\documentclass[10pt]{amsart}
\usepackage{amssymb}
\usepackage{amscd}
\usepackage[all]{xy}

\newcounter{TmpEnumi}

\numberwithin{equation}{section}

\def\today{\number\day\space\ifcase\month\or   January\or February\or
   March\or April\or May\or June\or   July\or August\or September\or
   October\or November\or December\fi\   \number\year}

\theoremstyle{definition}
\newtheorem{thm}{Theorem}[section]
\newtheorem{lem}[thm]{Lemma}
\newtheorem{prp}[thm]{Proposition}
\newtheorem{dfn}[thm]{Definition}
\newtheorem{cor}[thm]{Corollary}

\newtheorem{cnv}[thm]{Convention}

\newtheorem{ntn}[thm]{Notation}
\newtheorem{exa}[thm]{Example}
\newtheorem{pbm}[thm]{Problem}

\newtheorem{qst}[thm]{Question}

\newcommand{\beq}{\begin{equation}}
\newcommand{\eeq}{\end{equation}}
\newcommand{\beqr}{\begin{eqnarray*}}
\newcommand{\eeqr}{\end{eqnarray*}}
\newcommand{\bal}{\begin{align*}}
\newcommand{\eal}{\end{align*}}
\newcommand{\bei}{\begin{itemize}}
\newcommand{\eei}{\end{itemize}}
\newcommand{\limi}[1]{\lim_{{#1} \to \infty}}

\newcommand{\af}{\alpha}
\newcommand{\bt}{\beta}
\newcommand{\gm}{\gamma}
\newcommand{\dt}{\delta}
\newcommand{\ep}{\varepsilon}
\newcommand{\zt}{\zeta}
\newcommand{\et}{\eta}

\newcommand{\io}{\iota}
\newcommand{\te}{\theta}
\newcommand{\ld}{\lambda}
\newcommand{\sm}{\sigma}
\newcommand{\kp}{\kappa}
\newcommand{\ph}{\varphi}
\newcommand{\ps}{\psi}
\newcommand{\rh}{\rho}
\newcommand{\om}{\omega}
\newcommand{\ta}{\tau}

\newcommand{\Ld}{\Lambda}

\newcommand{\Q}{{\mathbb{Q}}}
\newcommand{\Z}{{\mathbb{Z}}}
\newcommand{\R}{{\mathbb{R}}}
\newcommand{\C}{{\mathbb{C}}}
\newcommand{\N}{{\mathbb{Z}}_{> 0}}
\newcommand{\Nz}{{\mathbb{Z}}_{\geq 0}}

\newcommand{\cN}{{\mathcal{N}}}

\pagenumbering{arabic}

\newcommand{\tsr}{{\mathrm{tsr}}}
\newcommand{\RR}{{\mathrm{RR}}}

\newcommand{\tr}{{\mathrm{tr}}}
\newcommand{\id}{{\mathrm{id}}}
\newcommand{\ev}{{\mathrm{ev}}}
\newcommand{\sint}{{\mathrm{int}}}

\newcommand{\Prim}{{\mathrm{Prim}}}
\newcommand{\diag}{{\mathrm{diag}}}
\newcommand{\supp}{{\mathrm{supp}}}

\newcommand{\card}{{\mathrm{card}}}
\newcommand{\Aut}{{\mathrm{Aut}}}
\newcommand{\Ad}{{\mathrm{Ad}}}

\newcommand{\Kern}{\mathrm{Ker}}

\newcommand{\dirlim}{\varinjlim}
\newcommand{\invlim}{\varprojlim}

\newcommand{\andeqn}{\qquad {\mbox{and}} \qquad}

\newcommand{\ts}[1]{{\textstyle{#1}}}
\newcommand{\ds}[1]{{\displaystyle{#1}}}

\newcommand{\set}[1]{\left\{ #1 \right\}}
\newcommand{\bset}[1]{\bigl\{ #1 \bigr\}}

\newcommand{\CXDHat}[2]{C (#1, #2 )^{\wedge}}


\newcommand{\Wolog}{Without loss of generality}

\newcommand{\ifo}{if and only if}

\newcommand{\ca}{C*-algebra}
\newcommand{\uca}{unital C*-algebra}

\newcommand{\hm}{homomorphism}

\newcommand{\fd}{finite dimensional}
\newcommand{\tst}{tracial state}

\newcommand{\pj}{projection}

\newcommand{\nzp}{nonzero projection}

\newcommand{\mvnt}{Murray-von Neumann equivalent}

\newcommand{\ct}{continuous}
\newcommand{\cfn}{continuous function}
\newcommand{\nbhd}{neighborhood}
\newcommand{\cms}{compact metric space}

\newcommand{\chs}{compact Hausdorff space}
\newcommand{\hme}{homeomorphism}
\newcommand{\mh}{minimal homeomorphism}

\newcommand{\tgca}{transformation group \ca}

\newcommand{\JS}{{\mathcal{Z}}}


\renewcommand{\S}{\subset}

\newcommand{\SM}{\setminus}
\newcommand{\I}{\infty}
\newcommand{\E}{\varnothing}
\newcommand{\ten}{\otimes}

\newcommand{\D}{\mathcal{E}}

\newcommand{\mdim}{\mathrm{mdim}}

\newcommand{\Lem}[1]{Lemma~\ref{#1}}
\newcommand{\Def}[1]{Definition~\ref{#1}}

\title[Crossed products of $C (X, D)$]{The structure of
 crossed products by automorphisms of $C (X, D)$}
\author{Dawn Archey
\and
Julian Buck
\and
N.~Christopher Phillips}

\date{25~September 2020}

\address{University of Detroit Mercy, Department of Mathematics,
       4001 W. McNichols Rd., Detroit MI 48221, USA.}

\address{Department of Mathematics, Okanagan College, 1000 KLO Road,
       Kelowna BC, V1Y 4X8 Canada}

\address{Department of Mathematics, University  of Oregon,
       Eugene OR 97403-1222, USA.}

\subjclass[2010]{Primary 46L40, 46L55;
 Secondary 46L36.}
\thanks{This material is based upon work supported by the
  US National Science Foundation under
  Grant DMS-1101742 and the Simons Foundation Collaboration Grant
  for Mathematicians \#587103.}

\begin{document}

\begin{abstract}
We construct centrally large subalgebras in crossed products of
the form $C^* \bigl( \Z, \, C (X, D), \, \af \bigr)$
in which $D$ is simple, $X$ is compact metrizable,
$\af$ induces a minimal homeomorphism $h \colon X \to X$,
and a mild technical assumption holds.
We use this construction to prove structural properties of
the crossed product,
such as (tracial) ${\mathcal{Z}}$-stability, stable rank one,
real rank zero, and pure infiniteness,
in a number of examples.
Our examples are not accessible
via methods based on finite Rokhlin dimension,
either because $D$ is not ${\mathcal{Z}}$-stable
or because $X$ is infinite dimensional.
\end{abstract}

\maketitle

\setcounter{section}{-1}

\section{Introduction}
Significant progress has been made in recent years on the
classification of crossed product \ca{s} arising from
finite dimensional minimal dynamical systems.
The long unpublished
preprint \cite{QLinPhDiff} of Q.~Lin and N.~C.\  Phillips
(see also the
survey articles \cite{QLinPh1} and \cite{QLinPh2}) provides a
thorough description of the transformation group \ca{s}
arising from minimal diffeomorphisms of finite dimensional smooth
compact manifolds in terms of a direct limit decomposition.
In
\cite{HLinPh} and \cite{TomsWinter}, it is shown that crossed
products arising from minimal homeomorphisms of infinite compact
metrizable spaces with finite covering dimension are classified by
their ordered K-theory in the presence of sufficiently many projections
(for instance, when projections separate traces).
In
\cite{TomsWinter} it is further proved that crossed products by such
minimal homeomorphisms have finite nuclear dimension, and hence
absorb the Jiang-Su algebra $\JS$ tensorially
(that is, are $\JS$-stable).
Finally, G.~A.\  Elliott and Z.~Niu (\cite{EllNiu})
have shown that crossed products by minimal
homeomorphisms of compact metric spaces with mean dimension zero
(including all minimal
homeomorphisms of finite dimensional compact metric spaces)
are ${\mathcal{Z}}$-stable,
from which it follows that they are classifiable
in the sense of the Elliott program by Corollary D of
Theorem A of \cite{CETWW}.

Not as much is known for crossed products of
\ca{s} of the form $C (X, D)$ for a
noncommutative \ca~$D$.
Hua (\cite{Hua}) has shown
that such crossed products have tracial rank zero
when $X$ is the Cantor set, $D$ has tracial rank zero,
the action on $X$ is minimal,
and some additional K-theoretic assumptions are made.
In this paper we consider the structure of crossed products of
the form $C^* \bigl( \Z, \, C (X, D), \, \af \bigr)$,
in which $X$ is a compact metric space,
$D$ is simple unital a C*-algebra, and $\af$ is an automorphism
of $C (X, D)$ which ``lies over'' a minimal homeomorphism
(as described in Definition~\ref{D_4Y12_OverX}).
In Section~\ref{Sec_Over}, we describe the types of actions on
$C (X, D)$ which will be of interest here.
In Section~\ref{Sec_OrbBreak} we introduce the generalization
of the orbit breaking subalgebras of \cite{PhLg}
for actions on $C (X, D)$,
and show that these are large in various senses
defined in \cite{PhLg}.
Section~\ref{RecStruct} introduces a
``$D$-fibered'' generalization of the recursive subhomogeneous
algebras in \cite{PhRsha1}, then demonstrates (following a
development analogous to that in \cite{QLinPhDiff}) that our
orbit breaking subalgebras have such a recursive structure.
In Section~\ref{StructAY} we use the results of
Sections~\ref{Sec_OrbBreak} and~\ref{RecStruct} to obtain
stronger structural properties for the orbit breaking
subalgebras, and deduce structural
properties of the crossed product from those of the
orbit breaking subalgebra under appropriate additional
assumptions.
In Section~\ref{Sec_9929_MinProd} we
establish minimality for products of certain Denjoy
homeomorphisms that will be used to produce examples.
Section~\ref{Sec_InfRed} gives a large collection of
examples of crossed products for which we can use the
theory developed here to deduce structural properties
that do not seem accessible using previously known methods.
Finally, in Section~\ref{Sec_Qs} we pose some
open questions for further research.

We recall Cuntz comparison and the Cuntz semigroup.
For a much fuller discussion,
in a form useful for work with large subalgebras,
we refer to Section~1 of~\cite{PhLg}.

\begin{ntn}\label{N_9Y30_CzComp}
If $A$ is a \ca{} and $a, b \in M_{\infty} (A)_{+}$,
we write $a \precsim_A b$
to mean that $a$ is Cuntz subequivalent to~$b$ over~$A$,
that is, there is a sequence $(v_n)_{n = 1}^{\infty}$
in $M_{\infty} (A)_{+}$
such that
$\limi{n} v_n b v_n^* = a$.
We write $a \sim_A b$
to mean that $a$ is Cuntz equivalent to~$b$ over~$A$,
that is, $a \precsim_A b$ and $b \precsim_A a$.
\end{ntn}

We specify~$A$ in the notation
because Cuntz subequivalence with respect to proper
subalgebras will play a key role.

The Cuntz semigroup $W (A)$ is then defined to be the set of
Cuntz equivalence classes $M_{\infty} (A)_{+} / \sim_A$,
with addition given by direct sum and order coming from
Cuntz subequivalence.

\begin{ntn}\label{ntn1-181221D}
For any \ca{} $A$, we denote the set of normalized $2$-quasitraces
on~$A$ by ${\operatorname{QT}} (A)$.
We use the word quasitrace to mean normalized $2$-quasitrace.
\end{ntn}

For a \ca~$A$,
the topology on $\Aut (A)$
is always pointwise convergence in the norm of~$A$.
That is, $x \mapsto \af_x$ is \ct{}
\ifo{} $x \mapsto \af_x (a)$ is \ct{} for all $a \in A$.
(This is the usual topology.)
To be explicit,
we point out that $\af \mapsto \af^{-1}$ is \ct{} in this topology,
as can be seen from the equation
\[
\| \af^{-1} (a) - \bt^{-1} (a) \|
  = \bigl\| \bt^{-1}
      \bigl( \af (\af^{-1} (a)) - \bt (af^{-1} (a)) \bigr) \bigr\|
\]
for $a \in A$ and $\af, \bt \in \Aut (A)$,
and the fact that $\bt^{-1}$ is isometric.

\begin{ntn}\label{N_0108_DimX}
For a \chs~$X$,
we denote the covering dimension of~$X$
(Definition 3.1.1 of~\cite{Prs}) by $\dim (X)$.
If $h \colon X \to X$ is a \hme,
its mean dimension
(Definition 2.6 of~\cite{LndWs})
is denoted by $\mdim (h)$.
\end{ntn}

\section{Preliminaries on actions on $C (X, D)$ lying over actions
  on~$X$}\label{Sec_Over}

In the section,
we give a few basic facts about actions of
groups on \ca{s} of the form $C (X, D)$
which ``lie over'' actions on~$X$.
We also introduce several technical conditions which
will be needed as hypotheses later,
and give some cases in which they are automatically satisfied.

\begin{ntn}\label{N_5418_ActCX}
Let $G$ be a locally compact group and let $X$ be a
locally \chs~$X$ on which $G$ acts.
We take the corresponding action $\af \colon G \to \Aut (C_0 (X))$
to be be given by
$\af_g (f) (x) = f (g^{-1} x)$ for $f \in C (X)$,
$g \in G$, and $x \in X$.
For a \hme{} $h \colon X \to X$,
this means that the corresponding action of~$\Z$ on $C_0 (X)$
is generated by
the automorphism $\af (f) = f \circ h^{-1}$ for $f \in C_0 (X)$.
\end{ntn}

\begin{dfn}\label{D_4Y12_OverX}
Let $X$ be a locally \chs,
let $G$ be a topological group,
and let $D$ be a \ca.
Let $(g, x) \mapsto g x$ be an action of $G$ on~$X$,
and let $\af \colon G \to \Aut (C_0 (X, D))$
be an action of $G$ on $C_0 (X, D)$.
We say that
{\emph{$\af$ lies over the action $(g, x) \mapsto g x$}}
if there exists a function
$(g, x) \mapsto \af_{g, x}$ from $G \times X$ to $\Aut (D)$
such that $\af_g (a) (x) = \af_{g, x} (a (g^{-1} x) )$
for all $g \in G$, $x \in X$,
and $a \in C_0 (X, D)$.

We say that an automorphism $\af$ of $C_0 (X, D)$
{\emph{lies over}} a \hme{} $h \colon X \to X$
if the action generated by $\af$
lies over the action generated by~$h$.
\end{dfn}

In Definition~\ref{D_4Y12_OverX},
for $g \in G$ and $d \in D$,
the function $x \mapsto \af_{g, x} (d)$
must be \ct.
The following elementary lemma,
which will be used without comment,
shows that if $G$ is discrete
then this is the only continuity condition that is needed.
If $G$ is not discrete,
there are additional continuity conditions.

\begin{lem}\label{L_9Y30_MixCont}
Let $D$ be a \ca,
let $X$ be a locally \chs,
and let $x \mapsto \af_{x}$
be a \cfn{} from $X$ to $\Aut (D)$.
The for every $a \in C_0 (X, D)$,
the function $b (x) = \af_x (a (x))$ is also in $C_0 (X, D)$.
\end{lem}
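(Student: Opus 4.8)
The plan is to verify separately the two defining properties of membership in $C_0 (X, D)$: that $b$ is continuous, and that $b$ vanishes at infinity. The key observation underpinning both is that each $\af_x$ is an \am{} of $D$ and is therefore isometric, so that $\norm{\af_x (d)} = \norm{d}$ for every $d \in D$ and every $x \in X$.

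The vanishing at infinity is then immediate. Since $\norm{b (x)} = \norm{\af_x (a (x))} = \norm{a (x)}$ for all $x$, the set $\{ x \in X : \norm{b (x)} \geq \ep \}$ coincides with $\{ x \in X : \norm{a (x)} \geq \ep \}$, which is compact for every $\ep > 0$ because $a \in C_0 (X, D)$.

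For continuity I would fix $x_0 \in X$ and estimate $\norm{b (x) - b (x_0)}$ as $x \to x_0$. The idea is to insert the intermediate term $\af_x (a (x_0))$ and apply the triangle inequality, writing
\[
\norm{b (x) - b (x_0)}
  \leq \norm{\af_x (a (x) - a (x_0))}
     + \norm{\af_x (a (x_0)) - \af_{x_0} (a (x_0))}.
\]
The first summand equals $\norm{a (x) - a (x_0)}$ by isometry, and it tends to $0$ as $x \to x_0$ because $a$ is continuous. For the second summand, one sets $d = a (x_0) \in D$ and uses that, by the continuity of $x \mapsto \af_x$ in the topology of pointwise norm convergence on $\Aut (D)$, the function $x \mapsto \af_x (d)$ is continuous; hence $\norm{\af_x (d) - \af_{x_0} (d)} \to 0$ as $x \to x_0$. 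Combining the two estimates shows that $b$ is continuous at $x_0$, and since $x_0$ was arbitrary, $b \in C (X, D)$.

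There is no serious obstacle here; the lemma is elementary. The only point worth flagging is the decomposition in the triangle inequality, which cleanly separates the two independent sources of continuity: the continuity of the section $a$ (handled via the isometry of $\af_x$) and the continuity of the field of automorphisms $x \mapsto \af_x$, which is exactly where the hypothesis on $x \mapsto \af_x$ enters, applied to the single fixed element $d = a (x_0)$.
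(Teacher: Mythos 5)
Your argument is correct and is essentially the same as the paper's: the paper proves continuity by choosing a neighborhood of $x_0$ on which both $\| a (x) - a (x_0) \| < \ep / 2$ and $\| \af_x (a (x_0)) - \af_{x_0} (a (x_0)) \| < \ep / 2$, and then invokes $\| \af_x \| = 1$, which is exactly your triangle-inequality decomposition through the intermediate term $\af_x (a (x_0))$. Your treatment of vanishing at infinity via the pointwise isometry $\| b (x) \| = \| a (x) \|$ is a correct elaboration of what the paper dismisses as immediate.
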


\begin{proof}
It is immediate that $b$ vanishes at infinity.
For continuity,
let $x_0 \in X$ and let $\ep > 0$.
Choose an open set $U \S X$ such that $x_0 \in U$ and
for all $x \in U$ we have
\[
\| a (x) - a (x_0) \| < \frac{\ep}{2}
\andeqn
\| \af_x (a (x_0)) - \af_{x_0} (a (x_0)) \| < \frac{\ep}{2}.
\]
Then, using $\| \af_x \| = 1$ for all $x \in X$,
one sees that $x \in U$
implies $\| \af_x (a (x)) - \af_{x_0} (a (x_0)) \| < \ep$.
\end{proof}

For any group~$G$,
there are also algebraic conditions
relating the automorphisms $\af_{g, x}$,
coming from the requirement that $g \mapsto \af_g$
be a group \hm.
If $G = \Z$,
then we really need only the function $x \mapsto \af_{1, x}$.
For reference,
we give the relevant statement as a lemma.

\begin{lem}\label{L_4Y12_GetAuto}
Let $X$ be a locally \chs,
let $h \colon X \to X$ be a \hme,
let $D$ be a \ca.
Then there is a one to one correspondence between actions of $\Z$ on
$C_0 (X, D)$
that lie over $h$ and
continuous functions from $X$ to $\Aut (D)$,
given as follows.

For any function $x \mapsto \af_{x}$
from $X$ to $\Aut (D)$
such that $x \mapsto \af_{x} (d)$ is \ct{} for all $d \in D$,
there is an automorphism $\af \in \Aut ( C_0 (X, D))$
given by $\af (a) (x) = \af_x (a (h^{-1} (x))$
for all $a \in C_0 (X, D)$ and $x \in X$,
and this automorphism lies over~$h$.

Conversely,
if $\af \in \Aut ( C_0 (X, D))$ lies over~$h$,
then there is a function $x \mapsto \af_{x}$
from $X$ to $\Aut (D)$
such that $x \mapsto \af_{x} (d)$ is \ct{} for all $d \in D$
and such that $\af (a) (x) = \af_x (a (h^{-1} (x))$
for all $a \in C_0 (X, D)$ and $x \in X$.
\end{lem}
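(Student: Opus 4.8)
The plan is to note first that an action of $\Z$ is completely determined by the image of the generator, so the asserted correspondence is really one between automorphisms $\af$ of $C_0 (X, D)$ lying over~$h$ and continuous families $x \mapsto \af_x$, implemented by the two assignments
\[
(x \mapsto \af_x) \longmapsto \af, \qquad \af (a) (x) = \af_x (a (h^{-1} (x))),
\]
and, in the reverse direction, $\af \mapsto (x \mapsto \af_{1, x})$, where $(n, x) \mapsto \af_{n, x}$ is the data witnessing that the action generated by $\af$ lies over the action generated by~$h$. I would show that both assignments are well defined and that they are mutually inverse.

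For the forward assignment, given a \cfn{} $x \mapsto \af_x$ into $\Aut (D)$, I would first check that $\af (a)$ as defined lies in $C_0 (X, D)$: writing $a' = a \circ h^{-1} \in C_0 (X, D)$, we have $\af (a) (x) = \af_x (a' (x))$, which lies in $C_0 (X, D)$ by \Lem{L_9Y30_MixCont}. Linearity and the $*$-homomorphism property are immediate, since each $\af_x$ is a $*$-homomorphism of~$D$ and the operations on $C_0 (X, D)$ are computed pointwise. For invertibility I would exhibit the explicit inverse $\bt (b) (x) = \af_{h (x)}^{-1} (b (h (x)))$; since inversion is \ct{} on $\Aut (D)$ (recorded in the discussion of the topology of $\Aut (D)$ above) and $h$ is a \hme, the family $x \mapsto \af_{h (x)}^{-1}$ is again continuous, so $\bt$ maps $C_0 (X, D)$ into itself by the same lemma, and a short computation gives $\bt \circ \af = \af \circ \bt = \id$. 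Finally, to verify that $\af$ lies over~$h$ I would record that the powers satisfy $\af^n (a) (x) = \af_{n, x} (a (h^{-n} (x)))$, where $\af_{n, x}$ is the appropriate composite of the automorphisms $\af_{h^{-j} (x)}$ (and their inverses when $n < 0$); each $\af_{n, x}$ is an automorphism of~$D$ and $x \mapsto \af_{n, x} (d)$ is \ct, so the family $(n, x) \mapsto \af_{n, x}$ is exactly the data demanded by \Def{D_4Y12_OverX}.

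For the reverse assignment, if $\af$ lies over~$h$ then by definition the generated action carries a family $(n, x) \mapsto \af_{n, x}$ with $\af (a) (x) = \af_{1, x} (a (h^{-1} (x)))$, and I set $\af_x = \af_{1, x}$. The one substantive point is that $x \mapsto \af_x (d)$ is \ct{} for each $d \in D$ (this is the continuity asserted right after \Def{D_4Y12_OverX}). I would prove it locally: given $x_0 \in X$ and $d \in D$, pick an element $a \in C_0 (X, D)$ equal to~$d$ on a \nbhd{} of $h^{-1} (x_0)$ (a scalar Urysohn function times the constant~$d$); then $\af (a) (x) = \af_x (d)$ for all $x$ in a \nbhd{} of $x_0$, and since $\af (a) \in C_0 (X, D)$ is \ct, so is $x \mapsto \af_x (d)$ near $x_0$.

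That the two assignments are mutually inverse then follows from a single uniqueness observation: for each fixed~$x$ the automorphism $\af_x$ is uniquely determined by the identity $\af (a) (x) = \af_x (a (h^{-1} (x)))$, because every $d \in D$ arises as $a (h^{-1} (x))$ for some $a \in C_0 (X, D)$. Hence extracting the family from a constructed $\af$ returns the original family, and rebuilding $\af$ from an extracted family returns the original~$\af$. I expect the only genuine obstacle to be the continuity claim in the reverse direction, namely producing the local element $a$ that is constant near $h^{-1} (x_0)$; the remaining work is the routine, if slightly tedious, bookkeeping of the composites $\af_{n, x}$ needed to confirm the ``lies over'' condition.
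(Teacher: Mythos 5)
Your proof is correct and follows the same route as the paper, whose entire proof is the single sentence ``Using Lemma~\ref{L_9Y30_MixCont}, this is immediate''; you have simply written out the details that the authors leave to the reader, invoking Lemma~\ref{L_9Y30_MixCont} at exactly the places where it is needed (membership of $\af (a)$ and of the inverse in $C_0 (X, D)$). The local Urysohn argument for continuity of $x \mapsto \af_x (d)$ in the reverse direction and the uniqueness observation for mutual inversion are both sound.
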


\begin{proof}
Using Lemma~\ref{L_9Y30_MixCont}, this is immediate.
\end{proof}

There is a conflict in the notation in Lemma~\ref{L_4Y12_GetAuto}:
if $n \in \Z$
then $\af_n$
is one of the automorphisms in the action on $C_0 (X, D)$
(namely $\af_n$),
while if $x \in X$
then $\af_x \in \Aut (D)$.
We use this notation anyway to avoid having more letters.
To distinguish the two uses,
take $\af$ to be the action
and write $x \mapsto \af_x$
when the function from $X$ to $\Aut (D)$ is intended.

If $D$ is prime,
then every action on $C_0 (X, D)$
lies over an action of $G$ on~$X$.

\begin{lem}\label{N_4Y12_IfDSimple}
Let $X$ be a locally \chs,
let $D$ be a prime \ca,
let $G$ be a topological group,
and let $\af \colon G \to \Aut (C_0 (X, D))$
be an action of $G$ on $C_0 (X, D)$.
Then there exists an action of $G$ on~$X$
such that $\af$ lies over this action.
\end{lem}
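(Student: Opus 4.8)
The plan is to recover $X$, together with the $G$-action on it, intrinsically from $A := C_0 (X, D)$, exploiting that $D$ is prime through the center of its multiplier algebra. First I would record that primeness of $D$ gives $Z (M (D)) = \C \cdot 1$. Indeed, if $z = z^* \in Z (M (D))$ had two points $\ld \neq \mu$ in its spectrum, one could choose nonzero $f, g \in C (\sigma (z))$ with $f g = 0$; then $f (z), g (z) \in Z (M (D))$ are nonzero with $f (z) g (z) = 0$, so the closed ideals $\overline{f (z) D}$ and $\overline{g (z) D}$ of $D$ are nonzero with product $\set{0}$, contradicting primeness. Since a central multiplier of $C_0 (X, D)$ is fiberwise a central multiplier of $D$, it follows that $Z (M (A)) = C_b (X) = C (\beta X)$, where $\beta X$ is the Stone--Cech compactification.

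Next, each $\af_g$ extends uniquely to an automorphism of $M (A)$, which preserves the center and hence restricts to an automorphism $\gm_g$ of $Z (M (A)) = C (\beta X)$; equivalently $\gm_g$ is given by a homeomorphism of $\beta X$, and $g \mapsto \gm_g$ is a homomorphism. The key step is to see that this homeomorphism preserves the subset $X \S \beta X$. For a maximal ideal $M$ of $Z (M (A)) = C_b (X)$ I would consider the closed ideal $\overline{M A}$ of $A$. Writing $M_x = \set{f \in C_b (X) : f (x) = 0}$ and $I_x = \set{a \in A : a (x) = 0}$, a routine approximation shows $\overline{M_x A} = I_x$ for $x \in X$, so that $A / \overline{M_x A} \cong D \neq 0$. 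On the other hand, for a point $\om$ of the corona $\beta X \SM X$ I would show $\overline{M_\om A} = A$: given $a \in C_0 (X, D)$ and $\ep > 0$, choose a compact $K \S X$ with $\norm{a (x)} < \ep$ off $K$; since $K$ is closed in the normal space $\beta X$ and $\om \notin K$, Urysohn's lemma provides $f \in C_b (X)$ with $0 \leq f \leq 1$, $f|_K = 1$, and $f (\om) = 0$, whence $f a \in M_\om A$ and $\norm{a - f a} < \ep$. Thus $X = \set{M : \overline{M A} \neq A}$. Because $\overline{\gm_g (M) A} = \af_g \bigl( \overline{M A} \bigr)$, this description is $\gm_g$-invariant, so $\gm_g$ restricts to a homeomorphism $h_g$ of $X$ and $g \mapsto h_g$ is an action of $G$ on $X$, which I write $g \cdot x := h_g (x)$.

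Finally I would extract the fiber automorphisms. From $\overline{M_y A} = I_y$ and the invariance above one gets $\af_g (I_y) = I_{g \cdot y}$ for all $y \in X$; taking $y = g^{-1} x$ yields $\af_g (I_{g^{-1} x}) = I_x$, so $\af_g$ descends to an isomorphism $A / I_{g^{-1} x} \to A / I_x$. Under the identifications $A / I_y \cong D$ by evaluation this is an automorphism $\af_{g, x} \in \Aut (D)$, and unwinding the definitions gives $\af_g (a) (x) = \af_{g, x} \bigl( a (g^{-1} x) \bigr)$. Continuity of $x \mapsto \af_{g, x} (d)$ follows by evaluating $\af_g$ on an element of $C_0 (X, D)$ that equals the constant $d$ near $g^{-1} x$; by \Lem{L_4Y12_GetAuto} this says exactly that $\af$ lies over the action $(g, x) \mapsto g x$.

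The \emph{main obstacle} is the corona computation $\overline{M_\om A} = A$, that is, pinning down $X$ inside $\beta X$ by the intrinsic condition $\overline{M A} \neq A$; a secondary point to check, when $G$ is not discrete, is joint continuity of the resulting action of $G$ on $X$, which I would deduce from continuity of $g \mapsto \af_g$ together with the continuity of the maps $\gm_g$ on $Z (M (A))$.
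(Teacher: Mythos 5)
Your proof is correct, but it takes a genuinely different route from the paper's. The paper disposes of this lemma in one sentence: the action of $G$ on $X$ is ``obtained from the identification $X \cong \Prim ( C_0 (X, D) )$,'' i.e.\ one lets the induced homeomorphisms of the primitive ideal space do all the work. You instead recover $X$ from the center of the multiplier algebra: primeness of $D$ gives $Z (M (D)) = \C \cdot 1$, hence $Z (M (C_0 (X, D))) = C_b (X) = C (\beta X)$, and you then isolate $X$ inside $\beta X$ by the intrinsic condition $\overline{M A} \neq A$, checking that this condition is preserved by the automorphisms $\gm_g$ induced on the center. What your argument buys is robustness in exactly the generality the lemma is stated in: for $D$ prime but not simple (e.g.\ $D = B (H)$), the space $\Prim (C_0 (X, D))$ is naturally $X \times \Prim (D)$ rather than $X$, so the paper's identification has to be read as passing to the complete regularization (Glimm space) of $\Prim$, which is essentially the Dauns--Hofmann computation you carry out explicitly; your version also makes the corona computation $\overline{M_\om A} = A$ visible rather than buried in a cited identification. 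What the paper's approach buys is brevity, and in all applications of this lemma $D$ is in fact simple, where $\Prim (C_0 (X, D)) \cong X$ is immediate. One small point on both sides: for non-discrete $G$ the definition of an action on $X$ implicitly requires joint continuity of $(g, x) \mapsto g x$, which neither the paper's one-liner nor your sketch fully verifies, though you at least flag it; it does follow from continuity of $g \mapsto \af_g$ by an argument close to your continuity check for $x \mapsto \af_{g, x} (d)$.
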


\begin{proof}
The action of $G$ on~$X$
is obtained from the identification
$X \cong \Prim ( C_0 (X, D) )$.
\end{proof}

\begin{prp}\label{P_4Y15_CPSimple}
Let $G$ be a discrete group,
let $X$ be a compact space,
and suppose $G$ acts on $X$ in such a way that
the action is minimal and
for every finite set $S \subset G \setminus \{ 1 \}$,
the set
\[
\big\{ x \in X \colon {\mbox{$g x \neq x$ for all $g \in S$}} \big\}
\]
is dense in~$X$.
Let $D$ be a simple \uca,
and let $\af \colon G \to \Aut (C (X, D))$
be an action of $G$ on $C (X, D)$
which lies over the given action of $G$ on $X$
(in the sense of Definition~\ref{D_4Y12_OverX}).
Then $C^*_{\mathrm{r}} \big( G, \, C (X, D), \, \af \big)$
is simple.
\end{prp}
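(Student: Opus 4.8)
The plan is to reduce the statement to the standard simplicity criterion for reduced crossed products by discrete groups, due to Archbold and Spielberg: if a discrete group acts on a \ca{} in a way that is topologically free and admits no nontrivial invariant ideal ($G$-simplicity), then the reduced crossed product is simple. Accordingly, I would verify these two hypotheses for the system $\big( C (X, D), \, G, \, \af \big)$. The key point is that both can be read off from the action of $G$ on the primitive ideal space $\Prim \big( C (X, D) \big)$, which I claim is canonically $G$-equivariantly homeomorphic to $X$ with the given action.

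First I would pin down the ideal structure of $C (X, D)$. Since $D$ is simple it is prime, so $\Prim (D)$ is a single point; as $C (X, D) \cong C (X) \ten D$ and $C (X)$ is nuclear, there is a canonical homeomorphism $\Prim \big( C (X, D) \big) \cong X$ under which $x \in X$ corresponds to the primitive ideal $J_x = \{ a \in C (X, D) \colon a (x) = 0 \}$. Consequently the closed two-sided ideals of $C (X, D)$ are exactly the sets $C_0 (U, D) = \{ a \colon a (x) = 0 \text{ for } x \notin U \}$ for $U \S X$ open. Next, using that $\af$ lies over the action on $X$, so that $\af_g (a) (x) = \af_{g, x} \big( a (g^{-1} x) \big)$ with each $\af_{g, x}$ an automorphism of $D$, a short computation evaluating $\af_g (a)$ at $g x$ shows $\af_g (J_x) = J_{g x}$; hence the induced action of $G$ on $\Prim \big( C (X, D) \big) \cong X$ is precisely the given action of $G$ on $X$.

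With this identification the two hypotheses follow. Minimality of the action on $X$ says there is no nontrivial $G$-invariant open subset of $X$, equivalently no nontrivial $G$-invariant ideal of $C (X, D)$, which is exactly $G$-simplicity. The stated density condition --- that for every finite $S \S G \SM \{ 1 \}$ the set $\{ x \colon g x \neq x \text{ for all } g \in S \}$ is dense --- is, under the identification $\Prim \big( C (X, D) \big) \cong X$, precisely topological freeness of $\af$ in the Archbold--Spielberg sense. Topological freeness gives that every nonzero ideal of $C^*_{\mathrm{r}} \big( G, \, C (X, D), \, \af \big)$ has nonzero intersection with $C (X, D)$; that intersection is a $G$-invariant ideal, so $G$-simplicity forces it to be all of $C (X, D)$. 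Since $X$ is compact and $D$ is unital, $C (X, D)$ is unital and contains the unit of the crossed product, so any such ideal is the whole algebra, and simplicity follows.

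The step I expect to require the most care is the identification of the ideal structure of $C (X, D)$ with the open subsets of $X$, and of the induced action on $\Prim \big( C (X, D) \big)$ with the given one; this is where simplicity of $D$ is genuinely used. It is worth emphasizing that the fiber automorphisms $\af_{g, x}$ play no role in the argument, because topological freeness is a condition on the $G$-action on $\Prim \big( C (X, D) \big)$ alone. In particular, even if $g x = x$ for some $g \neq 1$ while the induced automorphism $\af_{g, x}$ of $D$ is nontrivial (possibly outer), this does not affect simplicity: all that is needed is that the set of points genuinely moved in $\Prim \big( C (X, D) \big)$ be dense, which is exactly the given hypothesis.
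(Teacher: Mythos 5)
Your argument is correct and follows essentially the same route as the paper's proof: identify the spectrum of $C(X,D)$ with $X$ (using simplicity of~$D$), invoke the Archbold--Spielberg theorem to see that a nonzero ideal of the reduced crossed product meets $C(X,D)$, and use minimality to conclude. The only cosmetic difference is at the end, where you observe that $J \cap C(X,D)$ is a $G$-invariant ideal and appeal to $G$-simplicity, while the paper makes the same point explicit via a finite cover of $X$ by translates of $U$ and a partition of unity showing $1 \in J$.
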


\begin{proof}
For any \ca~$A$,
let ${\widehat{A}}$ be the space of unitary equivalence classes
of irreducible representations of~$A$,
with the hull-kernel topology.
Since the primitive ideals
of $C (X, D)$ are exactly the kernels of the point evaluations,
there is an obvious map $q \colon \CXDHat{X}{D} \to X$,
and the open sets in $\CXDHat{X}{D}$ are exactly
the sets $q^{-1} (U)$ for open sets $U \subset X$.
It is now immediate that for every finite set
$S \subset G \setminus \{ 1 \}$,
the set
\[
\big\{ x \in \CXDHat{X}{D} \colon
  {\mbox{$g x \neq x$ for all $g \in S$}} \big\}
\]
is dense in $\CXDHat{X}{D}$.
That is, the action of $G$ on $\CXDHat{X}{D}$
is topologically free in the sense of Definition~1 of~\cite{AS}.

Let $J \subset C^*_{\mathrm{r}} \big( G, \, C (X, D), \, \af \big)$
be a nonzero ideal.
Let
\[
\pi \colon C^* \big( G, \, C (X, D), \, \af \big)
      \to C^*_{\mathrm{r}} \big( G, \, C (X, D), \, \af \big)
\]
be the quotient map.
Theorem~1 of~\cite{AS}
implies that $\pi^{-1} (J)$ has nonzero intersection
with the canonical copy of $C (X, D)$
in $C^* \big( G, \, C (X, D), \, \af \big)$.
Therefore $J$ has nonzero intersection
with the canonical copy of $C (X, D)$
in $C^*_{\mathrm{r}} \big( G, \, C (X, D), \, \af \big)$.
Since $D$ is simple,
this intersection has the form $C_0 (U, D)$
for some nonempty open set $U \subset X$.

Since the action of $G$ on $X$ is minimal and $X$ is compact,
there exist $n \in \N$ and $g_1, g_2, \ldots, g_n \in \N$
such that the sets
$g_1^{-1} U, \, g_2^{-1} U, \, \ldots, g_n^{-1} U$ cover~$X$.
Choose
\[
f_1, f_2, \ldots, f_n
 \in C (X)
 \subset C (X, D)
 \subset C^*_{\mathrm{r}} \big( G, \, C (X, D), \, \af \big)
\]
such that $\supp (f_k) \subset g_k^{-1} U$ for $k = 1, 2, \ldots, n$
and $\sum_{k = 1}^n f_k = 1$.
For $k = 1, 2, \ldots, n$, the functions
$\af_{g_k} (f_k)$
are in $C_0 (U) \subset C_0 (U, D) \subset J$,
so
\[
1 = \sum_{k = 1}^n f_k
  = \sum_{k = 1}^n u_{g_k}^* \af_{g_k} (f_k) u_{g_k}
  \in J.
\]
So $J = C^*_{\mathrm{r}} \big( G, \, C (X, D), \, \af \big)$.
\end{proof}

\begin{prp}\label{P_5418_CPStFin}
Assume the hypotheses of Proposition~\ref{P_4Y15_CPSimple},
and in addition assume that
$G$ is amenable
and $D$ has a tracial state.
Then $C^*_{\mathrm{r}} \big( G, \, C (X, D), \, \af \big)$
has a tracial state
and is stably finite.
\end{prp}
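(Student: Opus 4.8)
The plan is to produce a $G$-invariant tracial state on $A = C(X, D)$, push it forward to the crossed product via the canonical conditional expectation, and then use the simplicity established in Proposition~\ref{P_4Y15_CPSimple} to upgrade the resulting trace to a faithful one, from which stable finiteness follows at once. I note first that $A$ is unital (as $X$ is compact and $D$ is unital) and $G$ is discrete, so $C^*_{\mathrm{r}}(G, A, \af)$ is unital; moreover amenability of $G$ makes the full and reduced crossed products agree, so it is harmless to work with $C^*_{\mathrm{r}}(G, A, \af)$ throughout.

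First I would construct a $G$-invariant tracial state on $A$. The set $T(A)$ of tracial states on $A$ is a weak*-compact convex subset of $A^*$, and it is nonempty: choosing a tracial state $\sigma$ on $D$ (which exists by hypothesis) and a point $x_0 \in X$, the functional $a \mapsto \sigma(a(x_0))$ is a tracial state on $A$. Since each $\af_g$ is an automorphism, the assignment $g \cdot \tau = \tau \circ \af_g^{-1}$ defines an affine action of $G$ on $T(A)$ by weak*-continuous maps (that $\tau \circ \af_g^{-1}$ is again tracial uses only the trace identity for $\tau$). As $G$ is amenable, the fixed-point characterization of amenability provides a fixed point $\tau_0 \in T(A)$, that is, a tracial state with $\tau_0 \circ \af_g = \tau_0$ for all $g \in G$. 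Working abstractly here is exactly what lets me avoid analyzing the fiberwise automorphisms $\af_{g, x}$: the amenable fixed-point theorem supplies the \emph{invariant} trace without any such computation, and this is the one genuinely nonroutine step of the argument.

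Next I would transport $\tau_0$ to the crossed product. Let $E \colon C^*_{\mathrm{r}}(G, A, \af) \to A$ be the canonical conditional expectation and set $\overline{\tau} = \tau_0 \circ E$; this is a state since $E$ is a conditional expectation and $\tau_0$ is a state. To verify the trace property it suffices to compare $\overline{\tau}\bigl( (a u_g)(b u_h) \bigr)$ and $\overline{\tau}\bigl( (b u_h)(a u_g) \bigr)$ on the dense $*$-subalgebra spanned by the products $a u_g$, where $u_g$ are the implementing unitaries. Both expressions vanish unless $h = g^{-1}$, and in that case the required equality $\tau_0\bigl( a \af_g(b) \bigr) = \tau_0\bigl( b \af_{g^{-1}}(a) \bigr)$ follows by applying the trace identity for $\tau_0$ and then its $G$-invariance. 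Hence $\overline{\tau}$ is a tracial state on $C^*_{\mathrm{r}}(G, A, \af)$, which proves the first assertion.

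Finally, for stable finiteness I would invoke Proposition~\ref{P_4Y15_CPSimple}, which gives that the crossed product is simple. On a simple C*-algebra every nonzero tracial state is faithful, because the set $\{ a : \overline{\tau}(a^* a) = 0 \}$ is a two-sided ideal (the trace property makes the always-present left ideal two-sided) and is proper, hence zero. A unital C*-algebra with a faithful tracial state is stably finite: extending $\overline{\tau}$ to the (nonnormalized) trace $\mathrm{Tr}$ on each matrix algebra over it, any isometry $v$ satisfies $\mathrm{Tr}(1 - v v^*) = \mathrm{Tr}(1 - v^* v) = 0$, so $v v^* = 1$ by faithfulness. Thus every matrix algebra over $C^*_{\mathrm{r}}(G, A, \af)$ is finite, and the crossed product is stably finite.
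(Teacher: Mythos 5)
Your proof is correct and follows essentially the same route as the paper: a tracial state on $C(X,D)$ from a point evaluation, a $G$-invariant one via the amenable fixed-point property, composition with the conditional expectation, and simplicity to get faithfulness and hence stable finiteness. The only difference is that you spell out the steps the paper delegates to \cite{Gr} and to ``standard methods.''
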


\begin{proof}
Since $C^*_{\mathrm{r}} \big( G, \, C (X, D), \, \af \big)$
is simple by Proposition~\ref{P_4Y15_CPSimple},
it suffices to show that
$C^*_{\mathrm{r}} \big( G, \, C (X, D), \, \af \big)$
has a tracial state.
We know that the tracial state space
${\operatorname{T}} (C (X, D))$ is nonempty,
since one can compose a tracial state on~$D$
with a point evaluation
$C (X, D) \to D$.
Since $G$ is amenable,
combining Theorem 2.2.1 and 3.3.1 of \cite{Gr}
shows that $C (X, D)$ has a $G$-invariant tracial state~$\ta$.
Standard methods show that the composition of $\ta$
with the conditional expectation
from $C^*_{\mathrm{r}} \big( G, \, C (X, D), \, \af \big)$
to $C (X, D)$
is a tracial state on
$C^*_{\mathrm{r}} \big( G, \, C (X, D), \, \af \big)$.
\end{proof}

The following condition is a technical hypothesis
which we need for the proof of the main large subalgebra result
(Theorem~\ref{T_5418_AYLg}).

\begin{dfn}\label{D_4Y12_PsPer}
Let $D$ be a \ca,
and let $S \subset \Aut (D)$.
We say that $S$ is {\emph{pseudoperiodic}}
if for every $a \in D_{+} \setminus \{ 0 \}$
there is $b \in D_{+} \setminus \{ 0 \}$
such that for every $\af \in S \cup \{ \id_D \}$
we have $b \precsim \af (a)$.
\end{dfn}

The interpretation of pseudoperiodicity is roughly as follows.
Suppose $S \subset \Aut (D)$ is pseudoperidic.
Then there is no sequence $(\alpha_n)_{n \in \N}$
in $S$ for which there is a nonzero element
$\eta \in W (D)$
such that the sequence $(\alpha_{n} (\eta))_{n \in \N}$
becomes arbitrarily small in $W (D)$
in a heuristic sense.

We give some conditions which imply pseudoperiodicity.

\begin{lem}\label{L_4Y12_ApproxInn}
Let $D$ be a \uca.
Then the set of approximately inner automorphisms of $D$
is pseudoperiodic in the sense of Definition~\ref{D_4Y12_PsPer}.
\end{lem}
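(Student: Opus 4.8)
The plan is to produce, for each $a \in D_{+} \SM \{ 0 \}$, a single witness $b$ that works simultaneously for the identity and for every approximately inner automorphism, exploiting the fact that an approximately inner automorphism moves $a$ only a small norm distance from a unitary conjugate of~$a$. Concretely, fix $\ep$ with $0 < \ep < \| a \|$ and set $b = (a - \ep)_+$. Since $\| (a - \ep)_+ \| = \| a \| - \ep > 0$, we have $b \in D_{+} \SM \{ 0 \}$, and this is the element I would show satisfies $b \precsim \af (a)$ for every $\af$ in $S \cup \{ \id_D \}$, where $S$ denotes the set of approximately inner automorphisms of~$D$. Crucially, $b$ is chosen before $\af$, so that the unitaries witnessing approximate innerness may depend on $\af$ afterwards.

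The case $\af = \id_D$ is immediate, since $(a - \ep)_+ \precsim a$ holds in any \ca. For the main case, let $\af$ be approximately inner and choose unitaries $(u_n)_{n = 1}^{\infty}$ in $D$ with $\af (x) = \limi{n} u_n x u_n^*$ for all $x \in D$; in particular $\limi{n} \| \af (a) - u_n a u_n^* \| = 0$, so I would fix $n$ large enough that $\| \af (a) - u_n a u_n^* \| < \ep$. The key step is then the standard perturbation estimate for Cuntz comparison (see Section~1 of~\cite{PhLg}): if $c, d \in D_{+}$ satisfy $\| c - d \| < \ep$, then $(c - \ep)_+ \precsim d$. Applying this with $c = u_n a u_n^*$ and $d = \af (a)$ gives $(u_n a u_n^* - \ep)_+ \precsim \af (a)$. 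Since conjugation by the unitary $u_n$ commutes with continuous functional calculus, we have $(u_n a u_n^* - \ep)_+ = u_n (a - \ep)_+ u_n^*$, and this element is Murray--von Neumann equivalent, hence Cuntz equivalent, to $b = (a - \ep)_+$. Combining, $b \sim u_n (a - \ep)_+ u_n^* \precsim \af (a)$, as desired.

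I do not expect a serious obstacle here. The one external ingredient is the perturbation lemma for~$\precsim$, which is precisely the standard tool that converts the norm approximation coming from approximate innerness into a Cuntz subequivalence; everything else is the observation that $(\,\cdot\,- \ep)_+$ is unitarily covariant and that cutting down by $\ep$ keeps $b$ nonzero. The only point genuinely requiring attention is the order of quantifiers, namely that $b$ must be fixed uniformly over $S$ while the approximating unitary and the index $n$ are allowed to vary with $\af$, and the argument above respects this.
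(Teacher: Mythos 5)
Your argument is correct and is essentially the paper's: both proofs exploit approximate innerness to produce a unitary conjugating $\af (a)$ to within $\ep$ of $a$ and then convert this norm estimate into Cuntz subequivalence. The only difference is that the paper takes $b = a$ itself and gets $a \precsim \af (a)$ directly from the sequential definition of $\precsim$ (the witnessing unitaries form the required sequence), so the cut-down to $(a - \ep)_{+}$ and the perturbation lemma, while perfectly valid, are not needed.
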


\begin{proof}
Let $a \in D_{+} \setminus \{ 0 \}$.
It suffices to prove that
$a \precsim \af (a)$ for every approximately inner automorphism
$\af \in \Aut (D)$.
To see this, let $\ep > 0$ be arbitrary,
and use approximate innerness
to chose a unitary $u \in D$ such that
$\| u \af (a) u^* - a \| < \ep$.
\end{proof}

\begin{lem}\label{Lem1_190201D}
Let $D$ be a simple \ca.
Let $S \S \Aut (D)$
be a subset which is compact in the topology
of pointwise convergence in the norm on~$D$.
Then $S$ is pseudoperiodic
in the sense of Definition~\ref{D_4Y12_PsPer}.
\end{lem}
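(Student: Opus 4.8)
The plan is to fix $a \in D_{+} \setminus \{0\}$ and manufacture a single nonzero $b \in D_{+}$ that is Cuntz subequivalent to every $\gm (a)$ with $\gm \in S \cup \{\id_D\}$. The set $S \cup \{\id_D\}$ is compact (a compact set together with one point), and by the very definition of the topology on $\Aut (D)$ the map $\gm \mapsto \gm (a)$ is continuous from $\Aut (D)$ into $D$. Fix $\ep$ with $0 < \ep < \|a\|$; since each automorphism is isometric we have $\|\gm (a)\| = \|a\| > \ep$, so $(\gm (a) - \ep)_{+} \neq 0$ for every $\gm$. The mechanism I would use is the standard perturbation fact for Cuntz comparison (Rørdam's lemma; see Section~1 of \cite{PhLg}): if $\|\gm (a) - \bt (a)\| < \ep$ then $(\bt (a) - \ep)_{+} \precsim \gm (a)$. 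Using continuity, for each $\bt \in S \cup \{\id_D\}$ I pick an open neighborhood $V_{\bt}$ on which $\|\gm (a) - \bt (a)\| < \ep$, so that $(\bt (a) - \ep)_{+} \precsim \gm (a)$ for all $\gm \in V_{\bt}$. By compactness finitely many $V_{\bt_1}, \dots, V_{\bt_n}$ cover $S \cup \{\id_D\}$, producing nonzero positive elements $c_j = (\bt_j (a) - \ep)_{+}$ with the property that every $\gm \in S \cup \{\id_D\}$ satisfies $c_j \precsim \gm (a)$ for at least one $j$.

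It now suffices to find a single nonzero $b \in D_{+}$ with $b \precsim c_j$ for all $j = 1, \dots, n$, since then, given $\gm$, I choose $j$ with $\gm \in V_{\bt_j}$ and conclude $b \precsim c_j \precsim \gm (a)$ (and the inclusion of $\id_D$ in the cover disposes of the requirement $b \precsim a$). Such a $b$ is obtained by iterating a two-element statement: I set $b_1 = c_1$ and, for $k = 2, \dots, n$, choose $b_k \in D_{+} \setminus \{0\}$ with $b_k \precsim b_{k-1}$ and $b_k \precsim c_k$; then $b = b_n$ satisfies $b \precsim c_j$ for every $j$ by transitivity of $\precsim$. Thus everything reduces to the claim that in a simple \ca, for nonzero $e, f \in D_{+}$ there is a nonzero $c \in D_{+}$ with $c \precsim e$ and $c \precsim f$.

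This two-element claim is the crux, and it is where I expect the real work to lie, since it is the only place simplicity of $D$ is used. I would first argue $e D f \neq \{0\}$: if $e D f = \{0\}$ then $D e D f = \{0\}$, hence $\overline{D e D}\, f = \{0\}$, and because $D$ is simple the nonzero ideal $\overline{D e D}$ equals $D$, forcing $f = 0$, a contradiction. So I may choose $d \in D$ with $x := e d f \neq 0$. Then $x^* x = f d^* e^2 d f \in \overline{f D f}$ and $x x^* = e d f^2 d^* e \in \overline{e D e}$, and since any positive element of a hereditary subalgebra is Cuntz subequivalent to its generator, $x^* x \precsim f$ and $x x^* \precsim e$; combined with $x^* x \sim x x^*$ this gives $c := x^* x \neq 0$ with $c \precsim f$ and $c \precsim e$. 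The only points demanding care are the exact form of the perturbation lemma and the ideal/orthogonality computation establishing $e D f \neq \{0\}$; granting these, assembling the finite cover with the iterated common-lower-bound construction yields the desired $b$ and shows $S$ is pseudoperiodic.
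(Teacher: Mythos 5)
Your proof is correct and follows essentially the same route as the paper: cover the compact set $S \cup \{\id_D\}$ by finitely many sets on which $\gm \mapsto \gm(a)$ varies by less than $\ep$, apply the R{\o}rdam perturbation lemma, and take a common nonzero Cuntz lower bound for the finitely many elements $(\bt_j(a) - \ep)_{+}$. The only difference is that the paper simply cites Lemma~2.6 of \cite{PhLg} for the existence of that common lower bound, whereas you prove it from scratch via the two-element case $x = e d f$, $x^* x \precsim f$, $x x^* \precsim e$ --- which is exactly the standard argument behind that lemma.
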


\begin{proof}
Let $a \in D_{+} \setminus \{ 0 \}$.
\Wolog{} $\| a \| = 1$.
For $\bt \in S \cup \{ \id_D \}$ set
\[
U_{\bt}
 = \left\{ \af \in S \cup \{ \id_D \} \colon
    \| \af (a) - \bt (a) \| < \frac{1}{2} \right\}.
\]
By compactness,
there are $\bt_1, \bt_2, \ldots, \bt_n \in S \cup \{ \id_D \}$
such that $U_{\bt_1}, U_{\bt_2}, \ldots, U_{\bt_n}$
cover $S \cup \{ \id_D \}$.
Since $\bigl( \bt (a) - \frac{1}{2} \bigr)_{+} \neq 0$
for all $\bt \in \Aut (D)$,
by Lemma~2.6 of~\cite{PhLg} there is $b \in D_{+} \setminus \{ 0 \}$
such that $b \precsim \bigl( \bt_k (a) - \frac{1}{2} \bigr)_{+}$
for $j = 1, 2, \ldots, n$.
Let $\af \in S \cup \{ \id_D \}$.
Choose $k \in \{ 1, 2, \ldots, n \}$ such that $\af \in U_{\bt_k}$.
Then $\| \bt_k (a) - \af (a) \| < \frac{1}{2}$,
so
$b \precsim \bigl( \bt_k (a) - \frac{1}{2} \bigr)_{+}
   \precsim \af (a)$.
\end{proof}

The following result will not be used,
since large subalgebras are not used in our proofs
when $D$ is purely infinite.
It is included as a further example of pseudoperiodicity.

\begin{lem}\label{L_9212_PISCA}
Let $D$ be a purely infinite simple \ca.
Then $\Aut (D)$
is pseudoperiodic in the sense of Definition~\ref{D_4Y12_PsPer}.
\end{lem}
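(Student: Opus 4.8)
The plan is to reduce the whole statement to the comparison theory of purely infinite simple \ca{s}. The one fact I would invoke is the standard characterization: if $D$ is purely infinite and simple, then any nonzero positive element of~$D$ is Cuntz subequivalent to any other nonzero positive element. Equivalently, $W (D)$ consists only of~$0$ and a single nonzero absorbing class, so that $c \precsim d$ for all $c, d \in D_{+} \SM \{ 0 \}$. (This can be cited directly, or derived from the definition that every nonzero \hsa{} contains an infinite \pj{} together with Cuntz comparison of projections.)

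Granting this, the verification of \Def{D_4Y12_PsPer} is immediate, and this is the whole point of including the lemma as an easy example. First I would let $a \in D_{+} \SM \{ 0 \}$ be given and simply take $b = a$, which is a nonzero positive element. Then, for any $\af \in \Aut (D) \cup \{ \id_D \}$, the element $\af (a)$ is again a nonzero positive element of~$D$, since $\af$ is an \am. Applying the comparison property above with $c = a$ and $d = \af (a)$ yields $b = a \precsim \af (a)$. As $\af$ was arbitrary, the single element~$b$ works for every $\af \in \Aut (D) \cup \{ \id_D \}$, which is exactly pseudoperiodicity of $\Aut (D)$.

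There is no genuine obstacle here: all the content sits in the comparison property of purely infinite simple \ca{s}, and once that is in hand the argument is a one-line application. The only subtlety worth flagging is that \Def{D_4Y12_PsPer} demands a \emph{single} choice of~$b$ that serves for all automorphisms simultaneously; this is precisely where the absorbing-class property does more work than the weaker compactness argument used in \Lem{Lem1_190201D}, since it lets us take $b = a$ itself rather than passing to a finite subcover and shrinking. The hard part, to the extent there is one, is merely choosing how explicitly to justify the comparison property versus citing it.
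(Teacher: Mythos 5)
Your proposal is correct and follows exactly the paper's argument: the paper likewise takes $b = a$ and invokes the fact that in a purely infinite simple \ca{} every nonzero positive element is Cuntz subequivalent to every other, so $a \precsim \af (a)$ for all $\af \in \Aut (D)$. Nothing is missing.
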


\begin{proof}
Let $a \in D_{+} \setminus \{ 0 \}$.
Then $a \precsim b$ for all $b \in D_{+} \setminus \{ 0 \}$.
In particular,
$a \precsim \af (a)$ for all $\af \in \Aut (D)$.
\end{proof}

\begin{lem}\label{L_4Y12_TrPrsv}
Let $D$ be a simple \uca{}
which has strict comparison of positive elements.
Then $\Aut (D)$
is pseudoperiodic in the sense of Definition~\ref{D_4Y12_PsPer}.
\end{lem}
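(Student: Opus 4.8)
The plan is to deduce pseudoperiodicity from strict comparison by obtaining a lower bound on the dimension functions of $\af (a)$ that is \emph{uniform} over all automorphisms~$\af$, exploiting the fact that automorphisms merely permute the quasitraces of~$D$. I fix $a \in D_{+} \SM \{ 0 \}$, and may assume $\| a \| = 1$. If $\QT (D) = \E$, then for every $\af \in \Aut (D)$ the hypothesis of strict comparison holds vacuously, so $a \precsim \af (a)$ and $b = a$ works (this is the purely infinite situation); thus I assume henceforth that $\QT (D) \neq \E$.

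For $\ta \in \QT (D)$ and $\af \in \Aut (D)$, the composition $\ta \circ \af$ is again a normalized quasitrace, and since $\af$ commutes with continuous functional calculus one checks that $d_{\ta} (\af (a)) = d_{\ta \circ \af} (a)$ for the associated dimension functions. I set $c = \inf \bset{ d_{\sm} (a) \colon \sm \in \QT (D) }$. Because $D$ is simple, every nonzero quasitrace is faithful, so $d_{\sm} (a) > 0$ for every~$\sm$; and since $\sm \mapsto d_{\sm} (a)$ is lower semicontinuous on the compact space $\QT (D)$, its infimum is attained and therefore $c > 0$. As $\ta \circ \af$ ranges over a subset of $\QT (D)$, this yields the uniform bound $d_{\ta} (\af (a)) = d_{\ta \circ \af} (a) \geq c$ for all $\ta \in \QT (D)$ and all $\af \in \Aut (D) \cup \{ \id_D \}$.

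It then remains to find $b \in D_{+} \SM \{ 0 \}$ with $\sup_{\ta} d_{\ta} (b) < c$; strict comparison will give $b \precsim \af (a)$ for every $\af \in \Aut (D) \cup \{ \id_D \}$, as required. If $D$ is finite dimensional, then $D \cong M_n$, every automorphism is inner, $\af (a)$ is unitarily equivalent to~$a$, and $b = a$ suffices. Otherwise $D$ is infinite dimensional; using the standard fact that a non-elementary simple \ca{} contains, for each $N \in \N$, mutually orthogonal pairwise Cuntz equivalent elements $b_1, b_2, \ldots, b_N \in D_{+} \SM \{ 0 \}$, I choose $N > 1 / c$ and obtain, for every $\ta \in \QT (D)$,
\[
N \, d_{\ta} (b_1)
  = \sum_{k = 1}^N d_{\ta} (b_k)
  = d_{\ta} \bigl( \ts{\sum_{k=1}^N} b_k \bigr)
  \leq d_{\ta} (1) = 1 < N c,
\]
so $d_{\ta} (b_1) < c$, and I take $b = b_1$.

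The routine inputs are that automorphisms preserve $\QT (D)$ and intertwine dimension functions as above, that a lower semicontinuous function attains its infimum on the compact space $\QT (D)$, and that faithfulness of quasitraces on a simple \ca{} forces $c > 0$. The main point to pin down is the production of the small element~$b$: the dimension-function argument cannot succeed when $D \cong M_n$ and $a$ has rank one, since then no strictly smaller nonzero element exists. This is exactly why the finite dimensional case must be handled separately through inner automorphisms, while in the infinite dimensional case one must invoke the halving/divisibility property of non-elementary simple \ca{s} to manufacture arbitrarily ``small'' nonzero positive elements.
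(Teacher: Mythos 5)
Your proposal is correct and follows essentially the same route as the paper's proof: both exploit that $\ta \circ \af \in \operatorname{QT}(D)$ to get a lower bound on $d_{\ta}(\af(a))$ that is uniform over all $\af$ and~$\ta$, then produce a nonzero $b$ with all dimension functions below that bound and invoke strict comparison. The only cosmetic differences are that the paper takes the uniform bound to be $\inf_{\ta} \ta(a)$ (using $\ta(a) \leq d_{\ta}(a)$ and weak-* continuity of $\ta \mapsto \ta(a)$, avoiding your lower semicontinuity argument) and cites Corollary~2.5 of~\cite{PhLg} for the existence of the small element~$b$ rather than rederiving it from orthogonal Cuntz-equivalent elements as you do.
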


\begin{proof}
If $D$ is \fd,
the conclusion is immediate.
Otherwise,
let $a \in D_{+} \setminus \{ 0 \}$.
\Wolog{} $\| a \| \leq 1$.
Then $\ta (a) \leq d_{\ta} (a)$
for all $\ta \in {\operatorname{QT}} (D)$.
Moreover,
since ${\operatorname{QT}} (D)$ is compact and $D$ is simple,
the number
$\dt = \inf_{\ta \in {\operatorname{QT}} (D)} \ta (a)$
satisfies $\dt > 0$.
Use Corollary~2.5 of~\cite{PhLg}
to find
$b \in D_{+} \setminus \{ 0 \}$
such that
$d_{\ta} (\langle b \rangle ) < \dt$
for all $\ta \in {\operatorname{QT}} (D)$.
Then
for every $\ta \in {\operatorname{QT}} (D)$,
using $\ta \circ \af \in {\operatorname{QT}} (D)$
at the second step,
we have
\[
d_{\ta} (b)
 < \dt
 \leq (\ta \circ \af) (a)
 \leq d_{\ta} ( \af (a) ).
\]
The strict comparison hypothesis therefore implies
that $b \precsim_A \af (a)$.
\end{proof}

\begin{dfn}\label{Def2-181221D}
Let $A$ be a \ca.
We say that the
{\emph{order on projections over $A$
is determined by quasitraces}}
if whenever $p,q \in M_{\infty} (A)$ are projections such that
$\tau (p) < \tau(q) $
for all $\tau \in \operatorname{QT} (A)$, then $p \precsim q$.
\end{dfn}

\begin{lem}\label{L_5417_SP}
Let $D$ be a simple \uca{}
with Property~(SP)
and such that
the order on projections over~$A$ is determined by quasitraces.
Then $\Aut (D)$
is pseudoperiodic in the sense of Definition~\ref{D_4Y12_PsPer}.
\end{lem}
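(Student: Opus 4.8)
The plan is to reduce everything to a comparison between \emph{two projections}, so that the hypothesis that the order on projections over $D$ is determined by quasitraces (Definition~\ref{Def2-181221D}) can be applied; recall that strict comparison of arbitrary positive elements is \emph{not} available here. First I would dispose of the finite-dimensional case separately: a simple unital finite-dimensional $D$ is a matrix algebra, all of whose automorphisms are inner, so $\af (a) \sim_D a$ for every $\af \in \Aut (D)$ and one may simply take $b = a$. So from now on assume $D$ is infinite-dimensional.

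Given $a \in D_{+} \setminus \{ 0 \}$, I would first use Property~(SP) to choose a nonzero projection $p$ in the hereditary subalgebra $\overline{a D a}$, so that $p \precsim_D a$. Since $D$ is simple, every quasitrace is faithful, so $\tau (p) > 0$ for each $\tau \in \operatorname{QT} (D)$; as $\operatorname{QT} (D)$ is compact and $\tau \mapsto \tau (p)$ is continuous, the number $\delta := \inf_{\tau \in \operatorname{QT} (D)} \tau (p)$ is strictly positive. The key observation is that $\operatorname{QT} (D)$ is invariant, as a set, under precomposition with any automorphism (using that automorphisms are unital), so that $\tau ( \af (p) ) = (\tau \circ \af) (p) \geq \delta$ for all $\tau \in \operatorname{QT}(D)$ and all $\af \in \Aut (D)$. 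This is what converts the ``for all $\af$'' into a single uniform lower bound.

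Next I would manufacture a uniformly small projection. Applying Corollary~2.5 of~\cite{PhLg} (valid because $D$ is simple and infinite-dimensional) gives $c \in D_{+} \setminus \{ 0 \}$ with $d_\tau (c) < \delta$ for all $\tau \in \operatorname{QT} (D)$, and then Property~(SP) applied to $\overline{c D c}$ yields a nonzero projection $q \precsim_D c$. Since $q$ is a projection, $\tau (q) = d_\tau (q) \leq d_\tau (c) < \delta$ for every $\tau \in \operatorname{QT} (D)$. This $q$ is the candidate witness $b$ in Definition~\ref{D_4Y12_PsPer}.

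Finally, for each $\af \in \Aut (D)$ the two projections $q$ and $\af (p)$ satisfy $\tau (q) < \delta \leq \tau ( \af (p) )$ for all $\tau \in \operatorname{QT} (D)$, so the hypothesis on projections gives $q \precsim_D \af (p)$; combining this with $\af (p) \precsim_D \af (a)$ (apply $\af$ to $p \precsim_D a$) yields $q \precsim_D \af (a)$, which is exactly what is required (the case $\af = \id_D$ is included). The main obstacle is precisely this opening reduction: because only projection comparison is at hand, one cannot test a generic small positive element directly against $\af (a)$, and must instead produce both a projection $p$ beneath $a$ and a strictly smaller projection $q$, then lean on the automorphism-invariance of $\operatorname{QT} (D)$ to make the estimate uniform in $\af$.
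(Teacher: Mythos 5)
Your proof is correct and follows essentially the same strategy as the paper's: extract a projection $p \in \overline{a D a}$ via Property~(SP), get the uniform lower bound $\dt = \inf_{\ta \in \operatorname{QT}(D)} \ta (p) > 0$ using $\ta \circ \af \in \operatorname{QT}(D)$, manufacture a projection $q$ with $\ta (q) < \dt$ for all~$\ta$, and apply the hypothesis on the order on projections. The only difference is in how $q$ is produced: you invoke Corollary~2.5 of~\cite{PhLg} to get a positive element $c$ with $d_\ta (c) < \dt$ and then take a projection in $\overline{c D c}$, whereas the paper uses Lemma~2.3 of~\cite{PhLg} to obtain $n + 1$ pairwise orthogonal unitary conjugates of a positive element (with $\tfrac{1}{n} < \dt$) and takes a projection in the hereditary subalgebra of one of them, so that $\ta (q) \leq \tfrac{1}{n + 1} < \dt$; both routes are valid.
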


\begin{proof}
If $D$ is \fd,
the conclusion is immediate.
Otherwise,
let $a \in D_{+} \setminus \{ 0 \}$.
Choose a \nzp{} $p \in {\overline{a D a}}$.
Since ${\operatorname{QT}} (D)$ is compact and $D$ is simple,
the number
$\dt = \inf_{\ta \in {\operatorname{QT}} (D)} \ta (p)$
satisfies $\dt > 0$.
Choose $n \in \N$
such that $\frac{1}{n} < \dt$.
Use Lemma~2.3 of~\cite{PhLg}
to choose a unitary $u \in A$
and a nonzero positive element $b \in A$
such that the elements
\[
b, \, u b u^{-1}, \, u^2 b u^{-2}, \, \ldots, \, u^n b u^{-n}
\]
are pairwise orthogonal.
Choose a \nzp{} $q \in {\overline{b D b}}$.
Then
for every $\ta \in {\operatorname{QT}} (D)$,
using $\ta \circ \af \in {\operatorname{QT}} (D)$
at the third step,
we have
\[
\ta (q)
 \leq \frac{1}{n + 1}
 < \dt
 \leq (\ta \circ \af) (p).
\]
The strict comparison hypothesis therefore implies
that $q$ is \mvnt{} to a subprojection of~$\af (p)$.
It follows that $q \precsim_A \af (a)$.
\end{proof}

The following definition is intended only for convenience
in this paper.
(The condition occurs several times as a hypothesis,
and is awkward to state.)

\begin{dfn}\label{D_4Y16_PsPrGen}
Let $X$ be a locally \chs,
let $G$ be a topological group,
and let $D$ be a \ca.
Let $\af \colon G \to \Aut (C_0 (X, D))$
be an action of $G$ on $C_0 (X, D)$
which lies over an action of $G$ on~$X$,
and let $(g, x) \mapsto \af_{g, x} \in \Aut (D)$
be as in Definition~\ref{D_4Y12_OverX}.
We say that $\af$ is
{\emph{pseudoperiodically generated}}
if
\[
\big\{ \af_{g, x} \colon {\mbox{$g \in G$ and $x \in X$}} \big\}
\]
is pseudoperiodic in the sense of Definition~\ref{D_4Y12_PsPer}.
\end{dfn}

\begin{lem}\label{L_4Y16_WhenPPG}
Let $X$ be a \cms, let $h \colon X \to X$ be a \hme,
let $D$ be a simple \uca,
and let $\af \in \Aut (C (X, D))$ lie over~$h$.
As in Lemma~\ref{L_4Y12_GetAuto},
let $( \af_{x} )_{x \in X}$
be the family in $\Aut (D)$
such that $\af (a) (x) = \af_x (a (h^{-1} (x))$
for all $a \in C_0 (X, D)$ and $x \in X$.
Suppose that the subgroup $H$ of $\Aut (D)$ generated by
$\{ \af_x \colon x \in X \}$ is pseudoperiodic.
Then the action of $\Z$ generated by $\af$
is pseudoperiodically generated.
\end{lem}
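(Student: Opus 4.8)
The plan is to reduce the statement directly to the hypothesis by identifying, for each $n \in \Z$ and $x \in X$, the automorphism $\af_{n, x} \in \Aut (D)$ of Definition~\ref{D_4Y12_OverX} attached to the $\Z$-action generated by~$\af$, and showing that it lies in the subgroup~$H$. By Definition~\ref{D_4Y16_PsPrGen}, what must be proved is that the set $S = \{ \af_{n, x} \colon n \in \Z, \, x \in X \}$ is pseudoperiodic, and pseudoperiodicity is plainly inherited by subsets: a single element $b \in D_{+} \SM \{ 0 \}$ witnessing $b \precsim \bt (a)$ for \emph{all} $\bt$ in a larger set works \emph{a fortiori} for every $\bt$ in a subset. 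Since $\id_D \in H$ and (as I will check) $S \S H$, the pseudoperiodicity of $H$ will therefore give the pseudoperiodicity of~$S$, which is exactly the conclusion.

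First I would record the cocycle identity relating the $\af_{n, x}$. Writing $\af_n$ for the $n$-th power of the generating automorphism and using $\af_n (a) (x) = \af_{n, x} \bigl( a ( h^{-n} (x)) \bigr)$ together with $\af_{m + n} = \af_m \circ \af_n$, a direct computation gives
\[
\af_{m + n, \, x} = \af_{m, x} \circ \af_{n, \, h^{-m} (x)},
\qquad m, n \in \Z, \ x \in X,
\]
with $\af_{0, x} = \id_D$ and, by Lemma~\ref{L_4Y12_GetAuto}, $\af_{1, x} = \af_x$.

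Next, from this identity I would prove by induction on $n \geq 0$ that
\[
\af_{n, x} = \af_x \circ \af_{h^{-1} (x)} \circ \cdots \circ \af_{h^{-(n - 1)} (x)},
\]
which is a finite product of generators of~$H$ and hence lies in~$H$. For negative indices, setting $m = -n$ in the cocycle identity and using the positive case already proved shows $\af_{-n, x} = \bigl( \af_{n, \, h^{n} (x)} \bigr)^{-1}$, again an element of~$H$ because $H$ is a subgroup. Thus $S \S H$, and the reduction above finishes the proof. The only real work is the bookkeeping in the cocycle computation, in particular correctly tracking the shifts $h^{-m} (x)$ and handling the negative powers; there is no analytic difficulty, since all the Cuntz-comparison content has been packaged into the hypothesis that $H$ is pseudoperiodic.
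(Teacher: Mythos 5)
Your proof is correct and follows exactly the route the paper takes: the paper's entire proof is the one-line assertion that $\af_{n, x} \in H$ for all $n \in \Z$ and $x \in X$, which you verify via the cocycle identity and the observation that pseudoperiodicity passes to subsets. Your write-up simply supplies the bookkeeping the paper leaves to the reader.
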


\begin{proof}
One checks that if $(n, x) \mapsto \af_{n, x} \in \Aut (D)$
is determined
(following the notation of Definition~\ref{D_4Y12_OverX})
by $\af^n (a) (x) = \af_{n, x} (a (h^{-n} (x))$,
then $\af_{n, x} \in H$ for all $n \in \Z$ and $x \in X$.
\end{proof}

\section{The orbit breaking subalgebra for a nonempty set meeting
each orbit at most once}\label{Sec_OrbBreak}

\indent
Let $h \colon X \to X$ be a \hme{}
of a \chs~$X$,
and let $D$ be a simple \uca.
For $Y \subset X$ closed,
following Putnam~\cite{Pt1},
in Definition~7.3 of~\cite{PhLg}
we defined the $Y$-orbit breaking subalgebra
$C^* (\Z, X, h)_Y \subset C^* (\Z, X, h)$.
Here,
for an automorphism $\af \in \Aut ( C (X, D) )$
which lies over~$h$
we define $C^* (\Z,  C (X, D), \af)_Y$.
We prove that if $X$ is infinite,
$h$~is minimal,
$Y$ intersects each orbit at most once,
and an additional technical condition is satisfied
(namely, that the action of $\Z$ generated by~$\af$
is pseudoperiodically generated),
then $C^* (\Z,  C (X, D), \af)_Y$
is a large subalgebra of $C^* (\Z,  C (X, D), \af)$
of crossed product type,
in the sense of Definition~4.9 of~\cite{PhLg}.
This is a generalization
of Theorem~7.10 of~\cite{PhLg}.

\begin{ntn}\label{N_4Y12_CPN}
Let $G$ be a discrete group,
let $A$ be a \ca,
and let $\af \colon G \to \Aut (A)$ be an action of $G$ on~$A$.
We identify $A$ with a subalgebra of $C^*_{\mathrm{r}} (G, A, \af)$
in the standard way.
We let $u_g \in M (C^*_{\mathrm{r}} (G, A, \af))$
be the standard unitary
corresponding to $g \in G$.
When $G = \Z$, we write just $u$ for the unitary~$u_1$
corresponding to the generator $1 \in \Z$.
We let $A [G]$ denote the dense *-subalgebra
of $C^*_{\mathrm{r}} (G, A, \af)$
consisting of sums $\sum_{g \in S} a_g u_g$
with $S \subset G$ finite and $a_g \in A$ for $g \in S$.
We may always assume $1 \in S$.
We let $E_{\af} \colon C^*_{\mathrm{r}} (G, A, \af) \to A$
denote the standard conditional expectation,
defined on $A [G]$ by
$E_{\af} \left( \sum_{g \in S} a_g u_g \right) = a_1$.
When $\af$ is understood, we just write~$E$.

When $G$ acts on a \chs~$X$,
we use obvious analogs of this notation for $C^*_{\mathrm{r}} (G, X)$,
with the action as in Notation~\ref{N_5418_ActCX}.
In particular,
if $G = \Z$ and the action of generated by a \hme{}
$h \colon X \to X$,
we have $u f u^* = f \circ h^{-1}$.
\end{ntn}

\begin{ntn}\label{N_4Y12_C0UD}
For a locally \chs~$X$
and a \ca~$D$,
we identify $C_0 (X, D) = C_0 (X) \otimes D$
in the standard way.
For an open subset $U \subset X$,
we use the abbreviation
\[
C_0 (U, D)
 = \big\{ a \in C_0 (X, D) \colon
     {\mbox{$a (x) = 0$ for all $x \in X \setminus U$}} \big\}
 \subset C_0 (X, D).
\]
This subalgebra is of course canonically isomorphic to
the usual algebra $C_0 (U, D)$ when $U$ is considered
as a locally \chs{} in its own right.
\end{ntn}

In particular,
if $Y \subset X$ is closed, then
\begin{equation}\label{Eq_4Y12_C0XYD}
C_0 (X \setminus Y, \, D)
 = \big\{ a \in C_0 (X, D) \colon
  {\mbox{$a (x) = 0$ for all $x \in Y$}} \big\}.
\end{equation}

The following definition is the analog of
Definition~7.3 of~\cite{PhLg}.

\begin{dfn}\label{D_4Y12_OrbSubalg}
Let $X$ be a locally \chs,
let $h \colon X \to X$ be a \hme,
let $D$ be a \ca,
and let $\af \in \Aut ( C_0 (X, D))$
be an automorphism which lies over~$h$.
Let $Y \subset X$ be a nonempty closed subset,
and, following~(\ref{Eq_4Y12_C0XYD}), define
\[
C^* \big( \Z, \, C_0 (X, D), \, \af \big)_Y
 = C^* \big(  C_0 (X, D), \, C_0 (X \setminus Y, \, D) u \big)
  \subset C^* \big( \Z, \, C_0 (X, D), \, \af \big).
\]
We call it the {\emph{$Y$-orbit breaking subalgebra}}
of $C^* \big( \Z,  C_0 (X, D), \af \big)$.
\end{dfn}

We show that if $Y$ intersects each orbit of~$h$ at most once,
and the action of $\Z$ generated by~$\af$
is pseudoperiodically generated,
then $C^* (\Z,  C_0 (X, D), \af)_Y$
is a large subalgebra of $C^* \big( \Z, \, C_0 (X, D), \, \af \big)$
of crossed product type.

The following lemma is the analog of Proposition~7.5 of~\cite{PhLg}.

\begin{lem}\label{L_4X21_AYStruct}
Let $X$ be a \chs, let $h \colon X \to X$ be a \hme,
let $D$ be a \uca,
and let $\af \in \Aut (C (X, D))$ lie over~$h$.
Let
\[
u \in C^* \bigl( \Z, \, C (X, D), \, \af \bigr)
\andeqn
E_{\af} \colon C^* \bigl( \Z, \, C (X, D), \, \af \bigr) \to C (X, D)
\]
be as in Notation~\ref{N_4Y12_CPN}.
Let $Y \subset X$ be a nonempty closed subset.
For $n \in \Z$, set
\[
Y_n = \begin{cases}
   \bigcup_{j = 0}^{n - 1} h^j (Y)    & \hspace{3em} n > 0
        \\
   \varnothing                        & \hspace{3em} n = 0
       \\
   \bigcup_{j = 1}^{- n} h^{-j} (Y)   & \hspace{3em} n < 0.
\end{cases}
\]
Then
\begin{align}\label{Eq:2816CharOB}
& C^* \bigl( \Z, \, C (X, D), \, \af \bigr)_Y
\\
& = \big\{ a \in C^* \bigl( \Z, \, C (X, D), \, \af \bigr) \colon
    {\mbox{$E_{\af} (a u^{-n}) \in C_0 (X \setminus Y_n, \, D)$
          for all $n \in \Z$}} \big\}
       \notag
\end{align}
and
\begin{equation}\label{Eq:2816Dense}
{\overline{C^* \bigl( \Z, \, C (X, D), \, \af \bigr)_Y \cap
C (X, D) [\Z]}}
 = C^* \bigl( \Z, \, C (X, D), \, \af \bigr)_Y.
\end{equation}
\end{lem}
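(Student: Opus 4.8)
The plan is to set $A = C^*(\Z, C(X,D), \af)$, write $A_Y$ for the orbit breaking subalgebra, let $B$ denote the right-hand side of~(\ref{Eq:2816CharOB}), and abbreviate $a_n = E_\af(a u^{-n})$ for the $n$-th Fourier coefficient of $a \in A$. The whole argument rests on a combinatorial identity for the sets $Y_n$: for all $m, k \in \Z$ one has $Y_{m+k} \S Y_m \cup h^m(Y_k)$, together with $h^n(Y_{-n}) = Y_n$ and the recursion $Y_n = Y \cup h(Y_{n-1})$ for $n \geq 1$. I would verify these first by a routine case analysis on the signs of $m$ and $k$, reducing each union of translates $h^j(Y)$ to a single interval of exponents. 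These are exactly the set-level facts that let the coefficient conditions survive the three algebraic operations.

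Next I would show $B$ is a C*-subalgebra. Linearity and norm-closedness are immediate, since $a \mapsto a_n$ is norm-contractive and $C_0(X \SM Y_n, D)$ is closed. For the involution, $(a^*)_n = \af^n((a_{-n})^*)$, and since $\af^m$ carries $C_0(U,D)$ onto $C_0(h^m(U),D)$ while $h^n(Y_{-n}) = Y_n$, the defining condition is preserved. The dual circle action $\gamma$ scales $a_n$ by $z^n$, so $B$ is $\gamma$-invariant; convolving with the Fej\'er kernel then shows that the Ces\`aro means $\sigma_N(a) = \sum_{|n| \le N} (1 - |n|/(N+1)) \, a_n u^n$ lie in $B$ and converge to $a$ in norm, so $B \cap C(X,D)[\Z]$ is dense in $B$. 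Closure under multiplication I would check first on $B \cap C(X,D)[\Z]$, where products are finite sums: the $u^n$-coefficient is $\sum_m a_m \af^m(b_{n-m})$, each term vanishing on $Y_m \cup h^m(Y_{n-m}) \supseteq Y_n$; joint continuity of multiplication and the density just established then give closure on all of $B$.

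To finish~(\ref{Eq:2816CharOB}) I would prove the two inclusions. For $A_Y \S B$ it suffices, since $B$ is now a closed subalgebra, that the generators lie in $B$: elements of $C(X,D)$ have only a $u^0$-coefficient and $Y_0 = \E$, while $cu$ with $c \in C_0(X \SM Y, D)$ has only a $u^1$-coefficient, lying in $C_0(X \SM Y_1, D) = C_0(X \SM Y, D)$. For $B \S A_Y$ the key step is that $C_0(X \SM Y_n, D) u^n \S A_Y$ for every $n$. I would prove this by induction for $n \geq 1$: using $Y_n = Y \cup h(Y_{n-1})$ and $\af(C_0(X \SM Y_{n-1}, D)) = C_0(X \SM h(Y_{n-1}), D)$, together with the fact that $C_0(U,D)\,C_0(V,D)$ has closed linear span $C_0(U \cap V, D)$, one obtains $C_0(X \SM Y_n, D) u^n = \ov{\spn}\{(cu)(b u^{n-1})\} \S A_Y$; the cases $n \le 0$ follow from $n = 0$ (where $Y_0 = \E$) and by taking adjoints. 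Given this, each $\sigma_N(a)$ with $a \in B$ is a finite sum of elements of $A_Y$, so $a = \lim \sigma_N(a) \in A_Y$.

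Finally,~(\ref{Eq:2816Dense}) is immediate from the machinery already in place: since $A_Y = B$ and $B \cap C(X,D)[\Z] = A_Y \cap C(X,D)[\Z]$ is dense in $B$ by the Fej\'er-mean argument, we are done. I expect the genuine obstacle to be the inclusion $B \S A_Y$, specifically the inductive identification $C_0(X \SM Y_n, D) u^n \S A_Y$: this is the only place where the recursive orbit breaking structure of the $Y_n$ is used essentially, and it is where one must combine the factorization of the $Y_n$ with the product formula for the ideals $C_0(U,D)$. The reliance on norm convergence of the Fej\'er means---valid because the dual circle action is strongly continuous, so convolution with the Fej\'er kernel acts as an approximate identity---is the other point to handle with care, while the combinatorial identity for the $Y_n$, though the true engine, is routine.
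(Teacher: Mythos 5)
Your proposal is correct, and its overall skeleton coincides with the paper's: define $B$ as the right-hand side of~(\ref{Eq:2816CharOB}), prove the set identities $Y_{m+n} \subset Y_m \cup h^m (Y_n)$ and $h^{-n} (Y_n) = Y_{-n}$, use Ces\`{a}ro means to show $B_0 = B \cap C (X, D) [\Z]$ is dense in~$B$, check that $B_0$ is a *-algebra via the Fourier coefficient formulas for adjoints and products, and reduce everything to the single claim that $C_0 (X \setminus Y_n, \, D) u^n$ is contained in the orbit breaking subalgebra. Where you genuinely diverge is in the proof of that claim, which is exactly the step the paper identifies as requiring substantial changes from the commutative case. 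The paper fixes an individual $f \in C_0 (X \setminus Y_n, \, D)$, uses the polar decomposition $f = s (f^* f)^{1/2}$ together with Proposition~1.3 of Cuntz to produce $a_0 = s (f^* f)^{1/(2n)} \in C_0 (X \setminus Y_n, \, D)$, and then builds elements $a_k$ with $\af (a_{k+1}) = a_k$ so that $f u^n$ is written exactly as the product $(a_0 u) (a_1 u) \cdots (a_{n-1} u)$ of generators. You instead argue at the level of ideals: by induction on $n$, using $Y_n = Y \cup h (Y_{n-1})$, the identity $\af \bigl( C_0 (U, D) \bigr) = C_0 (h (U), D)$, and the standard fact that for ideals $I, J$ of a C*-algebra the closed linear span of $I J$ is $I \cap J$, you get
\[
\overline{\operatorname{span}} \, \bigl\{ (c u) (b u^{n-1}) \colon
 c \in C_0 (X \setminus Y, \, D), \
 b \in C_0 (X \setminus Y_{n-1}, \, D) \bigr\}
 = C_0 (X \setminus Y_n, \, D) u^n,
\]
and the negative case by taking adjoints and using $h^{-n} (Y_n) = Y_{-n}$, just as in the paper. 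Your route is shorter and avoids working in the bidual, at the cost of producing the inclusion only up to closed linear span rather than an exact factorization of each element $f u^n$; since only the inclusion is needed for the lemma, nothing is lost. Both arguments are valid, and the remaining steps (the inclusion $A_Y \subset B$ via the generators, and the density statement~(\ref{Eq:2816Dense})) are handled the same way in both.
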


\begin{proof}
By Lemma~\ref{L_4Y12_GetAuto},
there exists a function $x \mapsto \af_{x}$
from $X$ to $\Aut (D)$
such that $x \mapsto \af_{x} (d)$ is \ct{} for all $d \in D$
and which satisfies $\af (a) (x) = \af_x (a (h^{-1} (x))$
for all $a \in C_0 (X, D)$ and $x \in X$.

Most of the proof of Proposition~7.5 of~\cite{PhLg}
goes through with only the obvious changes.
In analogy with that proof,
define
\[
B = \big\{ a \in C^* \bigl( \Z, \, C (X, D), \, \af \bigr) \colon
    {\mbox{$E_{\af} (a u^{-n}) \in C_0 (X \setminus Y_n, \, D)$
          for all $n \in \Z$}} \big\}
\]
and
\[
B_0 = B \cap C (X) [\Z].
\]
Then $B_0$ is dense in~$B$ by the same reasoning as in~\cite{PhLg}
(using Ces\`{a}ro means and Theorem VIII.2.2 of~\cite{Dv}).

The proof of Proposition~7.5 of~\cite{PhLg}
shows that
when $0 \leq m \leq n$ and also when $0 \geq m \geq n$,
we have $Y_m \subset Y_n$,
that for $n \in \Z$,
we have
\begin{equation}\label{Eq_4Y14_hnyn}
h^{-n} (Y_n) = Y_{-n},
\end{equation}
and that for $m, n \in \Z$,
we have $Y_{m + n} \subset Y_m \cup h^m (Y_n)$.
It then follows,
as in~\cite{PhLg},
that $B_0$ is a *-algebra,
and that
$C^* \bigl( \Z, \, C (X, D), \, \af \bigr)_Y
 \subset {\overline{B_0}} = B$.

We next claim that for all $n \in \Z$
and $f \in C_0 (X \setminus Y_n, \, D)$,
we have $f u^n \in C^* \bigl( \Z, \, C (X, D), \, \af \bigr)_Y$.
The changes to the proof of Proposition~7.5 of~\cite{PhLg}
at this point are more substantial.

For $n = 0$ the claim is trivial.
Let $n > 0$
and let $f \in C_0 (X \setminus Y_n, \, D)$.
Define $b = (f^* f)^{1 / (2 n)}$.
Let $s \in C_0 (X \setminus Y_n, \, D)''$
be the partial isometry in the polar decomposition of~$f$,
so that $f = s (f^* f)^{1 / 2} = s b^n$.
It follows from Proposition~1.3 of~\cite{Cu0}
that the element $a_0 = s b$ is in $C_0 (X \setminus Y_n, \, D)$.
Moreover,
$a_0 (f^* f)^{\frac{1}{2} - \frac{1}{2 n}} = f$.
Define $a_1 \in C (X, D)$
by $a_1 (x) = \af_{h (x)}^{-1} \big( b (h (x)) \big)$
for $x \in X$,
and for $k = 1, 2, \ldots, n - 2$ inductively define
$a_{k + 1} \in C (X, D)$
by $a_{k + 1} (x) = \af_{h (x)}^{-1} \big( a_k (h (x)) \big)$
for $x \in X$.
The definition of $Y_n$
implies that
$a_1, a_2, \ldots, a_{n - 1} \in C_0 (X \setminus Y, \, D)$,
and we already have
$a_0 \in C_0 (X \setminus Y_n, \, D) \S C_0 (X \setminus Y, \, D)$.
Therefore the element
\[
a = (a_0 u) (a_1 u) \cdots (a_{n - 1} u)
\]
is in $C^* \bigl( \Z, \, C (X, D), \, \af \bigr)_Y$.
For $k = 1, 2, \ldots, n - 2$ and $x \in X$, we have
\[
\af (a_{k + 1} ) (x)
 = \af_x (a_{k + 1} (h^{-1} (x))
 = \af_x \big( \af_{x}^{-1} ( a_k (x) ) \big)
 = a_k (x),
\]
so $\af (a_{k + 1} ) = a_k$.
Similarly, $\af (a_1) = b$.
Now
\begin{align*}
a
& = a_0 (u a_1 u^{-1}) (u^2 a_2 u^{-2})
    \cdots \big( u^{n - 1} a_{n - 1} u^{- (n - 1)} \big) u^n
  \\
& = a_0 \af (a_1) \af^2 (a_2)
    \cdots \af^{n - 1} (a_{n - 1}) u^n
  = (s b) b^{n - 1} u^n
  = f u^n.
\end{align*}
So $f u^n \in C^* \bigl( \Z, \, C (X, D), \, \af \bigr)_Y$.
Finally,
suppose $n < 0$,
and let $f \in C_0 (X \setminus Y_n, \, D)$.
It follows from~(\ref{Eq_4Y14_hnyn})
that $f \circ h^n \in C_0 (X \setminus Y_{- n}, \, D)$,
whence also $(f \circ h^n)^* \in C_0 (X \setminus Y_{- n}, \, D)$.
Since $- n > 0$,
we therefore get
\[
f u^n = \big( u^{-n} f^* \big)^*
      = \big( (f \circ h^n)^* u^{- n} \big)^*
      \in C^* \bigl( \Z, \, C (X, D), \, \af \bigr)_Y.
\]
The claim is proved.

It now follows that
$B_0 \subset C^* \bigl( \Z, \, C (X, D), \, \af \bigr)_Y$.
Combining this result with ${\overline{B_0}} = B$
and $C^* \bigl( \Z, \, C (X, D), \, \af \bigr)_Y \subset B$,
we get $C^* \bigl( \Z, \, C (X, D), \, \af \bigr)_Y = B$.
\end{proof}

The following lemma is the analog of Lemma~7.8 of~\cite{PhLg}.

\begin{lem}\label{L_4X21_78}
Let $G$ be a discrete group,
let $X$ be a compact space,
and suppose $G$ acts on $X$ in such a way that
for every finite set $S \subset G \setminus \{ 1 \}$,
the set
\[
\big\{ x \in X \colon {\mbox{$g x \neq x$ for all $g \in S$}} \big\}
\]
is dense in~$X$.
Let $D$ be a \uca,
and let $\af \colon G \to \Aut (C (X, D))$
be an action of $G$ on $C (X, D)$
which lies over the given action of $G$ on $X$
(in the sense of Definition~\ref{D_4Y12_OverX}).
Following Notation~\ref{N_4Y12_CPN},
let
$a \in C (X, D) [G]
 \subset C^*_{\mathrm{r}} \big( G, \, C (X, D), \, \af \big)$
and let $\ep > 0$.
Then there exists $f \in C (X) \subset C (X, D)$
such that
\[
0 \leq f \leq 1,
\qquad
f a^* a f \in C (X, D),
\andeqn
\| f a^* a f \| \geq \| E_{\af} (a^* a) \| - \ep.
\]
\end{lem}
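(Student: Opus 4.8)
The plan is to follow the proof of Lemma~7.8 of~\cite{PhLg}, the only genuinely new ingredient being bookkeeping for the noncommutative fiber $D$. First I would write $a = \sum_{g \in S} a_g u_g$ with $S \S G$ finite (we may take $1 \in S$) and $a_g \in C (X, D)$. Using $u_g b = \af_g (b) u_g$ for $b \in C (X, D)$, I would expand
\[
a^* a \;=\; \sum_{g, h \in S} \af_{g^{-1}} (a_g^* a_h) \, u_{g^{-1} h} ,
\]
so that $E_{\af} (a^* a) = \sum_{g \in S} \af_{g^{-1}} (a_g^* a_g) \in C (X, D)_{+}$, and the only group elements occurring off the diagonal lie in the finite set $S' = \{ g^{-1} h \colon g, h \in S, \ g \neq h \} \S G \SM \{ 1 \}$.

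The central idea is to take $f$ scalar valued, that is $f \in C (X) \S C (X, D)$ (embedded as $f \cdot 1_D$). Two facts make this work. First, $C (X) \cdot 1_D$ is central in $C (X, D)$, so $f$ commutes with every $a_g$. Second, because $\af$ lies over the action on $X$ and each $\af_{k, x}$ fixes $\C \cdot 1_D$, we have $\af_k (f) = f \circ k^{-1}$ for $k \in G$. Writing $c_k \in C (X, D)$ for the coefficient of $u_k$ in $a^* a$, one then gets
\[
f a^* a f \;=\; f^2 E_{\af} (a^* a) \;+\; \sum_{k \in S'} c_k \, f \, (f \circ k^{-1}) \, u_k ,
\]
and the sum vanishes as soon as $f \, (f \circ k^{-1}) = 0$, that is $\supp (f) \cap k \, \supp (f) = \E$, for every $k \in S'$; in that case $f a^* a f = f^2 E_{\af} (a^* a) \in C (X, D)$.

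It remains to produce one such $f$ that also nearly attains the norm. I would apply topological freeness to $S'$: the set $U = \{ x \in X \colon k x \neq x \text{ for all } k \in S' \}$ is dense (and open, since each $\{ x \colon k x \neq x \}$ is the complement of a closed fixed-point set). As $x \mapsto \| E_{\af} (a^* a) (x) \|$ is continuous on the compact space $X$ and attains $\| E_{\af} (a^* a) \|$, the set on which it exceeds $\| E_{\af} (a^* a) \| - \ep$ is nonempty and open, hence meets the dense set $U$; choose $x_0$ in the intersection. Since $k x_0 \neq x_0$ for each $k \in S'$, Hausdorffness together with continuity of the homeomorphisms $x \mapsto k x$ gives, for each $k$, an open $V_k \ni x_0$ with $V_k \cap k V_k = \E$; intersecting the finitely many $V_k$ yields an open $V \ni x_0$ with $V \cap k V = \E$ for all $k \in S'$. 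By Urysohn's lemma I pick $f \in C (X)$ with $0 \leq f \leq 1$, $f (x_0) = 1$, and $\supp (f) \S V$, so that $\supp (f) \cap k \, \supp (f) \S V \cap k V = \E$. Then $f a^* a f = f^2 E_{\af} (a^* a) \in C (X, D)$, and $\| f a^* a f \| = \| f^2 E_{\af} (a^* a) \| \geq f (x_0)^2 \| E_{\af} (a^* a) (x_0) \| = \| E_{\af} (a^* a) (x_0) \| > \| E_{\af} (a^* a) \| - \ep$.

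The one point needing care, and the main obstacle, is arranging a single $f$ to accomplish both jobs at once: it must separate the finitely many translates so that the off-diagonal terms die, while its support still contains a point where $E_{\af} (a^* a)$ is nearly maximal. These requirements are reconciled precisely because the near-maximal locus is open and the $S'$-free locus is dense, so the two sets meet. The noncommutativity of $D$ enters only through the observations that scalar functions remain central in $C (X, D)$ and that each $\af_{k, x}$ fixes scalars, so the commutative argument of~\cite{PhLg} transfers without change to the $D$-valued setting.
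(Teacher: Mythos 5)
Your proof is correct and takes essentially the same route as the paper's, which simply writes $a^*a = \sum_{g \in T} b_g u_g$ and then defers the choice of $V$, $W$, $f$, and $x_0$ to the proof of Lemma~7.8 of~\cite{PhLg}; you have spelled out exactly those deferred details (topological freeness applied to the finite set $S'$ of off-diagonal group elements, shrinking a neighborhood of $x_0$ so that $V \cap k V = \varnothing$, and the observations that $C(X) \cdot 1_D$ is central in $C(X,D)$ and is fixed pointwise by each $\af_{k,x}$). No gaps.
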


\begin{proof}
We follow the proof of Lemma~7.8 of~\cite{PhLg}.
Set $b = a^* a$.
There are a finite set $T \subset G$
with $1 \in T$,
and elements $b_g \in C (X, D)$ for $g \in T$,
such that $b = \sum_{g \in T} b_g u_g$.
If $\| b_1 \| \leq \ep$ take $f = 0$.
Otherwise, define
\[
U = \big\{ x \in X \colon
   \| b_1 (x) \| > \| E (a^* a) \| - \ep \big\},
\]
which is a nonempty open subset of~$X$.
Choose $V$, $W$, $f$, and $x_0$
as in the proof of Lemma~7.8 of~\cite{PhLg}.
Then,
as there, $f b f = f b_1 f$.
Moreover,
\[
\| f b_1 f \|
 \geq f (x_0) \| b_1 (x_0) \| f (x_0)
 = \| b_1 (x_0) \|
 > \| E_{\af} (a^* a) \| - \ep.
\]
This completes the proof.
\end{proof}

The following lemma is the analog of Lemma~7.9 of~\cite{PhLg}.

\begin{lem}\label{L_4X21_79}
Let $G$, $X$, the action of $G$ on $X$,
$D$, and $\af \colon G \to \Aut (C (X, D))$
be as in Lemma~\ref{L_4X21_78}.
Let $B \subset C^*_{\mathrm{r}} \big( G, \, C (X, D), \, \af \big)$
be a unital subalgebra
such that,
following Notation~\ref{N_4Y12_CPN},
\begin{enumerate}
\item\label{L_4X21_79:CXI}
$C (X, D) \subset B$.
\item\label{L_4X21_79:FSD}
$B \cap C (X, D)[G]$ is dense in~$B$.
\end{enumerate}
Let $a \in B_{+} \setminus \{ 0 \}$.
Then there exists $b \in C (X, D)_{+} \setminus \{ 0 \}$
such that $b \precsim_{B} a$.
\end{lem}

\begin{proof}
We follow the proof of Lemma~7.9 of~\cite{PhLg},
using our Lemma~\ref{L_4X21_78}
in place of Lemma~7.8 of~\cite{PhLg},
except that $c \in B \cap C (X, D)[G]$.
The element $(f c^* c f - 2 \ep)_{+}$
we obtain now satisfies
$(f c^* c f - 2 \ep)_{+} \in C (X, D)_{+} \setminus \{ 0 \}$
and $(f c^* c f - 2 \ep)_{+} \precsim_{B} a$.
\end{proof}

In Lemma~\ref{L_4X21_79},
we really want to have $b \in C (X)_{+} \setminus \{ 0 \}$.
When $G = \Z$ and under the pseudoperiodicity hypothesis
of \Def{D_4Y16_PsPrGen}, this is possible.

\begin{lem}\label{L_4Y11_Comp}
Let $X$ be a \cms, let $h \colon X \to X$ be a minimal homeomorphism,
let $D$ be a simple \uca,
and let $\af \in \Aut (C (X, D))$ lie over~$h$.
Assume that the action generated by $\af$
is pseudoperiodically generated.
Let $Y \subset X$ be a compact set
such that $h^n (Y) \cap Y = \varnothing$
for all $n \in \Z \setminus \{ 0 \}$.
Set $B = C^* \bigl( \Z, \, C (X, D), \, \af \bigr)_Y$.
Then for every $a \in C (X, D)_{+} \setminus \{ 0 \}$
there exists $f \in C (X)_{+} \setminus \{ 0 \} \subset B$
such that $f \precsim_B a$.
\end{lem}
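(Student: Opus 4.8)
I want to produce, from a given nonzero positive element $a \in C(X,D)$, a nonzero positive function $f \in C(X)$ with $f \precsim_B a$, where $B$ is the $Y$-orbit breaking subalgebra. The natural strategy is to combine three ingredients: (1) the Cuntz comparison theory over the subalgebra $B$, exploiting that $B$ satisfies hypotheses \eqref{L_4X21_79:CXI} and \eqref{L_4X21_79:FSD} of Lemma~\ref{L_4X21_79} (the first is built into the definition of the orbit breaking subalgebra, and the second is precisely the density statement \eqref{Eq:2816Dense} of Lemma~\ref{L_4X21_AYStruct}); (2) the pseudoperiodicity of the generating set of automorphisms $\{\af_x \colon x \in X\}$, which controls how Cuntz subequivalence in the fibre $D$ behaves under the various $\af_x$; and (3) minimality of $h$ together with $h^n(Y) \cap Y = \varnothing$, so that $Y$ meets each orbit at most once and the orbit breaking subalgebra is genuinely ``spread out.''

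\emph{First step.} I would apply Lemma~\ref{L_4X21_79} directly: since $X$ is compact, $h$ is minimal (so the topological freeness hypothesis of Lemma~\ref{L_4X21_78} holds for the $\Z$-action), $D$ is a unital \ca, and $B$ satisfies conditions~\eqref{L_4X21_79:CXI} and~\eqref{L_4X21_79:FSD}, the lemma produces some $b \in C(X,D)_{+} \setminus \{0\}$ with $b \precsim_B a$. The problem has thus been reduced from an arbitrary positive $a$ to finding $f \in C(X)_{+} \setminus \{0\}$ with $f \precsim_B b$; by transitivity of $\precsim_B$ this will finish the argument. So I may assume from the outset that $a = b$ lies in $C(X,D)$.

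\emph{Second step (the heart).} Now I want to pass from $b \in C(X,D)$ to a scalar-valued function. Fix a point $x_0$ where $b(x_0) \neq 0$; then $b(x_0) \in D_{+} \setminus \{0\}$. The pseudoperiodic-generation hypothesis, unwound through \Lem{L_4Y16_WhenPPG} and \Def{D_4Y12_PsPer}, gives a nonzero $c \in D_{+}$ with $c \precsim \af_{n,x}(b(x_0))$ uniformly over the relevant automorphisms. The idea is to choose a small open neighborhood $U$ of $x_0$ on which $b$ is close to the constant $b(x_0)$, pick a bump function $g \in C(X)_{+}$ supported in $U$, and then transport a rank-one-type element living over $U$ down the orbit using the unitary $u$ and the structure of $B$ established in Lemma~\ref{L_4X21_AYStruct}. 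Because $Y$ meets each orbit at most once, a suitable iterate $h^{-N}(U)$ can be arranged to miss $Y_n$ for the finitely many $n$ that matter, so that an element of the form $g u^N$ (after the usual $a_0 u \cdots a_{n-1}u$ factorization from the proof of Lemma~\ref{L_4X21_AYStruct}) genuinely lies in $B$ and implements the comparison inside $B$ rather than merely inside the full crossed product.

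\emph{The main obstacle.} The hard part will be the bookkeeping that makes the fibrewise Cuntz comparison $c \precsim \af_{n,x}(b(x))$ cohere into a single comparison $f \precsim_B b$ carried out by elements of $B$ (not just of the crossed product), while simultaneously ensuring the supports avoid $Y$ so that membership in the orbit breaking subalgebra is preserved. I expect this to require the combination of a continuous-field/partition-of-unity argument over $X$ with the explicit formula $\af(a)(x) = \af_x(a(h^{-1}(x)))$, exactly in the spirit of the witness $a = (a_0 u)\cdots(a_{n-1}u)$ constructed in the proof of Lemma~\ref{L_4X21_AYStruct}; controlling the approximation errors uniformly in $x$ via Lemma~\ref{L_9Y30_MixCont} and patching the local comparisons into a global $f \in C(X)_{+} \setminus \{0\}$ is where the real work lies.
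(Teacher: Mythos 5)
Your first step is harmless but redundant: the lemma already assumes $a \in C (X, D)_{+} \setminus \{ 0 \}$, so the reduction via Lemma~\ref{L_4X21_79} is not part of this lemma's proof (it is exactly what Corollary~\ref{C_5418_CompCX} does with it afterwards). The genuine gap is in your second step. You must compare $f \in C (X)_{+} \setminus \{ 0 \}$, which fibrewise is a positive multiple of $1_D$, with an element of $C (X, D)$ whose fibres may be very small positive elements of $D$. No single transported element $g u^N$ can do this, and a partition-of-unity/patching argument over $X$ cannot help either, because the obstruction is fibrewise: in general $1_D \not\precsim_D a (x)$, so no ``local comparison'' of $f$ with $a$ over a neighborhood ever holds. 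Your sketch never explains how a scalar-valued function gets under a fibrewise-small element, and that is the entire content of the lemma.

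The mechanism the paper uses, absent from your proposal, is to spread the unit of $D$ out along the orbit. After cutting $a$ down to an elementary tensor $g_0 \otimes d_0$ (Kirchberg's Slice Lemma, Lemma 4.1.9 of~\cite{Rrd}) and invoking pseudoperiodicity to get $d_1$ with $\af_{m, x} (d_1) \precsim d_0$ for all $m$ and~$x$, one uses simplicity and unitality of $D$ to write $1_D = \sum_{j = 1}^{k} w_j d w_j^*$ with $d = \bigl( d_1 - \tfrac{1}{2} \bigr)_{+}$ (Corollary~1.14 of~\cite{PhLg}). The integer $k$ then dictates the dynamical input: one needs $k$ return times $0 = n_1 < n_2 < \cdots < n_k$ of a point $y_0$ to the region where $g_0 g_1$ is bounded below, with the translates $h^{n_j} ( \ov{V} )$ of a small neighborhood $V$ of $y_0$ pairwise disjoint and avoiding the relevant translates of~$Y$. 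The elements $v_j = [ (f \circ h^{-n_j}) \otimes 1 ] u^{n_j} (1 \otimes w_j)^*$ lie in $B$ and carry the $k$ pieces $f \otimes w_j d w_j^*$ of $f \otimes 1$ onto the mutually orthogonal elements $c_{n_j} = \af^{n_j} (f \otimes d)$, whose sum is subequivalent to $g_0 g_1 \otimes d_0 \precsim_{C (X, D)} a$ because all the supports land where $g_0 g_1$ is bounded below. Without the decomposition of $1_D$ into finitely many conjugates of $d$ and the matching family of $k$ disjoint orbit returns, the step you yourself flag as ``the heart'' cannot be carried out.
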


\begin{proof}
Use Kirchberg's Slice Lemma (Lemma 4.1.9 of~\cite{Rrd})
to find $g_0 \in C (X)_{+} \setminus \{ 0 \}$
and $d_0 \in D_{+} \setminus \{ 0 \}$
such that
\begin{equation}\label{Eq_5417_Zero}
g_0 \otimes d_0 \precsim_{C (X, D)} a.
\end{equation}
As in Definition~\ref{D_4Y12_OverX},
let $(m, x) \mapsto \af_{m, x}$
be the function $\Z \times X \to \Aut (D)$
such that $\af^m (a) (x) = \af_{m, x} (a (h^{-m} (x))$
for $m \in \Z$, $a \in C_0 (X, D)$, and $x \in X$.
Since the action generated by $\af$
is pseudoperiodically generated,
there exists $d_1\in D_{+} \setminus \{ 0 \}$
such that
\begin{equation}\label{Eq_5418_PsPer}
\af_{m, x} (d_1) \precsim d_0
\end{equation}
for all $x \in X$ and $m \in \Z$.
\Wolog{} $\| d_1 \| = 1$.
Set $d = \big( d_1 - \tfrac{1}{2} \big)_{+}$.
Use Corollary 1.14 of~\cite{PhLg}
to find $k \in \N$ and $w_1, w_2, \ldots, w_k \in D$
such that
\begin{equation}\label{Eq_5418_Sum1}
\sum_{j = 1}^k w_j d w_j^* = 1.
\end{equation}

The set $X \setminus Y$ is dense in~$X$,
so there is $x_0 \in X \setminus Y$ such that $g_0 (x_0) \neq 0$.
Choose $g_1 \in C (X)_{+}$ such that $g_1 (x_0) = 1$
and $g_1 |_Y = 0$.
Then
\begin{equation}\label{Eq_5417_One}
g_1 g_0 \otimes d_0
 \precsim_{C (X, D)} g_0 \otimes d_0.
\end{equation}
Set
$U_0 = \big\{ x \in X \colon (g_1 g_0) (x) \neq 0 \big\}$.
Choose a nonempty open set $U \subset X$
such that ${\overline{U}} \subset U_0$.
Set
\begin{equation}\label{Eq_9Z01_NewLabel}
\rh = \inf_{x \in {\overline{U}}} (g_0 g_1) (x).
\end{equation}
Then $\rh > 0$.
The set
\[
X \setminus \bigcup_{n \in \Z} h^n (Y)
 = \bigcap_{n \in \Z} h^n (X \setminus Y)
\]
is dense by the Baire Category Theorem.
So we can choose
$y_0 \in U \cap \left( X \setminus
\bigcup_{n \in \Z} h^n (Y) \right)$.
The forward orbits $\{ h^n (y_0) \colon n \geq N \}$
of $y_0$ are all dense,
so there exist $n_1, n_2, \ldots, n_k \in \Nz$
with $0 = n_1 < n_2 < \cdots< n_k$
and $h^{n_j} (y_0) \in U$ for $j = 1, 2, \ldots, k$.

Choose an open set $V$ containing $y_0$ which is so small that
the following hold:
\begin{enumerate}
\item\label{L_4Y11_Comp_Cont}
${\overline{V}} \subset X \setminus
\bigcup_{m = 0}^{n_k} h^{- m} (Y)$.
\item\label{L_4Y11_Comp_InU}
$h^{n_j} ( {\overline{V}} ) \subset U$
for $j = 1, 2, \ldots, k$.
\item\label{L_4Y11_Comp_Disj}
The sets
${\overline{V}}, \, h ( {\overline{V}} ), \, \ldots, \,
   h^{n_k} ( {\overline{V}} )$
are disjoint.
\item\label{L_4Y11_Comp_Small}
For $y \in V$ and $m = 0, 1, \ldots, n_k$,
we have
\[
\bigl\| \af_{m, h^m (y)} (d_1) - \af_{m, h^m (y_0)} (d_1) \bigr\|
< \frac{1}{4}.
\]
\setcounter{TmpEnumi}{\value{enumi}}
\end{enumerate}
Choose $f_0, f \in C (X)_{+}$
such that:
\begin{enumerate}
\setcounter{enumi}{\value{TmpEnumi}}
\item\label{L_4Y11_Comp_1}
$\| f_0 \|, \, \| f \| \leq 1$.
\item\label{L_4Y11_Comp_Supp}
$\supp (f_0) \subset V$.
\item\label{L_4Y11_Comp_b0b}
$f_0 f = f$.
\item\label{L_4Y11_Comp_Aty0}
$f (y_0) = 1$.
\end{enumerate}
For $m = 0, 1, \ldots, n_k - 1$,
define $c_m \in C (X, D)$
by $c_m (x) = f ( h^{- m} (x)) \af_{m, x} (d)$
for $x \in X$.
Thus
\begin{equation}\label{Eq_5417_cm}
c_m = \af^m (f \otimes d).
\end{equation}
Also set
\begin{equation}\label{Eq_5418_DefBt}
\bt_m = \af_{m, h^m (y_0)}.
\end{equation}

We claim that for $m = 0, 1, \ldots, n_k$
we have
\[
c_m \precsim_{C (X, D)} (f \circ h^{-m}) \otimes \bt_m (d_1).
\]

To prove the claim, let $\ep > 0$.
Set $L = h^{m} ( \supp (f_0) )$.
Define $b \in C (L, D)$
by $b (x) = \af_{m, x} (d_1)$ for $x \in L$.
Since $L \subset h^m (V)$,
condition~(\ref{L_4Y11_Comp_Small})
implies that
for $x \in L$
we have $\| \af_{m, x} (d_1) - \bt_m (d_1) \| < \frac{1}{2}$.
In $C (L, D)$ we therefore get
$\| b - 1 \otimes \bt_m (d_1) \|
  \leq \frac{1}{4} < \frac{1}{2}$.
So
\[
\bigl( b - \tfrac{1}{2} \bigr)_{+}
  \precsim_{C (L, D)} 1 \otimes \bt_m (d_1).
\]
Therefore there exists $v_0 \in C (L, D)$
such that
\[
\big\| v_0^* ( 1 \otimes \bt_m (d_1) ) v_0
   - \bigl( b - \tfrac{1}{2} \bigr)_{+} \big\|
 < \frac{\ep}{2}.
\]
Define $v, a \in C (X, D)$ by
\[
v (x) = \begin{cases}
   f_0 (h^{- m} (x)) v_0 (x) & x \in L
        \\
   0                     & x \not\in L
\end{cases}
\andeqn
a = v^* \bigl[ (f \circ h^{-m}) \otimes \bt_m (d_1) \bigr] v.
\]
We show that $\| a - c_m \| < \ep$.

For $x \in X \setminus L$,
we have $a (x) = c_m (x) = 0$.
For $x \in L$,
using $f_0 f = f$ at the second step
and using $d = \bigl( d_1 - \tfrac{1}{2} \bigr)_{+}$
and the definition of~$b$ at the third step,
we get
\begin{align*}
& \| a (x) - c_m (x) \|
\\
& \hspace*{3em} {\mbox{}}
  = \big\| f_0 (h^{- m} (x))^2 f (h^{- m} (x)) v_0 (x)^*
          \bt_m (d_1) v_0 (x)
       - f (h^{- m} (x)) \af_{m, x} (d) \big\|
\\
& \hspace*{3em} {\mbox{}}
  = \big\| f (h^{- m} (x))
         [ v_0 (x)^* \bt_m (d_1) v_0 (x) - \af_{m, x} (d) ] \big\|
\\
& \hspace*{3em} {\mbox{}}
  = f (h^{- m} (x))
       \big\| v_0 (x)^* \bt_m (d_1) v_0 (x)
              - \bigl( b (x) - \tfrac{1}{2} \bigr)_{+} \big\|
\\
& \hspace*{3em} {\mbox{}}
  \leq f (h^{- m} (x))
        \big\| v_0^* ( 1 \otimes \bt_m (d_1) ) v_0
            - \bigl( b - \tfrac{1}{2} \bigr)_{+} \big\|
  < \frac{\ep}{2}.
\end{align*}
Taking the supremum over $x \in X$ gives
$\| a - c_m \| \leq \frac{\ep}{2} < \ep$.
Thus
\[
\big\| v^* [ (f \circ h^{-m}) \otimes \bt_m (d_1) ] v - c_m \big\|
 < \ep.
\]
Since $\ep > 0$ is arbitrary,
the claim follows.

For $m = 0, 1, \ldots, n_k$,
the functions $f \circ h^{-m}$ are orthogonal
since the sets $h^m ( {\overline{V}} )$ are disjoint.
The claim therefore implies that
\begin{equation}\label{Eq_5417_Two}
\sum_{j = 1}^k c_{n_j}
 \precsim_{C (X, D)}
     \sum_{j = 1}^k (f \circ h^{-n_j}) \otimes \bt_{n_j} (d_1).
\end{equation}

We now claim that
\begin{equation}\label{Eq_5417_Three}
f \otimes 1 \precsim_{B} \sum_{j = 1}^k c_{n_j}.
\end{equation}
To prove this claim,
for $j = 1, 2, \ldots, k$
define
\[
v_j = [(f \circ h^{-n_j}) \otimes 1] u^{n_j} (1 \otimes w_j)^*
    \in C^* \bigl( \Z, \, C (X, D), \, \af \bigr).
\]
Combining (\ref{L_4Y11_Comp_Supp}) and~(\ref{L_4Y11_Comp_Cont}),
we see that $f_0$ vanishes in particular on the sets
\[
h^{-1} ( Y ), \, h^{-2} ( Y ), \, \ldots, \, h^{- n_j} ( Y ),
\]
whence $f_0 \circ h^{- n_j}$ vanishes on the sets
\[
Y, \, h ( Y ), \, \ldots, \, h^{n_j - 1} ( Y ).
\]
So $v_j \in B$ by
\Lem{L_4X21_AYStruct}.
Using (\ref{Eq_5417_cm}) at the first step
and $f_0 f = f$ at the last step,
we calculate:
\begin{align*}
v_j^* c_{n_j} v_j
& = (1 \otimes w_j) u^{- n_j} [(f \circ h^{-n_j}) \otimes 1]
     \af^{n_j} (f \otimes d) [(f \circ h^{-n_j}) \otimes 1]
     u^{n_j} (1 \otimes w_j)^*
\\
& = (1 \otimes w_j) (f_0 \otimes 1) (f \otimes d)
     (f_0 \otimes 1) (1 \otimes w_j^*)
  = f \otimes w_j d w_j^*.
\end{align*}
We apply (\ref{Eq_5418_Sum1}) at the first step
and use orthogonality
of $c_0, c_1, \ldots, c_{n_k}$
and Lemma 1.4(12) of~\cite{PhLg} at the second step,
to get
\[
f \otimes 1
 = \sum_{j = 1}^k f \otimes w_j d w_j^*
 \precsim_B \sum_{j = 1}^k c_{n_j}.
\]
This proves the claim~(\ref{Eq_5417_Three}).

We next claim that
\begin{equation}\label{Eq_5417_Four}
\sum_{j = 1}^k (f \circ h^{-n_j}) \otimes \bt_{n_j} (d_1)
 \precsim_{C (X, D)} g_0 g_1 \otimes d.
\end{equation}
To prove this,
combine (\ref{Eq_5418_PsPer}), (\ref{Eq_5418_DefBt}),
and Lemma 1.11 of~\cite{PhLg}
to get
\[
(f \circ h^{-n_j}) \otimes \bt_{n_j} (d_1)
 \precsim_{C (X, D)} (f \circ h^{-n_j}) \otimes d_0
\]
for $j = 1, 2, \ldots, k$.
Since the functions
$f \circ h^{-n_j}$ are orthogonal,
it follows that
\[
\sum_{j = 1}^k (f \circ h^{-n_j}) \otimes \bt_{n_j} (d_1)
 \precsim_{C (X, D)}
 \left( \sum_{j = 1}^k (f \circ h^{-n_j}) \right) \otimes d_0.
\]
Using orthogonality
of the functions
$f \circ h^{-n_j}$ again,
together with
\[
\supp (f \circ h^{-n_j}) \subset h^{n_j} ( {\overline{V}} )
 \subset U
\andeqn
0 \leq f \circ h^{-n_j} \leq 1
\]
for $j = 1, 2, \ldots, k$,
we see that
$0 \leq \sum_{j = 1}^k f \circ h^{-n_j} \leq 1$
and,
by~(\ref{L_4Y11_Comp_InU}), that this sum is supported in~$U$.
Since $g_0 g_1 \geq \rh$ on ${\overline{U}}$
by~(\ref{Eq_9Z01_NewLabel}),
we get the first step in the following computation;
the second step is clear:
\[
\sum_{j = 1}^k (f \circ h^{-n_j}) \otimes d_0
 \leq \frac{1}{\rh} ( g_0 g_1 \otimes d_0 )
 \sim_{C (X, D)} g_0 g_1 \otimes d_0.
\]
The claim is proved.

Now combine
(\ref{Eq_5417_Zero}),
(\ref{Eq_5417_One}),
(\ref{Eq_5417_Four}),
(\ref{Eq_5417_Two}),
and
(\ref{Eq_5417_Three})
to get $f \otimes 1 \precsim_{B} a$.
\end{proof}

\begin{cor}\label{C_5418_CompCX}
Let $X$ be a \cms, let $h \colon X \to X$ be a minimal homeomorphism,
let $D$ be a simple \uca,
and let $\af \in \Aut (C (X, D))$ lie over~$h$.
Assume that the action generated by $\af$
is pseudoperiodically generated.
Let $Y \subset X$ be a compact set
(possibly empty)
such that $h^n (Y) \cap Y = \varnothing$
for all $n \in \Z \setminus \{ 0 \}$.
Set $B = C^* \bigl( \Z, \, C (X, D), \, \af \bigr)_Y$.
Let $a \in B_{+} \setminus \{ 0 \}$.
Then there exists $f \in C (X)_{+} \setminus \{ 0 \} \subset B$
such that $f \precsim_B a$.
\end{cor}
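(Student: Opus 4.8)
The plan is to obtain this from \Lem{L_4Y11_Comp} by a short two-step Cuntz-comparison argument. Indeed, \Lem{L_4Y11_Comp} already delivers exactly the conclusion we want, but only for $a \in C (X, D)_{+} \setminus \{ 0 \}$; so the whole content of the corollary is to reduce a general $a \in B_{+} \setminus \{ 0 \}$ to an element lying in the distinguished subalgebra $C (X, D)$, and then quote \Lem{L_4Y11_Comp}.

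First I would verify that $B = C^* \bigl( \Z, \, C (X, D), \, \af \bigr)_Y$ meets the hypotheses of \Lem{L_4X21_79}. It is unital because $1 \in C (X, D) \subset B$; it contains $C (X, D)$ by \Def{D_4Y12_OrbSubalg}; and $B \cap C (X, D) [\Z]$ is dense in $B$ by the identity~(\ref{Eq:2816Dense}) established in \Lem{L_4X21_AYStruct}. The topological freeness hypothesis required via \Lem{L_4X21_78} holds because $h$ is a minimal homeomorphism of an infinite compact space (infiniteness being forced by the disjointness condition $h^n (Y) \cap Y = \varnothing$ when $Y \neq \varnothing$, and part of the standing setting otherwise): such an $h$ has no periodic points, so for every finite $S \subset \Z \setminus \{ 0 \}$ the set $\{ x \in X \colon h^n (x) \neq x \text{ for all } n \in S \}$ is all of $X$, hence dense.

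Granting these, I would apply \Lem{L_4X21_79} to the given $a \in B_{+} \setminus \{ 0 \}$ to produce $b \in C (X, D)_{+} \setminus \{ 0 \}$ with $b \precsim_B a$. Since $b$ now lies in $C (X, D)$, \Lem{L_4Y11_Comp} applies to it directly and yields $f \in C (X)_{+} \setminus \{ 0 \} \subset B$ with $f \precsim_B b$. Transitivity of Cuntz subequivalence over the fixed algebra $B$ then gives $f \precsim_B b \precsim_B a$, which is the desired conclusion.

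I do not anticipate any real obstacle: all the analytic work is already packaged into \Lem{L_4Y11_Comp} (the pseudoperiodicity-driven comparison inside $C (X, D)$) and \Lem{L_4X21_79} (the passage from $B_{+}$ down to $C (X, D)_{+}$), and the corollary merely chains them through transitivity. The only steps needing attention are the routine checks that $B$ satisfies the density and topological-freeness hypotheses of \Lem{L_4X21_79}, and the remark that the permitted case $Y = \varnothing$ (where $B$ degenerates to the full crossed product) is already covered by \Lem{L_4Y11_Comp}, so it needs no separate treatment.
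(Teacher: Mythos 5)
Your proposal is correct and follows essentially the same route as the paper: verify via Lemma~\ref{L_4X21_AYStruct} that $B$ satisfies the hypotheses of Lemma~\ref{L_4X21_79}, use that lemma to replace $a$ by some $b \in C(X,D)_{+} \setminus \{0\}$ with $b \precsim_B a$, and then apply Lemma~\ref{L_4Y11_Comp} and transitivity. The extra checks you spell out (density of $B \cap C(X,D)[\Z]$ and topological freeness from minimality) are exactly what the paper leaves implicit.
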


\begin{proof}
\Lem{L_4X21_AYStruct}
implies that $B$ satisfies the hypotheses
of \Lem{L_4X21_79}
with $G = \Z$.
Therefore,
by \Lem{L_4X21_79},
there exists $b \in C (X, D)_{+} \setminus \{ 0 \}$
such that $b \precsim_{B} a$.
Now \Lem{L_4Y11_Comp}
provides $f \in C (X)_{+} \setminus \{ 0 \}$
such that $f \precsim_B a$.
\end{proof}

We can now prove the analog of Theorem~7.10 of~\cite{PhLg}.

\begin{thm}\label{T_5418_AYLg}
Let $X$ be a \cms, let $h \colon X \to X$ be a minimal homeomorphism,
let $D$ be a simple \uca{} which has a tracial state,
and let $\af \in \Aut (C (X, D))$ lie over~$h$.
Assume that the action generated by $\af$
is pseudoperiodically generated.
Let $Y \subset X$ be a compact subset
such that $h^n (Y) \cap Y = \varnothing$
for all $n \in \Z \setminus \{ 0 \}$.
Then $C^* \bigl( \Z, \, C (X, D), \, \af \bigr)_Y$
is a large subalgebra of $C^* \bigl( \Z, \, C (X, D), \, \af \bigr)$
of crossed product type
in the sense of Definition~4.9 of~\cite{PhLg}.
\end{thm}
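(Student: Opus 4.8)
The plan is to verify the definition of ``large subalgebra of crossed product type'' (Definition~4.9 of~\cite{PhLg}) term by term for the inclusion $C^* \bigl( \Z, \, C (X, D), \, \af \bigr)_Y \subset C^* \bigl( \Z, \, C (X, D), \, \af \bigr)$, following the structure of the proof of Theorem~7.10 of~\cite{PhLg}. That definition requires, for every $a \in A_{+} \setminus \{ 0 \}$ (where $A$ is the crossed product) and every $\ep > 0$, finding $g \in B_{+} \setminus \{ 0 \}$ (where $B$ is the orbit breaking subalgebra) together with elements $c \in A$ and $x \in B$ realizing the approximation, comparison, and norm conditions on $g$, $c$, $(c - \ep)_{+}$, and $E_{\af}$ that constitute crossed product type. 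First I would record that $B$ is unital, contains $C (X, D)$, and that $B \cap C (X, D)[\Z]$ is dense in $B$; all of these follow from \Lem{L_4X21_AYStruct} and Definition~\ref{D_4Y12_OrbSubalg}, so the hypotheses of \Lem{L_4X21_79} and \Cor{C_5418_CompCX} are in force.

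The core of the argument is the comparison step, and this is where \Cor{C_5418_CompCX} does the real work. Given $a \in A_{+} \setminus \{ 0 \}$, I would first use \Lem{L_4X21_79} (via density of $C (X, D)[\Z]$ and \Lem{L_4X21_78}) to produce a nonzero positive element of $C (X, D)$ that is Cuntz subequivalent over $B$ to~$a$; \Cor{C_5418_CompCX} then upgrades this to a nonzero $f \in C (X)_{+}$ with $f \precsim_B a$. The point of passing all the way down to a function in $C (X)$ (rather than stopping at $C (X, D)$) is that the scalar-valued function lives in the ``commutative core'' where the orbit-breaking combinatorics of \cite{PhLg} apply verbatim. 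This is exactly the role played by the corresponding comparison lemma in the proof of Theorem~7.10 of~\cite{PhLg}, so once $f$ is in hand the remaining bookkeeping should transcribe with only notational changes.

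With the comparison element $f \in C (X)_{+} \setminus \{ 0 \}$ secured, I would then run the Rokhlin-tower / orbit-breaking construction exactly as in Theorem~7.10 of~\cite{PhLg}: use minimality of $h$ together with the hypothesis $h^n (Y) \cap Y = \varnothing$ for $n \neq 0$ to build, inside $B$, a positive element and the approximating data witnessing that $f$ (hence an appropriate cutdown of the given $a$) satisfies all the numerical estimates in Definition~4.9. Because $f$ is scalar-valued, the tracial/normalization estimates reduce to the commutative case already handled in~\cite{PhLg}, and the tracial state hypothesis on~$D$ guarantees (via Proposition~\ref{P_5418_CPStFin}) that the crossed product is stably finite, which is the ambient setting in which ``crossed product type'' is formulated.

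I expect the main obstacle to be \emph{not} the large-algebra combinatorics (which are inherited from \cite{PhLg}) but rather confirming that the reduction to a scalar function in $C (X)$ is legitimate in the noncommutative fiber setting; that reduction is precisely what \Lem{L_4Y11_Comp} and \Cor{C_5418_CompCX} were built to provide, and its proof is where the pseudoperiodicity hypothesis is genuinely used --- to control the elements $\af_{m, x} (d_1)$ uniformly across the orbit so that a single noncommutative positive element $d_0$ dominates all the relevant translates. Having isolated that difficulty into the already-proved \Cor{C_5418_CompCX}, the proof of the theorem itself becomes a matter of citing \Cor{C_5418_CompCX} for the comparison input and then quoting the proof of Theorem~7.10 of~\cite{PhLg} for the rest, checking that each invocation of a commutative fact there has a valid analog here.
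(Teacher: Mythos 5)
Your proposal matches the paper's proof in all essentials: the paper likewise reduces everything to the comparison statement of Corollary~\ref{C_5418_CompCX} (applied once with $Y = \varnothing$ for elements of the crossed product and once with the given $Y$ for elements of the subalgebra), invokes Proposition~\ref{P_4Y15_CPSimple} and Proposition~\ref{P_5418_CPStFin} for simplicity and finiteness, and then transcribes the remaining orbit-breaking combinatorics from the proof of Theorem~7.10 of~\cite{PhLg}, using Lemma~\ref{L_4X21_AYStruct} in place of Proposition~7.5 there. The only cosmetic difference is that the paper verifies the criterion of Proposition~4.11 of~\cite{PhLg} rather than Definition~4.9 directly, which does not change the substance of the argument.
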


\begin{proof}
We verify the hypotheses of
Proposition~4.11 of~\cite{PhLg}.
We follow Notation~\ref{N_4Y12_CPN}.
Set
\[
A = C^* \bigl( \Z, \, C (X, D), \, \af \bigr),
\qquad
B = C^* \bigl( \Z, \, C (X, D), \, \af \bigr)_Y,
\]
\[
C = C (X, D),
\andeqn
G = \{ u \}.
\]

The algebra $A$ is simple
by Proposition~\ref{P_4Y15_CPSimple}
and finite by Proposition~\ref{P_5418_CPStFin}.
In particular,
condition~(1)
in Proposition 4.11 of~\cite{PhLg} holds.

We next verify condition~(2)
in Proposition 4.11 of~\cite{PhLg}.
All parts are obvious except~(2d).
So let $a \in A_{+} \setminus \{ 0 \}$
and $b \in B_{+} \setminus \{ 0 \}$.
Apply Corollary~\ref{C_5418_CompCX} twice,
the first time with $Y = \varnothing$ and $a$ as given
and the second time with $Y$ as given
and with $b$ in place of~$a$.
We get $a_0, b_0 \in C (X)_{+} \setminus \{ 0 \}$
such that $a_0 \precsim_{A} a$ and $b_0 \precsim_{B} b$.

We can now argue as in the corresponding part of the
proof of Theorem~7.10 of~\cite{PhLg}
to find $f \in C (X)$
such that
\[
f \precsim_{C (X)} a_0 \precsim_{A} a
\andeqn
f \precsim_{B} b_0 \precsim_{B} b,
\]
completing the proof of condition~(2d).
Alternatively,
$a_0, b_0
 \in C^* (\Z, X, h)_Y
 \subset C^* \bigl( \Z, \, C (X, D), \, \af \bigr)_Y$,
and $C^* (\Z, X, h)_Y$ is a large subalgebra of $C^* ( \Z, X, h)$
of crossed product type
by Theorem~7.10 of~\cite{PhLg},
so the existence of $f$
follows from condition~(2d)
in Proposition 4.11 of~\cite{PhLg}.

We next verify condition~(3)
in Proposition 4.11 of~\cite{PhLg}.
Let $m \in \N$,
let $a_1, a_2, \ldots, a_m \in A$,
let $\ep > 0$,
and let $b \in B_{+} \setminus \{ 0 \}$.
We follow the corresponding part of the
proof of Theorem~7.10 of~\cite{PhLg}.
Choose $c_1, c_2, \ldots, c_m \in C (X, D) [\Z]$
such that $\| c_j - a_j \| < \ep$
for $j = 1, 2, \ldots, m$.
(This estimate is condition~(3b).)
Choose $N \in \N$
such that for $j = 1, 2, \ldots, m$
there are $c_{j, l} \in C (X, D)$
for $l = -N, \, - N + 1, \, \ldots, \, N - 1, \, N$
with
\[
c_j = \sum_{l = -N}^N c_{j, l} u^l.
\]

Apply Corollary~\ref{C_5418_CompCX} to~$B$,
to find $f \in C (X)_{+} \setminus \{ 0 \}$
such that $f \precsim_{B} b$.
Set $U = \{ x \in X \colon f (x) \neq 0 \}$,
and choose nonempty disjoint open sets
$U_l \subset U$ for $l = -N, \, - N + 1, \, \ldots, \, N - 1, \, N$.
For each such~$l$,
use Lemma~7.7 of~\cite{PhLg}
to choose $f_l, r_l \in C (X)_{+}$
such that $r_l (x) = 1$ for all $x \in h^l (Y)$,
such that $0 \leq r_l \leq 1$,
such that $\supp (f_l) \subset U_l$,
and such that $r_l \precsim_{C^* (\Z, X, h)_Y} f_l$.
Then also $r_l \precsim_{B} f_l$.

Choose an open set $W$ containing~$Y$
such that
\[
h^{-N} (W), \, h^{- N + 1} (W),
 \, \ldots, \, h^{N - 1} (W), \, h^{N} (W)
\]
are disjoint,
and choose $r \in C (X)$ such that $0 \leq r \leq 1$,
$r (x) = 1$ for all $x \in Y$,
and $\supp (r) \subset W$.
Set
\[
g_0 = r \cdot \prod_{l = - N}^{N} r_l \circ h^{l}.
\]
Set $g_l = g_0 \circ h^{- l}$
for $l = -N, \, - N + 1, \, \ldots, \, N - 1, \, N$.
Then $0 \leq g_l \leq r_l \leq 1$.
Set $g = \sum_{l = - N}^{N} g_l$.
The supports of the functions $g_l$ are disjoint,
so $0 \leq g \leq 1$.
This is condition~(3a)
in Proposition 4.11 of~\cite{PhLg}.
The proof of Theorem~7.10 of~\cite{PhLg}
shows that
$g \precsim_{C^* (\Z, X, h)_Y} f \precsim_B b$.
Since $C^* (\Z, X, h)_Y \subset B$,
it follows that $g \precsim_B b$.
This is condition~(3d)
in Proposition 4.11 of~\cite{PhLg}.

It remains to verify condition~(3c)
in Proposition 4.11 of~\cite{PhLg}.
This is done the same way as
the corresponding part of the proof of Theorem~7.10 of~\cite{PhLg},
except using \Lem{L_4X21_AYStruct}
in place of Proposition~7.5 of~\cite{PhLg}.
\end{proof}

The following corollary is the analog of Corollary~7.11
of~\cite{PhLg}.

\begin{cor}\label{C_5418_AYStabLg}
Let $X$ be a \cms, let $h \colon X \to X$ be a minimal
homeomorphism, let $D$ be a simple \uca{}
which has a tracial state,
and let $\af \in \Aut (C (X, D))$ lie over~$h$.
Assume that the action generated by $\af$
is pseudoperiodically generated.
Let $Y \subset X$ be a compact subset
such that $h^n (Y) \cap Y = \varnothing$
for all $n \in \Z \setminus \{ 0 \}$.
Then $C^* \bigl( \Z, \, C (X, D), \, \af \bigr)_Y$
is a stably large subalgebra
of $C^* \bigl( \Z, \, C (X, D), \, \af \bigr)$
in the sense of Definition~5.1 of~\cite{PhLg}.
\end{cor}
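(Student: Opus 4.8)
The plan is to prove that $B = C^* \bigl( \Z, \, C (X, D), \, \af \bigr)_Y$ is stably large by verifying, for each $n \in \N$, that $M_n (B)$ is a large subalgebra of $M_n (A)$, where $A = C^* \bigl( \Z, \, C (X, D), \, \af \bigr)$; by Definition~5.1 of~\cite{PhLg} this is exactly the assertion to be proved. The point is that passing to $n \times n$ matrices does not leave the framework of this section: it only replaces the coefficient algebra $D$ by $M_n (D)$. First I would record the identifications
\[
M_n (A) = C^* \bigl( \Z, \, C (X, M_n (D)), \, \af^{(n)} \bigr)
\andeqn
M_n (B) = C^* \bigl( \Z, \, C (X, M_n (D)), \, \af^{(n)} \bigr)_Y,
\]
where $\af^{(n)} = \af \otimes \id_{M_n}$. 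Using $M_n (C (X, D)) = C (X, M_n (D))$, one checks directly from Definition~\ref{D_4Y12_OverX} and Definition~\ref{D_4Y12_OrbSubalg} that $\af^{(n)}$ lies over~$h$ with associated family $x \mapsto \af_x \otimes \id_{M_n}$, and that the matrix amplifications of the generators $C (X, D)$ and $C_0 (X \setminus Y, D) u$ of~$B$ are precisely the generators of the $Y$-orbit breaking subalgebra over $M_n (D)$.

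With these identifications in hand, the plan is to apply Theorem~\ref{T_5418_AYLg} with $M_n (D)$ in place of~$D$. The algebra $M_n (D)$ is again simple and unital, and it has a tracial state (compose a tracial state on~$D$ with the normalized trace on $M_n$), so all hypotheses of Theorem~\ref{T_5418_AYLg} are immediate except one: we must check that $\af^{(n)}$ is pseudoperiodically generated. Since the automorphisms occurring in Definition~\ref{D_4Y12_OverX} for $\af^{(n)}$ are exactly $\af_{m, x} \otimes \id_{M_n}$, this reduces to showing that if $S \subset \Aut (D)$ is pseudoperiodic then $\bigl\{ \sm \otimes \id_{M_n} \colon \sm \in S \bigr\}$ is pseudoperiodic in $\Aut (M_n (D))$. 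I expect this to be the only non-formal step, and hence the main point of the proof.

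To carry out that step, given $a \in M_n (D)_{+} \setminus \{ 0 \}$ I would first locate a nonzero diagonal entry: since $a \geq 0$ and $a \neq 0$, some $a_{ii} \in D_{+} \setminus \{ 0 \}$, and compression by the matrix unit $e_{ii}$ gives $a_{ii} \otimes e_{ii} \precsim_{M_n (D)} a$. Applying pseudoperiodicity of~$S$ in~$D$ to $a_{ii}$ produces $b_0 \in D_{+} \setminus \{ 0 \}$ with $b_0 \precsim_D \sm (a_{ii})$ for all $\sm \in S \cup \{ \id_D \}$; set $b = b_0 \otimes e_{11} \in M_n (D)_{+} \setminus \{ 0 \}$. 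For each such~$\sm$ the $(i,i)$ corner of $(\sm \otimes \id_{M_n}) (a)$ is $\sm (a_{ii})$, whence $\sm (a_{ii}) \otimes e_{ii} \precsim_{M_n (D)} (\sm \otimes \id_{M_n}) (a)$; combining this with $b_0 \precsim_D \sm (a_{ii})$, together with the facts that Cuntz comparison is compatible with the inclusion $M_{\infty} (M_n (D)) \subset M_{\infty} (D)$ and that $e_{11} \sim e_{ii}$ in $M_n$, one gets $b \precsim_{M_n (D)} (\sm \otimes \id_{M_n}) (a)$. This is the required pseudoperiodicity.

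Finally, Theorem~\ref{T_5418_AYLg} then yields that $M_n (B)$ is a large subalgebra of $M_n (A)$ of crossed product type, in particular large, for every $n \in \N$; by Definition~5.1 of~\cite{PhLg} this says exactly that $B$ is stably large, completing the proof. Alternatively, once Theorem~\ref{T_5418_AYLg} is available, one may simply invoke the general result of~\cite{PhLg} that a large subalgebra of crossed product type is automatically stably large, mirroring the proof of Corollary~7.11 of~\cite{PhLg}; the matrix argument above is in effect a proof of that implication in the present situation.
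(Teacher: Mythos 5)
Your argument is correct, but it is not the route the paper takes. The paper's proof is a one-liner: it notes that $A = C^* \bigl( \Z, \, C (X, D), \, \af \bigr)$ is stably finite (Proposition~\ref{P_5418_CPStFin}), that $B = C^* \bigl( \Z, \, C (X, D), \, \af \bigr)_Y$ is large of crossed product type (Theorem~\ref{T_5418_AYLg}), hence large (Proposition~4.10 of~\cite{PhLg}), and then invokes Corollary~5.8 of~\cite{PhLg}, which upgrades largeness to stable largeness when the ambient algebra is stably finite. That is essentially the ``alternative'' you mention in your last paragraph, except that your paraphrase of the general result omits the stable finiteness hypothesis which the paper explicitly supplies via Proposition~\ref{P_5418_CPStFin}. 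Your primary argument instead verifies Definition~5.1 of~\cite{PhLg} by hand: identify $M_n (A)$ with $C^* \bigl( \Z, \, C (X, M_n (D)), \, \af \otimes \id_{M_n} \bigr)$ and $M_n (B)$ with the corresponding $Y$-orbit breaking subalgebra, check that pseudoperiodic generation survives amplification by $\id_{M_n}$ (your compression to a nonzero diagonal corner, followed by moving $e_{ii}$ to $e_{11}$, is a correct proof of this), and reapply Theorem~\ref{T_5418_AYLg} with $M_n (D)$ in place of~$D$, noting that $M_n (D)$ is again simple, unital, and has a tracial state. This costs you one small lemma (stability of pseudoperiodicity under $\sm \mapsto \sm \otimes \id_{M_n}$) that the paper never needs, but it buys independence from Corollary~5.8 of~\cite{PhLg} and from stable finiteness, and it yields the slightly stronger conclusion that each $M_n (B)$ is itself large of crossed product type in $M_n (A)$. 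Both routes are valid.
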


\begin{proof}
Since $C^* \bigl( \Z, \, C (X, D), \, \af \bigr)$ is stably finite
(by Proposition~\ref{P_5418_CPStFin}),
we can combine Theorem~\ref{T_5418_AYLg},
Proposition~4.10 of~\cite{PhLg},
and Corollary~5.8 of~\cite{PhLg}.
\end{proof}

\begin{cor}\label{AyCentLarge}
Let $X$ be a \cms, let $h \colon X \to X$ be a minimal
homeomorphism, let $D$ be a simple \uca{}
which has a tracial state,
and let $\af \in \Aut (C (X, D))$ lie over~$h$.
Assume that the action generated by $\af$
is pseudoperiodically generated.
Let $Y \subset X$ be a compact subset
such that $h^n (Y) \cap Y = \varnothing$
for all $n \in \Z \setminus \{ 0 \}$.
Then $C^* \bigl( \Z, \, C (X, D), \, \af \bigr)_Y$
is a centrally large subalgebra
of $C^* \bigl( \Z, \, C (X, D), \, \af \bigr)$
in the sense of Definition~3.2 of~\cite{ArPh}.
\end{cor}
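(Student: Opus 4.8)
The plan is to deduce Corollary~\ref{AyCentLarge} from the work already done, specifically Theorem~\ref{T_5418_AYLg} and Corollary~\ref{C_5418_AYStabLg}, by invoking a general comparison between the notions of ``large subalgebra of crossed product type'' (Definition~4.9 of~\cite{PhLg}) and ``centrally large subalgebra'' (Definition~3.2 of~\cite{ArPh}). The key point is that these two notions are known to coincide, or at least that the former implies the latter, under the hypotheses available here, namely that the ambient algebra $C^* \bigl( \Z, \, C (X, D), \, \af \bigr)$ is simple and stably finite.

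First I would recall that, by Proposition~\ref{P_4Y15_CPSimple} and Proposition~\ref{P_5418_CPStFin}, the algebra $A = C^* \bigl( \Z, \, C (X, D), \, \af \bigr)$ is simple, has a tracial state, and is stably finite. (The hypotheses of Proposition~\ref{P_4Y15_CPSimple} hold because a minimal homeomorphism of an infinite compact metric space automatically has the density property for the set of points with $g x \neq x$, and because $\Z$ is amenable.) These are exactly the structural facts one needs to pass between the two largeness conditions.

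Next I would apply Theorem~\ref{T_5418_AYLg}, which gives that $B = C^* \bigl( \Z, \, C (X, D), \, \af \bigr)_Y$ is a large subalgebra of crossed product type in the sense of Definition~4.9 of~\cite{PhLg}. The decisive step is then to cite the theorem from~\cite{ArPh} (this is the content connecting~\cite{PhLg} and~\cite{ArPh}) asserting that in a stably finite simple \uca, a large subalgebra of crossed product type is automatically centrally large. The notion of ``crossed product type'' builds in precisely the extra commutation data in condition~(3c) of Proposition~4.11 of~\cite{PhLg} that is needed to verify the centrality requirement in Definition~3.2 of~\cite{ArPh}; this is exactly how central largeness was designed to interact with the crossed product structure.

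The main obstacle I anticipate is verifying that the \emph{centrality} clause of Definition~3.2 of~\cite{ArPh}---which demands approximate commutation of the witnessing elements with a prescribed finite set $a_1, \dots, a_m$---is supplied by the ``crossed product type'' structure, rather than merely ordinary largeness. I expect this to be handled by invoking the appropriate result of~\cite{ArPh} as a black box, since that reference establishes precisely that large subalgebras of crossed product type in stably finite simple algebras are centrally large. Thus the proof reduces to checking that the hypotheses of that result are met, which we have just done, and the conclusion follows immediately.
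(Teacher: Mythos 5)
Your proposal is correct and follows essentially the same route as the paper: the paper's proof combines stable finiteness of the crossed product (Proposition~\ref{P_5418_CPStFin}) with Theorem~\ref{T_5418_AYLg} and Theorem~4.6 of~\cite{ArPh}, which is exactly the black-box result you describe (large of crossed product type plus stably finite implies centrally large). The only cosmetic difference is that you also mention Corollary~\ref{C_5418_AYStabLg}, which is not actually needed.
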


\begin{proof}
Since $C^* \bigl( \Z, \, C (X, D), \, \af \bigr)$ is stably finite
(by Proposition~\ref{P_5418_CPStFin}),
we can combine Theorem~\ref{T_5418_AYLg}
and Theorem~4.6 of~\cite{ArPh}.
\end{proof}

We conclude by giving some conditions on $D$ and~$\af$ which
guarantee the hypotheses of Corollary~\ref{AyCentLarge}.
These are more natural to consider than the awkward
pseudoperiodicity hypothesis.

\begin{cor}\label{AyCentLargeConds}
Let $X$ be a compact metric space,
let $h \colon X \to X$ be a minimal homeomorphism,
let $Y \subset X$ be a compact subset
such that $h^{n} (Y) \cap Y = \varnothing$
for all $n \in \Z \setminus \set{0}$,
let $D$ be a simple unital \ca{}
which has a tracial state,
and let $\af \in \Aut (C (X, D))$ lie over~$h$.
Let $x \mapsto \af_{x}$
be the corresponding map from $X$ to $\Aut (D)$,
as in Lemma~\ref{L_4Y12_GetAuto}.
Assume one of the following conditions holds.
\begin{enumerate}
\item\label{9212_AyCentLargeConds_AppInn}
All elements of $\set{ \af_{x} \colon x \in X} \subset \Aut (D)$
are approximately inner.
\item\label{9212_AyCentLargeConds_StrCmp}
$D$ has strict comparison of positive elements.
\item\label{9212_AyCentLargeConds_SP}
$D$ has property (SP) and the order on projections over~$D$
is determined by quasitraces.
\item\label{9212_AyCentLargeConds_CptGen}
The set
$\set{ \af_{x} \colon x \in X}$
is contained in a subgroup of $\Aut (D)$
which is compact in the topology
of pointwise convergence in the norm on~$D$.
\end{enumerate}
Then
$C^* \bigl( \Z, \, C (X, D), \, \af \bigr)_Y$
is a centrally large subalgebra of
$C^* \bigl( \Z, \, C (X, D), \, \af \bigr)$.
\end{cor}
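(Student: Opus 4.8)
The plan is to reduce each of the four conditions to the single hypothesis of Corollary~\ref{AyCentLarge}, namely that the action generated by $\af$ is pseudoperiodically generated; once that is in hand, Corollary~\ref{AyCentLarge} applies verbatim and yields the conclusion. By Lemma~\ref{L_4Y16_WhenPPG}, it suffices in each case to show that the subgroup $H \S \Aut (D)$ generated by $\{ \af_x \colon x \in X \}$ is pseudoperiodic in the sense of Definition~\ref{D_4Y12_PsPer}. First I would record the elementary observation, immediate from Definition~\ref{D_4Y12_PsPer}, that pseudoperiodicity passes to subsets: if $S' \S S$ and $S$ is pseudoperiodic, then for a given $a \in D_{+} \SM \{ 0 \}$ the element $b$ witnessing pseudoperiodicity of $S$ also witnesses it for $S'$, since $S' \cup \{ \id_D \} \S S \cup \{ \id_D \}$. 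This reduces each case to exhibiting a pseudoperiodic subset of $\Aut (D)$ that contains $H$.

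I would then dispatch the cases as follows. For condition~\ref{9212_AyCentLargeConds_AppInn}, I would use that the approximately inner automorphisms form a subgroup of $\Aut (D)$ (closed under composition and inverse), so that $H$, being generated by approximately inner automorphisms, consists entirely of approximately inner automorphisms; Lemma~\ref{L_4Y12_ApproxInn} shows this ambient set is pseudoperiodic, whence $H$ is pseudoperiodic by the subset remark. For conditions~\ref{9212_AyCentLargeConds_StrCmp} and~\ref{9212_AyCentLargeConds_SP} the situation is even simpler, since Lemma~\ref{L_4Y12_TrPrsv} (respectively Lemma~\ref{L_5417_SP}) shows that \emph{all} of $\Aut (D)$ is pseudoperiodic, so in particular so is the subgroup $H$. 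For condition~\ref{9212_AyCentLargeConds_CptGen}, let $K \S \Aut (D)$ be the compact subgroup containing $\{ \af_x \colon x \in X \}$; since $K$ is a subgroup containing the generators, $H \S K$, and because $D$ is simple and $K$ is compact in the topology of pointwise norm convergence, Lemma~\ref{Lem1_190201D} shows $K$ is pseudoperiodic, hence so is $H$.

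In every case $H$ is pseudoperiodic, so Lemma~\ref{L_4Y16_WhenPPG} gives that the action generated by $\af$ is pseudoperiodically generated, and Corollary~\ref{AyCentLarge} completes the proof. This is essentially a bookkeeping exercise drawing the earlier pseudoperiodicity lemmas together, so there is no deep obstacle; the only points requiring a moment's care are the subset-stability of pseudoperiodicity and, in condition~\ref{9212_AyCentLargeConds_AppInn}, the fact that the approximately inner automorphisms are closed under the group operations, so that passing from the generators $\af_x$ to the generated subgroup $H$ does not leave the pseudoperiodic set supplied by Lemma~\ref{L_4Y12_ApproxInn}.
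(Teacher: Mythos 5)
Your proposal is correct and follows essentially the same route as the paper: in each case one exhibits a pseudoperiodic subgroup of $\Aut (D)$ containing $\set{ \af_{x} \colon x \in X}$ via Lemmas \ref{L_4Y12_ApproxInn}, \ref{L_4Y12_TrPrsv}, \ref{L_5417_SP}, and \ref{Lem1_190201D} respectively, and then applies Lemma~\ref{L_4Y16_WhenPPG} followed by Corollary~\ref{AyCentLarge}. The only difference is that you spell out two small points the paper leaves implicit, namely that pseudoperiodicity passes to subsets and that the approximately inner automorphisms form a subgroup; both are correct and worth having been checked.
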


\begin{proof}
We claim that, under any of the conditions
(\ref{9212_AyCentLargeConds_AppInn}),
(\ref{9212_AyCentLargeConds_StrCmp}),
(\ref{9212_AyCentLargeConds_SP}),
or (\ref{9212_AyCentLargeConds_CptGen}),
the set $\set{ \af_{x} \colon x \in X}$
is contained in a pseudoperiodic subgroup of $\Aut (D)$.
For~(\ref{9212_AyCentLargeConds_AppInn}),
this follows from Lemma~\ref{L_4Y12_ApproxInn};
for~(\ref{9212_AyCentLargeConds_StrCmp}),
from Lemma~\ref{L_4Y12_TrPrsv};
for~(\ref{9212_AyCentLargeConds_SP}),
from Lemma~\ref{L_5417_SP};
and for~(\ref{9212_AyCentLargeConds_CptGen}),
from Lemma~\ref{Lem1_190201D}.
Now apply Lemma~\ref{L_4Y16_WhenPPG}
to see that
the hypotheses of Corollary~\ref{AyCentLarge} are satisfied.
\end{proof}

The case in which $D$ is purely infinite and simple is
much easier.
We give a direct proof, not depending on large subalgebras,
that $C^*_{\mathrm{r}} \big( G, \, C (X, D), \, \af \big)$
is purely infinite simple for any discrete group~$G$.
It is still true,
and will be proved below (Proposition \ref{T_9921_PICase}),
that, under the other hypotheses of Theorem~\ref{T_5418_AYLg}
the subalgebra $C^* \bigl( \Z, \, C (X, D), \, \af \bigr)_Y$
is a large subalgebra of $C^* \bigl( \Z, \, C (X, D), \, \af \bigr)$
of crossed product type.
This fact seems potentially useful,
but does not help with the analysis of any of the examples
in Section~\ref{Sec_InfRed}.

\begin{thm}\label{T_0101_PICase}
Let $G$ be a discrete group,
let $X$ be a compact space,
and suppose $G$ acts on $X$ in such a way that
the action is minimal and
for every finite set $S \subset G \setminus \{ 1 \}$,
the set
\[
\big\{ x \in X \colon {\mbox{$g x \neq x$ for all $g \in S$}} \big\}
\]
is dense in~$X$.
Let $D$ be a purely infinite simple \uca,
and let $\af \colon G \to \Aut (C (X, D))$
be an action of $G$ on $C (X, D)$
which lies over the given action of $G$ on $X$
(in the sense of Definition~\ref{D_4Y12_OverX}).
Then $C^*_{\mathrm{r}} \big( G, \, C (X, D), \, \af \big)$
is purely infinite simple.
\end{thm}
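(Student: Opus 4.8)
The plan is to deduce this from simplicity, which is already available, together with the Cuntz-comparison characterization of pure infiniteness: a simple \uca{} $A \neq \C$ is purely infinite \ifo{} $1_A \precsim_A a$ for every $a \in A_{+} \SM \{ 0 \}$. Write $A = C^*_{\mathrm{r}}(G, C (X, D), \af)$. By \Prp{P_4Y15_CPSimple} the algebra $A$ is simple, and it is not isomorphic to~$\C$ since it contains a unital copy of the (infinite dimensional) purely infinite simple algebra~$D$. Thus it suffices to fix $a \in A_{+} \SM \{ 0 \}$ and prove $1_A \precsim_A a$.

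First I would move from $a$ into $C (X, D)$. The standing hypotheses are exactly those of \Lem{L_4X21_78}, and with $B = A$ both conditions of \Lem{L_4X21_79} hold trivially, since $C (X, D) \S A$ and $A \cap C (X, D) [G] = C (X, D) [G]$ is dense in~$A$. Hence \Lem{L_4X21_79} yields $c \in C (X, D)_{+} \SM \{ 0 \}$ with $c \precsim_A a$, and the task reduces to showing $1_A \precsim_A c$.

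Next I would use pure infiniteness of~$D$ to replace $c$ by an element of the simple form $g \otimes 1_D$. Choose $x_0 \in X$ with $c (x_0) \neq 0$. Since $D$ is purely infinite simple, $1_D \precsim_D c (x_0)$, so there is $v \in D$ with $\| v c (x_0) v^* - 1_D \| < \tfrac{1}{4}$; by continuity the same estimate holds on an open \nbhd{}~$U$ of~$x_0$, so $e = (1 \otimes v) c (1 \otimes v^*)$ satisfies $e (x) \geq \tfrac{3}{4} \cdot 1_D$ for $x \in U$ and $e \precsim_{C (X, D)} c$. Taking $g \in C (X)_{+}$ with $\| g \| = 1$, $g (x_0) = 1$, and $\supp (g) \S U$, the pointwise bound $g \otimes 1_D \leq \tfrac{4}{3} (g^{1/2} \otimes 1) e (g^{1/2} \otimes 1)$ gives $g \otimes 1_D \precsim_{C (X, D)} c$, so it remains to prove $1_A \precsim_A g \otimes 1_D$.

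The hard part is this final step, where minimality is combined with the abundance of isometries in~$D$. Put $U_1 = \{ x \in X \colon g (x) > \tfrac{1}{2} \}$. By minimality the nonempty open invariant set $\bigcup_{r \in G} r^{-1} U_1$ equals~$X$, so by compactness there are $s_1, \ldots, s_n \in G$ with $X = \bigcup_{k = 1}^n s_k^{-1} U_1$; choose $f_1, \ldots, f_n \in C (X)_{+}$ with $\supp (f_k) \S s_k^{-1} U_1$ and $\sum_{k = 1}^n f_k^2 = 1$. Since $1_D$ is properly infinite, fix isometries $t_1, \ldots, t_n \in D$ with $t_j^* t_k = \dt_{jk} 1_D$, and, using $g \geq \tfrac{1}{2}$ on $\ov{U_1}$, apply Tietze's theorem to find $\ps \in C (X)$ with $\ps = g^{-1/2}$ on $\ov{U_1}$. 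Then set
\[
w = \sum_{k = 1}^n (1 \otimes t_k) (\ps \otimes 1_D)\, u_{s_k}\, (f_k \otimes 1_D) \in A.
\]
A direct computation, in which the relations $t_j^* t_k = \dt_{jk} 1_D$ annihilate all cross terms and the identities $u_{s_k} (f_k \otimes 1_D) = \big( (f_k \circ s_k^{-1}) \otimes 1_D \big) u_{s_k}$, $\ps^2 g = 1$ on $\ov{U_1}$, and $\sum_k f_k^2 = 1$ collapse each diagonal term to $f_k^2 \otimes 1_D$, shows that $w^* (g \otimes 1_D)\, w = 1_A$. Since $b^* x b \precsim_A x$ for all $b \in A$ and $x \in A_{+}$, this gives $1_A = w^* (g \otimes 1_D) w \precsim_A g \otimes 1_D \precsim_A c \precsim_A a$, as required. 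The one genuinely delicate point is the support bookkeeping in this computation—tracking how $f_k$, $\ps$, and $g$ transform under the translations $s_k$ so that the diagonal terms reduce exactly to $f_k^2 \otimes 1_D$—while the role of pure infiniteness of~$D$ is precisely to supply the orthogonal isometries $t_k$ that decouple the $n$ summands.
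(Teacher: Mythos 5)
Your proof is correct, and its skeleton is the same as the paper's: reduce to $a \in C (X, D)_{+} \SM \{ 0 \}$ via \Lem{L_4X21_79} with $B = A$, extract an element of the form $g \otimes 1_D$ below it, cover $X$ by finitely many translates of $\{ g > \tfrac{1}{2} \}$ using minimality and compactness, and then use proper infiniteness of $1_D$ to absorb the resulting multiplicity. You differ in two steps, both in the direction of being more explicit. First, the paper invokes Kirchberg's Slice Lemma to get $f \otimes b \precsim_{C (X, D)} a$ and then upgrades $b$ to $1_D$; you instead conjugate $c$ near a point $x_0$ so that it dominates $\tfrac{3}{4} \cdot 1_D$ on a neighborhood and cut down by a bump function. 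This exploits pure infiniteness of~$D$ to bypass the Slice Lemma entirely (which the paper genuinely needs only in the finite setting of \Lem{L_4Y11_Comp}). Second, where the paper runs a Cuntz-semigroup computation through $\diag \bigl( \af_{g_1} (f \otimes 1_D), \ldots, \af_{g_n} (f \otimes 1_D) \bigr) \sim_A f \otimes 1_{M_n (D)} \precsim f \otimes 1_D$, you exhibit a single explicit element $w$ built from orthogonal isometries $t_1, \ldots, t_n \in D$ and a partition of unity with $w^* (g \otimes 1_D) w = 1_A$; the two are the same idea (proper infiniteness of $1_D$ decouples the $n$ summands), but your version makes the subequivalence a one-line identity rather than a chain of comparisons. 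The computation itself checks out: the relations $t_j^* t_k = \dt_{j k} 1_D$ kill the cross terms, and $\ps^2 g = 1$ on ${\overline{U_1}}$ together with $\supp (f_k) \S s_k^{-1} U_1$ collapses each diagonal term to $f_k^2 \otimes 1_D$.
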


We saw in Proposition~\ref{P_4Y15_CPSimple}
that $C^*_{\mathrm{r}} \big( G, \, C (X, D), \, \af \big)$
is simple,
but we won't use that in this proof.

\begin{proof}[Proof of Theorem~\ref{T_0101_PICase}]
For convenience of notation,
set $A = C^*_{\mathrm{r}} \big( G, \, C (X, D), \, \af \big)$.
We also freely identify $C (X, D)$ with $C (X) \otimes D$.
We prove the result by showing that $1_A \precsim a$
for all $a \in A_{+} \setminus \{ 0 \}$.
By Lemma~\ref{L_4X21_79}
(taking $B$ there to be~$A$),
we can assume that $a \in C (X, D)_{+} \setminus \{ 0 \}$.

Use Kirchberg's Slice Lemma (Lemma 4.1.9 of~\cite{Rrd})
to find $f \in C (X)_{+} \setminus \{ 0 \}$
and $b \in D_{+} \setminus \{ 0 \}$
such that
\begin{equation}\label{Eq_0201_Slice}
f \otimes b \precsim_{C (X, D)} a.
\end{equation}
\Wolog{} $\| f \| = 1$.
Since $D$ is purely infinite and simple,
we have $1_D \precsim_{D} b$, and so it follows that
\begin{equation}\label{Eq_0201_InD}
f \otimes 1_D \precsim_{D} f \otimes b.
\end{equation}
Set $U = \bigl\{ x \in X \colon f (x) > \frac{1}{2} \bigr\}$.
By minimality of the action,
the sets $g U$ for $g \in G$ cover~$X$.
So there are $n \in \N$ and $g_1, g_2, \ldots, g_n \in G$
such that the sets $g_1 U, g_2 U, \ldots, g_n U$ cover~$X$.
The function $x \mapsto \sum_{k = 1}^n f (g_k^{-1} x)$
is strictly positive on~$X$.
Using this fact at the first step, pure infiniteness of~$D$
at the second last step,
and (\ref{Eq_0201_InD}) and~(\ref{Eq_0201_Slice})
at the last step,
we get
\begin{align*}
1_A
& \precsim_{C (X)} \sum_{k = 1}^n \af_{g_k} (f \otimes 1_D)
\\
& \precsim_{C (X)} \diag \bigl( \af_{g_1} (f \otimes 1_D), \,
       \af_{g_2} (f \otimes 1_D), \, \ldots, \,
        \af_{g_n} (f \otimes 1_D) \bigr)
\\
& = \diag \bigl( u_{g_1} (f \otimes 1_D) u_{g_1}^*, \,
       u_{g_2} (f \otimes 1_D) u_{g_2}^*, \, \ldots, \,
        u_{g_n} (f \otimes 1_D) u_{g_n}^* \bigr)
\\
& \sim_{A} \diag \bigl( f \otimes 1_D, \,
       f \otimes 1_D, \, \ldots, \,
        f \otimes 1_D \bigr)
  = f \otimes 1_{M_n (D)}
  \precsim_{C (X, D)} f \otimes 1_D
  \precsim_{A} a.
\end{align*}
This completes the proof.
\end{proof}

As promised,
we now give a result on large subalgebras when $G = \Z$.
We use two lemmas,
the first of which has a similar proof to that of
Theorem~\ref{T_0101_PICase}.

\begin{lem}\label{L_0101_SubalgPI}
Let $X$ be a \cms, let $h \colon X \to X$ be a minimal homeomorphism,
let $D$ be a purely infinite simple \uca,
and let $\af \in \Aut (C (X, D))$ lie over~$h$.
Let $Y \subset X$ be a compact subset
such that $h^n (Y) \cap Y = \varnothing$
for all $n \in \Z \setminus \{ 0 \}$.
Then $C^* \bigl( \Z, \, C (X, D), \, \af \bigr)_Y$
is purely infinite and simple.
\end{lem}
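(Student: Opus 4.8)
The plan is to prove the single assertion that $1 \precsim_B a$ for every $a \in B_{+} \setminus \{0\}$, where $B = C^* \bigl( \Z, \, C (X, D), \, \af \bigr)_Y$; this yields simplicity (a nonzero ideal contains such an~$a$, hence contains~$1$) and pure infiniteness simultaneously. I would first establish that $B$ is simple, and then deduce the comparison $1 \precsim_B a$ formally from simplicity together with the pure infiniteness of~$D$.

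For simplicity, let $J \subseteq B$ be a nonzero ideal and choose $a \in J_{+} \setminus \{0\}$. Since $C (X, D) \subseteq B$, the set $B \cap C (X, D) [\Z]$ is dense in~$B$ by~(\ref{Eq:2816Dense}), and the minimal homeomorphism~$h$ of the infinite space~$X$ has no periodic points, \Lem{L_4X21_79} (with $G = \Z$) yields $b \in C (X, D)_{+} \setminus \{0\}$ with $b \precsim_B a$, so that $b \in J$. As $J \cap C (X, D)$ is a nonzero ideal of $C (X, D)$ and $D$ is simple, we have $J \cap C (X, D) = C_0 (W, D)$ for some nonempty open $W \subseteq X$. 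The key step is then to spread $W$ across all of~$X$. Given $x_0 \in X$, the orbit of $x_0$ is dense and meets $Y$ at most once, so I can choose $n \in \Z$ with $h^{-n} (x_0) \in W$ and $x_0 \notin Y_n$ (with $Y_n$ as in \Lem{L_4X21_AYStruct}): indeed $x_0 \in Y_n$ only for the $n$ lying strictly between~$0$ and the unique time at which the orbit of $x_0$ hits~$Y$, so infinitely many admissible $n$ survive on one side. Picking $c \in C_0 (W, D)$ with $c (h^{-n} (x_0)) \neq 0$ and $g \in C (X)$ with $g (x_0) \neq 0$ and $g|_{Y_n} = 0$, the element $(g \otimes 1_D) u^n$ lies in~$B$ by \Lem{L_4X21_AYStruct}, and
\[
(g \otimes 1_D) u^n \, c \, \bigl( (g \otimes 1_D) u^n \bigr)^{*}
 = (g \otimes 1_D) \, \af^n (c) \, (g \otimes 1_D)
 \in J \cap C (X, D) = C_0 (W, D)
\]
takes the value $g (x_0)^2 \, \af_{n, x_0} \bigl( c (h^{-n} (x_0)) \bigr) \neq 0$ at~$x_0$. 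Hence $x_0 \in W$; as $x_0$ is arbitrary, $W = X$, so $1 \in J$ and $J = B$.

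For the comparison, fix $a \in B_{+} \setminus \{0\}$. As in the proof of \Thm{T_0101_PICase}, I would apply \Lem{L_4X21_79} to get $b \in C (X, D)_{+} \setminus \{0\}$ with $b \precsim_B a$, then Kirchberg's Slice Lemma (Lemma 4.1.9 of~\cite{Rrd}) to get $f \in C (X)_{+} \setminus \{0\}$ and $d \in D_{+} \setminus \{0\}$ with $f \otimes d \precsim_{C (X, D)} b$; since $D$ is purely infinite and simple, $1_D \precsim_D d$, whence $f \otimes 1_D \precsim_{C (X, D)} f \otimes d \precsim_B a$. Because $B$ is simple and unital, the nonzero element $f \otimes 1_D$ is full, so the standard fact about full positive elements (a full $c$ satisfies $1 \precsim \diag (c, \ldots, c)$ for enough copies) gives $m \in \N$ with
\[
1 \precsim_B \diag \bigl( f \otimes 1_D, \, \ldots, \, f \otimes 1_D \bigr)
   = f \otimes 1_{M_m (D)}
\]
($m$ summands). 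Finally $1_{M_m (D)} \precsim_D 1_D$ by pure infiniteness of~$D$, so $f \otimes 1_{M_m (D)} \precsim_{C (X, D)} f \otimes 1_D$, and concatenating the displayed relations gives $1 \precsim_B f \otimes 1_D \precsim_B a$. Thus $1 \precsim_B a$ for every nonzero positive~$a$, and $B$ is purely infinite and simple.

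The crux, and the only place where $B$ genuinely differs from the full crossed product of \Thm{T_0101_PICase}, is that $u \notin B$: one cannot transport positive elements along the orbit by conjugating with the $u^n$, but only with the restricted elements $(g \otimes 1_D) u^n$ having $g|_{Y_n} = 0$ (\Lem{L_4X21_AYStruct}). The technical heart is therefore the combinatorial claim in the simplicity step — that for each $x_0$ an admissible~$n$ exists, so that the conjugation lands inside~$B$ — which is exactly where the hypothesis that $Y$ meets each orbit at most once enters. Once simplicity is secured in this way, the passage to pure infiniteness is purely formal and uses nothing about $X$ or~$h$ beyond what is already encoded in the simple element $f \otimes 1_D$.
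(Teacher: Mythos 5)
Your argument is correct, and it reorganizes the endgame differently from the paper while sharing the same technical core. Both proofs reduce, via \Lem{L_4X21_79} and \Lem{L_4X21_AYStruct}, to showing $1 \precsim_B a$ for $a \in C (X, D)_{+} \setminus \{ 0 \}$, and both use Kirchberg's Slice Lemma plus pure infiniteness of~$D$ to reach $f \otimes 1_D \precsim_B a$ for some $f \in C (X)_{+} \setminus \{ 0 \}$. From there the paper stays constructive: for each $x \in X$ it produces $l_x \in C (X)_{+}$ with $l_x (x) > \frac{1}{2}$ and $l_x \otimes 1_D \precsim_B f \otimes 1_D$ by conjugating with $(r \otimes 1_D) u^{\pm n} \in B$ (where $r$ vanishes on the relevant $Y_n$), covers $X$ by finitely many such sets, and collapses the resulting strictly positive sum using pure infiniteness of~$D$ as in \Thm{T_0101_PICase}. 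You instead transport ideals rather than functions along the orbit to prove simplicity of~$B$ first, and then finish with the soft argument that a full positive element in a simple unital algebra satisfies $1 \precsim \diag (c, \ldots, c)$, followed by $1_{M_m (D)} \precsim_D 1_D$. The crux is identical in both versions — the elements $(g \otimes 1_D) u^n$ lie in $B$ exactly when $g |_{Y_n} = 0$, and since $Y$ meets each orbit at most once the admissible $n$ for a given point form a half-line, so a dense half-orbit is always reachable — and your one-sidedness argument is sound, although your parenthetical description of the excluded set is slightly off: when $x_0 \in h^k (Y)$ with $k \geq 0$, the bad integers are $n \geq k + 1$ (a half-line, not an interval between $0$ and the hitting time); the conclusion you actually use, that infinitely many admissible $n$ survive on one side, is nevertheless correct. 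Your packaging buys a cleaner, more formal passage from simplicity to pure infiniteness and isolates simplicity of $B$ as an independently useful fact; the paper's version avoids invoking fullness and matrix amplification, and reuses the explicit covering computation already written out for \Thm{T_0101_PICase}. One shared caveat: both proofs invoke \Lem{L_4X21_79}, which requires the topological freeness of the action on~$X$; this holds because $h$ is a minimal homeomorphism of an infinite space, an assumption implicit in the lemma (and which you at least state explicitly).
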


\begin{proof}
For convenience of notation,
set $B = C^* \big( \Z, \, C (X, D), \, \af \big)_Y$.
We also freely identify $C (X, D)$ with $C (X) \otimes D$.
We prove the result by showing that $1_A \precsim a$
for all $a \in A_{+} \setminus \{ 0 \}$.
By Lemma~\ref{L_4X21_79}
and Lemma~\ref{L_4X21_AYStruct},
we can assume that $a \in C (X, D)_{+} \setminus \{ 0 \}$.
Using Kirchberg's Slice Lemma
and pure infiniteness of~$D$
as in the proof of Theorem~\ref{T_0101_PICase},
there is $f \in C (X)_{+} \setminus \{ 0 \}$
such that $f \otimes 1_D \precsim_{C (X, D)} a$.

We now claim that for every $x \in X$ there is $l_x \in C (X)_{+}$
such that $l_x \otimes 1_D \precsim_{B} f \otimes 1_D$
and $l_x (x) > \frac{1}{2}$.
Given this,
the proof is finished as in the last computation
in the proof of
Theorem~\ref{T_0101_PICase}.

To prove the claim,
set $U = \bigl\{ x \in X \colon f (x) > \frac{1}{2} \bigr\}$.
If $x \in U$,
take $l_x = f$.

Suppose next that $x \not\in U$
and $x \not\in \bigcup_{m = 0}^{\I} h^m (Y)$.
By minimality of~$h$,
there is $n \in \N$ such that $x \in h^{n} (U)$.
Choose $r \in C (X)_{+}$ such that $r (y) = 0$
for all $y \in \bigcup_{m = 0}^{n - 1} h^m (Y)$
and $r (x) = 1$.
Then, letting $u \in C^* \big( \Z, \, C (X, D), \, \af \big)$
be the standard unitary (as in Notation~\ref{ntn1-181221D}),
we have $(r \otimes 1_D) u^n \in B$
by Lemma~\ref{L_4X21_AYStruct}.
Taking $l_x = r^2 \cdot (f \circ h^{- n})$,
we have $l_x (x) = f ( h^{- n} (x)) > \frac{1}{2}$.
Also, $(r u^n) (f \otimes 1_D) (r u^n)^* = l_x \otimes 1_D$,
so $l_x \otimes 1_D \precsim_{B} f \otimes 1_D$.

Finally, suppose $x \not\in U$
and $x \in \bigcup_{m = 0}^{\I} h^m (Y)$.
Then $x \not\in \bigcup_{m = 1}^{\I} h^{- m} (Y)$.
By minimality of~$h$,
there is $n \in \N$ such that $x \in h^{- n} (U)$.
Choose $r \in C (X)_{+}$ such that $r (y) = 0$
for all $y \in \bigcup_{m = 1}^{n} h^{- m} (Y)$
and $r (x) = 1$.
Then $(r \otimes 1_D) u^{- n} \in B$
by Lemma~\ref{L_4X21_AYStruct}.
Taking $l_x = r^2 \cdot (f \circ h^{n})$,
we have $l_x (x) = f ( h^{n} (x)) > \frac{1}{2}$.
Also, $(r u^{- n}) (f \otimes 1_D) (r u^{- n})^* = l_x \otimes 1_D$,
so $l_x \otimes 1_D \precsim_{B} f \otimes 1_D$.
This completes the proof.
\end{proof}

\begin{lem}\label{L_9Z01_KeepLarge}
Let $X$ be a \chs,
let $G$ be a discrete group,
and let $D$ be a \ca.
Let $(g, x) \mapsto g x$ be an action of $G$ on~$X$,
and let $\af \colon G \to \Aut (C (X, D))$
be an action of $G$ on $C (X, D)$
which lies over $(g, x) \mapsto g x$
in the sense of Definition~\ref{D_4Y12_OverX}.
Then for every
$b \in C^*_{\mathrm{r}} \bigl( G, \, C (X, D), \, \af \bigr)$
and every $\ep > 0$,
there is a finite set $T \S G$ and a nonempty open set $V \S X$
such that,
whenever $x \in V$
and $f \in C (X)$ satisfy $0 \leq f \leq 1$
and $f (g x) = 1$ for all $g \in T$,
then $\| f b \| > \| b \| - \ep$.
\end{lem}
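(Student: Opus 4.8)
The plan is to reduce the assertion to the dense $*$-subalgebra $C(X,D)[G]$ and then exploit the compactness of $X$ together with the density hypothesis on points with large (nonfixed) orbits. Since $\|f b\| > \|b\| - \ep$ is an open condition, it suffices to prove the inequality for a $b'$ close to~$b$ in norm; so first I would choose $b' = \sum_{g \in S} b_g u_g \in C(X,D)[G]$ with $\|b - b'\| < \ep/3$ and $S \subset G$ finite. Replacing $b$ by $b'$ throughout costs only a bounded amount in the final estimate, so I may assume $b$ itself lies in $C(X,D)[G]$ and has finite support $S \subset G$ (we may take $1 \in S$).

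\emph{The main idea} is to pick a point $x_0 \in X$ at which $\|b\|$ is essentially attained after a suitable localization. Concretely, since $b = \sum_{g \in S} b_g u_g$ and $\|b\| = \sup$ of norms in irreducible representations, and these representations factor (over~$C(X,D)$) through point evaluations indexed by $X$ (as in the proof of Proposition~\ref{P_4Y15_CPSimple}, using $X \cong \Prim(C(X,D))$ up to the $D$-fiber), I would locate a point $x_0 \in X$ and a nearby neighborhood where the ``diagonal part'' $b_1$ together with the full operator achieves norm close to $\|b\|$. The key leverage is that the density hypothesis lets me take $x_0$ so that the finitely many translates $\{g x_0 : g \in S\}$ are \emph{distinct} (for $g \neq 1$, avoiding $x_0$); this is exactly the topologically-free condition already used above. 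Then I set $T = S$ and take $V$ a small enough open neighborhood of $x_0$ so that the translates $\{gV : g \in S\}$ are pairwise disjoint and the norm of $b$, cut down appropriately, is controlled on $V$.

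\emph{For the cutdown estimate}, the point is that if $f \in C(X)$ satisfies $0 \le f \le 1$ and $f(gx) = 1$ for all $g \in T = S$, then multiplication by $f$ (viewed in $C(X) \subset C(X,D)$) does not decrease the relevant matrix coefficients of $b$ near $x_0$, because $f$ equals $1$ at each of the points $gx$ that govern the terms $b_g u_g$. More precisely, in the regular representation localized at the orbit of a point $x \in V$, the covariance relation $u_g a u_g^* = \af_g(a)$ means that the off-diagonal terms $b_g u_g$ connect the point-evaluation fibers at $x$ and $gx$; multiplying by $f$ on the appropriate side preserves these terms when $f = 1$ at those points. I would therefore estimate $\|f b\|$ from below by evaluating in a representation supported on the orbit segment $\{gx : g \in S\}$ and checking that the compression of $fb$ agrees with that of $b$ up to the error already absorbed.

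\emph{The hard part} will be the bookkeeping in the final lower bound: making precise ``the norm of $b$ is nearly attained at a point'' in a crossed product (rather than in $C(X,D)$ alone), and verifying that the condition $f(gx) = 1$ for $g \in T$ genuinely leaves the dominant matrix coefficient of $b$ untouched. This requires choosing the representation (equivalently, the orbit) carefully and using that the off-diagonal contributions involve only the finitely many group elements in $S$, so that a single finite set $T = S$ and a single neighborhood $V$ suffice uniformly for all admissible~$f$. Once the representation and the disjointness of translates are set up, the estimate $\|fb\| > \|b\| - \ep$ follows by a direct computation analogous to the norm estimate in Lemma~\ref{L_4X21_78}, and the proof is complete.
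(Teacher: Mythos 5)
Your overall strategy is the same as the paper's: approximate $b$ by $c \in C(X, D)[G]$, localize via the regular representations $\sm_x$ induced from the point evaluations $\ev_x$ (whose direct sum is faithful on the reduced crossed product), pick $x_0$ where the norm is nearly attained, shrink to a neighborhood $V$ of $x_0$ by continuity, and observe that cutting down by $f$ leaves the relevant vector fixed. However, there is a genuine gap in your choice $T = S$. In the regular representation $\sm_{x_0}$ on $l^2 (G, H)$, a vector $\xi$ that approximately norms $\sm_{x_0} (c)$ must in general be spread over a finite set $S_2 \S G$ of basis directions, not concentrated at the identity, because the terms $c_g u_g$ can interfere constructively only on such spread-out vectors. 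Since $\sm_x (c)$ sends $\dt_h \otimes \xi_h$ to a sum of terms supported at $\dt_{g h}$ involving the values $c_g (g h x)$, the condition needed to guarantee $\sm_x (f c) \xi = \sm_x (c) \xi$ is $f (g h x) = 1$ for all $g \in S_1$ and $h \in S_2$; this is why the paper takes $T = \{ g h \colon g \in S_1, \ h \in S_2 \}$. With $T = S$ alone the statement fails: for the irrational rotation algebra and $c = 1 + u$ (so $\| c \| = 2$), a function $f$ with $0 \leq f \leq 1$ that equals $1$ at $x$ and $h (x)$ but vanishes at all other orbit points gives $\| f c \| \leq \sqrt{3} < 2$, so no neighborhood $V$ can rescue the estimate.

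A secondary point: you invoke "the density hypothesis" (topological freeness) to arrange that the translates $g x_0$ for $g \in S$ are distinct and that the sets $g V$ are disjoint. No such hypothesis appears in this lemma --- it is stated for an arbitrary action of a discrete group --- and none is needed: the condition $f (g x) = 1$ for $g \in T$ makes sense and does its job whether or not the points $g x$ are distinct, and disjointness of the translates of $V$ plays no role in the paper's argument. You should remove that step rather than add a hypothesis the lemma does not have.
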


If $D$ is not unital, then the product $f b$ is realized via
the inclusion of $C (X)$ in
$M \bigl( C^* \bigl( G, \, C (X, D), \, \af \bigr) \bigr)$.

\begin{proof}[Proof of Lemma~\ref{L_9Z01_KeepLarge}]
Let $(g, x) \mapsto \af_{g, x}$ be as in Definition~\ref{D_4Y12_OverX}.

Fix a faithful representation $\rh$ of $D$ on a Hilbert space~$H$.
Then for every $x \in X$ there is a representation
$\rh \circ \ev_x \colon C (X, D) \to L (H)$.
Let $(v_x, \pi_x)$ be the corresponding regular covariant
representation of $\bigl( G, \, C (X, D), \, \af \bigr)$
on $l^2 (G, H)$,
and let
$\sm_x \colon C^* \bigl( G, \, C (X, D), \, \af \bigr) \to l^2 (G, H)$
be its integrated form.
We identify $l^2 (G, H)$ with $l^2 (G) \otimes H$,
and we write $\dt_g \in l^2 (G)$ for the standard basis vector
corresponding to $g \in G$.
For later use, we recall that if $S_1, S_2 \S G$ are finite sets,
$d \in C (X, D) [G]$
is given as $d = \sum_{g \in S_1} d_g u_g$
with $d_g \in C (X, D)$ for $g \in S_1$,
and $\xi \in l^2 (G) \otimes H$ has the form
$\xi = \sum_{h \in S_2} \dt_h \otimes \xi_h$
with $\xi_h \in H$ for $h \in S_2$,
then
\begin{equation}\label{Eq_9X01_IndRepFormula}
\sm_x (d) \xi
 = \sum_{g \in S_1} \sum_{h \in S_2}
   \dt_{g h} \otimes
       \rh \bigl( \af_{h^{-1} g^{-1}, x} (d_g (g h x)) \bigr) \xi_h.
\end{equation}

The representation $\bigoplus_{x \in X} \rh \circ \ev_x$
is a faithful representation of $C (X, D)$,
so $\bigoplus_{x \in X} \sm_x$
is a faithful representation of
$C^*_{\mathrm{r}} \bigl( G, \, C (X, D), \, \af \bigr)$.
Therefore there exists $x_0 \in X$ such that
$\| \sm_{x_0} (b) \| > \| b \| - \frac{\ep}{5}$.
Choose $c \in C (X, D) [G]$ such that
$\| b - c \| < \frac{\ep}{5}$.
In particular,
$\| \sm_{x_0} (c) \| > \| b \| - \frac{2 \ep}{5}$.
Choose $\xi \in l^2 (G, H)$ with finite support such that
$\| \xi \| = 1$ and
$\| \sm_{x_0} (c) \xi \| > \| b \| - \frac{3 \ep}{5}$.
Write $c = \sum_{g \in S_1} c_g u_g$
and $\xi = \sum_{h \in S_2} \dt_h \otimes \xi_h$
with $S_1, S_2 \S G$ finite,
$c_g \in C (X, D)$ for $g \in S_1$,
and $\xi_h \in H$ for $h \in S_2$.
Use Lemma~\ref{L_9Y30_MixCont} to choose
an open set $V \S X$ such that $x_0 \in V$
and such that for all $x \in V$, $g \in S_1$, and $h \in S_2$, we have
\begin{equation}\label{Eq_9Z01_Choice}
\bigl\| \af_{h^{-1} g^{-1}, x} (c_g (g h x))
    - \af_{h^{-1} g^{-1}, x_0} (c_g (g h x_0) ) \bigr\|
  < \frac{\ep}{5 \card (S_1) \card (S_2) + 1}.
\end{equation}
Set
\[
T = \bigl\{ g h \colon {\mbox{$g \in S_1$ and $h \in S_2$}} \bigr\}.
\]
Now let $x \in V$,
and suppose $f \in C (X)$ satisfies $0 \leq f \leq 1$
and $f (g x) = 1$ for all $g \in T$.
The condition on~$f$
and the formula~(\ref{Eq_9X01_IndRepFormula})
imply that $\sm_x (f c) \xi = \sm_x (c) \xi$.
Applying~(\ref{Eq_9X01_IndRepFormula}) again,
and using~(\ref{Eq_9Z01_Choice}) at the second step,
we get
\begin{equation}\label{Eq_9Z01_xx0}
\| \sm_x (c) \xi - \sm_{x_0} (c) \xi \|
 \leq \sum_{g \in S_1} \sum_{h \in S_2}
   \bigl\| \af_{h^{-1} g^{-1}, x} (c_g (g h x))
    - \af_{h^{-1} g^{-1}, x_0} (c_g (g h x_0) ) \bigr\|
 < \frac{\ep}{5}.
\end{equation}
Therefore, using $\| f \| \leq 1$ at the second step
and $\| b - c \| < \frac{\ep}{5}$ and the second and fifth steps,
as well as (\ref{Eq_9Z01_xx0}) at the fourth step,
\begin{align*}
\| f b \|
& \geq \| \sm_x (f b) \|
  > \| \sm_x (f c) \xi \| - \frac{\ep}{5}
  = \| \sm_x (c) \xi \| - \frac{\ep}{5}
  > \| \sm_{x_0} (c) \xi \| - \frac{2 \ep}{5}
\\
& > \| \sm_{x_0} (b) \xi \| - \frac{3 \ep}{5}
  > \| \sm_{x_0} (b) \| - \frac{4 \ep}{5}
  > \| \sm_{x_0} (b) \| - \frac{4 \ep}{5}
  > \| b \| - \ep,
\end{align*}
as desired.
\end{proof}

\begin{prp}\label{T_9921_PICase}
Let $X$ be a compact metric space,
let $h \colon X \to X$ be a minimal homeomorphism,
let $Y \subset X$ be a compact subset
such that $h^{n} (Y) \cap Y = \varnothing$
for all $n \in \Z \setminus \set{0}$,
let $D$ be a purely infinite simple unital \ca,
and let $\af \in \Aut (C (X, D))$ lie over~$h$.
Then
$C^* \bigl( \Z, \, C (X, D), \, \af \bigr)_Y$
is a large subalgebra of
$C^* \bigl( \Z, \, C (X, D), \, \af \bigr)$
of crossed product type.
\end{prp}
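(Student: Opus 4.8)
The plan is to show that $B := C^* \bigl( \Z, \, C (X, D), \, \af \bigr)_Y$ is a large subalgebra of $A := C^* \bigl( \Z, \, C (X, D), \, \af \bigr)$ of crossed product type by verifying the largeness conditions directly, following the scheme of the proof of \Thm{T_5418_AYLg} (so with $C = C (X, D)$ and $G = \{ u \}$, organized as in Proposition~4.11 of~\cite{PhLg}), but replacing every appeal to traces or to the pseudoperiodicity hypothesis by pure infiniteness. The simplicity needed for condition~(1) is furnished by \Thm{T_0101_PICase}, which gives that $A$ is purely infinite and simple, and \Lem{L_0101_SubalgPI} tells us that $B$ is purely infinite and simple as well.

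The decisive feature is that every Cuntz-comparison requirement collapses: in a purely infinite simple \ca{} one has $c \precsim d$ for all $c, d \in A_{+} \setminus \{ 0 \}$. Hence condition~(2d) of Proposition~4.11 of~\cite{PhLg} holds by taking any nonzero $f \in C (X)_{+}$ and invoking pure infiniteness of $A$ and of $B$ to get $f \precsim_A a$ and $f \precsim_B b$; likewise the domination $g \precsim_B b$ of condition~(3d) is automatic for every $g$ once $b \in B_{+} \setminus \{ 0 \}$. Consequently \Lem{L_4Y11_Comp} and Corollary~\ref{C_5418_CompCX}, whose whole purpose in the finite case was to manufacture such comparisons from pseudoperiodicity, play no role here.

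What remains is the norm condition built into largeness — that the cut-down $g \in B$ stays small, in the sense that $1 - g$ nearly preserves the norm of the given test element $x \in A_{+}$ with $\| x \| = 1$. In the finite case this is extracted from the conditional expectation and tracial data, both unavailable now; in their place I would use \Lem{L_9Z01_KeepLarge}. Applied to $b = x$ and $\ep$, it yields a finite set $T \subset \Z$ and a nonempty open $V \subset X$ such that any $f \in C (X)$ with $0 \le f \le 1$ and $f ( h^{n} (x_0) ) = 1$ for all $n \in T$ (for some $x_0 \in V$) satisfies $\| f x \| > \| x \| - \ep$, from which the two-sided estimate $\| (1 - g) x (1 - g) \| > 1 - \ep$ follows after a harmless adjustment of constants. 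The cut-down itself is built as in condition~(3) of \Thm{T_5418_AYLg}: approximate the $a_j$ by $c_j = \sum_{l = -N}^{N} c_{j, l} u^{l} \in C (X, D) [\Z]$, then take $g \in C (X)_{+}$ equal to $1$ near $\bigcup_{|l| \le N} h^{l} (Y)$, so that $(1 - g) c_{j, l} \in C_0 (X \setminus Y_l, \, D)$ and hence $(1 - g) c_j \in B$ by \Lem{L_4X21_AYStruct}; this delivers conditions~(3a)--(3c).

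The main obstacle is reconciling the two demands placed on $f = 1 - g$: it must equal $1$ on the finite orbit segment $\{ h^{n} (x_0) \colon n \in T \}$ supplied by \Lem{L_9Z01_KeepLarge} (to secure the norm estimate), while $g$ must equal $1$ near $\bigcup_{|l| \le N} h^{l} (Y)$ (to secure membership in $B$). These coexist precisely when the segment avoids those iterates of $Y$. Since $h^{n} (Y) \cap Y = \varnothing$ for $n \ne 0$ forces $Y$ to have empty interior, the set $\bigcup_{n \in \Z} h^{n} (Y)$ is meager by the Baire category theorem, so one may shrink $V$ and pick $x_0 \in V$ with $\{ h^{n} (x_0) \colon n \in T \}$ disjoint from $\bigcup_{|l| \le N} h^{l} (Y)$. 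Then $f$ and $g$ with the prescribed behaviours exist simultaneously, and combining the resulting norm estimate with the free comparison $g \precsim_B b$ completes the verification that $B$ is a large subalgebra of $A$ of crossed product type.
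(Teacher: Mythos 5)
Your proposal is correct and follows essentially the same route as the paper's proof: verify the largeness conditions directly, get the Cuntz comparisons for free from pure infiniteness of $A$ (Theorem~\ref{T_0101_PICase}) and of $B$ (Lemma~\ref{L_0101_SubalgPI}), obtain the norm condition from Lemma~\ref{L_9Z01_KeepLarge}, and reconcile the two requirements on the cut-down function by choosing the base point off the nowhere dense set $\bigcup_{|n| \leq N} h^n (Y)$ translated by the finite set from that lemma. The paper does exactly this, down to the use of Lemma~\ref{L_4X21_AYStruct} for membership of $(1-g) c_j$ in $B$.
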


\begin{proof}
We verify directly the conditions of Definition~4.9 of~\cite{PhLg}.
We follow Notation~\ref{N_4Y12_CPN}.
Set
\[
A = C^* \bigl( \Z, \, C (X, D), \, \af \bigr),
\qquad
B = C^* \bigl( \Z, \, C (X, D), \, \af \bigr)_Y,
\]
\[
C = C (X, D),
\andeqn
G = \{ u \}.
\]
All parts of condition~(1)
in Definition~4.9 of~\cite{PhLg}
are obvious.

We verify condition~(2) there.
Let $m \in \N$,
let $a_1, a_2, \ldots, a_m \in A$,
let $\ep > 0$,
let $b_1 \in A_{+}$ satisfy $\| b_1 \| = 1$,
and let $b_2 \in B_{+} \SM \{ 0 ]$.
We will part of the verification of
condition~(3)
in Proposition 4.11 of~\cite{PhLg}
in the proof of Theorem~\ref{T_5418_AYLg}.
Choose $c_1, c_2, \ldots, c_m \in C (X, D) [\Z]$
such that $\| c_j - a_j \| < \ep$
for $j = 1, 2, \ldots, m$.
(This estimate is condition~(2b).)
Choose $N \in \N$
such that for $j = 1, 2, \ldots, m$
there are $c_{j, l} \in C (X, D)$
for $l = -N, \, - N + 1, \, \ldots, \, N - 1, \, N$
with
\[
c_j = \sum_{l = -N}^N c_{j, l} u^l.
\]
Apply Lemma~\ref{L_9Z01_KeepLarge},
getting a finite set $T \S \Z$ and a nonempty open set $U \S X$
such that,
whenever $x \in U$
and $f \in C (X)$ satisfy $0 \leq f \leq 1$
and $f (h^k (x)) = 1$ for all $k \in T$,
then $\| f b_1^{1 / 2} \| > 1 - \frac{\ep}{2}$.
Define
\[
S = \bigl\{ n - k \colon
      {\mbox{$k \in T$ and $n \in [- N, N] \cap \Z$}} \bigr\},
\]
which is a finite subset of~$\Z$.
Then $\bigcup_{n \in S} h^n (Y)$ is nowhere dense in~$X$,
so there exists $x_0 \in W$
such that $x_0 \not\in \bigcup_{n \in S} h^n (Y)$.
This choice implies that
\[
\{ h^k (x_0) \colon k \in T \} \cap \bigcup_{n = - N}^{N} h^n (Y)
  = \E,
\]
so there is $g \in C (X)$
such that $0 \leq g \leq 1$,
$g (y) = 1$ for all $y \in \bigcup_{n = - N}^{N} h^n (Y)$,
and $g (h^k (x)) = 0$ for all $k \in T$.

Condition~(1a) in Definition~4.9 of~\cite{PhLg}
holds by construction.
The proof that $(1 - g) c_j \in B$ for $j = 1, 2, \ldots, m$
(condition~(2c) in Definition~4.9 of~\cite{PhLg})
is the same as at the end of the proof of Theorem~\ref{T_5418_AYLg}:
follow
the corresponding part of the proof of Theorem~7.10 of~\cite{PhLg},
except using \Lem{L_4X21_AYStruct}
in place of Proposition~7.5 of~\cite{PhLg}.
Lemma~\ref{L_0101_SubalgPI}
implies $1_A \precsim_A b_1$ and $1_A \precsim_B b_2$,
so the requirements $g \precsim_A b_1$ and $g \precsim_B b_2$
(condition~(2d) in Definition~4.9 of~\cite{PhLg})
follow immediately.
For condition~(2e) in Definition~4.9 of~\cite{PhLg},
we estimate, using $1 - g (h^k (x_0)) = 1$
for all $k \in T$ at the second step:
\[
\bigl\| (1 - g) b_1 (1 - g) \bigr\|
 = \| (1 - g) b_1^{1 / 2} \|^2
 > \bigl( 1 - \frac{\ep}{2} \bigr)^2
 > \ep.
\]
This completes the proof.
\end{proof}

\section{Recursive structure for orbit breaking
 subalgebras}\label{RecStruct}

In this section, under appropriate conditions
we will show that the orbit breaking subalgebras
of Definition~\ref{D_4Y12_OrbSubalg}
have a tractable recursive structure,
as subalgebras of certain homogeneous algebras
tensored with $D$. We start by introducing the
formalism for a generalization of the recursive
subhomogeneous algebras introduced in
\cite{PhRsha1} that were crucial for the analysis in
\cite{QLinPhDiff} and \cite{HLinPh}.

\begin{dfn}\label{pullback}
Let $A$, $B$, and $C$ be \ca{s}, and let $\ph \colon
A \to C$ and $\psi \colon B \to C$ be homomorphisms.
Then the associated {\emph{pullback \ca{}}} $A
\oplus_{C, \ph, \psi} B$ is defined by
\[
A \oplus_{C, \ph, \psi} B
 = \bset{ (a, b) \in A \oplus B \colon \ph (a) = \psi (b) }.
\]
\end{dfn}

We frequently write $A \oplus_{C} B$ when the maps $\ph$
and $\psi$ are understood.
If $C = 0$ we just get $A \oplus B$.

\begin{dfn}\label{RSHA}
Let $D$ be a simple unital \ca.
The class of
{\emph{recursive subhomogeneous algebras over $D$}} is
the smallest class $\mathcal{R}$ of \ca{s} that is
closed under isomorphism and such that:
\begin{enumerate}
\item\label{9223_RSHA_1}
If $X$ is a compact Hausdorff space and $n \geq 1$,
then $C (X, M_{n} (D)) \in \mathcal{R}$.
\item\label{9223_RSHA_2}
If $B \in \mathcal{R}$, $X$ is compact Hausdorff,
$n \geq 1$, $X^{(0)} \subset X$ is closed (possibly empty),
$\ph \colon B \to C (X^{(0)}, \, M_{n} (D))$
is any unital homomorphism
(the zero \hm{} if $X^{(0)}$ is empty), and
$\rho \colon C (X, M_{n} (D)) \to C (X^{(0)}, \, M_{n} (D))$ is the
restriction homomorphism, then the pullback
\begin{align*}
& B \oplus_{C (X^{(0)}, \, M_{n} (D)), \, \ph, \, \rh)} C (X, M_{n} (D))
\\
& \hspace*{3em} {\mbox{}}
  = \bset{ (b, f) \in B \oplus C (X, M_{n} (D)) \colon
        \ph (b) = f |_{(X^{(0)}} }
\end{align*}
is in $\mathcal{R}$.
\end{enumerate}
\end{dfn}

Taking $D = \C$ in this definition gives the usual definition
for the class of recursive subhomogeneous algebras.
(See \cite{PhRsha1}.)
This definition makes sense for any unital \ca~$D$,
but it is not clear whether it is appropriate in this generality.

\begin{dfn}\label{RSHAMachinery}
We adopt the following standard notation for recursive
subhomogeneous algebras over $D$.
The definition implies
that if $R$ is any recursive subhomogeneous algebra over
$D$, then $R$ may be written in the form
\[
R \cong
  \left[ \cdots \left[ \left[ C_{0} \oplus_{C_{1}^{(0)}, \ph_1, \rh_1}
              C_{1} \right]
    \oplus_{C_{2}^{(0)}, \ph_2, \rh_2} \right] \cdots \right]
   \oplus_{C_{l}^{(0)}, \ph_l, \rh_l} C_{l},
\]
with $C_{k} = C (X_{k}, \, M_{n (k)} (D))$ for compact Hausdorff
spaces $X_{k}$ and positive integers $n (k)$, and with
$C_{k}^{(0)} = C \bigl( X_{k}^{(0)}, \, M_{n (k)} (D) \bigr)$
for compact
subsets $X_{k}^{(0)} \subset X_{k}$ (possibly empty),
where the maps $\rho_{k} \colon C_{k} \to C_{k}^{(0)}$ are
always the restriction maps.
An expression of this type for
$R$ will be referred to as a {\emph{decomposition}} of~$R$,
and the notation that appears here will be referred to as the
{\emph{standard notation for a decomposition}}.
We associate the following objects to this decomposition.
\begin{enumerate}
\item\label{9223_RSHAMachinery_1}
Its {\emph{length}} $l$.
\item\label{9223_RSHAMachinery_2}
The {\emph{$k$-th stage algebra}}
\[
R^{(k)}
 = \left[ \cdots \left[ \left[ C_{0} \oplus_{C_{1}^{(0)}}
      C_{1} \right] \oplus_{C_{2}^{(0)}} C_{2} \right] \cdots \right]
      \oplus_{C_{k}^{(0)}} C_{k}.
\]
\item\label{9223_RSHAMachinery_3}
Its {\emph{base spaces}} $X_{0},\ldots,X_{l}$ and {\emph{total space}}
$X = \coprod_{k = 0}^{l} X_{k}$.
\item\label{9223_RSHAMachinery_4}
Its {\emph{matrix sizes}} $n (0),\ldots,n (l)$ and {\emph{matrix
size function}} $m \colon X \to \Z_{\geq 0}$ defined by $m(x) = n (k)$
when $x \in X_{k}$.
(This is called the {\emph{matrix size of $R$ at~$x$}}.)
\item\label{9223_RSHAMachinery_5}
Its {\emph{minimum matrix size}} $\min_{k} n (k)$ and
{\emph{maximum matrix size}} $\max_{k} n (k)$.
\item\label{9223_RSHAMachinery_6}
Its {\emph{topological dimension}}
$\dim (X) = \max_{k} \dim (X_{k})$
and {\emph{topological dimension function}}
$d \colon X \to \Z_{\geq 0}$, defined by $d(x) = \dim (X_{k})$
for $x \in X_{k}$.
(This is called the {\emph{topological dimension of $R$ at $x$}}.)
\item\label{9223_RSHAMachinery_7}
Its {\emph{standard representation}}
$\sm
 = \sm_{R} \colon R
   \to \bigoplus_{k = 0}^{l} C (X_{k}, \, M_{n (k)} (D))$,
defined by
forgetting the restriction to a subalgebra in each of the fibered
products in the decomposition.
\item\label{9223_RSHAMachinery_8}
The associated {\emph{evaluation maps}}
$\ev_{x} \colon R \to M_{n (k)} (D)$,
defined to be the restriction of the usual evaluation
map on $\bigoplus_{k = 0}^{l} C (X_{k}, \, M_{n (k)} (D))$ to $R$,
when $R$
is identified with a subalgebra of this algebra through the standard
representation $\sm_{R}$.
\end{enumerate}
\end{dfn}

A decomposition of an algebra $R$ as a
recursive subhomogeneous algebra over $D$ may be highly
nonunique, which in the case $D = \C$ is clear from examples
in~\cite{PhRsha1}.
Moreover, the matrix sizes are not uniquely
determined even if all the other data has already been chosen,
which is easily seen through
the example of the $2^{\infty}$~UHF algebra.

\begin{ntn}\label{StronglySelfAbs}
Throughout, we let $\D$ denote a separable, unital \ca{}
that is strongly selfabsorbing in the sense of \cite{TomsWinter2}:
there
exists an isomorphism $\D \cong \D \ten \D$ that is unitarily
equivalent to $a \mapsto a \ten 1_{\D}$.
\end{ntn}

Examples include the Cuntz algebras
$\mathcal{O}_{2}$ and $\mathcal{O}_{\infty}$,
and the Jiang-Su algebra $\JS$,
which is a simple, separable, unital, infinite dimensional,
strongly selfabsorbing, nuclear \ca{} having the same
Elliott invariant
as the complex numbers $\C$.
(See \cite{JiangSu} for its construction).

\begin{dfn}\label{DStableDefn}
Adopt Notation~\ref{StronglySelfAbs}.
A separable \ca{} $D$ is called {\emph{$\D$-stable}}
if there is
an isomorphism $D \ten \D \cong D$.
\end{dfn}

It is clear that if $D$ is $\D$-stable, then so are $M_{n} (D)$
and $C (X, D)$.
With appropriate assumptions on the algebra $D$, we can give some
results about the $\D$-stability of recursive subhomogeneous algebras
over $C$.

\begin{lem}\label{DStablePullback}
Adopt Notation~\ref{StronglySelfAbs}.
Let $A$, $B$, and $C$ be separable unital \ca{s}
(allowing $C = 0$),
and let $\ph \colon A \to C$ and $\psi \colon B \to C$
be unital homomorphisms.
Let $P = A \oplus_{C, \ph, \ps} B$ be the associated pullback
(Definition~\ref{pullback}).
If $\ps$
is surjective and both $A$ and $B$ are $\D$-stable, then $P$ is
$\D$-stable.
\end{lem}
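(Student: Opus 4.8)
The plan is to realize $P$ as an extension of $A$ by an ideal of~$B$, and then to invoke the permanence of $\D$-stability under passage to ideals and under extensions. This reduces the statement to the structural results for strongly selfabsorbing algebras in~\cite{TomsWinter2}, and the surjectivity hypothesis on $\ps$ enters at exactly one point: it is what lets us produce the quotient map.

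First I would consider the coordinate projection $\pi \colon P \to A$ given by $\pi (a, b) = a$, which is a unital $*$-\hm. It is \emph{surjective}: given $a \in A$, surjectivity of $\ps$ provides $b \in B$ with $\ps (b) = \ph (a)$, so that $(a, b) \in P$ and $\pi (a, b) = a$. This forces us to project onto the factor~$A$ rather than~$B$, since the projection $(a, b) \mapsto b$ need not be surjective (its range would require $\ps (b) \in \ph (A)$). Its kernel is
\[
\Kern (\pi)
 = \bset{ (0, b) \colon b \in \Kern (\ps) }
 \cong \Kern (\ps),
\]
a (closed, two-sided) ideal of~$B$. Thus we obtain a short exact sequence of separable \ca{s}
\[
0 \longrightarrow \Kern (\ps) \longrightarrow P
    \stackrel{\pi}{\longrightarrow} A \longrightarrow 0 .
\]
(When $C = 0$ this is just the split sequence $0 \to B \to A \oplus B \to A \to 0$, so the argument below covers that case uniformly.)

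Next I would check that both ends of this sequence are $\D$-stable. The quotient $A$ is $\D$-stable by hypothesis. The ideal $\Kern (\ps)$ is $\D$-stable because $B$ is $\D$-stable and, by the permanence results of Toms and Winter in~\cite{TomsWinter2}, $\D$-stability passes to ideals of separable \ca{s}. Here one uses that $\D$ is $K_1$-injective, which is now known to hold automatically for every strongly selfabsorbing algebra, so no extra hypothesis is required. Finally, applying the theorem of~\cite{TomsWinter2} that $\D$-stability is preserved under extensions of separable \ca{s} (if both the ideal and the quotient are $\D$-stable, then so is the extension), I conclude that $P$ is $\D$-stable.

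The manipulations involving $\pi$ and its kernel are routine; the genuine content is carried entirely by the two permanence theorems for strongly selfabsorbing algebras. Consequently the only points I expect to need care are the verification that $\pi$ is surjective (which pins down the correct choice of coordinate projection and is exactly where the hypothesis on $\ps$ is consumed) and the confirmation that the separability and $K_1$-injectivity conditions under which the cited theorems hold are indeed satisfied in our setting.
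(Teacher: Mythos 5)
Your argument is correct and is essentially identical to the paper's proof: the same coordinate projection $\pi (a, b) = a$, the same identification of $\Kern (\pi)$ with $\Kern (\ps)$, the same short exact sequence, and the same appeal to the permanence of $\D$-stability under passage to ideals and under extensions from \cite{TomsWinter2}. Your explicit remark about $K_1$-injectivity is a sensible (and correct) check of the hypotheses of the cited extension theorem that the paper leaves implicit.
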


\begin{proof}
Define
$\pi \colon P \to A$ by $\pi (a, b) = a$.
There is an isomorphism
$\io \colon \Kern (\ps) \to \Kern (\pi)$
given by $\io (b) = (0, b)$ for $b \in \Kern (\ps)$.
Moreover, surjectivity of $\ps$ is easily seen to imply
surjectivity of $\pi$.
We obtain an exact
sequence
\[
0 \longrightarrow \Kern (\ps)
 \stackrel{\iota}{\longrightarrow} P
 \stackrel{\pi}{\longrightarrow} A
 \longrightarrow 0.
\]
Now $\Kern (\ph)$ is $\D$-stable by Corollary 3.1 of
\cite{TomsWinter2}.
Since $B$ is also $\D$-stable, Theorem 4.3 of
\cite{TomsWinter2} implies that $P$ is $\D$-stable.
\end{proof}

\begin{prp}\label{DStableRSHA}
Adopt Notation~\ref{StronglySelfAbs}.
Let $D$ be a simple separable $\D$-stable \ca,
and let $R$ be a recursive subhomogeneous algebra over $D$.
Then $R$ is $\D$-stable.
\end{prp}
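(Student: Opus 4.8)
The plan is to induct on the length $l$ of a decomposition of $R$ in the standard notation of \Def{RSHAMachinery}, using \Lem{DStablePullback} for the inductive step. Writing
\[
R \cong
  \left[ \cdots \left[ C_{0} \oplus_{C_{1}^{(0)}} C_{1} \right]
    \cdots \right] \oplus_{C_{l}^{(0)}} C_{l}
\]
with $C_{k} = C (X_{k}, \, M_{n (k)} (D))$, $C_k^{(0)} = C (X_k^{(0)}, M_{n (k)} (D))$, and the maps $\rho_k$ the restriction homomorphisms, I would use that the $(l - 1)$-st stage algebra $R^{(l - 1)}$ is itself a recursive subhomogeneous algebra over~$D$ admitting a decomposition of length $l - 1$, with $R = R^{(l - 1)} \oplus_{C_{l}^{(0)}, \, \ph_l, \, \rho_l} C_{l}$.

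For the base case $l = 0$ I would treat $R = C (X_0, \, M_{n (0)} (D))$ directly. Identifying $C (X_0, \, M_{n (0)} (D)) = C (X_0) \ten M_{n (0)} \ten D$ and using the defining isomorphism $D \ten \D \cong D$ gives $R \ten \D \cong R$, so $R$ is $\D$-stable; this is exactly the remark preceding \Lem{DStablePullback} applied first to $M_{n (0)} (D)$ and then to $C (X_0, M_{n (0)} (D))$. For the inductive step, suppose the result holds for all recursive subhomogeneous algebras over~$D$ of length at most $l - 1$. Then $R^{(l - 1)}$ is $\D$-stable by the inductive hypothesis, and $C_{l} = C (X_l, M_{n (l)} (D))$ is $\D$-stable by the base-case argument. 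The connecting map $\rho_l \colon C_l \to C_l^{(0)}$ restricts functions to the closed set $X_l^{(0)} \subset X_l$ and is therefore surjective by the Tietze extension theorem, while all the homomorphisms and algebras in sight are unital. Exploiting the symmetry of the pullback so that the surjective leg $\rho_l$ plays the role of $\psi$ in \Lem{DStablePullback} (take $A = R^{(l - 1)}$, $B = C_l$, $\ph = \ph_l$, $\psi = \rho_l$), that lemma yields that $R = R^{(l - 1)} \oplus_{C_l^{(0)}} C_l$ is $\D$-stable, completing the induction.

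There is no deep obstacle here; the content lies entirely in verifying the hypotheses of \Lem{DStablePullback} at each stage. The two points that require care are the surjectivity of the connecting maps, handled uniformly by Tietze since every $\rho_k$ is a restriction map, and the separability hypothesis of \Lem{DStablePullback}, which rests on the Toms--Winter permanence results and so requires the base spaces $X_k$ to be metrizable, as they are in all of our intended applications. Granting metrizability of the base spaces, each $C (X_k, M_{n (k)} (D))$, and hence each stage algebra, is separable, and the induction goes through verbatim.
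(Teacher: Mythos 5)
Your proof is correct and follows essentially the same route as the paper's: induction on the length of a decomposition, the base case $C(X_0, M_{n(0)}(D))$ handled by tensoring through $D \ten \D \cong D$, and the inductive step obtained by applying Lemma~\ref{DStablePullback} with the restriction map $\rho_l$ as the surjective leg. Your added observation that the separability hypothesis of Lemma~\ref{DStablePullback} forces the base spaces to be metrizable is a legitimate point of care which the paper's own proof passes over silently.
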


\begin{proof}
We proceed by induction on the length of a decomposition for $R$ as
a recursive subhomogeneous algebra over $D$.
The base case is when
$R = C (X, M_{n} (D))$, and this is $\D$-stable whenever $D$ is
$\D$-stable.
For the inductive step, we may assume that
there are an $\D$-stable unital \ca~$R'$,
$n \in \N$, a compact Hausdorff space~$X$,
a closed subset $X^{(0)} \subset X$,
and a unital homomorphism
$\ph \colon R' \to C \bigl( X^{(0)}, \, M_{n} (D) \bigr)$
such that
\[
R = \bset{ (a, f) \in R' \oplus C (X, M_{n} (D))
   \colon \ph (a) = f |_{X^{(0)}}  }
\]
Since $f \mapsto f |_{X^{(0)}}$ is surjective and both
$R'$ and $C (X, M_{n} (D))$
are $\D$-stable, it follows from Lemma~\ref{DStablePullback}
that $R$ is $\D$-stable,
which completes the induction.
\end{proof}

\begin{prp}\label{AyDirLim}
Let $X$ be a \chs, let $h \colon X \to X$ be a homeomorphism,
let $D$ be a \uca,
and let $\af \in \Aut (C (X, D))$ lie over~$h$.
Let $Y \subset X$ be closed, and let
$Y_{1} \supset Y_{2} \supset \cdots$
be closed subsets of $X$ such that $\bigcap_{n = 1}^{\infty}
Y_{n} = Y$.
Then
\[
C^* \bigl( \Z, \, C (X, D), \, \af \bigr)_Y
 = {\overline{ {\ts{ {\ds{\bigcup}}_{n = 1}^{\infty} }} A_{Y_{n}} }}
 \cong \dirlim A_{Y_{n}}.
\]
\end{prp}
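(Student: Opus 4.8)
The plan is to establish the equality $C^* \bigl( \Z, \, C (X, D), \, \af \bigr)_Y = \overline{\bigcup_{n} A_{Y_n}}$ by proving the two inclusions separately, and then to obtain the isomorphism with $\dirlim A_{Y_n}$ from the standard fact that the inductive limit of an increasing chain of C*-subalgebras, with the inclusions as connecting maps, is realized as the closure of the union. Throughout write $A_Z = C^* \bigl( \Z, \, C (X, D), \, \af \bigr)_Z$ for a closed set $Z \subset X$, so that $A_{Y_n}$ is the term appearing in the statement.

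First I would record the easy containments coming directly from the definition $A_Z = C^* \bigl( C (X, D), \, C_0 (X \setminus Z, \, D) u \bigr)$ (Definition~\ref{D_4Y12_OrbSubalg}). Since passing to complements reverses inclusions and $C_0 (X \setminus Z, \, D) u$ is monotone in $X \setminus Z$, the containment $Y \subset Y_n$ gives $A_{Y_n} \subset A_Y$, while $Y_n \supset Y_{n + 1}$ gives $A_{Y_n} \subset A_{Y_{n + 1}}$. Hence $(A_{Y_n})_n$ is an increasing chain of C*-subalgebras of $A_Y$, its union is a $*$-subalgebra, and $\overline{\bigcup_n A_{Y_n}}$ is a C*-subalgebra of $A_Y$. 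This gives one inclusion, and it also yields $\overline{\bigcup_n A_{Y_n}} \cong \dirlim A_{Y_n}$ for free, with no further work needed.

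The substance is the reverse inclusion $A_Y \subset \overline{\bigcup_n A_{Y_n}}$. Since $A_Y$ is generated as a C*-algebra by $C (X, D)$ together with the set $\{ f u \colon f \in C_0 (X \setminus Y, \, D) \}$, and since $\overline{\bigcup_n A_{Y_n}}$ is a C*-algebra, it suffices to check that each such generator lies in $\overline{\bigcup_n A_{Y_n}}$. For $C (X, D)$ this is immediate, because $C (X, D) \subset A_{Y_n}$ for every~$n$. For a generator $f u$ with $f \in C_0 (X \setminus Y, \, D)$, that is, with $f$ vanishing on $Y$ by~(\ref{Eq_4Y12_C0XYD}), the idea is to approximate $f$ in norm by a function $g$ vanishing on some~$Y_m$; because $D$ is unital and $X$ is compact, $u$ is a unitary and $\| (f - g) u \| = \| f - g \|$, so such a $g$ gives $g u \in C_0 (X \setminus Y_m, \, D) u \subset A_{Y_m}$ with $\| f u - g u \|$ small.

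Everything therefore reduces to the following topological approximation claim, which I expect to be the only real (though not difficult) point: if $Y_1 \supset Y_2 \supset \cdots$ are closed with $\bigcap_n Y_n = Y$ and $f \in C (X, D)$ vanishes on $Y$, then for every $\ep > 0$ there are $m$ and $g \in C (X, D)$ vanishing on $Y_m$ with $\| f - g \| < \ep$. To prove it I would set $V = \{ x \in X \colon \| f (x) \| < \ep \}$, an open set containing $Y$, and note that the closed sets $Y_n \setminus V$ decrease with $\bigcap_n (Y_n \setminus V) = Y \setminus V = \varnothing$; compactness of~$X$, via the finite intersection property, then forces $Y_m \subset V$ for some~$m$. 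Normality of~$X$ supplies, by Urysohn's lemma, a function $\chi \in C (X, [0, 1])$ with $\chi = 0$ on $Y_m$ and $\chi = 1$ on $X \setminus V$, and $g = \chi f$ works: it vanishes on $Y_m$, while $(1 - \chi) f$ is supported in $V$, where $\| f \| < \ep$, so $\| f - g \| = \| (1 - \chi) f \| \leq \ep$. The sole essential use of compactness is this finite-intersection step, which makes the argument robust; the remainder is bookkeeping with the generating set and the standard inductive-limit identification. One could alternatively run the same approximation through the explicit description of $A_Y$ in Lemma~\ref{L_4X21_AYStruct}, but working with generators is more direct.
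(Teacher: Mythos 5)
Your proof is correct, and it is surely the argument the authors had in mind: the paper simply states ``The proof is easy, and is omitted.'' The monotonicity of $Z \mapsto C_0 (X \setminus Z, \, D)$, the reduction to generators, and the compactness/Urysohn approximation of a function vanishing on $Y$ by one vanishing on some $Y_m$ are exactly the expected ingredients, and each step as you have written it is sound.
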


\begin{proof}
The proof is easy, and is omitted.
\end{proof}

The results in the remainder of this section are
mostly generalizations of ones in Section 1 of
\cite{QLinPhDiff}.
We follow a slightly more modern development,
adapted from Section 11.3 of \cite{PhNotes}, since \cite{QLinPhDiff}
has not been published.
(The proofs in \cite{PhNotes} are mostly
taken from \cite{QLinPhDiff}, with some changes in notational
conventions.)
Most proofs go through with changes only to the notation,
such as replacing the action of a minimal homeomorphism
$h$ with the action $\af$ on $C (X, D)$.
The biggest technical
differences are in the proof of Lemma~\ref{L_4X21_AYStruct}.
We begin with the well-known Rokhlin tower construction.

\begin{ntn}\label{ModifiedRokhlinTower}
Let $X$ be a \chs{} and let $h \colon X \to X$ be a \mh.
Let $Y \subset X$ be a closed set with $\sint (Y) \neq \varnothing$.
For $y \in Y$, define
$r (y) = \inf \bigl( \set{m \geq 1 \colon h^{m} (y) \in Y} \bigr)$.
Then $\sup_{y \in Y} r (y) < \infty$
(see Lemma \ref{RokhlinTowerProps}(\ref{9Z26_new})),
so there are finitely many
distinct values $n (0) < n (1) < \cdots < n (l)$ in the range of~$r$.
For $k = 0, 1, \ldots l$, set
\[
Y_{k} = \overline{ \set{ y \in Y \colon r (y) = n (k) } }
\andeqn
Y_{k}^{\circ}
 = \sint \bigl( \bset{ y \in Y \colon r (y) = n (k) } \bigr).
\]
\end{ntn}

We warn that, while
$Y_{k}^{\circ} \subset \sint (Y_k)$,
the inclusion may be proper.

\begin{lem}\label{RokhlinTowerProps}
Let $Y \subset X$ be a closed set with $\sint (Y) \neq \varnothing$,
and adopt Notation \ref{ModifiedRokhlinTower}.
Then:
\begin{enumerate}
\item\label{9Z26_new}
$\sup_{y \in Y} r (y) < \infty$.
\item\label{Item_RokhlinTowerProps_1}
The sets $h^{j} (Y_{k}^{\circ})$ are pairwise disjoint for $0
\leq k \leq l$ and $1 \leq j \leq n (k)$.
\item\label{Item_RokhlinTowerProps_2}
$\bigcup_{k = 0}^{l} Y_{k} = Y$.
\item\label{Item_RokhlinTowerProps_3}
$\bigcup_{k = 0}^{l} \bigcup_{j= 0}^{n (k)-1} h^{j} (Y_{k}) = X$.
\end{enumerate}
\end{lem}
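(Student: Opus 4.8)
The plan is to prove the four parts in the order stated, with parts~(1) and~(4) carrying the real input from minimality and compactness, and parts~(2) and~(3) reducing to bookkeeping with the first-return function~$r$ from Notation~\ref{ModifiedRokhlinTower}.

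For part~(1) I would set $U = \sint (Y)$, which is nonempty by hypothesis. By minimality every forward orbit $\{ h^m (x) \colon m \geq 1 \}$ is dense, so for each $x \in X$ there is $m \geq 1$ with $h^m (x) \in U$; equivalently, the open sets $h^{-m} (U)$ for $m \geq 1$ cover the compact space~$X$. Extracting a finite subcover $h^{-m_1} (U), \ldots, h^{-m_p} (U)$ and setting $M = \max_i m_i$, every $y \in Y$ lies in some $h^{-m_i} (U)$, whence $h^{m_i} (y) \in U \S Y$ and so $r (y) \leq m_i \leq M$. Thus $\sup_{y \in Y} r (y) \leq M < \I$, which also justifies that the range of~$r$ is the finite set $\{ n (0), \ldots, n (l) \}$. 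Part~(3) is then immediate: writing $E_k = \{ y \in Y \colon r (y) = n (k) \}$, every $y \in Y$ has $r (y) \in \{ n (0), \ldots, n (l) \}$, so $Y = \bigcup_{k} E_k \S \bigcup_k \overline{E_k} = \bigcup_k Y_k$, while each $Y_k = \overline{E_k} \S Y$ because $Y$ is closed.

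For part~(2) the key observation is that $Y_k^{\circ} \S E_k$, so every $y \in Y_k^{\circ}$ has first-return time exactly $n (k)$; in particular $h^i (y) \notin Y$ for $1 \leq i \leq n (k) - 1$. Suppose $z \in h^{j_1} (Y_{k_1}^{\circ}) \cap h^{j_2} (Y_{k_2}^{\circ})$ with $1 \leq j_i \leq n (k_i)$, say $j_1 \geq j_2$, and write $z = h^{j_1} (y_1) = h^{j_2} (y_2)$. Then $y_2 = h^{j_1 - j_2} (y_1)$ with $0 \leq j_1 - j_2 \leq n (k_1) - 1$ (using $j_2 \geq 1$), and $y_2 \in Y$. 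Since, within that range of exponents, $h^i (y_1) \in Y$ only for $i = 0$, this forces $j_1 = j_2$ and $y_1 = y_2$. As the $E_k$ are pairwise disjoint (a point cannot have two distinct return times) and $Y_{k}^{\circ} \S E_{k}$, we conclude $k_1 = k_2$, so the pairs $(k_1, j_1)$ and $(k_2, j_2)$ coincide; this is exactly the disjointness claim.

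Finally, for part~(4) I would argue by first \emph{backward} return. Given $x \in X$, minimality makes the backward orbit dense, so $m (x) = \min \{ m \geq 0 \colon h^{-m} (x) \in Y \}$ is finite; put $y = h^{-m (x)} (x) \in Y$ and let $n (k) = r (y)$. Minimality of~$m (x)$ gives $h^i (y) \notin Y$ for $1 \leq i \leq m (x)$, whereas $h^{n (k)} (y) \in Y$, and comparing these forces $m (x) \leq n (k) - 1$. Hence $x = h^{m (x)} (y) \in h^{m (x)} (E_k) \S h^{m (x)} (Y_k)$ with $0 \leq m (x) \leq n (k) - 1$, so $X \S \bigcup_{k} \bigcup_{j = 0}^{n (k) - 1} h^j (Y_k)$, the reverse inclusion being trivial. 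The main obstacle throughout is getting the indexing exactly right in parts~(2) and~(4): one must track carefully that $Y_k^{\circ}$ and $E_k$ consist of points that do not return to~$Y$ strictly before time $n (k)$, so that the floor index produced by the first-return argument (forward in~(2), backward in~(4)) lands in the asserted range.
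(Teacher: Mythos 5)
Your proof is correct. The paper gives no argument of its own for this lemma---it simply cites Lemma 11.3.5 of~\cite{PhNotes}---and what you have written is the standard first-return (Rokhlin tower) argument that the citation stands in for: density of forward orbits plus compactness bounds $r$ in part~(1), parts~(2) and~(4) follow from minimality of the forward return time and of the first backward hitting time respectively, and part~(3) is bookkeeping with the level sets $E_k$; all the index ranges check out.
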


\begin{proof}
This is Lemma 11.3.5 of~\cite{PhNotes}.
\end{proof}

We first consider the specific situation of the structure of
$C^* \bigl( \Z, \, C (X, D), \, \af \bigr)_Y$
when $X$ is the Cantor set.
This is needed later.

\begin{lem}\label{CantorSetCase}
Adopt Notation \ref{ModifiedRokhlinTower}, let $X$ be the Cantor
set, and let $Y \subset X$ be a nonempty compact open subset.
Then
\[
C^* \bigl( \Z, \, C (X, D), \, \af \bigr)_Y
 \cong \bigoplus_{k = 0}^{l} C (Y_{k}, \, M_{n (k)} (D)).
\]
\end{lem}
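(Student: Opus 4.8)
The plan is to realize the Rokhlin decomposition associated to $Y$ as a genuine clopen Kakutani--Rokhlin tower partition of $X$, to manufacture matrix units inside $A_Y := C^* \bigl( \Z, \, C (X, D), \, \af \bigr)_Y$ from the levels of the towers, and to identify the surviving corners with $C (Y_k, D)$. First I would observe that, since $X$ is the Cantor set and $Y$ is clopen, the first return time $r$ of Notation~\ref{ModifiedRokhlinTower} is locally constant: for each $m$,
\[
\{ y \in Y \colon r (y) = m \}
 = Y \cap h^{-m} (Y) \cap \ts{ {\ds{\bigcap}}_{j = 1}^{m - 1} } h^{-j} (X \SM Y)
\]
is a finite intersection of clopen sets, hence clopen. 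As $\sup_{y \in Y} r (y) < \infty$ by Lemma~\ref{RokhlinTowerProps}(\ref{9Z26_new}), this forces $Y_k = Y_k^{\circ} = \{ y \in Y \colon r (y) = n (k) \}$ to be clopen, and by Lemma~\ref{RokhlinTowerProps} the sets $h^j (Y_k)$, for $0 \leq k \leq l$ and $0 \leq j \leq n (k) - 1$, form a clopen partition of~$X$.

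Next I would set up the tower data. Put $p_{k, j} = \chi_{h^j (Y_k)} \ten 1_D \in C (X, D) \S A_Y$; these are \mops{} with $\sum_{k, j} p_{k, j} = 1$. Using $u (\chi_E \ten 1_D) = (\chi_{h (E)} \ten 1_D) u$, define for $0 \leq j \leq n (k) - 2$ the element $v_{k, j} = u p_{k, j} = (\chi_{h^{j + 1} (Y_k)} \ten 1_D) u$. Since the first return time on $Y_k$ equals $n (k)$, we have $h^i (Y_k) \cap Y = \varnothing$ for $1 \leq i \leq n (k) - 1$, so the coefficient of $v_{k, j}$ vanishes on $Y$ and hence $v_{k, j} \in A_Y$ by Lemma~\ref{L_4X21_AYStruct}. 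One checks $v_{k, j}^* v_{k, j} = p_{k, j}$ and $v_{k, j} v_{k, j}^* = p_{k, j + 1}$, so the products $e^{(k)}_{i, 0} = v_{k, i - 1} \cdots v_{k, 0}$ (with $e^{(k)}_{0, 0} = p_{k, 0}$), together with $e^{(k)}_{i, j} = e^{(k)}_{i, 0} \bigl( e^{(k)}_{j, 0} \bigr)^*$, give for each $k$ a system of $n (k) \times n (k)$ matrix units satisfying $e^{(k)}_{i, i} = p_{k, i}$ and $\sum_{k, i} e^{(k)}_{i, i} = 1$.

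The heart of the proof, and the step I expect to be the main obstacle, is the computation of the corners $p_{k, 0} A_Y p_{k', 0}$. Working in the dense subalgebra $A_Y \cap C (X, D) [\Z]$ (the density statement in Lemma~\ref{L_4X21_AYStruct}) and using the description of $A_Y$ there, take $a = \sum_n f_n u^n$ with $f_n \in C_0 (X \SM Y_n, \, D)$; then $u^n p_{k', 0} = (\chi_{h^n (Y_{k'})} \ten 1_D) u^n$ gives $p_{k, 0} f_n u^n p_{k', 0} = p_{k, 0} f_n (\chi_{h^n (Y_{k'})} \ten 1_D) u^n$. For $n > 0$ this vanishes because $f_n$ vanishes on $Y_n \supset Y \supset Y_k$ while $p_{k, 0}$ is supported in $Y_k$; for $n < 0$ it vanishes because $f_n$ vanishes on $Y_n \supset h^n (Y_{k'})$. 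Only the term $n = 0$ survives, contributing $\chi_{Y_k \cap Y_{k'}} f_0$. Hence $p_{k, 0} A_Y p_{k', 0} = 0$ for $k \neq k'$, while $p_{k, 0} A_Y p_{k, 0} = \chi_{Y_k} C (X, D) = C (Y_k, D)$, the last equality since restriction to the clopen set $Y_k$ is surjective. The only delicate bookkeeping is tracking which $u$-degrees the conditions $f_n \in C_0 (X \SM Y_n, \, D)$ permit in each corner; the fiber automorphisms $\af_x$ are harmless, since the surviving corner carries no power of~$u$.

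Finally I would assemble these pieces. From $p_{k, i} A_Y p_{k', i'} = e^{(k)}_{i, 0} \bigl( p_{k, 0} A_Y p_{k', 0} \bigr) e^{(k')}_{0, i'}$ and the vanishing just proved, we get $q_k A_Y q_{k'} = 0$ for $k \neq k'$, where $q_k = \sum_j p_{k, j}$; thus each $q_k$ is central and $A_Y = \bigoplus_{k = 0}^l q_k A_Y$. Within $q_k A_Y$ the matrix units $e^{(k)}_{i, j}$ sum to $q_k$, so the standard isomorphism $a \mapsto \bigl( e^{(k)}_{0, i} a \, e^{(k)}_{j, 0} \bigr)_{i, j}$ gives $q_k A_Y \cong M_{n (k)} \bigl( p_{k, 0} A_Y p_{k, 0} \bigr) = M_{n (k)} (C (Y_k, D))$. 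Combining this with the canonical identification $M_{n (k)} (C (Y_k, D)) \cong C (Y_k, M_{n (k)} (D))$ yields
\[
A_Y \cong \ts{ {\ds{\bigoplus}}_{k = 0}^l } C (Y_k, \, M_{n (k)} (D)),
\]
as required.
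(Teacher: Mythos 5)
Your proof is correct and is essentially the argument the paper's one-line proof defers to (the adaptation of Lemma 11.2.20 of~\cite{PhNotes}, i.e.\ Putnam's clopen Kakutani--Rokhlin tower construction): since $Y$ is clopen the return-time sets $Y_k$ are clopen and the towers partition $X$, the partial isometries $u p_{k,j}$ have coefficient supported off $Y$ and hence lie in the orbit-breaking subalgebra, and in each corner $p_{k,0} A_Y p_{k',0}$ only the $u^0$ term survives. All the individual steps check out, including the key points that the fiber automorphisms fix $1_D$ (so $u^n(\chi_E\otimes 1_D)u^{-n}=\chi_{h^n(E)}\otimes 1_D$) and that $Y_n \supset Y \supset Y_k$ for $n>0$ while $Y_n \supset h^n(Y_{k'})$ for $n<0$.
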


\begin{proof}
This is a straightforward adaption of the proof of Lemma 11.2.20
of~\cite{PhNotes}.
\end{proof}

If we set $B_{k} = C (Y_{k}, M_{n (k)})$, then $B_{k}$ is an
AF algebra for each~$k$
(since $Y_{k}$ is totally disconnected), and hence
$C^* \bigl( \Z, \, C (X, D), \, \af \bigr)_Y$
is a direct sum of what might be called ``AF $D$-algebras''.

\begin{prp}\label{BanachSpaceDirSum}
Let $X$ be a \chs, let $h \colon X \to X$ be a \mh,
let $D$ be a \uca,
let $\af \in \Aut (C (X, D))$ lie over~$h$,
let $Y \subset X$ be a closed set with $\sint (Y) \neq \varnothing$,
and adopt Notation \ref{ModifiedRokhlinTower}.
Following the notation of Lemma~\ref{L_4X21_AYStruct}, there is
$N \in \N$ such that $C^* \bigl( \Z, \, C (X, D), \, \af \bigr)_Y$
has the Banach space direct sum decomposition
\[
C^* \bigl( \Z, \, C (X, D), \, \af \bigr)_Y
 = \bigoplus_{n = - N}^{N} C_{0} (X \setminus Y_{n}, \, D) u^{n}.
\]
\end{prp}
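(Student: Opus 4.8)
The plan is to read off the decomposition directly from the characterization of $B := C^* \bigl( \Z, \, C (X, D), \, \af \bigr)_Y$ proved in \Lem{L_4X21_AYStruct}. For $a \in B$ write $a_n = E_{\af} (a u^{-n}) \in C (X, D)$ for its Fourier coefficients; by \Lem{L_4X21_AYStruct}, membership in $B$ is equivalent to $a_n \in C_0 (X \setminus Y_n, \, D)$ for all $n \in \Z$, with $Y_n$ as defined there. The key observation is that, since $Y$ has nonempty interior and $h$ is minimal, the sets $Y_n$ exhaust $X$ once $|n|$ is large; the corresponding coefficients must then vanish, so every element of $B$ is forced to be a genuine finite sum of terms $a_n u^n$.

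First I would fix $N = \sup_{y \in Y} r (y)$, finite by Lemma~\ref{RokhlinTowerProps}(\ref{9Z26_new}), and show $Y_N = X$. In Notation~\ref{ModifiedRokhlinTower} we have $N = n (l)$, and Lemma~\ref{RokhlinTowerProps}(\ref{Item_RokhlinTowerProps_3}) gives $\bigcup_{k = 0}^{l} \bigcup_{j = 0}^{n (k) - 1} h^{j} (Y_{k}) = X$. Since $Y_{k} \subset Y$ and $n (k) \leq N$ for every $k$, the left-hand side is contained in $\bigcup_{j = 0}^{N - 1} h^{j} (Y) = Y_N$, whence $Y_N = X$. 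Applying~(\ref{Eq_4Y14_hnyn}) gives $Y_{-N} = h^{-N} (Y_N) = X$, and the monotonicity of $(Y_n)$ recorded in the proof of \Lem{L_4X21_AYStruct} then yields $Y_n = X$, and hence $C_0 (X \setminus Y_n, \, D) = 0$, for all $n$ with $|n| \geq N$.

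Next I would deduce the algebraic decomposition. Given $a \in B$, the previous step forces $a_n = 0$ for $|n| \geq N$, so $b = \sum_{n = -N}^{N} a_n u^n$ is a finite sum lying in $B$: each term $a_n u^n$ belongs to $B$ by the claim established inside the proof of \Lem{L_4X21_AYStruct} that $f u^n \in B$ whenever $f \in C_0 (X \setminus Y_n, \, D)$. The element $a - b$ has all Fourier coefficients zero, so the Ces\`{a}ro-means argument used in \Lem{L_4X21_AYStruct} (via Theorem VIII.2.2 of~\cite{Dv}) forces $a - b = 0$, i.e.\ $a = b$. Combined with the reverse inclusion (each summand lies in the subalgebra $B$), this gives $B = \sum_{n = -N}^{N} C_0 (X \setminus Y_n, \, D) u^n$.

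Finally I would upgrade this to a Banach space direct sum. Each summand $C_0 (X \setminus Y_n, \, D) u^n$ is a closed subspace of $B$, being the image of the closed subspace $C_0 (X \setminus Y_n, \, D) \subset C (X, D)$ under the isometry $c \mapsto c u^n$. The maps $P_n \colon a \mapsto E_{\af} (a u^{-n}) u^n$ are bounded idempotents (since $E_{\af}$ is contractive and right multiplication by $u^n$ is isometric) onto these subspaces, and the computation above shows $\sum_{n = -N}^{N} P_n = \id_B$, which exhibits $B$ as the claimed internal topological direct sum. The only real subtlety is the uniqueness step, namely that an element of the reduced crossed product with all Fourier coefficients zero must vanish; but this is exactly the input already quoted from~\cite{Dv} in \Lem{L_4X21_AYStruct}, so no new obstacle arises.
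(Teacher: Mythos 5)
Your proof is correct, and it follows essentially the same route as the paper's (which simply defers to Corollary 11.3.7 of \cite{PhNotes}): one reads the decomposition off the Fourier-coefficient characterization of the orbit-breaking subalgebra in Lemma~\ref{L_4X21_AYStruct}, observes via the first-return-time bound of Lemma~\ref{RokhlinTowerProps} that $Y_n = X$ once $|n| \geq n(l)$, and recovers the finite sum by Ces\`{a}ro summation and the boundedness of the maps $a \mapsto E_{\af}(a u^{-n}) u^n$. No gaps.
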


\begin{proof}
The proof is the same as that of Corollary 11.3.7 of~\cite{PhNotes}.
\end{proof}

\begin{prp}\label{AyHom}
Let $X$ be a \chs, let $h \colon X \to X$ be a \mh,
let $D$ be a \uca,
let $\af \in \Aut (C (X, D))$ lie over~$h$,
let $Y \subset X$ be a closed set with $\sint (Y) \neq \varnothing$,
and adopt Notation \ref{ModifiedRokhlinTower}.
Define the unitary $s_{k} \in C (Y_{k}, M_{n (k)} (D))$ by
\[
s_{k} = \left[ \begin{array}{ccccccc}
0 & 0 & \cdots & \cdots & 0 & 0 & 1 \\
1 & 0 & \cdots & \cdots & 0 & 0 & 0 \\
0 & 1 & \cdots & \cdots & 0 & 0 & 0 \\
\vdots & \vdots & \ddots &  & \vdots & \vdots & \vdots \\
\vdots & \vdots & & \ddots & \vdots & \vdots & \vdots \\
0 & 0 & \cdots & \cdots & 1 & 0 & 0 \\
0 & 0 & \cdots & \cdots & 0 & 1 & 0 \end{array} \right].
\]
For $k = 0, 1, \ldots, l$ there is a unique linear map
\[
\gm_{k} \colon C^* \bigl( \Z, \, C (X, D), \, \af \bigr)_Y
 \to C (Y_{k}, M_{n (k)} (D))
\]
such that for $m = 0, 1, \ldots, n (k) - 1$
and $f \in C_{0} (X \setminus Z_{m}, D)$ we have:
\begin{enumerate}
\item\label{9222_AyHom_1}
$\gm_{k} (f u^{m}) = \diag \bigl( f |_{Y_{k}}, \,
  \af^{-1} (f) |_{Y_{k}}, \,
  \ldots, \, \af^{-[n (k)-1]} (f) |_{Y_{k}} \bigr) \cdot s_{k}^{m}$.
\item\label{9222_AyHom_2}
$\gm_{k} (u^{-m} f) = s_{k}^{-m} \cdot
  \diag \bigl( f |_{Y_{k}}, \, \af^{-1} (f)
  |_{Y_{k}}, \, \ldots, \,
   \af^{-[n (k)-1]} (f) |_{Y_{k}} \bigr)$.
\end{enumerate}
Moreover, the map
\[
\gm \colon C^* \bigl( \Z, \, C (X, D), \, \af \bigr)_Y
      \to \bigoplus_{k = 0}^{l} C (Y_{k}, M_{n (k)} (D))
\]
given by $\gm (a) = (\gm_{0} (a), \gm_{1} (a), \ldots, \gm_{l} (a))$
is a
$*$-homomorphism.
\end{prp}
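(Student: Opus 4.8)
The plan is to read off $\gm_k$ from the finite Banach space direct sum decomposition
\[
C^* \bigl( \Z, \, C (X, D), \, \af \bigr)_Y
 = \bigoplus_{n = -N}^{N} C_0 (X \SM Y_n, \, D) u^n
\]
provided by \Prp{BanachSpaceDirSum}, and then to verify the $*$-homomorphism property on the homogeneous pieces $f u^n$. First I would define $\gm_k$ on each summand: for $n \ge 0$ by~(\ref{9222_AyHom_1}), and for $n < 0$ by rewriting $f u^n = u^n \af^{-n} (f)$, where $\af^{-n} (f) \in C_0 (X \SM Y_{-n}, \, D)$ by the relation $h^{-n} (Y_n) = Y_{-n}$ of~(\ref{Eq_4Y14_hnyn}), and then applying~(\ref{9222_AyHom_2}). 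Extending linearly produces $\gm_k$, and this is forced, since the two formulas already prescribe $\gm_k$ on a spanning set and agree at $n = 0$ because $s_k^0 = 1$. Each $\gm_k (a)$ is continuous on $Y_k$ because the restrictions $\af^{-j} (f) |_{Y_k}$ are continuous and $s_k$ is a constant unitary, so $\gm = (\gm_0, \ldots, \gm_l)$ does map into $\bigoplus_{k} C (Y_k, M_{n (k)} (D))$.

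The structural heart of the argument is a vanishing phenomenon forced by the orbit-breaking support conditions together with the first-return description of the tower (Notation~\ref{ModifiedRokhlinTower}). Writing $\Lambda_f = \diag \bigl( f |_{Y_k}, \, \af^{-1} (f) |_{Y_k}, \, \ldots, \, \af^{-[n (k) - 1]} (f) |_{Y_k} \bigr)$, I would prove: if $f \in C_0 (X \SM Y_p, \, D)$ with $p > 0$, then the first $p$ diagonal entries of $\Lambda_f$ vanish (all of them if $p \ge n (k)$), because $Y_k \S Y$ and $h^j (Y_k) \S Y_p$ for $0 \le j \le p - 1$; and if $f \in C_0 (X \SM Y_{-m}, \, D)$ with $m > 0$, then the top $m$ diagonal entries of $\Lambda_f$ vanish, because for $y$ with first-return time $n (k)$ one has $h^{n (k)} (y) \in Y$, which places $h^j (y)$ in $Y_{-m}$ for $n (k) - m \le j \le n (k) - 1$, and one then passes to the closure $Y_k$. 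Using this, the two defining formulas can be unified into the single expression $\gm_k (f u^p) = \Lambda_f \, s_k^p$, valid for all $p \in \Z$: directly from~(\ref{9222_AyHom_1}) for $p \ge 0$, and, after conjugating by $s_k^p$, from~(\ref{9222_AyHom_2}) together with the vanishing claim for $p < 0$.

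Checking that $\gm$ is $*$-preserving is then routine: from $(f u^p)^* = u^{-p} f^*$ and the defining formulas one gets $\gm_k ((f u^p)^*) = s_k^{-p} \Lambda_{f^*}$, while $\gm_k (f u^p)^* = (\Lambda_f s_k^p)^* = s_k^{-p} \Lambda_f^*$, and these agree because restriction and each $\af^{-j}$ are $*$-preserving, so $\Lambda_f^* = \Lambda_{f^*}$.

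Multiplicativity is the main obstacle, and I would verify it on homogeneous elements, using $(f u^p)(g u^q) = f \af^p (g) u^{p + q}$ to reduce to comparing
\[
\gm_k (f u^p) \, \gm_k (g u^q)
 = \Lambda_f \, s_k^p \, \Lambda_g \, s_k^q
 = \Lambda_f \bigl( s_k^p \, \Lambda_g \, s_k^{-p} \bigr) s_k^{p + q}
\]
with $\gm_k \bigl( f \af^p (g) u^{p + q} \bigr) = \Lambda_{f \af^p (g)} \, s_k^{p + q}$. Since conjugation by $s_k$ cyclically permutes diagonal entries, the $(j, j)$ entry of $\Lambda_f ( s_k^p \Lambda_g s_k^{-p})$ is $\af^{-j} (f) |_{Y_k} \cdot \af^{- ((j - p) \bmod n (k))} (g) |_{Y_k}$, whereas the $(j, j)$ entry of $\Lambda_{f \af^p (g)}$ is $\af^{-j} (f) |_{Y_k} \cdot \af^{p - j} (g) |_{Y_k}$ with $p - j$ the actual integer. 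On the indices where no wrap-around occurs these agree verbatim; on the wrap-around indices the factor $\af^{-j} (f) |_{Y_k}$ is exactly one of the entries killed by the vanishing claim, so both sides are zero. The real work is the bookkeeping of the cyclic indices in the cases $p > 0$ and $p < 0$ and confirming that in each case the wrap-around indices coincide with the ones on which $\Lambda_f$ vanishes; this index-chasing, rather than any analytic difficulty, is where care is needed.
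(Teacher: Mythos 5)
Your argument is correct and is essentially the computation the paper delegates (via the reference to Proposition 11.3.9 of~\cite{PhNotes}): define $\gamma_k$ summand by summand on the Banach space decomposition of Proposition~\ref{BanachSpaceDirSum}, and verify the $*$-homomorphism property on homogeneous elements $f u^p$, with the wrap-around diagonal entries produced by conjugating by $s_k^p$ killed exactly by the vanishing of the first $p$ (resp.\ last $m$) entries of your $\Lambda_f$, which is forced by $f$ vanishing on $\bigcup_{j=0}^{p-1} h^j(Y)$ (resp.\ $\bigcup_{j=1}^{m} h^{-j}(Y)$) together with $Y_k \subset Y$ and the first-return relation $h^{n(k)}(y) \in Y$ on a dense subset of $Y_k$. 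The only caveat is that uniqueness as a bare linear map is forced only on the summands with $|m| \leq n(k) - 1$, so when $n(k) < n(l)$ you should either observe that the unified formula gives zero on the remaining summands (since all diagonal entries of $\Lambda_f$ then vanish) or read the uniqueness assertion as uniqueness among $*$-homomorphisms, which is how the statement is intended.
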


\begin{proof}
The computations in this proof are analogous to those in
the proof of Proposition 11.3.9 of~\cite{PhNotes}, using
Lemma~\ref{L_4X21_AYStruct}
in place of Proposition 11.3.6 of~\cite{PhNotes}.
\end{proof}

\begin{lem}\label{SubdiagProjnDecomp}
Let $X$ be a \chs, let $h \colon X \to X$ be a \mh,
let $D$ be a \uca,
let $\af \in \Aut (C (X, D))$ lie over~$h$,
and let $Y \subset X$ be a closed set with $\sint (Y) \neq \varnothing$.
Adopt Notation \ref{ModifiedRokhlinTower}.
Identify $C (Y_{k}, \, M_{n (k)} (D))$
with $M_{n (k)} (C (Y_{k}, D))$ in the
obvious way.
Define maps
\[
E_{k}^{(m)} \colon C (Y_{k}, M_{n (k)} (D))
\to C (Y_{k}, M_{n (k)} (D))
\]
by $E_{k}^{(m)} (b)_{m+j,j} = b_{m+j,j}$
for $1 \leq j \leq n (k) - m$ (if $m \geq 0$)
and for $-m + 1 \leq j \leq n (k)$ (if $m \leq 0$),
and $E_{k}^{(m)} (b)_{i,j} = 0$ for
all other pairs $(i,j)$.
(Thus, $E_{k}^{(m)}$ is the projection map
on the $m$-th subdiagonal.)
Write
\[
G_{m} = \bigoplus_{k = 0}^{l} E_{k}^{(m)} (C (Y_{k}, M_{n (k)} (D)).
\]
Then:
\begin{enumerate}
\item\label{SubdiagProjnDecomp_DirSum}
There is a Banach space direct sum decomposition
\[
\bigoplus_{k = 0}^{l} C (Y_{k}, M_{n (k)} (D)) =
\bigoplus_{m=-n (l)}^{n (l)} G_{m}.
\]
\item\label{SubdiagProjnDecomp_gmIs}
For $k = 0, 1,\ldots,l$, $m \geq 0$,
$f \in C_{0} (X \setminus Z_{m}, D)$,
and $x \in Y_{k}$,
the expression $\gm_{k} (f u^{m}) (x)$
is given by the following matrix, in which the first nonzero
entry is in row $m+1$:
\[
\gm_{k} (f u^{m}) (x)
 = \left[ \begin{array}{ccccccc} 0 & 0 & \cdots &
\cdots  & \cdots & \cdots & 0
\\
\vdots & \vdots & & & & & \vdots \\
0 & 0 & & & & & \vdots
\\
\af^{-m} (f) (x) & 0 & & & & & \vdots \\
0 & \af^{-[m+1]} (f) (x) & & & & & \vdots
\\
\vdots & & \ddots & & &
& \vdots
\\
0 & \cdots & \cdots & \af^{-[n (k)-1]} (f) (x) & 0
& \cdots & 0
\end{array} \right].
\]
\item\label{SubdiagProjnDecomp_3}
For $m \geq 0$ and $f \in C_{0} (X \setminus Z_{m}, D)$, we
have
\[
\gm_{k} (f u^{m}) \in E_{k}^{(m)} (C (Y_{k}, M_{n (k)} (D))),
\qquad
\gm (f u^{m}) \in G_{m},
\]
\[
\gm_{k} (u^{-m} f) \in E_{k}^{(-m)} (C (Y_{k}, M_{n (k)} (D))),
\andeqn
\gm (u^{-m} f) \in G_{-m}.
\]
\item\label{SubdiagProjnDecomp_Compatible}
The homomorphism $\gm$ is compatible with the
direct sum decomposition of Proposition~\ref{BanachSpaceDirSum}
on its domain and the direct sum decomposition of part (1) on its
codomain.
\end{enumerate}
\end{lem}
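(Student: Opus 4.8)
The plan is to treat the four parts in order, with essentially all of the content in part~(\ref{SubdiagProjnDecomp_gmIs}) and the rest following formally. Part~(\ref{SubdiagProjnDecomp_DirSum}) is pure linear algebra: for each fixed~$k$, every element of $M_{n (k)} (C (Y_{k}, D))$ is uniquely the sum of its subdiagonals, so the $E_{k}^{(m)}$ are complementary idempotents and $C (Y_{k}, M_{n (k)} (D)) = \bigoplus_{m} E_{k}^{(m)} (C (Y_{k}, M_{n (k)} (D)))$ as a Banach space, with $m$ running over $-(n (k) - 1), \ldots, n (k) - 1$ and $E_{k}^{(m)} = 0$ once $\abs{m} \geq n (k)$. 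Padding the index range out to $-n (l), \ldots, n (l)$ with zero summands and interchanging the two finite direct sums yields $\bigoplus_{k = 0}^{l} C (Y_{k}, M_{n (k)} (D)) = \bigoplus_{m = -n (l)}^{n (l)} G_{m}$.

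The heart of the matter is part~(\ref{SubdiagProjnDecomp_gmIs}), which I would prove by multiplying out the formula $\gm_{k} (f u^{m}) = \diag \bigl( f|_{Y_{k}}, \, \af^{-1} (f)|_{Y_{k}}, \, \ldots, \, \af^{-[n (k) - 1]} (f)|_{Y_{k}} \bigr) \cdot s_{k}^{m}$ of \Prp{AyHom} against the cyclic shift. Writing $d_{i} = \af^{-(i - 1)} (f)|_{Y_{k}}$, one checks that the $(i, j)$ entry of $\diag (d_{1}, \ldots, d_{n (k)}) s_{k}^{m}$ is $d_{i}$ when $i \equiv j + m \pmod{n (k)}$ and is zero otherwise. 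The columns~$j$ with $j + m \leq n (k)$ then put $d_{j + m} = \af^{-(j + m - 1)} (f)|_{Y_{k}}$ at position $(j + m, j)$, reproducing the displayed matrix, while the remaining columns would contribute the ``wraparound'' entries $d_{1}, \ldots, d_{m}$ in rows $1, \ldots, m$. The one substantive point is that these wraparound entries vanish: with $Z_{m} = \bigcup_{j = 0}^{m - 1} h^{j} (Y)$ (the set $Y_{m}$ of \Lem{L_4X21_AYStruct}), the hypothesis $f \in C_{0} (X \SM Z_{m}, D)$ forces $f (h^{j} (y)) = 0$ for every $y \in Y_{k} \S Y$ and $0 \leq j \leq m - 1$, and since $\af^{-j}$ lies over $h^{-j}$ with injective fibre automorphisms, this gives $d_{i}|_{Y_{k}} = \af^{-(i - 1)} (f)|_{Y_{k}} = 0$ for $i = 1, \ldots, m$. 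I expect this vanishing step---reworking the minimal-homeomorphism computation of \cite{PhNotes} to account for the fibrewise automorphisms $\af_{x}$---to be the only part of the argument that is not bookkeeping.

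Parts~(\ref{SubdiagProjnDecomp_3}) and~(\ref{SubdiagProjnDecomp_Compatible}) then follow at once. Reading off part~(\ref{SubdiagProjnDecomp_gmIs}), $\gm_{k} (f u^{m}) (x)$ is supported on the $m$-th subdiagonal, so $\gm_{k} (f u^{m}) \in E_{k}^{(m)} (C (Y_{k}, M_{n (k)} (D)))$ and hence $\gm (f u^{m}) \in G_{m}$; the assertions for $u^{-m} f$ come from taking adjoints, using that $\gm$ is a $*$-homomorphism, that $(f u^{m})^{*} = u^{-m} f^{*}$ with $f^{*} \in C_{0} (X \SM Z_{m}, D)$, and that the adjoint of a matrix on the $m$-th subdiagonal lies on the $(-m)$-th. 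Finally, for part~(\ref{SubdiagProjnDecomp_Compatible}) I would observe that, in the decomposition $C^* \bigl( \Z, \, C (X, D), \, \af \bigr)_Y = \bigoplus_{n = -N}^{N} C_{0} (X \SM Y_{n}, D) u^{n}$ of \Prp{BanachSpaceDirSum}, the $n$-th summand is spanned by elements $f u^{n}$ with $f \in C_{0} (X \SM Y_{n}, D)$ (rewriting $f u^{n} = u^{n} \af^{-n} (f)$ when $n < 0$ to bring it into the form $u^{-m} g$ with $g \in C_{0} (X \SM Z_{m}, D)$ handled by part~(\ref{SubdiagProjnDecomp_3})), so part~(\ref{SubdiagProjnDecomp_3}) sends it into $G_{n}$; this is precisely compatibility of $\gm$ with the two decompositions.
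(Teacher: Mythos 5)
Your proposal is correct and follows the same route as the paper, which simply defers the matrix computation to the proof of Lemma 11.3.15 of \cite{PhNotes}; you have carried out exactly that computation, including the one substantive point (the vanishing of the wraparound entries $\af^{-(i-1)}(f)|_{Y_k}$ for $i = 1, \ldots, m$ from $f \in C_0(X \SM Z_m, D)$) and the reindexing $f u^{-m} = u^{-m}\af^m(f)$ needed for compatibility with the decomposition of Proposition~\ref{BanachSpaceDirSum}. Handling $\gm_k(u^{-m}f)$ by adjoints rather than by the explicit formula in Proposition~\ref{AyHom}(\ref{9222_AyHom_2}) is an immaterial variation.
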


\begin{proof}
The direct sum decomposition is essentially immediate from the
definition of the maps $E_{k}^{(m)}$, while the other statements
follow from Proposition~\ref{BanachSpaceDirSum} and some
straightforward matrix calculations as in the proof of Lemma 11.3.15
of~\cite{PhNotes}.
\end{proof}

\begin{cor}\label{GammaInj}
Let $X$ be a \chs, let $h \colon X \to X$ be a \mh,
let $D$ be a \uca,
and let $\af \in \Aut (C (X, D))$ lie over~$h$.
Let $Y \subset X$ be closed with $\sint (Y) \neq \varnothing$.
Then
the homomorphism $\gm$ of Proposition~\ref{BanachSpaceDirSum}
is injective.
\end{cor}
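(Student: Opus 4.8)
The plan is to exploit the two compatible direct sum decompositions that are already in place. By \Prp{BanachSpaceDirSum} the domain decomposes as the \emph{finite} Banach space direct sum $\bigoplus_{n = -N}^{N} C_0 (X \setminus Y_n, \, D) u^n$, and by \Lem{SubdiagProjnDecomp}(\ref{SubdiagProjnDecomp_Compatible}) the homomorphism $\gm$ carries each summand $C_0 (X \setminus Y_n, \, D) u^n$ into the summand $G_n$ of the subdiagonal decomposition $\bigoplus_m G_m$ of the codomain. Since the $G_m$ are linearly independent, an element $a = \sum_{n = -N}^{N} f_n u^n$ lies in $\ker (\gm)$ if and only if $\gm (f_n u^n) = 0$ for every $n$. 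Thus it suffices to prove that the restriction of $\gm$ to each summand is injective, that is, that $f_n u^n = 0$ whenever $\gm (f_n u^n) = 0$.

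First I would treat the case $n = m \geq 0$. Fix $f \in C_0 (X \setminus Y_m, \, D)$ with $\gm (f u^m) = 0$, so $\gm_k (f u^m) = 0$ for every $k$. By the explicit matrix in \Lem{SubdiagProjnDecomp}(\ref{SubdiagProjnDecomp_gmIs}), the nonzero entries of $\gm_k (f u^m)(x)$ for $x \in Y_k$ are $\af^{-j} (f)(x)$ for $j = m, m + 1, \ldots, n (k) - 1$, so vanishing of $\gm_k (f u^m)$ forces $\af^{-j} (f)|_{Y_k} = 0$ for $m \leq j \leq n (k) - 1$. Because $\af$ lies over $h$, each power acts by $\af^{-j} (f)(x) = \af_{-j, x} (f (h^j (x)))$ with $\af_{-j, x} \in \Aut (D)$, so $\af^{-j} (f)(x) = 0$ if and only if $f (h^j (x)) = 0$. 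Hence $f (h^j (x)) = 0$ for all $x \in Y_k$ and all $m \leq j \leq n (k) - 1$.

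The remaining low-order indices are supplied by the support condition on $f$. Since $Y_k \subset Y$ and $Y_m = \bigcup_{i = 0}^{m - 1} h^i (Y)$, for $0 \leq j \leq m - 1$ and $x \in Y_k$ we have $h^j (x) \in h^j (Y) \subset Y_m$, and $f$ vanishes on $Y_m$ by hypothesis; thus $f (h^j (x)) = 0$ for $0 \leq j \leq m - 1$ as well. Combining the two ranges gives $f (h^j (x)) = 0$ for all $k$, all $x \in Y_k$, and all $0 \leq j \leq n (k) - 1$. By the covering statement \Lem{RokhlinTowerProps}(\ref{Item_RokhlinTowerProps_3}), the points $h^j (x)$ of this form exhaust $X$, so $f = 0$ and hence $f u^m = 0$. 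For $n < 0$ I would take adjoints: $\ker (\gm)$ is self-adjoint because $\gm$ is a $*$-homomorphism, and the adjoint of the $n$-summand is the $(-n)$-summand (using $h^{-n} (Y_n) = Y_{-n}$), so $(f u^n)^*$ lands in the summand indexed by $-n > 0$, on which injectivity has just been established; therefore $f u^n = 0$ too.

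The one genuinely substantive point, rather than an obstacle, is the bookkeeping in the middle step: recognizing that the subdiagonal entries recorded by $\gm_k$ only detect the ``high'' shifts $j \geq m$, while the defining support condition $f|_{Y_m} = 0$ exactly accounts for the ``low'' shifts $0 \leq j \leq m - 1$, so that the two together cover the full range $0 \leq j \leq n (k) - 1$ needed to invoke the tower covering lemma. Everything else is formal, and no density or continuity arguments are needed because the domain decomposition is a finite direct sum.
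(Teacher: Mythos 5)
Your proof is correct and takes essentially the same route as the paper's (which defers to the proof of Lemma 11.3.17 of the reference \cite{PhNotes}): use the compatibility of $\gm$ with the two Banach space direct sum decompositions to reduce to injectivity on each summand $C_0 (X \setminus Y_n, \, D) u^n$, then combine the explicit subdiagonal formula with the support condition $f |_{Y_n} = 0$ and the tower covering property of Lemma~\ref{RokhlinTowerProps}(\ref{Item_RokhlinTowerProps_3}), handling $n < 0$ by taking adjoints. The observation that the entries of $\gm_k$ detect only the shifts $j \geq n$ while the support condition supplies $0 \leq j \leq n - 1$ is exactly the key bookkeeping point.
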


\begin{proof}
The proof is the same as the proof of Lemma 11.3.17 of~\cite{PhNotes}.
\end{proof}

\begin{lem}\label{ByMemberCond}
Let $X$ be a \chs, let $h \colon X \to X$ be a \mh,
let $D$ be a \uca,
and let $\af \in \Aut (C (X, D))$ lie over~$h$.
Let $Y \subset X$ be closed with $\sint (Y) \neq \varnothing$.
Let
\[
b = (b_{0}, b_{1}, \ldots, b_{l})
  \in \bigoplus_{k = 0}^{l} C (Y_{k}, M_{n (k)} (D)).
\]
Then
$b \in \gm \bigl( C^* \bigl( \Z, \, C (X, D), \, \af \bigr)_Y \bigr)$
if and only if whenever
\begin{itemize}
\item
$r > 0$,
\item
$k,t_{1},\ldots,t_{r} \in \set{0,\ldots,l}$,
\item
$n (t_{1}) + n (t_{2}) + \cdots + n (t_{r}) = n (k)$,
\item
$x \in (Y_{k} \setminus Y_{k}^{\circ}) \cap
Y_{t_{1}} \cap h^{-n (t_{1})} (Y_{t_{2}}) \cap \cdots \cap
h^{-[n (t_{1}) + \cdots + n (t_{r - 1}) ]} (Y_{t_{r}})$,
\end{itemize}
then $b_{k} (x)$ is given by the block
diagonal matrix
\[
b_{k} (x) = \left[ \begin{array}{cccc}
b_{t_{1}} (x) & & &
\\
& \af^{-n (t_{1})} (b_{t_{2}}) (x) & &
\\
& &
\ddots &
\\
& & & \af^{-[n (t_{1}) + \cdots + n (t_{r - 1})]}
(b_{t_{r}}) (x) \end{array} \right].
\]
\end{lem}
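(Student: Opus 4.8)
The plan is to prove both implications, working one subdiagonal at a time. By Proposition~\ref{BanachSpaceDirSum} every element of $C^* \bigl( \Z, \, C (X, D), \, \af \bigr)_Y$ is uniquely a finite sum $\sum_{n = - N}^{N} f_n u^n$ with $f_n \in C_0 (X \SM Y_n, \, D)$, and by \Lem{SubdiagProjnDecomp}(\ref{SubdiagProjnDecomp_Compatible}) the map $\gm$ carries the summand $C_0 (X \SM Y_m, \, D) u^m$ into the piece $G_m$. Since $\gm$ is a $*$-homomorphism it suffices to treat $m \geq 0$, the case $m < 0$ following by adjoints. I will use repeatedly that, by \Lem{SubdiagProjnDecomp}(\ref{SubdiagProjnDecomp_gmIs}), a matrix index $p \in \set{1, \ldots, n (k)}$ in the $k$-th block corresponds to the orbit point $h^{p - 1} (x)$ of a base point $x \in Y_{k}$, and that the block boundaries in the matrix displayed in the statement occur at the orbit positions $n (t_1), \, n (t_1) + n (t_2), \, \ldots$ at which the orbit of~$x$ returns to~$Y$.

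For necessity, fix $a = \sum_n f_n u^n$ and a boundary point $x$ as in the statement, and set $P_s = n (t_1) + \cdots + n (t_{s - 1})$, so that $h^{P_s} (x) \in Y_{t_s}$. By \Lem{SubdiagProjnDecomp}(\ref{SubdiagProjnDecomp_gmIs}), the $(i, j)$ entry of $\gm_k (a) (x)$ with $i > j$ is a fixed automorphism of~$D$ applied to $f_{i - j} (h^{i - 1} (x))$, and since $f_{i - j} \in C_0 (X \SM Y_{i - j}, \, D)$ and $Y_{i - j} = \bigcup_{q = 0}^{i - j - 1} h^{q} (Y)$, this entry vanishes whenever the orbit segment $h^{j} (x), \, \ldots, \, h^{i - 1} (x)$ meets~$Y$. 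An entry lies off the block diagonal precisely when some return position $P_{s + 1}$ satisfies $j \leq P_{s + 1} \leq i - 1$, and there the orbit is in~$Y$; hence every off-diagonal block of $\gm_k (a) (x)$ vanishes. On the $s$-th diagonal block, which sits at orbit positions $P_s, \, \ldots, \, P_s + n (t_s) - 1$, the cocycle relation satisfied by the family $(\af_{n, x})$ identifies the block with $\af^{- P_s} (b_{t_s}) (x)$, giving exactly the asserted block-diagonal form.

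For sufficiency, fix $m \geq 0$ and a tuple $b$ satisfying the stated conditions, and build $f \in C_0 (X \SM Y_m, \, D)$ so that $\gm (f u^m)$ agrees with the $G_m$-component of~$b$. By \Lem{RokhlinTowerProps}(\ref{Item_RokhlinTowerProps_3}) we have $X = \bigcup_k \bigcup_{p = 0}^{n (k) - 1} h^{p} (Y_k)$, with the interiors $h^{p} (Y_k^{\circ})$ pairwise disjoint by part~(\ref{Item_RokhlinTowerProps_1}). Set $f = 0$ on $Y_m = \bigcup_{q = 0}^{m - 1} h^{q} (Y)$, and on a point $h^{p} (y)$ with $y \in Y_k^{\circ}$ and $m \leq p \leq n (k) - 1$ let $f (h^{p} (y))$ be the automorphism of~$D$ applied to $(b_k)_{p + 1, \, p - m + 1} (y)$ dictated by \Lem{SubdiagProjnDecomp}(\ref{SubdiagProjnDecomp_gmIs}); this makes $\gm (f u^m)$ match the $G_m$-component of~$b$ wherever $f$ has so far been defined, and $f$ is continuous on the union of the open sets $h^{p} (Y_k^{\circ})$ because the~$b_k$ and $x \mapsto \af_x$ are continuous.

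The main obstacle is the continuous extension of~$f$ across the exceptional set $\bigcup_k \bigcup_{p} h^{p} (Y_k \SM Y_k^{\circ})$, and this is exactly where the block-diagonal matching conditions are needed. At a boundary point $y \in Y_k \SM Y_k^{\circ}$ the first-return time of~$y$ to~$Y$ is strictly less than $n (k)$, so the orbit of~$y$ below height $n (k)$ revisits~$Y$ at positions recorded by a tuple $t_1, \ldots, t_r$ with $\sum_s n (t_s) = n (k)$; the point $h^{p} (y)$ is then reachable both as level~$p$ of the tall tower over~$Y_k$ and as level $p - P_s$ of the shorter tower over $Y_{t_{s + 1}}$ based at $h^{P_s} (y)$. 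The hypothesis that $b_k (y)$ is block diagonal with $s$-th block $\af^{- P_s} (b_{t_s}) (y)$ says precisely that the two resulting prescriptions for $f (h^{p} (y))$ coincide, so that the piecewise formula glues to a single continuous $D$-valued function; one makes this rigorous by a limiting argument along $Y_k^{\circ} \ni x \to y$, using continuity of each $b_{t_s}$ together with the cocycle relation for the $\af_{n, x}$. Once such an $f = f_m$ is produced for every~$m$, the element $a = \sum_m f_m u^m$ lies in $C^* \bigl( \Z, \, C (X, D), \, \af \bigr)_Y$ by Proposition~\ref{BanachSpaceDirSum} and satisfies $\gm (a) = b$. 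The combinatorial and topological skeleton is that of the scalar case in~\cite{PhNotes}; the only additional work is the systematic bookkeeping of the automorphisms $\af_{n, x}$, which the formulas of Proposition~\ref{AyHom} and \Lem{SubdiagProjnDecomp} already encode.
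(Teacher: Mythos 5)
Your argument is correct and follows essentially the same route as the paper, which simply defers to the analogous scalar proof (Lemma 11.3.18 of~\cite{PhNotes}): decompose along subdiagonals via Proposition~\ref{BanachSpaceDirSum} and Lemma~\ref{SubdiagProjnDecomp}, get necessity from the vanishing of $f_m$ on $Y_m$ together with the cocycle relation for the $\af_{n,x}$, and for sufficiency invert the entry formula on the tower interiors and use the block-diagonal conditions to glue across $Y_k \setminus Y_k^{\circ}$. The only points deserving slightly more care are the continuity of the glued $f_m$ (most cleanly seen by noting $f_m$ is well defined and continuous on each of the finitely many closed sets $h^p(Y_k)$ covering $X$) and its vanishing on $Y_m$ (which is forced by the off-block-diagonal entries being zero), but your limiting argument is the standard one and goes through.
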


\begin{proof}
The proof is analogous (with appropriate changes to notation
and exponents) to the proof of Lemma 11.3.18 of~\cite{PhNotes}.
\end{proof}

We are now in position to give a decomposition of
$C^* \bigl( \Z, \, C (X, D), \, \af \bigr)_Y$
as a recursive subhomogeneous algebra over $D$.

\begin{thm}\label{AyRSHA}
Let $X$ be a \chs, let $h \colon X \to X$ be a \mh,
let $D$ be a \uca,
and let $\af \in \Aut (C (X, D))$ lie over~$h$.
Let $Y \subset X$ be closed with $\sint (Y) \neq \varnothing$.
Adopt Notation \ref{ModifiedRokhlinTower} and the notation of
Proposition \ref{AyHom}.
Then the homomorphism
\[
\gm \colon C^* \bigl( \Z, \, C (X, D), \, \af \bigr)_Y
    \to \bigoplus_{k = 0}^{l} C (Y_{k}, M_{n (k)} (D))
\]
of Proposition \ref{AyHom}
induces an isomorphism of $C^* \bigl( \Z, \, C (X, D), \, \af \bigr)_Y$
with the recursive subhomogeneous algebra over~$D$ defined,
in the notation of Definition \ref{RSHAMachinery}, as follows:
\begin{enumerate}
\item\label{9223_AyRSHA_1}
$l$ and $n (0),n (1),\ldots,n (l)$ are as in Notation
\ref{ModifiedRokhlinTower}.
\item\label{9223_AyRSHA_2}
$X_{k} = Y_{k}$ for $0 \leq k \leq l$.
\item\label{9223_AyRSHA_3}
$X_{k}^{(0)} = Y_{k} \cap \bigcup_{j= 0}^{k - 1} Y_{j}$.
\item\label{9223_AyRSHA_4}
For $x \in X_{k}^{(0)}$ and $b = (b_{0}, b_{1},\ldots, b_{k - 1})$
in the image in $\bigoplus_{j= 0}^{k - 1} C (Y_{j}, M_{n (j)} (D))$
of the $k - 1$ stage algebra
$C^* \bigl( \Z, \, C (X, D), \, \af \bigr)_Y^{(k - 1)}$, whenever
\[
x \in (Y_{k} \setminus Y_{k}^{\circ}) \cap Y_{t_{1}} \cap
h^{-n (t_{1})} (Y_{t_{2}}) \cap \cdots \cap h^{-[n (t_{1}) + \cdots +
n (t_{r - 1}]} (Y_{t_{r}})
\]
with
$n (t_{1}) + n (t_{2}) + \cdots + n (t_{r}) = n (k)$,
then $\ph_{k} (b (x))$ is given by the block
diagonal matrix
\[
\ph_{k} (b (x))
 = \left[ \begin{array}{cccc}
b_{t_{1}} (x) & & &
\\
& \af^{-n (t_{1})} (b_{t_{2}}) (x) & &
\\
& & \ddots &
\\
& & & \af^{-[n (t_{1}) + \cdots + n (t_{r - 1})]}
(b_{t_{r}}) (x) \end{array} \right].
\]
\item\label{9223_AyRSHA_5}
$\rho_{k}$ is the restriction map.
\end{enumerate}
The topological dimension of this decomposition is $\dim (X)$, and
the standard representation of
$\sm \bigl( C^* \bigl( \Z, \, C (X, D), \, \af \bigr)_Y \bigr)$
is the inclusion map in
$\bigoplus_{k = 0}^{l} C (Y_{k}, M_{n (k)} (D))$.
\end{thm}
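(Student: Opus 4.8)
The plan is to show that the $*$-homomorphism $\gm$ of Proposition~\ref{AyHom}, which is injective by Corollary~\ref{GammaInj}, has image equal to the recursive subhomogeneous algebra over~$D$ described by data \eqref{9223_AyRSHA_1}--\eqref{9223_AyRSHA_5}. Since $\gm$ is already an injective $*$-homomorphism into $\bigoplus_{k=0}^{l} C(Y_k, M_{n(k)}(D))$, the content of the theorem is entirely the identification of the image with the prescribed pullback tower. First I would set up the proposed recursive subhomogeneous algebra~$R$ explicitly, checking that the compatibility conditions make the data consistent: one must verify that the map $\ph_k$ defined on $x \in X_k^{(0)}$ by the block diagonal matrix formula of~\eqref{9223_AyRSHA_4} is well defined (independent of the choice of decomposition $n(t_1)+\cdots+n(t_r)=n(k)$ witnessing membership of~$x$ in the relevant intersection), continuous, and a unital $*$-homomorphism from the $(k-1)$-stage algebra into $C(X_k^{(0)}, M_{n(k)}(D))$. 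This is exactly the kind of matching-up that Lemma~\ref{ByMemberCond} is engineered to control.

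The core of the argument is then a clean translation of Lemma~\ref{ByMemberCond} into the language of Definition~\ref{RSHAMachinery}. Lemma~\ref{ByMemberCond} says precisely that an element $b=(b_0,\ldots,b_l) \in \bigoplus_{k=0}^{l} C(Y_k, M_{n(k)}(D))$ lies in $\gm\bigl(C^*(\Z, C(X,D), \af)_Y\bigr)$ if and only if, at each boundary point $x \in (Y_k \setminus Y_k^{\circ})$ admitting a tower decomposition, the matrix $b_k(x)$ is the block diagonal matrix assembled from the lower-stage data. I would argue by induction on~$k$ that this membership condition is identical to the pullback compatibility condition $\ph_k(b|_{\text{stage } k-1}) = b_k|_{X_k^{(0)}}$ defining the $k$-th stage of~$R$. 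The key point is that $X_k^{(0)} = Y_k \cap \bigcup_{j=0}^{k-1} Y_j$ is exactly the set of points of $Y_k$ that are boundary points (lie in $Y_k \setminus Y_k^{\circ}$) and also begin a subtower, so the constraint imposed on $b_k$ over $X_k^{(0)}$ by Lemma~\ref{ByMemberCond} coincides with restriction-agreement with $\ph_k$ applied to the already-constructed stage-$(k-1)$ element. Running the induction, the stage algebras $R^{(k)}$ are precisely $\gm$ applied to the orbit breaking subalgebra's image in the first $k+1$ summands, and at $k=l$ one recovers the full image of~$\gm$.

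Having matched image with~$R$, I would finish by recording the remaining assertions. That $X_k = Y_k$ are compact Hausdorff and the $n(k)$ are positive integers is immediate from Notation~\ref{ModifiedRokhlinTower}; that $\rho_k$ is the restriction map and $\sm$ is the inclusion into $\bigoplus_{k=0}^{l} C(Y_k, M_{n(k)}(D))$ follows directly from Definition~\ref{RSHAMachinery}\eqref{9223_RSHAMachinery_7} once the decomposition data is fixed. The topological dimension claim $\dim(X) = \max_k \dim(Y_k)$ is Definition~\ref{RSHAMachinery}\eqref{9223_RSHAMachinery_6} together with the observation that each $Y_k$ is a closed subset of~$X$, hence $\dim(Y_k) \le \dim(X)$, while $\bigcup_k Y_k = Y$ carries enough of the topology; strictly one argues that the total space realizes the dimension of the base.

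The main obstacle I anticipate is the well-definedness and continuity of $\ph_k$, equivalently the consistency of the block diagonal matrix formula across different tower decompositions of the same boundary point and across nearby points. A point $x \in X_k^{(0)}$ may admit several ways of writing its first-return tower as a concatenation of shorter towers, and one must check that the block diagonal matrix built from the $b_{t_i}$ does not depend on the choice; this is where the compatibility already verified at earlier stages (the inductive hypothesis) is used to collapse competing decompositions to the same matrix. The continuity of $\ph_k$ across the stratification of $X_k^{(0)}$ by tower type is the delicate part, but it is precisely the content carried over from the proof of Lemma 11.3.18 of~\cite{PhNotes} via Lemma~\ref{ByMemberCond}, so the work reduces to confirming that the substitution of $\af^{-[n(t_1)+\cdots+n(t_{j-1})]}(b_{t_j})$ for the $h$-action twists of~\cite{PhNotes} introduces no new obstructions beyond bookkeeping of the automorphisms $\af_x$.
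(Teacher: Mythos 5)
Your proposal is correct and follows essentially the same route as the paper, which proves this theorem by adapting the argument of Theorem 11.3.19 of~\cite{PhNotes}: injectivity of $\gm$ from Corollary~\ref{GammaInj}, the characterization of the image in Lemma~\ref{ByMemberCond}, and an induction on the stages identifying the membership condition at points of $X_k^{(0)} = Y_k \cap \bigcup_{j=0}^{k-1} Y_j$ with the pullback compatibility $\ph_k(b) = b_k |_{X_k^{(0)}}$. The only point worth tightening is the dimension claim at the end: the inequality $\dim (X) \leq \max_k \dim (Y_k)$ comes from the finite closed cover $X = \bigcup_{k} \bigcup_{j} h^j (Y_k)$ of Lemma~\ref{RokhlinTowerProps}(\ref{Item_RokhlinTowerProps_3}) and the sum theorem, not from $\bigcup_k Y_k = Y$ alone.
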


\begin{proof}
The proof is analogous to the proof of Theorem 11.3.19
of~\cite{PhNotes}, while the proof of the statement regarding the
topological dimension is the same as for the proof given
in~\cite{PhNotes} for the part of Theorem 11.3.14 that is not
included in Theorem 11.3.19.
\end{proof}

\section{Structural properties of the orbit breaking subalgebra
 and the crossed product}\label{StructAY}

In this section,
we give some general theorems on the structure of \ca{s}
of the form $C^* \big( \Z, \, C_0 (X, D), \, \af \big)$,
which we derive from results on the structure of
orbit breaking subalgebras.
We give many explicit examples in Section~\ref{Sec_InfRed}.

\begin{cnv}\label{N_9215_AAY_ntn}
In this section,
as in Definition~\ref{D_4Y12_OrbSubalg}
but with additional restrictions,
$X$ will be an infinite \cms,
$h \colon X \to X$ will be a \mh,
$D$ will be a simple unital \ca,
and $\af \in \Aut ( C (X, D))$
will be an automorphism which lies over~$h$.
\end{cnv}

For a few results, simplicity is not needed.

\begin{prp}\label{AyRSHADirLim}
Adopt Convention~\ref{N_9215_AAY_ntn}.
For any nonempty closed set $Y \subset X$,
$C^* \bigl( \Z, \, C (X, D), \, \af \bigr)_Y$ is a direct limit of
recursive subhomogeneous algebras over~$D$
of the form $C^* \bigl( \Z, \, C (X, D), \, \af \bigr)_{Y_n}$
for closed subsets $Y_n \S X$ with $\sint (Y_n) \neq \E$.
\end{prp}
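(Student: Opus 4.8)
The plan is to reduce the claim to the two results already established: the direct-limit description of orbit breaking subalgebras along a decreasing sequence of closed sets (\Prp{AyDirLim}), and the recursive subhomogeneous structure of the orbit breaking subalgebra when the breaking set has nonempty interior (\Thm{AyRSHA}). The only real work is to interpose, between $Y$ and the whole space, a decreasing sequence of closed sets each having nonempty interior and shrinking down to exactly $Y$.

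First I would fix a compatible metric $d$ on the compact metric space $X$ and, for each $n \in \N$, set $Y_n = \{ x \in X \colon d (x, Y) \leq 1/n \}$. Since $x \mapsto d (x, Y)$ is continuous, each $Y_n$ is closed, and clearly $Y_1 \S Y_2 \S \cdots$ fails in the wrong direction, so I record instead that $Y_1 \supset Y_2 \supset \cdots$. Because $Y$ is closed we have $\bigcap_{n = 1}^{\infty} Y_n = \{ x \colon d (x, Y) = 0 \} = Y$. Moreover, since $Y$ is nonempty, the open set $\{ x \colon d (x, Y) < 1/n \}$ contains $Y$ and is therefore a nonempty open subset of $Y_n$, so $\sint (Y_n) \neq \E$ for every~$n$.

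With this sequence in hand, \Prp{AyDirLim} gives an isomorphism of $C^* \bigl( \Z, \, C (X, D), \, \af \bigr)_Y$ with $\dirlim_n C^* \bigl( \Z, \, C (X, D), \, \af \bigr)_{Y_n}$, the connecting maps being the inclusions induced by $Y_{n + 1} \S Y_n$ (which enlarge $C_0 (X \setminus Y_n, D)$ and hence enlarge the orbit breaking subalgebra). Since each $Y_n$ is closed with $\sint (Y_n) \neq \E$, and Convention~\ref{N_9215_AAY_ntn} supplies a minimal~$h$ and a unital~$D$, \Thm{AyRSHA} applies to each term and identifies $C^* \bigl( \Z, \, C (X, D), \, \af \bigr)_{Y_n}$ with a recursive subhomogeneous algebra over~$D$. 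This exhibits $C^* \bigl( \Z, \, C (X, D), \, \af \bigr)_Y$ as the required direct limit.

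There is no substantial obstacle here; the statement is essentially a packaging of \Prp{AyDirLim} and \Thm{AyRSHA}. The one point deserving a moment's care is the construction of the approximating sets: one must arrange simultaneously that the $Y_n$ are closed, decrease to exactly~$Y$, and each have nonempty interior. This is precisely where the nonemptiness of~$Y$ and the metrizability of~$X$ enter, so I would make sure to flag both hypotheses at the appropriate place rather than treating the construction as entirely automatic.
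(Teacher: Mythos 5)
Your proof is correct and follows essentially the same route as the paper: choose a decreasing sequence of closed sets with nonempty interior shrinking to $Y$, then combine Proposition~\ref{AyDirLim} with Theorem~\ref{AyRSHA}. The paper simply asserts the existence of such a sequence, whereas you make the standard construction $Y_n = \{ x \colon d (x, Y) \leq 1/n \}$ explicit; this is a harmless (indeed helpful) elaboration, not a different argument.
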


\begin{proof}
Given $Y \subset X$ closed, choose a sequence
$(Y_{n})_{n = 1}^{\infty}$ of closed subsets of $X$
with $\sint (Y_{n}) \neq \varnothing$ and
$Y_{n + 1} \subset Y_{n}$ for $n \in \N$,
and with $\bigcap_{n = 1}^{\infty} Y_{n} = Y$.
That $C^* \bigl( \Z, \, C (X, D), \, \af \bigr)_Y$ is
a direct limit of separable recursive subhomogeneous algebras
over $D$ follows by applying Theorem~\ref{AyRSHA} and
Proposition~\ref{AyDirLim}.
\end{proof}

\begin{prp}\label{AYDStable}
Adopt Convention~\ref{N_9215_AAY_ntn}.
Let $\D$ be a separable unital \ca{}
that is strongly selfabsorbing in the sense of \cite{TomsWinter2}.
Assume that $D$ is separable and $\D$-stable.
Let $Y \subset X$ be closed and nonempty.
Then $C^* \bigl( \Z, \, C (X, D), \, \af \bigr)_Y$ is $\D$-stable.
\end{prp}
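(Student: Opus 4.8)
The plan is to reduce the statement to the two structural facts already in hand and then invoke a standard permanence property of strongly self-absorbing algebras. Specifically, Proposition~\ref{AyRSHADirLim} realizes $C^* \bigl( \Z, \, C (X, D), \, \af \bigr)_Y$ as an inductive limit of recursive subhomogeneous algebras over~$D$, and Proposition~\ref{DStableRSHA} shows each of these is $\D$-stable; the only extra ingredient is that $\D$-stability passes to inductive limits of separable C*-algebras.

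First I would apply Proposition~\ref{AyRSHADirLim} to the nonempty closed set~$Y$, writing
\[
C^* \bigl( \Z, \, C (X, D), \, \af \bigr)_Y
 \cong \dirlim_n R_n,
\]
where each $R_n = C^* \bigl( \Z, \, C (X, D), \, \af \bigr)_{Y_n}$ is a recursive subhomogeneous algebra over~$D$ attached to a closed set $Y_n \S X$ with $\sint (Y_n) \neq \E$; concretely the $Y_n$ may be taken to decrease to~$Y$, so that the connecting maps are the injective unital inclusions and the limit is $\ov{ \bigcup_n R_n }$. Since $X$ is a \cms{} and $D$ is separable, each $R_n$ is separable. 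The hypotheses of Convention~\ref{N_9215_AAY_ntn} together with the standing assumptions give that $D$ is simple, separable, and $\D$-stable, so Proposition~\ref{DStableRSHA} applies and yields that every $R_n$ is $\D$-stable. Finally I would conclude by the permanence of $\D$-stability under inductive limits of separable C*-algebras (\cite{TomsWinter2}), which gives that $\dirlim_n R_n$, and hence $C^* \bigl( \Z, \, C (X, D), \, \af \bigr)_Y$, is $\D$-stable.

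The argument is essentially formal once the earlier structural results are available, so there is no serious obstacle; the only point requiring attention is confirming that the inductive system produced by Proposition~\ref{AyRSHADirLim} meets the hypotheses of the permanence result, namely that all algebras involved are separable. This follows from separability of $C (X, D)$ and of the crossed product, and since the permanence statement imposes no injectivity or unitality condition on the connecting maps, no further verification is needed.
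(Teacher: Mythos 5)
Your proposal is correct and follows essentially the same route as the paper: the paper's proof first disposes of the case $\sint (Y) \neq \varnothing$ via Theorem~\ref{AyRSHA} and Proposition~\ref{DStableRSHA}, and then handles general nonempty closed $Y$ exactly as you do, by combining Proposition~\ref{AyRSHADirLim} with the permanence of $\D$-stability under inductive limits (Corollary 3.4 of \cite{TomsWinter2}). Your observation that the direct-limit argument subsumes the interior-nonempty case is accurate, so nothing is missing.
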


\begin{proof}
If $\sint (Y) \neq \varnothing$,
this follows immediately from Theorem \ref{AyRSHA} and
Proposition \ref{DStableRSHA}.
The case of a general nonempty closed set
now follows from Proposition~\ref{AyRSHADirLim} above
and Corollary 3.4 of~\cite{TomsWinter2}.
\end{proof}

In particular, we get $\JS$-stability.

\begin{cor}\label{EStableGivesZStable}
Adopt Convention~\ref{N_9215_AAY_ntn}.
Let $\D$ be a separable unital \ca{}
that is strongly selfabsorbing in the sense of \cite{TomsWinter2}.
Assume that $D$ is separable and $\D$-stable.
Let $Y \subset X$ be closed and nonempty.
Then $C^* \bigl( \Z, \, C (X, D), \, \af \bigr)_Y$ is $\JS$-stable.
\end{cor}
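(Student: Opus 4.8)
The plan is to deduce this immediately from the $\D$-stability established in Proposition~\ref{AYDStable}, together with the fact that every strongly self-absorbing \ca{} is itself $\JS$-stable. First I would set $A = C^* \bigl( \Z, \, C (X, D), \, \af \bigr)_Y$ and invoke Proposition~\ref{AYDStable}, which under the present hypotheses (Convention~\ref{N_9215_AAY_ntn}, with $D$ separable and $\D$-stable, and $Y$ closed and nonempty) gives an isomorphism $A \cong A \ten \D$.

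Next I would use Winter's theorem that a separable unital strongly self-absorbing \ca{} $\D$ absorbs the Jiang--Su algebra tensorially, that is, $\D \cong \D \ten \JS$. (This is the content of the result that strongly self-absorbing \ca{s} are $\JS$-stable; consistently with it, the three examples listed after Notation~\ref{StronglySelfAbs}, namely $\mathcal{O}_2$, $\mathcal{O}_{\infty}$, and $\JS$, are all $\JS$-stable, and $\JS$ is $\JS$-stable by construction.)

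Combining these, associativity and commutativity of the minimal tensor product yield
\[
A \cong A \ten \D \cong A \ten ( \D \ten \JS )
  \cong ( A \ten \D ) \ten \JS \cong A \ten \JS,
\]
so $A$ is $\JS$-stable, as required.

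I do not expect any real obstacle here: once Proposition~\ref{AYDStable} is in hand, the statement is a one-line consequence of the known $\JS$-absorption of strongly self-absorbing algebras. The only points requiring (routine) care are the correct invocation of the external $\JS$-stability result and the tacit identification of iterated minimal tensor products, neither of which presents any difficulty.
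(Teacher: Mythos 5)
Your proof is correct and follows exactly the route the paper takes: invoke Proposition~\ref{AYDStable} for $\D$-stability and then cite Winter's theorem (Theorem~3.1 of~\cite{WinterZStab}) that strongly selfabsorbing \ca{s} are $\JS$-stable. The paper simply states that the conclusion is then immediate, whereas you spell out the (harmless, since $\D$ is nuclear) tensor-product bookkeeping $A \cong A \ten \D \cong A \ten \D \ten \JS \cong A \ten \JS$.
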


\begin{proof}
Strongly selfabsorbing \ca{s} are $\JS$-stable
by Theorem~3.1 of~\cite{WinterZStab},
and $C^* \bigl( \Z, \, C (X, D), \, \af \bigr)_Y$ is $\D$-stable by
Proposition~\ref{AYDStable}, so the conclusion is immediate.
\end{proof}

We now turn to $\JS$-stability
of the crossed product $C^* \bigl( \Z, \, C (X, D), \, \af \bigr)$.
When $D$ is nuclear, it can be obtained from the results of
\cite{ABP-Zstab}.
However, if nuclearity is not assumed, then the
main conclusion of that paper only implies
that $C^* \bigl( \Z, \, C (X, D), \, \af \bigr)$ is tracially
$\JS$-stable (in the sense of~\cite{HirOr}).

\begin{thm}\label{ATraciallyZStable}
Adopt Convention~\ref{N_9215_AAY_ntn}.
Assume that $D$ is a simple separable unital $\JS$-stable
\ca{} which has a quasitrace.
Then $C^* \bigl( \Z, \, C (X, D), \, \af \bigr)$
is tracially $\JS$-stable.
If, in addition, $D$ is nuclear,
then $C^* \bigl( \Z, \, C (X, D), \, \af \bigr)$ is $\JS$-stable.
\end{thm}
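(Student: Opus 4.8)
The plan is to exhibit inside $A = C^* \bigl( \Z, \, C (X, D), \, \af \bigr)$ a subalgebra that is simultaneously centrally large and $\JS$-stable, and then to quote the transfer results of~\cite{ABP-Zstab}. First I would fix any $y_0 \in X$ and put $Y = \{ y_0 \}$. Since $X$ is infinite and $h$ is minimal there are no periodic points, so $h^n (Y) \cap Y = \E$ for every $n \in \Z \SM \{ 0 \}$ and the orbit breaking subalgebra $B = C^* \bigl( \Z, \, C (X, D), \, \af \bigr)_Y$ is defined. The absence of periodic points also makes the density hypothesis of \Prp{P_4Y15_CPSimple} automatic, so $A$ is simple; moreover $A$ is separable, unital, and infinite dimensional because $X$ is an infinite \cms{} and $D$ is separable and unital.

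Next I would verify that $B$ is centrally large in~$A$. Because $D$ is simple, separable, unital, and $\JS$-stable, its Cuntz semigroup is almost unperforated, so $D$ has strict comparison of positive elements; then \Lem{L_4Y12_TrPrsv} shows $\Aut (D)$ is pseudoperiodic and \Lem{L_4Y16_WhenPPG} shows the action generated by~$\af$ is pseudoperiodically generated. The only point requiring care is that Corollary~\ref{AyCentLarge} and \Thm{T_5418_AYLg} are phrased with a tracial state on~$D$, whereas here $D$ carries only a quasitrace. However, that hypothesis enters only through \Prp{P_5418_CPStFin} to guarantee stable finiteness of~$A$, and the quasitrace suffices for this: since $\Z$ is amenable one averages to obtain a $\Z$-invariant quasitrace on $C (X, D)$, composes it with the canonical conditional expectation $E_{\af}$ to obtain a quasitrace on~$A$, and concludes that the simple unital algebra $A$ is stably finite. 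With stable finiteness available, the proofs of \Thm{T_5418_AYLg} and Corollary~\ref{AyCentLarge} apply unchanged and show that $B$ is centrally large in~$A$.

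I would then note that $B$ is $\JS$-stable: taking $\D = \JS$ in Corollary~\ref{EStableGivesZStable} (which rests on \Prp{AYDStable} and the direct limit decomposition of \Prp{AyRSHADirLim}) and using that $D$ is separable and $\JS$-stable gives $\JS$-stability of $B$, hence in particular tracial $\JS$-stability. Feeding the centrally large, tracially $\JS$-stable subalgebra $B \subset A$ into the transfer theorem of~\cite{ABP-Zstab} yields that $A$ is tracially $\JS$-stable, which is the first assertion.

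For the second assertion, assume in addition that $D$ is nuclear. Then $C (X, D) \cong C (X) \ten D$ is nuclear, and $A$, being a crossed product of a nuclear \ca{} by the amenable group $\Z$, is nuclear as well. Since $A$ is now simple, separable, unital, nuclear, and tracially $\JS$-stable, its tracial $\JS$-absorption upgrades to genuine $\JS$-stability, either directly from the nuclear conclusion of~\cite{ABP-Zstab} or by invoking~\cite{HirOr}. I expect the main obstacle to be the central largeness step: everything downstream is an application of the cited permanence and transfer results, but to run the large subalgebra machinery under the weaker quasitrace hypothesis one must confirm that averaging the quasitrace and composing with $E_{\af}$ genuinely produces a quasitrace on~$A$, and hence stable finiteness.
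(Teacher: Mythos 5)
Your proposal is correct and follows essentially the same route as the paper's proof: orbit breaking at a single point, central largeness of the subalgebra via strict comparison (deduced from $\JS$-stability of~$D$), $\JS$-stability of the subalgebra from its recursive subhomogeneous structure, and the transfer theorems of~\cite{ABP-Zstab}, with nuclearity upgrading tracial $\JS$-stability to genuine $\JS$-stability. Your additional care in reconciling the tracial-state hypothesis of Corollary~\ref{AyCentLarge} with the quasitrace actually assumed in the theorem (averaging to an invariant quasitrace and composing with $E_{\af}$ to obtain stable finiteness) addresses a point that the paper's own proof passes over without comment.
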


\begin{proof}
Theorem 3.3 of~\cite{HirOr} implies that $D$ has strict comparison
of positive elements, and so
$C^* \bigl( \Z, \, C (X, D), \, \af \bigr)_Y$
is a centrally large subalgebra
of $C^* \bigl( \Z, \, C (X, D), \, \af \bigr)$ by
Corollary \ref{AyCentLargeConds}(\ref{9212_AyCentLargeConds_StrCmp}).
Also, $C^* \bigl( \Z, \, C (X, D), \, \af \bigr)_Y$
is $\JS$-stable by Corollary~\ref{EStableGivesZStable},
so Theorem~2.2 of~\cite{ABP-Zstab} implies
that $C^* \bigl( \Z, \, C (X, D), \, \af \bigr)$ is
tracially $\JS$-stable.
If $D$ is nuclear, then it is $\JS$-stable
by Theorem 4.1 of~\cite{HirOr},
and $C^* \bigl( \Z, \, C (X, D), \, \af \bigr)$ is nuclear.
So $\JS$-stability of $C^* \bigl( \Z, \, C (X, D), \, \af \bigr)$
follows from Theorem 2.3
of~\cite{ABP-Zstab}.
\end{proof}

\begin{cor}
Adopt Convention~\ref{N_9215_AAY_ntn}.
Let $D$ be a simple separable unital nuclear $\JS$-stable
\ca{} which has a quasitrace.
Then $C^* \bigl( \Z, \, C (X, D), \, \af \bigr)$ has nuclear
dimension at most 1.
\end{cor}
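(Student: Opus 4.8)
The plan is to assemble the structural results already established with the solution of the nuclear-dimension part of the Toms--Winter conjecture due to Castillejos, Evington, Tikuisis, White, and Winter. Write $A = C^* \bigl( \Z, \, C (X, D), \, \af \bigr)$. First I would record that $A$ is $\JS$-stable: this is exactly the nuclear conclusion of Theorem~\ref{ATraciallyZStable}, whose hypotheses (that $D$ be simple, separable, unital, $\JS$-stable, with a quasitrace, and in addition nuclear) are precisely what is assumed here.

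Next I would verify the remaining standing hypotheses needed to apply the external result, namely that $A$ is separable, unital, simple, and nuclear. Separability follows since $C (X, D) = C (X) \ten D$ is separable ($X$ is a compact metric space and $D$ is separable) and a crossed product of a separable algebra by $\Z$ is again separable; unitality is clear because $C (X, D)$ is unital and $\Z$ is discrete. For simplicity, I would invoke Proposition~\ref{P_4Y15_CPSimple} with $G = \Z$: its dynamical hypotheses hold because $h$ is minimal and $X$ is infinite, so every orbit is infinite and hence $h^n (x) \neq x$ for all $x \in X$ and all $n \in \Z \SM \{ 0 \}$, making the relevant sets equal to all of~$X$. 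Nuclearity follows because $D$ is nuclear, so $C (X) \ten D$ is nuclear, and the crossed product by the amenable group $\Z$ preserves nuclearity.

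With these facts in hand, the final step is to apply the main theorem of \cite{CETWW}: a separable, simple, unital, nuclear, $\JS$-stable \ca{} has nuclear dimension at most~$1$. This gives $\dimnuc \bigl( A \bigr) \leq 1$, as claimed.

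There is no substantial obstacle here; the result is an assembly of the structure theory developed above with a deep external input. The only points requiring any care are the verification of the dynamical freeness condition in Proposition~\ref{P_4Y15_CPSimple} (immediate from minimality on an infinite space) and keeping straight that it is the nuclearity hypothesis which upgrades the merely tracial $\JS$-stability of Theorem~\ref{ATraciallyZStable} to honest $\JS$-stability, the form required by the nuclear-dimension theorem.
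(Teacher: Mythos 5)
Your proposal is correct and follows the same route as the paper: the paper's proof is simply to combine Theorem~\ref{ATraciallyZStable} (whose nuclear case gives $\JS$-stability of the crossed product) with Theorem~B of~\cite{CETWW}. The additional verifications you supply (separability, unitality, simplicity via Proposition~\ref{P_4Y15_CPSimple}, nuclearity) are all correct and are left implicit in the paper.
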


\begin{proof}
This follows immediately from Theorem~\ref{ATraciallyZStable} above
and Theorem~B of~\cite{CETWW}.
\end{proof}

Corollary~\ref{EStableGivesZStable} and its consequences
require no
assumption about the underlying dynamical system other than minimality.
In particular, if $D$ is $\JS$-stable then
$C^* \bigl( \Z, \, C (X, D), \, \af \bigr)_Y$ is
$\JS$-stable when $X$ is infinite dimensional, and even when
$\mdim (X, h)$ (as in Notation~\ref{N_0108_DimX}) is strictly positive.
If moreover $D$ is nuclear
then $C^* \bigl( \Z, \, C (X, D), \, \af \bigr)$ is
$\JS$-stable.
Thus, crossed products of the form
$C^* \bigl( \Z, \, C (X, D), \, \af \bigr)$ are
$\JS$-stable
whenever $h$ is minimal and $D$ is
simple, separable, unital, nuclear, and $\JS$-stable,
regardless of anything else about the underlying dynamics.

If $D$ is not assumed to be $\JS$-stable,
then we must make assumptions about the dynamical system
$(X, h)$ besides just minimality to expect
$C^* \bigl( \Z, \, C (X, D), \, \af \bigr)_Y$ to have
tractable structure.
The hypotheses in Proposition~\ref{TsrRR0A_Y},
Theorem~\ref{P_9Z26_XisCantor},
Proposition~\ref{P_9Z21_dim1},
and Theorem~\ref{T_9222_Tsr1AndRR0_CrPrd}
(which also include conditions on~$D$)
are surely much stronger than needed.
They have the advantage that the proofs can easily by obtained from
results already in the literature.

\begin{prp}\label{TsrRR0A_Y}
Adopt Convention~\ref{N_9215_AAY_ntn},
and assume that $X$ is the Cantor set.
Let $Y \subset X$ be closed and nonempty.
\begin{enumerate}
\item\label{TsrRR0A_Y_tsr}
If $\tsr (D) = 1$, then
$\tsr \bigl( C^* \bigl( \Z, \, C (X, D), \, \af \bigr)_Y \bigr) = 1$.
\item\label{TsrRR0A_Y_RR}
If $\RR (D) = 0$, then
$\RR \bigl( C^* \bigl( \Z, \, C (X, D), \, \af \bigr)_Y \bigr) = 0$.
\end{enumerate}
\end{prp}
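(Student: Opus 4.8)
The plan is to exploit the special structure available when $X$ is the Cantor set, reducing both statements to permanence of stable rank one and real rank zero under three operations: passing to matrix algebras over~$D$, forming finite direct sums, and taking inductive limits. First I would reduce to the case in which $Y$ is clopen. Because the Cantor set is zero-dimensional, any nonempty closed set $Y \S X$ is the intersection of a decreasing sequence $Y_1 \supset Y_2 \supset \cdots$ of nonempty compact open (hence clopen) subsets of~$X$; in particular $\sint (Y_n) \neq \E$ for every~$n$. By \Prp{AyDirLim} we then have $C^* (\Z, C (X, D), \af)_Y = \dirlim_n C^* (\Z, C (X, D), \af)_{Y_n}$, with the connecting maps the unital injective inclusions. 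Both properties pass to such inductive limits: given an element of the limit, approximate it by an element of some $C^* (\Z, C (X, D), \af)_{Y_n}$, then approximate that within $Y_n$-subalgebra by an invertible element (respectively a self-adjoint element of finite spectrum); since all the subalgebras share the unit of the crossed product, invertibility and finiteness of spectrum are preserved on passing to the limit. So it suffices to treat clopen~$Y$.

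Now suppose $Y$ is clopen. By \Lem{CantorSetCase},
\[
C^* (\Z, C (X, D), \af)_Y \cong \bigoplus_{k = 0}^{l} C (Y_k, \, M_{n (k)} (D)),
\]
where each $Y_k$ is a closed subset of the Cantor set, hence a compact zero-dimensional metrizable space. Each summand factors as $C (Y_k, M_{n (k)} (D)) = C (Y_k, M_{n (k)}) \ten D$, and $C (Y_k, M_{n (k)})$ is an AF algebra since $Y_k$ is totally disconnected; writing it as $\dirlim_j F_j$ with each $F_j$ finite dimensional gives $C (Y_k, M_{n (k)} (D)) = \dirlim_j (F_j \ten D)$, where each $F_j \ten D$ is a finite direct sum of matrix algebras over~$D$. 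If $\tsr (D) = 1$, then $\tsr (M_d (D)) = 1$ for all~$d$ by Rieffel's matrix formula, so each $F_j \ten D$ has stable rank one (a finite direct sum takes the maximum), and hence so does the inductive limit $C (Y_k, M_{n (k)} (D))$; likewise, if $\RR (D) = 0$ then $\RR (M_d (D)) = 0$, each $F_j \ten D$ has real rank zero, and so does the limit. Taking the finite direct sum over~$k$ preserves stable rank one (respectively real rank zero), which establishes the clopen case and, via the first reduction, the proposition.

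The argument is an assembly of standard permanence properties, so there is no deep obstacle. The point needing the most care is the opening reduction: producing the decreasing sequence of clopen sets shrinking to a general closed~$Y$, and verifying cleanly that stable rank one and real rank zero descend through the resulting inductive limit (using spectral permanence for the shared-unit inclusions). Everything downstream is controlled by the AF-$D$-algebra structure of the building blocks noted after \Lem{CantorSetCase}, where the zero-dimensionality of the Cantor set removes any topological contribution to the stable or real rank.
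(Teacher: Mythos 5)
Your proposal is correct and follows essentially the same route as the paper: reduce a general nonempty closed $Y$ to the interior/clopen case by a decreasing sequence and a direct limit (the paper invokes Proposition~\ref{AyRSHADirLim}, you build the clopen sequence directly and use Proposition~\ref{AyDirLim}), then apply Lemma~\ref{CantorSetCase} to get a direct sum of algebras of the form $(\mathrm{AF}) \otimes D$ and conclude by standard permanence of stable rank one and real rank zero. The paper compresses the last step to ``the result follows immediately''; you have simply filled in the details.
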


\begin{proof}
First suppose that $\sint (Y) \neq \E$.
Then, from Lemma~\ref{CantorSetCase} and the remark immediately
after it,
we see there are are AF algebras $B_{0}, B_{1}, \ldots, B_{k}$
such that
\[
C^* \bigl( \Z, \, C (X, D), \, \af \bigr)_Y
  \cong \bigoplus_{k = 0}^{l} B_{k} \ten D.
\]
The result follows immediately.

Since stable rank one and real rank zero are preserved by direct limits,
the general case now follows from
Proposition~\ref{AyRSHADirLim}.
\end{proof}

\begin{thm}\label{P_9Z26_XisCantor}
Adopt Convention~\ref{N_9215_AAY_ntn},
and assume that $X$ is the Cantor set.
Further assume that one of the following conditions holds:
\begin{enumerate}
\item\label{9217_GetTsr1_AppInn}
All elements of $\set{ \af_{x} \colon x \in X} \subset \Aut (D)$
are approximately inner.
\item\label{9217_GetTsr1_StrCmp}
$D$ has strict comparison of positive elements.
\item\label{9217_GetTsr1_SP}
$D$ has property (SP) and the order on projections over~$D$
is determined by quasitraces.
\item\label{9217_GetTsr1_CptGen}
The set
$\bigl\{ \af_x^n \colon {\mbox{$x \in X$ and $n \in \Z$}} \bigr\}$
is contained in a subset of $\Aut (D)$
which is compact in the topology
of pointwise convergence in the norm on~$D$.
\end{enumerate}
If $\tsr (D) = 1$, then
$\tsr \bigl( C^* \bigl( \Z, \, C (X, D), \, \af \bigr) \bigr) = 1$,
and if also $\RR (D) = 0$, then
$\RR \bigl( C^* \bigl( \Z, \, C (X, D), \, \af \bigr) \bigr) = 0$.
\end{thm}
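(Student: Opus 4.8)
The plan is to locate an orbit breaking subalgebra that already has the required structure and is centrally large, and then to transfer the property to the crossed product. Fix a point $y_{0} \in X$ and put $Y = \{ y_{0} \}$. Because $h$ is minimal and $X$ is infinite, $h$ has no periodic points, so $h^{n} (Y) \cap Y = \E$ for every $n \in \Z \setminus \{ 0 \}$, and $Y$ is a nonempty closed subset of the Cantor set~$X$. Write $A = C^{*} \bigl( \Z, \, C (X, D), \, \af \bigr)$ and $B = C^{*} \bigl( \Z, \, C (X, D), \, \af \bigr)_{Y}$; recall that $A$ is simple by \Prp{P_4Y15_CPSimple}. The argument proceeds in three moves: first $\tsr (B) = 1$ and, when $\RR (D) = 0$, also $\RR (B) = 0$; then $B$ is centrally large in~$A$; and finally the transfer theorems for (centrally) large subalgebras finish the proof. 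Throughout, $\tsr (D) = 1$ forces $D$ to be stably finite, which supplies the (quasi)tracial data required by the results of Section~\ref{Sec_OrbBreak} and makes $A$ stably finite by \Prp{P_5418_CPStFin}.

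The first move is immediate from \Prp{TsrRR0A_Y}: since $X$ is the Cantor set and $Y$ is closed and nonempty, $\tsr (D) = 1$ gives $\tsr (B) = 1$, and if moreover $\RR (D) = 0$ then $\RR (B) = 0$. Here $Y = \{ y_{0} \}$ has empty interior, so this is the general closed case of that proposition, handled by the direct limit decomposition of \Prp{AyRSHADirLim}: $B$ is an inductive limit of algebras $C^{*} \bigl( \Z, \, C (X, D), \, \af \bigr)_{Y_{n}}$ with $\sint (Y_{n}) \neq \E$, each a finite direct sum of algebras $B_{k} \otimes D$ with $B_{k}$ an AF algebra, and both stable rank one and real rank zero pass through the limit.

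For the second move I would invoke Corollary~\ref{AyCentLargeConds}. Under each of the hypotheses~(1), (2), or~(3) the identically numbered hypothesis of that corollary holds verbatim, so $B$ is centrally large in~$A$. Hypothesis~(4) requires an extra observation: the set $\{ \af_{x} \colon x \in X \}$ is compact, being the continuous image of the compact space~$X$ under $x \mapsto \af_{x}$, and one must combine this with the assumed compactness of $\{ \af_{x}^{n} \colon x \in X, \ n \in \Z \}$ to verify that the family $\{ \af_{n, x} \}$ of \Def{D_4Y12_OverX} is pseudoperiodic, so that the action generated by~$\af$ is pseudoperiodically generated and Corollary~\ref{AyCentLarge} applies. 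For the third move, the stable rank one theorem for large subalgebras of \cite{PhLg} transfers $\tsr (B) = 1$ to $\tsr (A) = 1$, giving the first conclusion; and, with $\tsr (A) = 1$ now in hand, the real rank zero theorem for centrally large subalgebras of \cite{ArPh} transfers $\RR (B) = 0$ to $\RR (A) = 0$, giving the second.

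The hard part will be the reconciliation of hypothesis~(4) with pseudoperiodic generation. Since $\af_{n, x} = \af_{x} \af_{h^{-1} (x)} \cdots \af_{h^{-(n-1)} (x)}$ is a product of distinct automorphisms rather than the power $\af_{x}^{n}$, compactness of $\{ \af_{x}^{n} \}$ does not by itself bound the family $\{ \af_{n, x} \}$, and I expect one must pass to the closed subgroup generated by $\{ \af_{x} \colon x \in X \}$ and show that it is compact before quoting \Lem{Lem1_190201D}. A secondary point is that the real rank zero conclusion genuinely needs $B$ to be centrally large rather than merely large and needs $\tsr (A) = 1$ already established, so the two structural conclusions must be proved in the stated order.
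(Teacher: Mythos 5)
Your proposal is correct and follows the paper's proof essentially verbatim: take $Y$ to be a one-point set, obtain $\tsr = 1$ (and, when $\RR (D) = 0$, also $\RR = 0$) for $C^* \bigl( \Z, \, C (X, D), \, \af \bigr)_Y$ from Proposition~\ref{TsrRR0A_Y}, conclude that this subalgebra is centrally large via Corollary~\ref{AyCentLargeConds}, and transfer the properties to the crossed product by Theorems 6.3 and~6.4 of~\cite{ArPh} (the real rank zero step indeed requiring stable rank one first). The worry you raise about case~(\ref{9217_GetTsr1_CptGen}) is legitimate but is not special to your argument: the paper's proof silently invokes hypothesis~(\ref{9212_AyCentLargeConds_CptGen}) of Corollary~\ref{AyCentLargeConds}, which asks for a compact \emph{subgroup} containing $\set{ \af_x \colon x \in X }$ rather than a compact set containing all powers $\af_x^n$, and since $\af_{n, x} = \af_x \af_{h^{-1} (x)} \cdots \af_{h^{-(n - 1)} (x)}$ lies only in the generated subgroup, the reduction you sketch (pass to the closed subgroup generated by $\set{ \af_x \colon x \in X }$ and verify its compactness, as holds in all of the paper's examples) is exactly what is needed to make case~(\ref{9217_GetTsr1_CptGen}) rigorous.
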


The action in this theorem has the Rokhlin property.
However, it seems to be unknown whether the crossed product of
even a
simple unital \ca{} with stable rank one by a Rokhlin automorphism
still has stable rank one.
See Problem~\ref{Pb_9922_RokhlinTsr} and the discussion afterwards.

We also emphasize that the conclusion for real rank zero
is only valid if stable rank one is also assumed.
The reason is the stable rank one hypothesis
in Theorem~6.4 of~\cite{ArPh}.
That theorem likely holds without stable rank one,
but this has not yet been proved.
For now, this is no great loss,
since, other than purely infinite simple \ca{s}
(which are covered by our Theorem~\ref{T_0101_PICase}),
there are no known examples of simple unital \ca{s} which have
real rank zero but not stable rank one.

\begin{proof}[Proof of Theorem~\ref{P_9Z26_XisCantor}]
Choose any one point subset $Y \S X$.
Since $\tsr (D) = 1$,
Proposition \ref{TsrRR0A_Y}(\ref{TsrRR0A_Y_tsr}) implies
that the subalgebra $C^* \bigl( \Z, \, C (X, D), \, \af \bigr)_Y$
(see Definition~\ref{D_4Y12_OrbSubalg})
has stable rank one.
If also $\RR (D) = 0$,
then Proposition \ref{TsrRR0A_Y}(\ref{TsrRR0A_Y_RR})  implies
$\tsr \big( C^* \bigl( \Z, \, C (X, D), \, \af \bigr)_Y \big) = 1$.
Now $C^* \bigl( \Z, \, C (X, D), \, \af \bigr)_Y$
is a centrally large subalgebra of
$C^* \bigl( \Z, \, C (X, D), \, \af \bigr)$
by the appropriate part of Corollary \ref{AyCentLargeConds}.
So $\tsr \big( C^* \bigl( \Z, \, C (X, D), \, \af \bigr) \big) = 1$
by Theorem 6.3 of~\cite{ArPh},
and if $\RR (D) = 0$, then Theorem~6.4 of~\cite{ArPh}
gives $\RR \big( C^* \bigl( \Z, \, C (X, D), \, \af \bigr) \big) = 0$.
\end{proof}

With more restrictive assumptions on~$D$,
enough is known to get results when $\dim (X) = 1$.

\begin{prp}\label{P_9027_FibPrdT_sr1}
Let $A$, $B$, and $C$ be \ca{s},
and let $\ph \colon A \to C$ and $\ps \colon B \to C$ be \hm{s}.
Let $D = A \oplus_{C, \ph, \psi} B$ (Definition~\ref{pullback}).
Assume that $\ps$ is surjective,
and that $A$ and $B$ have stable rank one.
Then $D$ has stable rank one.
\end{prp}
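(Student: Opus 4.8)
The plan is to show directly that the invertible elements are dense in $D$, which is the meaning of $\tsr (D) = 1$. First I would reduce to the case that $A$, $B$, $C$ are unital and $\ph, \ps$ are unital: passing to unitizations gives $\widetilde{D} \cong \widetilde{A} \oplus_{\widetilde{C}} \widetilde{B}$ with $\widetilde{\ps}$ still surjective, and $\tsr$ is unchanged under unitization, so nothing is lost; the degenerate case $C = 0$ is trivial, since then $D = A \oplus B$. The key elementary observation is that an element $(a, b) \in D$ is invertible in $D$ precisely when $a$ is invertible in $A$ and $b$ is invertible in $B$ (the candidate inverse $(a^{-1}, b^{-1})$ automatically satisfies $\ph (a^{-1}) = \ps (b^{-1})$). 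So, given $(a, b) \in D$ with $\ph (a) = \ps (b) = c$ and $\ep > 0$, I must produce invertibles $a' \in A$ and $b' \in B$ with $\| a - a' \|$ and $\| b - b' \|$ small and $\ph (a') = \ps (b')$.

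The construction proceeds from the $A$-side, where density of invertibles is available. Using $\tsr (A) = 1$, choose $a'$ invertible with $\| a - a' \| < \ep$, and set $c' = \ph (a')$, an invertible of $C$ with $\| c' - c \| < \ep$. Since $\ps$ is surjective I can lift $c' - c$ to some $w \in B$ of small norm and replace $b$ by $b_0 = b + w$, so that $\ps (b_0) = c'$ while $b_0$ remains close to $b$. It then remains to approximate $b_0$, \emph{within its own coset} $b_0 + \Kern (\ps) = \{\, b'' \in B \colon \ps (b'') = c' \,\}$, by an element invertible in $B$. (I work from the $A$-side rather than the $B$-side precisely because $\ps$, and not $\ph$, is assumed surjective.)

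The heart of the matter, and the step I expect to be the main obstacle, is that $c'$ need not a priori be the image of an invertible of $B$: this is an index-type lifting problem. Here I would use $\tsr (B) = 1$ in an essential way through the following lemma: \emph{if $\tsr (B) = 1$ and $\ps \colon B \to C$ is surjective, then $\ps (\mathrm{GL} (B)) = \mathrm{GL} (C)$.} Indeed, $\ps$ is an open map and the invertibles form an open set, so $\ps (\mathrm{GL} (B))$ is an open subgroup of $\mathrm{GL} (C)$ and hence contains the identity component; and for any fixed invertible $c' \in C$, lifting it to some $x \in B$ and approximating $x$ by an invertible $y \in B$ to within $\| c'^{-1} \|^{-1}$ forces $\ps (y)$ into the same component of $\mathrm{GL} (C)$ as $c'$, whence $c' \in \ps (\mathrm{GL} (B))$. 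Granting this, pick $\beta$ invertible in $B$ with $\ps (\beta) = c'$. Then $\beta^{-1} b_0$ lies in $1_B + \Kern (\ps) \subset \widetilde{\Kern (\ps)}$, and since $\Kern (\ps)$ is an ideal of $B$ it has stable rank one (an ideal never exceeds the stable rank of the ambient algebra), so its unitization does too and $\beta^{-1} b_0$ can be approximated by an invertible $\gamma$ of $\widetilde{\Kern (\ps)}$; after rescaling $\gamma$ by its scalar part I may assume $\ps (\gamma) = 1_C$. Setting $b' = \beta \gamma$ then gives an invertible of $B$ with $\ps (b') = c'$ and $\| b' - b_0 \| \leq \| \beta \| \, \| \gamma - \beta^{-1} b_0 \|$, which is as small as desired once the approximation of $\beta^{-1} b_0$ is taken fine enough. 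Consequently $(a', b') \in D$ is invertible and close to $(a, b)$, completing the argument. The only genuinely non-formal input is the lemma $\ps (\mathrm{GL} (B)) = \mathrm{GL} (C)$, which is exactly where stable rank one of the \emph{total} algebra $B$ is used to overcome the lifting obstruction; everything else is formal manipulation with the pullback and the surjectivity of $\ps$.
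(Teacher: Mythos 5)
Your proof is correct, but it follows a genuinely different route from the paper's. The paper argues K-theoretically: it forms the extension $0 \to \Kern (\ps) \to D \to A \to 0$, notes that $\Kern (\ps)$ has stable rank one because ideals of stable rank one algebras do (Rieffel), applies Nagy's criterion to $0 \to \Kern (\ps) \to B \to B/\Kern (\ps) \to 0$ to see that the index map there vanishes and hence $K_0 (\Kern (\ps)) \to K_0 (B)$ is injective, deduces by a diagram chase that the index map $K_1 (A) \to K_0 (\Kern (\ps))$ of the first extension also vanishes, and then invokes Nagy's criterion once more to conclude $\tsr (D) = 1$. You instead run a direct density-of-invertibles argument; your key lemma, that invertibles of $C$ lift to invertibles of $B$ when $\tsr (B) = 1$ and $\ps$ is surjective, is precisely the element-level fact underlying the vanishing of the index map in the paper's bottom row, and your final step (approximating within the coset $1 + \Kern (\ps)$) uses the same Rieffel ideal theorem. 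So the two proofs rest on the same structural inputs, but yours keeps everything at the level of elements and is self-contained, avoiding K-theory entirely at the cost of some bookkeeping (the unitization reduction, the rescaling of $\gamma$ by its scalar part), whereas the paper's is shorter given the cited machinery and isolates the obstruction cleanly as an index map. I see no gaps in your argument.
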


\begin{proof}
Set $J = {\operatorname{Ker}} (\ps)$.
We have a commutative diagram with exact rows (maps defined below):
\[
\begin{CD}
0 @>>> J @>{\ld}>> D @>{\pi}>> A @>>> 0  \\
& &  @VV{\id_B}V  @VV{\sm}V  @VV{\gm}V          \\
0 @>>> J @>{\io}>> B @>{\kp}>> B/J @>>> 0.
\end{CD}
\]
Here $\io$ is the inclusion of $J$ in~$B$
and $\kp$ is the quotient map,
$\pi \colon D \to A$ and $\sm \colon D \to B$
are the restrictions of the coordinate projections
$(a, b) \to a$ and $(a, b) \to b$,
and $\ld (b) = (b, 0)$ for $b \in J$.
For $a \in A$,
define $\gm (a)$ by
first choosing $b \in B$ such that $\ps (b) = \ph (a)$,
and then setting $\gm (a) = \kp (b)$.
It is easy to check that $\gm$ is well defined,
and that the diagram commutes.
All parts of exactness are immediate except that
surjectivity of $\pi$ follows from
surjectivity of~$\ps$.

The algebra $A$ has stable rank one by hypothesis,
and $J$ has stable rank one by Theorem~4.4 of~\cite{Rffl6}.
Next,
${\mathrm{ind}} \colon K_1 (B / J) \to K_0 (J)$ is the zero map
by Corollary~2 of~\cite{Nag},
so $K_0 (\io)$ is injective by the long exact sequence in
K-theory for the bottom row.
Since $K_0 (\io) = K_0 (\sm) \circ K_0 (\ld)$,
it follows that $K_0 (\ld)$ is injective.
Therefore ${\mathrm{ind}} \colon K_1 (A) \to K_0 (J)$ is the zero map
by the long exact sequence in
K-theory for the top row.
Now $D$ has stable rank one by Corollary~2 of~\cite{Nag}.
\end{proof}

We state for convenient reference the result on the stable rank
of direct limits.

\begin{lem}\label{L_0111_tsr_lim}
Let $(A_{\ld})_{\ld \in \Ld}$ be a direct system of \ca{s}.
Then
$\tsr \bigl( \dirlim_{\ld} A_{\ld} \bigr)
  \leq \liminf_{\ld} \tsr (A_{\ld})$.
\end{lem}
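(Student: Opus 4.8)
The plan is to recognize this as Rieffel's lower semicontinuity of the stable rank under inductive limits (Theorem~5.1 of~\cite{Rffl6}), so that in the final writeup it suffices to cite that result. For the record I would also indicate the short direct argument.

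Set $n = \liminf_{\ld} \tsr (A_{\ld})$ and $A = \dirlim_{\ld} A_{\ld}$, with canonical maps $\ph_{\ld} \colon A_{\ld} \to A$; assume $n < \infty$, as otherwise there is nothing to prove. From the definition of the liminf over the directed set~$\Ld$, the set of indices $\ld$ with $\tsr (A_{\ld}) \leq n$ is cofinal. Recalling that the stable rank of a nonunital algebra is that of its unitization, and that unitization commutes with inductive limits, I would first pass to the unitizations $\tilde{A}_{\ld}$ (with unital connecting maps), reducing to the unital case. There $\tsr (B) \leq n$ means precisely that the left-unimodular $n$-tuples---those $(b_1, \ldots, b_n)$ admitting $c_1, \ldots, c_n$ with $\sum_i c_i b_i = 1$---are dense in~$B^n$.

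The key step is the standard approximation argument. Given a tuple $(x_1, \ldots, x_n)$ in $A^n$ and $\ep > 0$, I would use density of $\bigcup_{\ld} \ph_{\ld} (A_{\ld})$ to choose a single cofinal index~$\ld$ with $\tsr (A_{\ld}) \leq n$ and elements $y_i$ with $\| x_i - \ph_{\ld} (y_i) \| < \ep / 2$. Since $\tsr (A_{\ld}) \leq n$, there is a left-unimodular tuple $(z_1, \ldots, z_n)$ in $A_{\ld}^n$ close to $(y_1, \ldots, y_n)$. Because $\ph_{\ld}$ is a unital homomorphism, it carries the relation $\sum_i c_i z_i = 1$ to the same relation for the images, so $(\ph_{\ld} (z_1), \ldots, \ph_{\ld} (z_n))$ is left-unimodular in~$A$ and within $\ep$ of the given tuple. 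This exhibits the left-unimodular tuples as dense in $A^n$, whence $\tsr (A) \leq n$.

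The only delicate points are bookkeeping: that a single index~$\ld$ serves all $n$ coordinates at once (directedness of~$\Ld$ together with cofinality of the indices of stable rank at most~$n$), and that the reduction to unitizations is legitimate even though the connecting maps need not be injective. Neither is a real obstacle, and the essential content is Rieffel's theorem; I would be content to cite Theorem~5.1 of~\cite{Rffl6}.
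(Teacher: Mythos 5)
Your proposal matches the paper's proof, which simply cites Theorem~5.1 of~\cite{Rffl6} for the case $\Ld = \N$ and observes that the argument for general direct limits is the same. The extra detail you supply (cofinality of the indices with $\tsr (A_{\ld}) \leq n$, reduction to unitizations, and the density argument for left-unimodular $n$-tuples) is exactly the routine adaptation the paper has in mind, so the two approaches are essentially identical.
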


We do not assume that the maps of the direct system are injective.

\begin{proof}[Proof of Lemma~\ref{L_0111_tsr_lim}]
When $\Ld = \N$,
this is Theorem~5.1 of~\cite{Rffl6}.
The proof for general direct limits is the same.
\end{proof}

\begin{prp}\label{P_9027_CXD_Tsr1}
Let $D$ be a simple unital \ca{}
with stable rank one and real rank zero,
and suppose that $K_1 (D) = 0$.
Let $X$ be a \chs{} with $\dim (X) \leq 1$.
Then $C (X, D)$ has stable rank one.
\end{prp}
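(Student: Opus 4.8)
The plan is to isolate $C ([0, 1], D)$ as the single nontrivial building block, prove stable rank one for it, and then assemble $C (X, D)$ out of copies of it using the pullback result \Prp{P_9027_FibPrdT_sr1} and the behaviour of stable rank under direct limits (\Lem{L_0111_tsr_lim}). Since $\tsr (A) \geq 1$ for every nonzero unital \ca, it suffices to prove $\tsr (C (X, D)) \leq 1$.

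First I would reduce to the case in which $X$ is a finite graph, i.e.\ a finite $1$-dimensional CW~complex. As $\dim (X) \leq 1$, one may write $X$ as an inverse limit of compact metrizable spaces of covering dimension at most~$1$ (a theorem of Marde\v{s}i\'{c}), and then each such metric space as an inverse limit of finite $1$-complexes; hence $C (X) \cong \dirlim_i C (X_i)$ with each $X_i$ a finite graph. Tensoring with $D$, and using that tensoring commutes with direct limits, gives $C (X, D) \cong \dirlim_i C (X_i, D)$, so by \Lem{L_0111_tsr_lim} it is enough to show $\tsr (C (X_i, D)) = 1$ for every finite graph~$X_i$.

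A finite graph is built from its finite vertex set by attaching edges one at a time, and each attachment is a pullback at the level of function algebras. If $Y$ is the part of the graph assembled so far and one glues an edge $[0, 1]$ along a map sending $\{0, 1\}$ to vertices $v, w \in Y$, then, in the notation of \Def{pullback},
\[
C (Y \cup_{e} [0, 1], \, D)
  = C (Y, D) \oplus_{D \oplus D, \, \ph, \, \ps} C ([0, 1], D),
\]
where $\ph (g) = (g (v), g (w))$ and $\ps (f) = (f (0), f (1))$, and where $\ps$ is surjective. Starting from the vertices, where the function algebra is a finite direct sum of copies of~$D$ and hence has stable rank one, iterated application of \Prp{P_9027_FibPrdT_sr1} shows that $C (X_i, D)$ has stable rank one, \emph{provided} the block $C ([0, 1], D)$ does.

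The main obstacle is thus to prove $\tsr (C ([0, 1], D)) = 1$, and this is where $K_1 (D) = 0$ is essential: contractibility of $[0, 1]$ alone does not suffice, since the stable rank of $C ([0, 1], A)$ genuinely depends on $K_1 (A)$. The route I would take is through Rieffel's connected stable rank ${\mathrm{csr}}$ of~\cite{Rffl6}. Because $\tsr (D) = 1$, the Bass stable range condition holds, so elementary matrices act transitively on unimodular rows of length at least~$2$; and because $\tsr (D) = 1$ forces $\pi_0 (GL_1 (D)) \cong K_1 (D) = 0$, the group $GL_1 (D)$ is connected. Together these give ${\mathrm{csr}} (D) = 1$. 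Feeding this into Rieffel's estimates for the stable rank of cones and suspensions, together with his extension estimate applied to $0 \to C_0 ([0, 1), D) \to C ([0, 1], D) \to D \to 0$, yields $\tsr (C ([0, 1], D)) \leq {\mathrm{csr}} (D) = 1$. The genuinely delicate point absorbed into ${\mathrm{csr}} (D) = 1$ is the passage from pointwise-nearby invertibles to an honest continuous path of invertibles across the interval, which a naive pointwise approximation cannot control because of conditioning; the standing real rank zero hypothesis on~$D$ provides extra room here and in the surrounding results, though for $\dim (X) \leq 1$ the connectedness furnished by $K_1 (D) = 0$ is the decisive input.
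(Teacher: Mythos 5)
Your overall architecture coincides with the paper's: reduce a general $X$ with $\dim (X) \leq 1$ to finite one-dimensional complexes via the inverse limit theorems (\cite{Mdsc}, \cite{SkiK}) and Lemma~\ref{L_0111_tsr_lim}, then build each finite complex by attaching edges and apply Proposition~\ref{P_9027_FibPrdT_sr1} inductively. All of that is correct. The gap is in the one step you treat as a routine computation, namely $\tsr (C ([0, 1], D)) = 1$. The inequality you assert, $\tsr (C ([0, 1], D)) \leq \mathrm{csr} (D)$, is not among Rieffel's estimates, and the implication ``$\tsr (D) = 1$ and $\mathrm{csr} (D) = 1$ imply $\tsr (C ([0, 1], D)) = 1$'' is false: take $D = C ([0, 1])$. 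Then $\tsr (D) = 1$, $K_1 (D) = 0$, and $\mathrm{csr} (D) = 1$ (each $Lg_m (C ([0, 1]))$ is the space of continuous maps from $[0, 1]$ to $\C^m \setminus \{ 0 \}$, which is path connected), yet $C ([0, 1], D) \cong C ([0, 1]^2)$ has stable rank~$2$. Rieffel's extension estimate applied to $0 \to C_0 ([0, 1), D) \to C ([0, 1], D) \to D \to 0$ only yields $\tsr (C ([0, 1], D)) \leq \max \bigl( \tsr (C_0 ([0, 1), D)), \, \tsr (D), \, \mathrm{csr} (D) \bigr)$, and the stable rank of the cone $C_0 ([0, 1), D)$ is not controlled by $\mathrm{csr} (D)$: in the example above it equals~$2$. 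So connectedness of $GL_1 (D)$, while necessary, is not the decisive input, and contractibility of $[0, 1]$ buys you nothing here.

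Your chain of implications uses only $\tsr (D) = 1$ and $K_1 (D) = 0$, so the example above refutes it even though that $D$ is neither simple nor of real rank zero; those are precisely the hypotheses your sketch never actually invokes and which are needed to close the gap. The paper does not reprove the interval case at all: it quotes Theorem~4.3 of~\cite{NagOsPh2001}, whose proof uses $\RR (D) = 0$ together with $K_1 (D) = 0$ in an essential way (roughly, to replace invertibles over $D$ by ones that can be diagonalized against a finite spectral decomposition before interpolating across the interval), rather than a connected stable rank computation. If you want a self-contained treatment of $C ([0, 1], D)$ you must reproduce an argument of that kind; everything in your write-up after this base case matches the paper's proof and is fine.
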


\begin{proof}
If $X$ is a point, this is trivial.
If $X = [0, 1]$,
it follows from Theorem~4.3 of~\cite{NagOsPh2001}.
If $X$ is a one dimensional finite complex,
the result follows from these facts
and Proposition~\ref{P_9027_FibPrdT_sr1}
by induction on the number of cells.

Now suppose $X$ is a \cms.
By Corollary 5.2.6 of~\cite{SkiK},
there is an inverse system $(X_n)_{n \in \Nz}$
of one dimensional finite complexes
such that $X \cong \invlim_n X_n$.
Then $C (X, D) \cong \dirlim_n C (X_n, D)$.
Since $C (X_n, D)$ has stable rank one for all $n \in \Nz$,
it follows from Lemma~\ref{L_0111_tsr_lim}
that $C (X, D)$ has stable rank one.

Finally,
let $X$ be a general \chs{} with $\dim (X) \leq 1$.
By Theorem~1 of~\cite{Mdsc}
(in Section~3 of that paper),
there is an inverse system $(X_{\ld})_{\ld \in \Ld}$
of one dimensional \cms{s}
such that $X \cong \invlim_{\ld} X_{\ld}$.
It now follows that $C (X, D)$ has stable rank one
by the same reasoning as in the previous paragraph.
\end{proof}

\begin{prp}\label{P_9027_Rsha_Tsr1}
Let $D$ be a simple unital \ca.
Let $R$ be a recursive subhomogeneous \ca{} over~$D$
(Definition~\ref{RSHA})
with topological dimension at most~$1$
(Definition \ref{RSHAMachinery}(\ref{9223_RSHAMachinery_6})).
If $D$ has stable rank one and real rank zero,
and $K_1 (D) = 0$,
then $R$ has stable rank one.
\end{prp}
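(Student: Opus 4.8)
The plan is to induct on the length $l$ of a decomposition of $R$ as a recursive subhomogeneous algebra over $D$ (in the standard notation of Definition~\ref{RSHAMachinery}) whose topological dimension is at most~$1$; such a decomposition exists by hypothesis, and we fix one. The key preliminary observation, used at every stage, is that each matrix amplification $M_{n (k)} (D)$ inherits from $D$ all the structure needed to apply Proposition~\ref{P_9027_CXD_Tsr1}: it is again simple and unital, it has stable rank one and real rank zero (both invariants are preserved under passage to matrix algebras), and $K_1 (M_{n (k)} (D)) \cong K_1 (D) = 0$ by Morita invariance. Consequently, for each~$k$ with $\dim (X_k) \leq 1$, Proposition~\ref{P_9027_CXD_Tsr1} applied with $D$ replaced by $M_{n (k)} (D)$ shows that the building block $C (X_k, \, M_{n (k)} (D))$ has stable rank one.

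For the base case $l = 0$, we have $R \cong C (X_0, \, M_{n (0)} (D))$ with $\dim (X_0) \leq 1$, so the conclusion is exactly the observation just made.

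For the inductive step, write $R \cong R^{(l - 1)} \oplus_{C_l^{(0)}, \, \ph_l, \, \rh_l} C_l$, where $C_l = C (X_l, \, M_{n (l)} (D))$ and $R^{(l - 1)}$ is the $(l - 1)$-stage algebra. The algebra $R^{(l - 1)}$ carries the truncated decomposition with base spaces $X_0, \ldots, X_{l - 1}$, hence is itself a recursive subhomogeneous algebra over~$D$ of length $l - 1$ and topological dimension at most~$1$; by the inductive hypothesis it has stable rank one. The factor $C_l$ has stable rank one by the base-case observation, and the map $\rh_l$, being the restriction map onto $C_l^{(0)} = C (X_l^{(0)}, \, M_{n (l)} (D))$, is surjective. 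Applying Proposition~\ref{P_9027_FibPrdT_sr1} with $A = R^{(l - 1)}$, $B = C_l$, and $\psi = \rh_l$, we conclude that the pullback $R$ has stable rank one, which completes the induction.

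I expect no genuine obstacle here: once the preservation of stable rank one, real rank zero, and the vanishing of $K_1$ under matrix amplification is recorded, Propositions~\ref{P_9027_CXD_Tsr1} and~\ref{P_9027_FibPrdT_sr1} carry the entire argument. The only point deserving a moment's care is the verification that $M_{n (k)} (D)$ satisfies the hypotheses of Proposition~\ref{P_9027_CXD_Tsr1}, and this rests on standard facts about the matrix stability of these invariants.
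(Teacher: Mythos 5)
Your proof is correct and is essentially the paper's own argument: the paper also proceeds by induction on the length of the decomposition, handling the base case $C (X_k, \, M_{n (k)} (D))$ via Proposition~\ref{P_9027_CXD_Tsr1} and the inductive step via Proposition~\ref{P_9027_FibPrdT_sr1} applied to the pullback with the surjective restriction map. Your explicit remark that simplicity, stable rank one, real rank zero, and the vanishing of $K_1$ pass to $M_{n (k)} (D)$ is the only detail the paper leaves implicit, and it is correct.
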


\begin{proof}
The proof is the same as that of Proposition~\ref{DStableRSHA},
except using Proposition~\ref{P_9027_FibPrdT_sr1}
in place of Lemma~\ref{DStablePullback},
and using Proposition~\ref{P_9027_CXD_Tsr1}.
\end{proof}

\begin{prp}\label{P_9Z21_dim1}
Adopt Convention~\ref{N_9215_AAY_ntn},
and assume that $\dim (X) \leq 1$,
that $D$ has stable rank one and real rank zero,
and that $K_1 (D) = 0$.
Let $Y \subset X$ be closed and nonempty.
Then
$\tsr \bigl( C^* \bigl( \Z, \, C (X, D), \, \af \bigr)_Y \bigr) = 1$.
\end{prp}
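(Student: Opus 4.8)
The plan is to follow exactly the two-case structure used in the proof of \Prp{TsrRR0A_Y}, reducing everything to the recursive subhomogeneous structure established in \Thm{AyRSHA}. First I would treat the case in which $\sint (Y) \neq \E$. Here \Thm{AyRSHA} identifies $C^* \bigl( \Z, \, C (X, D), \, \af \bigr)_Y$ with a recursive subhomogeneous algebra over~$D$ whose topological dimension equals $\dim (X) \leq 1$. Since $D$ is a simple unital \ca{} with stable rank one and real rank zero and with $K_1 (D) = 0$, \Prp{P_9027_Rsha_Tsr1} applies directly and gives $\tsr \bigl( C^* \bigl( \Z, \, C (X, D), \, \af \bigr)_Y \bigr) = 1$.

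For the general case of an arbitrary nonempty closed set~$Y$, I would invoke \Prp{AyRSHADirLim} to write $C^* \bigl( \Z, \, C (X, D), \, \af \bigr)_Y$ as a direct limit of algebras of the form $C^* \bigl( \Z, \, C (X, D), \, \af \bigr)_{Y_n}$ with $\sint (Y_n) \neq \E$. By the first case each of these has stable rank one, so \Lem{L_0111_tsr_lim} yields $\tsr \bigl( C^* \bigl( \Z, \, C (X, D), \, \af \bigr)_Y \bigr) \leq \liminf_n 1 = 1$; combined with the trivial lower bound $\tsr \geq 1$, this gives the desired equality.

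The argument is entirely a matter of assembling results already proved, so there is no substantive obstacle; the only point requiring care is that the hypotheses of \Prp{P_9027_Rsha_Tsr1} transfer correctly. In particular, one must note that the topological dimension of the recursive subhomogeneous decomposition is $\dim (X)$, hence at most~$1$, which is exactly the content of the dimension statement at the end of \Thm{AyRSHA}, and that the passage to the direct limit preserves the bound on stable rank even though the connecting maps of the system need not be injective, as guaranteed by \Lem{L_0111_tsr_lim}.
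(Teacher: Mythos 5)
Your proposal is correct and takes essentially the same route as the paper: the case $\sint (Y) \neq \varnothing$ via \Thm{AyRSHA} and \Prp{P_9027_Rsha_Tsr1}, then the general case via \Prp{AyRSHADirLim} and \Lem{L_0111_tsr_lim}. The only cosmetic difference is that the paper bounds the topological dimension by observing that the base spaces $Y_k$ are closed subsets of $X$ and invoking the subset theorem for covering dimension (Proposition 3.1.3 of~\cite{Prs}), whereas you quote the dimension statement at the end of \Thm{AyRSHA} directly; both give the needed bound.
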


\begin{proof}
First suppose that $\sint (Y) \neq \E$.
Then the recursive subhomogeneous algebra over~$D$
in Theorem~\ref{AyRSHA}
has base spaces $Y_k$ which,
following Notation~\ref{ModifiedRokhlinTower},
are closed subsets of~$X$.
By Proposition 3.1.3 of~\cite{Prs},
they therefore have covering dimension at most~$1$.
The result now follows from Proposition~\ref{P_9027_Rsha_Tsr1}.

Stable rank one is preserved by direct limits
(Lemma~\ref{L_0111_tsr_lim}),
so the general case now follows from
Proposition~\ref{AyRSHADirLim}.
\end{proof}

The obvious examples of \mh{s}
of one dimensional spaces are irrational rotations on~$S^1$,
but there are others.
See, for example, \cite{GjdJhn},
and the work on \mh{s} of the product of the Cantor set
and the circle in \cite{LnMti1} and~\cite{LnMti3}.

\begin{thm}\label{T_9222_Tsr1AndRR0_CrPrd}
Adopt Convention~\ref{N_9215_AAY_ntn},
and assume that $\dim (X) \leq 1$,
that $D$ has stable rank one and real rank zero,
and that $K_1 (D) = 0$.
Further assume that one of the following conditions holds:
\begin{enumerate}
\item\label{9927_AyCentLargeConds_AppInn}
All elements of $\set{ \af_{x} \colon x \in X} \subset \Aut (D)$
are approximately inner.
\item\label{9927_AyCentLargeConds_SP}
The order on projections over~$D$
is determined by quasitraces.
\item\label{9927_AyCentLargeConds_CptGen}
The set
$\bigl\{ \af_x^n \colon {\mbox{$x \in X$ and $n \in \Z$}} \bigr\}$
is contained in a subset of $\Aut (D)$
which is compact in the topology
of pointwise convergence in the norm on~$D$.
\end{enumerate}
Then $C^* \bigl( \Z, \, C (X, D), \, \af \bigr)$
has stable rank one.
\end{thm}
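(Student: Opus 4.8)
The plan is to follow the proof of Theorem~\ref{P_9Z26_XisCantor} verbatim in structure, substituting Proposition~\ref{P_9Z21_dim1} (the one-dimensional statement) for the Cantor-set input Proposition~\ref{TsrRR0A_Y}. First I would choose any one point subset $Y = \{ y_0 \} \subset X$. Because $X$ is infinite and $h$ is minimal, the orbit of $y_0$ is infinite, so $h^n (y_0) \neq y_0$ for every $n \in \Z \setminus \{ 0 \}$; hence $Y$ is a nonempty compact set with $h^n (Y) \cap Y = \E$ for all $n \neq 0$, and $C^* \bigl( \Z, \, C (X, D), \, \af \bigr)_Y$ is defined as in Definition~\ref{D_4Y12_OrbSubalg}. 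Since $\dim (X) \leq 1$, $\tsr (D) = 1$, $\RR (D) = 0$, and $K_1 (D) = 0$ are all in force, Proposition~\ref{P_9Z21_dim1} yields $\tsr \bigl( C^* \bigl( \Z, \, C (X, D), \, \af \bigr)_Y \bigr) = 1$.

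Next I would match each of the present hypotheses to a condition of Corollary~\ref{AyCentLargeConds} in order to conclude that $C^* \bigl( \Z, \, C (X, D), \, \af \bigr)_Y$ is a centrally large subalgebra of $C^* \bigl( \Z, \, C (X, D), \, \af \bigr)$. Condition~(\ref{9927_AyCentLargeConds_AppInn}) is condition~(\ref{9212_AyCentLargeConds_AppInn}) of that corollary, and condition~(\ref{9927_AyCentLargeConds_CptGen}) corresponds to its condition~(\ref{9212_AyCentLargeConds_CptGen}). The only step needing a remark is condition~(\ref{9927_AyCentLargeConds_SP}): Corollary~\ref{AyCentLargeConds}(\ref{9212_AyCentLargeConds_SP}) also requires Property~(SP), but this is automatic under the standing hypothesis $\RR (D) = 0$, since a \ca{} of real rank zero has the property that every nonzero \hsa{} contains a \nzp{} (approximate a nonzero positive element by a self-adjoint element with finite spectrum and take a nonzero spectral projection). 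Thus in each of the three cases the hypotheses of Corollary~\ref{AyCentLargeConds} are met, and it gives that $C^* \bigl( \Z, \, C (X, D), \, \af \bigr)_Y$ is a centrally large subalgebra of $C^* \bigl( \Z, \, C (X, D), \, \af \bigr)$.

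Finally, with $C^* \bigl( \Z, \, C (X, D), \, \af \bigr)_Y$ a centrally large subalgebra of stable rank one, Theorem~6.3 of~\cite{ArPh} gives $\tsr \bigl( C^* \bigl( \Z, \, C (X, D), \, \af \bigr) \bigr) = 1$. I expect no serious obstacle: the analytic content is entirely contained in Proposition~\ref{P_9Z21_dim1}, Corollary~\ref{AyCentLargeConds}, and the cited permanence result from~\cite{ArPh}, so the work here is the bookkeeping of the preceding two paragraphs. The one genuinely new observation is that real rank zero supplies Property~(SP), which is what allows hypothesis~(\ref{9927_AyCentLargeConds_SP}) to feed the pseudoperiodicity input of Corollary~\ref{AyCentLargeConds}.
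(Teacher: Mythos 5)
Your proof is correct and follows essentially the same route as the paper, whose entire argument is to combine Theorem~6.3 of~\cite{ArPh}, Corollary~\ref{AyCentLargeConds}, and the one-dimensional stable rank result (you cite Proposition~\ref{P_9Z21_dim1}, the paper cites its ingredient Proposition~\ref{P_9027_Rsha_Tsr1}; these amount to the same chain). Your observation that real rank zero supplies Property~(SP), so that hypothesis~(\ref{9927_AyCentLargeConds_SP}) feeds Corollary~\ref{AyCentLargeConds}(\ref{9212_AyCentLargeConds_SP}), is a correct piece of glue that the paper leaves implicit.
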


\begin{proof}
Combine Theorem 6.3 of~\cite{ArPh},
Corollary~\ref{AyCentLargeConds},
and Proposition~\ref{P_9027_Rsha_Tsr1}.
\end{proof}

\section{Minimality of products of Denjoy homeomorphisms
 of the Cantor set}\label{Sec_9929_MinProd}

\indent
For use in examples in the next section,
we prove minimality of products of Denjoy homeomorphisms
with rationally independent rotation numbers.
We have not found this result in the literature.
The method we use here is not the traditional approach
to this kind of problem,
but we hope it will be useful elsewhere.

\begin{ntn}\label{N_9X04_Rotation}
For $\te \in \R$ we let $r_{\te} \colon S^1 \to S^1$
denote the rotation by $2 \pi \te$,
that is, $r_{\te} (\zt) = \exp (2 \pi i \te)  \zt$
for $\zt \in S^1$.
\end{ntn}

\begin{prp}\label{P_4X04_ProdRotation}
Let $I$ be a set,
and let $(\te_i)_{i \in I}$ be a family of elements of~$\R$.
Suppose that $1$ and the numbers $\te_i$ for $i \in I$ are
linearly independent over~$\Q$.
Then the product $r \colon (S^1)^{I} \to (S^1)^{I}$
of the rotations $r_{\te_i}$ is minimal.
\end{prp}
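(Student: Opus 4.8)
The plan is to regard $(S^1)^I$ as a compact abelian group $G$ and $r$ as translation by the single element $a = (a_i)_{i \in I}$ with $a_i = \exp(2\pi i \te_i)$, and then to invoke the standard criterion that a translation on a compact abelian group is minimal exactly when the translating element generates a dense subgroup. First I would observe that $G$ is compact by Tychonoff and that the $r$-orbit of any point $z$ is $\{a^n z : n \in \Z\}$, whose closure is $\overline{\langle a\rangle}\,z$; since left translation by $z$ is a homeomorphism, every orbit is dense if and only if $\overline{\langle a\rangle} = G$. Thus minimality of $r$ is equivalent to density of the cyclic subgroup generated by $a$.

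Next I would pass to the dual group. For a compact abelian group a closed subgroup is all of $G$ precisely when its annihilator in $\hat G$ is trivial, so $\overline{\langle a\rangle} = G$ if and only if the only continuous character $\ch$ with $\ch(a) = 1$ is the trivial one. The key structural input is the description of $\hat G$ when $I$ may be infinite: the dual of $(S^1)^I$ is the direct sum $\bigoplus_{I} \Z$, meaning every continuous character has the form $\ch(z) = \prod_{i} z_i^{n_i}$ for a \emph{finitely supported} family $(n_i)_{i \in I}$ of integers. One gets this from Pontryagin duality for products of compact groups, or directly: continuity forces $\ch$ to carry some basic neighborhood of the identity---one constrained in only finitely many coordinates---into the open right half plane, which contains no nontrivial subgroup of $S^1$, so $\ch$ must kill the subgroup of elements that are trivial in those finitely many coordinates and hence factors through a finite sub-product.

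With this in hand the computation is immediate: for such a character, $\ch(a) = \prod_i a_i^{n_i} = \exp\bigl(2\pi i \sum_i n_i \te_i\bigr)$, which equals $1$ if and only if $\sum_i n_i \te_i \in \Z$, say $\sum_i n_i \te_i = m$. Rewriting this as $(-m)\cdot 1 + \sum_i n_i \te_i = 0$ and using the hypothesis that $1$ together with the $\te_i$ is linearly independent over $\Q$ forces $m = 0$ and every $n_i = 0$, so $\ch$ is trivial. Hence no nontrivial character satisfies $\ch(a) = 1$, and $r$ is minimal. The one step that genuinely requires care---and the reason the hypothesis is phrased for the whole (possibly infinite) family at once---is the identification of $\hat G$ with finitely supported integer tuples, since it is exactly this finiteness that lets the linear independence hypothesis, which is a statement about finite subsets, do its work; the remaining steps are routine.
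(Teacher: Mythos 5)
Your proof is correct, but it takes a genuinely different route from the paper. The paper disposes of the finite case by citing Proposition 1.4.1 of \cite{KtHsb} and then obtains the general case by writing $(S^1)^I$ as the inverse limit of the finite sub-products and invoking the discussion after Proposition II.4 of \cite{Frtg} (which the authors must additionally note extends from $\Nz$-actions and countable inverse limits to $\Z$-actions and arbitrary inverse limits). You instead give a single self-contained argument valid for arbitrary~$I$: minimality of a translation on a compact abelian group is equivalent to density of the cyclic subgroup generated by the translating element, which by Pontryagin duality is equivalent to the triviality of its annihilator in the dual group; the identification $\widehat{(S^1)^I} \cong \bigoplus_I \Z$ (your no-small-subgroups argument for why every continuous character is finitely supported is the right one) reduces everything to a finite integer relation $\sum_i n_i \te_i = m$, which the rational independence hypothesis kills. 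What your approach buys is uniformity -- no reduction to the finite case and no appeal to a permanence property of minimality under inverse limits -- at the cost of assuming the duality machinery; what the paper's approach buys is brevity via citation, at the cost of asking the reader to verify that the cited inverse-limit argument transfers to the setting actually needed. You correctly identify the finite support of characters as the point where the (finitary) linear independence hypothesis engages with the possibly infinite index set; this is exactly the issue the inverse-limit reduction is designed to handle in the paper's version.
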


\begin{proof}
If $I$ is finite,
this is Proposition 1.4.1 of~\cite{KtHsb}.
The general case follows from the finite case by taking the inverse
limit of the products over finite subsets of~$I$,
by the discussion after Proposition II.4 of~\cite{Frtg}.
The proof is is written for actions of the semigroup~$\Nz$
and for countable inverse limits,
but the proof for actions of $\Z$ and arbitrary inverse limits
is the same.
\end{proof}

\begin{dfn}\label{D_9929_StrOpen}
Let $X$ and $Y$ be topological spaces,
and let $g \colon Y \to X$.
Let $y \in Y$.
We say that $g$ is {\emph{strictly open}} at~$y$
if
\[
\bigl\{ g^{-1} (U) \colon
 {\mbox{$U \S X$ is open and $g (y) \in U$}} \bigr\}
\]
is a neighborhood base for $y$ in~$Y$.
\end{dfn}

Some of the theory below works without assuming $g$ is \ct,
so we do not require that \nbhd{s} be open.

The next lemma gives an alternate interpretation of strict openness.

\begin{lem}\label{L_9929_WhenStrOpen}
In the situation in Definition~\ref{D_9929_StrOpen},
assume that $g$ is \ct{} and surjective, $Y$ is compact,
and $X$ and $Y$ are Hausdorff.
Then $g$ is strictly open at~$y$ \ifo{}
the following conditions hold:
\begin{enumerate}
\item\label{Item_D_9929_Inj}
$g^{-1} (\{ g (y) \}) = \{ y \}$.
\item\label{Item_D_9929_Open}
For every open set $V \S Y$ with $y \in V$,
the set $g (V)$ is a \nbhd{} of $g (y)$.
\end{enumerate}
\end{lem}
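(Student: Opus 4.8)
The plan is to unwind the definition of strict openness and establish the two implications separately, the key mechanism being that a \ct{} map from a compact space to a Hausdorff space is closed. I will first record that $g$ is strictly open at~$y$ \ifo{} for every \nbhd{} $W$ of~$y$ in~$Y$ there is an open set $U \S X$ with $g (y) \in U$ and $g^{-1} (U) \S W$; indeed each such $g^{-1} (U)$ is automatically a \nbhd{} of~$y$ since $g$ is \ct, so the condition is exactly that these sets form a \nbhd{} base at~$y$.

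For the forward implication, I would assume $g$ is strictly open at~$y$. To obtain~(\ref{Item_D_9929_Inj}), suppose there were $y' \in g^{-1} (\{ g (y) \})$ with $y' \neq y$; using that $Y$ is Hausdorff I would choose an open \nbhd{} $W$ of~$y$ with $y' \notin W$, and then strict openness yields an open $U \ni g (y)$ with $g^{-1} (U) \S W$, whence $g (y') = g (y) \in U$ forces the contradiction $y' \in W$. For~(\ref{Item_D_9929_Open}), given an open set $V \ni y$, strict openness supplies an open $U \ni g (y)$ with $g^{-1} (U) \S V$, and surjectivity of~$g$ gives $U = g (g^{-1} (U)) \S g (V)$, so $g (V)$ is a \nbhd{} of $g (y)$.

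For the reverse implication, I would assume~(\ref{Item_D_9929_Inj}) and~(\ref{Item_D_9929_Open}) and let $W$ be a \nbhd{} of~$y$, which I may take to be open. Then $Y \SM W$ is closed, hence compact, so $g (Y \SM W)$ is compact and therefore closed in~$X$; this is the one place where compactness of~$Y$ and the Hausdorff property of~$X$ enter. Condition~(\ref{Item_D_9929_Inj}) guarantees $g (y) \notin g (Y \SM W)$, because any $z \in Y \SM W$ with $g (z) = g (y)$ would force $z \in g^{-1} (\{ g (y) \}) = \{ y \} \S W$. Thus $U = X \SM g (Y \SM W)$ is open, contains $g (y)$, and satisfies $g^{-1} (U) \S W$ by construction, which is exactly what strict openness requires.

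The argument is largely routine once the closed-map property is available; the only delicate point is keeping straight the interplay of surjectivity and the single-point-fiber condition. I note that condition~(\ref{Item_D_9929_Open}) is not actually used in the reverse direction, which relies only on~(\ref{Item_D_9929_Inj}); in fact the same compactness argument, applied to an arbitrary open $V \ni y$ and combined with surjectivity as in the proof of~(\ref{Item_D_9929_Open}) above, shows that~(\ref{Item_D_9929_Inj}) already implies~(\ref{Item_D_9929_Open}). Retaining both conditions is nonetheless convenient, since in a given application either one may be the easier to verify directly.
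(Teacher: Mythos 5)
Your proof is correct. The forward implication matches the paper's argument essentially verbatim. For the reverse implication, however, you take a genuinely different and more direct route: the paper argues by contradiction, building a net $(z_U)$ indexed by the open \nbhd{s} of $g(y)$ with $z_U \notin V$ but $g(z_U) \in U$, extracting a convergent subnet by compactness, and showing its limit violates condition~(\ref{Item_D_9929_Inj}); you instead invoke the closed-map property of a \ct{} map from a compact space to a Hausdorff space and exhibit the required open set explicitly as $U = X \SM g(Y \SM W)$. Your construction buys two things: it avoids nets entirely, and it makes visible that condition~(\ref{Item_D_9929_Inj}) alone suffices for the converse --- indeed, as you correctly observe, under the standing hypotheses (\ref{Item_D_9929_Inj}) already implies~(\ref{Item_D_9929_Open}) via the same closed-image argument combined with surjectivity. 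The paper's net argument, by contrast, is the one that localizes cleanly to show exactly where compactness is needed (its counterexample following the lemma shows the statement fails without it), but both proofs use compactness of $Y$ and the Hausdorff property of $X$ in essentially the same place. Either argument is acceptable; yours is arguably the cleaner of the two.
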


The lemma is false without compactness of~$Y$.
Take
\[
X = [0, 1]
\andeqn
Y = \{ (0, 0) \} \cup \bigl( (0, 1]\times [0, 1] \bigr) \S [0, 1]^2,
\]
let $g$ be projection to the first coordinate,
and take $y = (0, 0)$.
Then (\ref{Item_D_9929_Inj}) and~(\ref{Item_D_9929_Open}) hold,
but $g$ is not strictly open at~$y$.

\begin{proof}[Proof of Lemma~\ref{L_9929_WhenStrOpen}]
Assume that $g$ is strictly open at~$y$.
To prove~(\ref{Item_D_9929_Inj}),
let $z \in Y \setminus \{ y \}$,
choose $V \S Y$ open with $y \in V$ and $z \not\in V$,
and choose $U \S X$ open such that $g (y) \in U$
and $g^{-1} (U) \S V$.
Then $z \not\in g^{-1} (U)$, so $g (z) \neq g (y)$.

To prove~(\ref{Item_D_9929_Open}),
let $V \S Y$ be open with $y \in V$.
Choose $U \S X$ open such that $g (y) \in U$
and $g^{-1} (U) \S V$.
Surjectivity of~$g$ implies $U \S g ( g^{-1} (U) ) \S g (V)$,
so $g (V)$ is a \nbhd{} of $g (y)$.

For the converse,
assume~(\ref{Item_D_9929_Open}),
and suppose that $g$ is not strictly open at~$y$.
Let $\cN$ be the set of open subsets of~$X$ which contain $g (y)$,
ordered by reverse inclusion.
Since $g$ is \ct,
failure of strict openness means we can
choose $V \S Y$ open with $y \in V$
and such that for every $U \in \cN$,
we have $g^{-1} (U) \not\S V$.
So there is $z_U \in Y$ such that $z_U \not\in V$ but $g (z_U) \in U$.
The net $(z_U)_{U \in \cN}$
satisfies $g (z_U) \to g (y)$.
Choose a subnet $(t_{\ld})_{\ld \in \Ld}$
such that $t = \lim_{\ld} t_{\ld}$ exists.
Then $t \not\in V$.
Also $g (t_{\ld}) \to g (t)$ by continuity and $g (t_{\ld}) \to g (y)$
by construction, so $g (t) = g (y)$.
We have contradicted~(\ref{Item_D_9929_Inj}).
\end{proof}

\begin{lem}\label{L_9929_ExtMin}
Let $X$ and $Y$ be \chs{s},
and let $h \colon X \to X$ and $k \colon Y \to Y$ be \hme{s}.
Let $g \colon Y \to X$ be continuous and surjective,
and satisfy $g \circ k = h \circ g$.
Suppose that $h$ is minimal and there is a dense subset $S \S Y$
such that $g$ is strictly open at every point of~$S$.
Then $k$ is minimal.
\end{lem}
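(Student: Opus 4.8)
The plan is to prove that every $k$-orbit is dense, which is equivalent to minimality. So fix $y \in Y$ and let $F = \overline{\{ k^n (y) \colon n \in \Z \}}$ be its orbit closure; the goal is to show $F = Y$. First I would record that $F$ is genuinely $k$-invariant: since $k$ is a \hme{} and the orbit $\{ k^n (y) \}$ is carried onto itself by~$k$, we get $k (F) = \overline{k (\{ k^n (y) \})} = F$. Because $Y$ is compact and $g$ is \ct, the image $g (F)$ is compact, hence closed in the Hausdorff space~$X$, and it is nonempty. The intertwining relation $g \circ k = h \circ g$ then gives $h (g (F)) = g (k (F)) = g (F)$, so $g (F)$ is a nonempty closed $h$-invariant subset of~$X$. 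Minimality of~$h$ forces $g (F) = X$; that is, $g |_F$ is already surjective.

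The remaining, and really the only substantive, step is to upgrade surjectivity of $g |_F$ to the equality $F = Y$, and this is exactly where strict openness along the dense set~$S$ is used. Suppose for contradiction that $F \neq Y$. Then $Y \SM F$ is a nonempty open set, so by density of~$S$ there is a point $s \in S \cap (Y \SM F)$. Since $g$ is strictly open at~$s$ (Definition~\ref{D_9929_StrOpen}), the sets $g^{-1} (U)$ with $U \S X$ open and $g (s) \in U$ form a \nbhd{} base at~$s$; applying this to the open \nbhd{} $Y \SM F$ of~$s$ yields an open set $U \S X$ with $g (s) \in U$ and $g^{-1} (U) \S Y \SM F$. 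The condition $g^{-1} (U) \cap F = \varnothing$ is equivalent to $U \cap g (F) = \varnothing$. But $U$ is nonempty (it contains $g (s)$) while $g (F) = X$, so $U \cap g (F) = U \neq \varnothing$, a contradiction. Hence $F = Y$, and as $y$ was arbitrary, $k$ is minimal.

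I expect the main conceptual point to be isolating precisely why ordinary openness of~$g$ would not suffice: a proper closed set can surject onto~$X$ under a merely open map, so the argument genuinely needs strict openness, which converts ``$g (s) \in U$ together with a small \nbhd{} of~$s$ avoiding~$F$'' into ``$g^{-1} (U)$ avoids~$F$,'' thereby pushing the obstruction down to the base. The compactness of~$Y$ (ensuring $g (F)$ is closed) and the fact that orbit closures are two-sidedly invariant under a \hme{} (ensuring $g (F)$ is $h$-invariant, not merely forward-invariant) are the two bookkeeping points that must not be skipped; everything else is routine.
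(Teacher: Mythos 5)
Your proof is correct. It runs on the same essential mechanism as the paper's argument --- strict openness at a well-chosen point of the dense set $S$ pulls an open set $U \S X$ back into the relevant open subset of $Y$, and minimality of $h$ then finishes the job downstairs --- but the packaging is genuinely different. The paper argues directly that every $k$-orbit meets every nonempty open $V \S Y$: choose $z \in S \cap V$, use strict openness to get $U$ with $g(z) \in U$ and $g^{-1}(U) \S V$, and use density of the $h$-orbit of $g(y)$ to find $n$ with $h^n(g(y)) \in U$, whence $k^n(y) \in g^{-1}(U) \S V$. You instead pass to the orbit closure $F$, show $g(F) = X$ via the closed-invariant-set formulation of minimality, and derive a contradiction from a point of $S \cap (Y \SM F)$. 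The two routes are logically equivalent, and each uses strict openness at exactly one point of $S$. The small cost of yours is the extra appeal to compactness of $Y$ (needed to make $g(F)$ closed), which the paper's direct version never invokes; the small benefit is that your version isolates cleanly why ordinary openness of $g$ would not suffice, since a proper closed invariant set can surject onto $X$ under a merely open map. Your two ``bookkeeping points'' (closedness of $g(F)$ and two-sided invariance of orbit closures under a \hme) are both handled correctly.
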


\begin{proof}
It is enough to show that for every $y \in Y$ and every
nonempty open set $V \S Y$,
there is $n \in \Z$ such that $k^n (y) \in V$.
Choose $z \in S \cap V$.
Since $g$ is strictly open at~$z$,
there is an open set $U \S X$ such that $g (z) \in U$
and $g^{-1} (U) \S V$.
Since $h$ is minimal,
there is $n \in \Z$ such that $h^n (g (y)) \in U$.
Then $g (k^n (y)) \in U$,
so $k^n (y) \in g^{-1} (U) \S V$.
\end{proof}

It is well known that Lemma~\ref{L_9929_ExtMin} fails
without strict openness,
even if one assumes that $g^{-1} (x)$ is finite for all $x \in X$
and has only one point for a dense subset of points $x \in X$.
The following example was suggested by B.~Weiss.

\begin{exa}\label{E_9X04_NeedStrict}
Fix $\te \in \R \SM \Q$.
Take $X = S^1$ and take $h = r_{\te}$
(rotation by $2 \pi \te$; Notation~\ref{N_9X04_Rotation}).
Define $Y \S S^1 \times [0, 1]$ to be
\[
Y = \bigl( S^1 \times \{ 0 \} \bigr)
 \cup \left\{ \left( e^{2 \pi i n \te}, \, \frac{1}{| n | + 1} \right)
     \colon n \in \Z \right\},
\]
and define $k \colon Y \to Y$ to be
$k (\zt, 0) = (r_{\te} (\zt), \, 0)$ for $\zt \in S^1$ and
\[
k \left( e^{2 \pi i n \te}, \, \frac{1}{| n | + 1} \right)
   = \left( e^{2 \pi i (n + 1) \te}, \, \frac{1}{| n + 1 | + 1} \right)
\]
for $n \in \Z$.
Let $g \colon Y \to X$ be projection to the first coordinate.
Then $g \circ k = h \circ g$,
$g$ is surjective,
$h$ is minimal,
$k$ is not minimal,
$g^{-1} (x)$ has at most two points for every $x \in X$,
and $g^{-1} (x)$ has one point  for all but countably many $x \in X$.
\end{exa}

The following two easy lemmas will be used in the construction of
examples.

\begin{lem}\label{L_9X04_Prod}
Let $I$ be a nonempty set,
and for $i \in I$ let $g_i \colon X_i \to Y_i$
and $y_i \in Y_i$ be as in Definition~\ref{D_9929_StrOpen},
with $g_i$ strictly open at~$y_i$.
Set
\[
X = \prod_{i \in I} X_i
\andeqn
Y = \prod_{i \in I} Y_i,
\]
let $g \colon Y \to X$ be given by $g (z) = ( g_i (z_i))_{i \in I}$
for $z = (z_i)_{i \in I} \in Y$,
and set $y = (y_i)_{i \in I}$.
Then $g$ is strictly open at~$y$.
\end{lem}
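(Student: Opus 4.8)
The plan is to verify the conclusion directly from Definition~\ref{D_9929_StrOpen}, that is, to show that the family
$\bigl\{ g^{-1} (U) \colon {\mbox{$U \S X$ open and $g (y) \in U$}} \bigr\}$
is a \nbhd{} base for $y$ in $Y = \prod_{i \in I} Y_i$. The essential input is the description of a \nbhd{} base at $y = (y_i)_{i \in I}$ in the product topology: it is given by the basic boxes $W = \prod_{i \in I} V_i$, where each $V_i \S Y_i$ is open with $y_i \in V_i$ and $V_i = Y_i$ for all but finitely many $i$. I would also record at the outset the set-theoretic identity $g^{-1} \bigl( \prod_{i \in I} U_i \bigr) = \prod_{i \in I} g_i^{-1} (U_i)$, which is what makes the preimages factor through the coordinates; this identity is what the whole argument turns on.

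For the cofinality half of the \nbhd{}-base condition, I would start from an arbitrary basic box $W = \prod_{i \in I} V_i$ containing $y$, with $V_i = Y_i$ off a finite set $F \S I$. For each $i \in F$, strict openness of $g_i$ at $y_i$ lets me choose an open $U_i \S X_i$ with $g_i (y_i) \in U_i$ and $g_i^{-1} (U_i) \S V_i$; for $i \notin F$ I set $U_i = X_i$. Then $U = \prod_{i \in I} U_i$ has only finitely many factors different from the whole space, so it is open in $X$, and $g (y) = ( g_i (y_i) )_{i \in I} \in U$. Using the factoring identity,
\[
g^{-1} (U)
 = \prod_{i \in I} g_i^{-1} (U_i)
 \S \prod_{i \in I} V_i
 = W,
\]
since $g_i^{-1} (U_i) \S V_i$ for $i \in F$ and $g_i^{-1} (U_i) = g_i^{-1} (X_i) = Y_i = V_i$ for $i \notin F$. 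Thus every basic \nbhd{} of $y$ contains a member of the family.

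The one point needing care, since $g$ is not assumed \ct, is that each member of the family must itself be a \nbhd{} of~$y$. Given any open $U \S X$ with $g (y) \in U$, I would choose a basic box $U' = \prod_{i \in I} U_i' \S U$ with $g (y) \in U'$ (so $U_i' = X_i$ off a finite set); then $g^{-1} (U') = \prod_{i \in I} g_i^{-1} (U_i')$ is a product of \nbhd{s} of the $y_i$ in which all but finitely many factors equal the full space $Y_i$, hence a \nbhd{} of $y$, and therefore $g^{-1} (U) \supseteq g^{-1} (U')$ is a \nbhd{} of $y$ as well. Combining the two halves gives that the family is a \nbhd{} base at $y$, so $g$ is strictly open at~$y$. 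I do not expect a genuine obstacle here: the argument is pure point-set bookkeeping, and the only mildly delicate features are keeping track of the ``all but finitely many factors trivial'' condition in the product topology and handling the non-continuity caveat just described.
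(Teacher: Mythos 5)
Your proof is correct and follows essentially the same route as the paper's: reduce to basic boxes $\prod_{i \in I} V_i$ with all but finitely many factors equal to $Y_i$, use strict openness of each $g_i$ at $y_i$ on the finite exceptional set, take $U_i = X_i$ elsewhere, and invoke the identity $g^{-1} \bigl( \prod_{i \in I} U_i \bigr) = \prod_{i \in I} g_i^{-1} (U_i)$ to conclude. Your explicit final check that every set $g^{-1} (U)$ in the family is itself a \nbhd{} of~$y$ is a point the paper leaves implicit, and it is handled correctly.
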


\begin{proof}
It is obvious that $g^{-1} ( \{ g (y) \} ) = \{ y \}$.
Now let $V \S Y$ be open with $y \in V$.
We need an open set $U \S X$
such that $g (y) \in U$, $g^{-1} (U)$ is a \nbhd{} of~$y$,
and $g^{-1} (U) \S V$.
\Wolog{} there are open subsets $V_i \S Y_i$
and a finite set $F \S I$
such that $V = \prod_{i \in I} V_i$,
$V_i = Y_i$ for all $i \in I \SM F$,
and $y_i \in V_i$ for all $i \in I$.
For $i \in F$ choose an open set $U_i \S X_i$ with $g_i (y_i) \in U_i$
and such that $g_i^{-1} (U_i) \S V_i$.
Set $U_i = X_i$ for $i \in I \SM F$.
Since $g_i^{-1} (U_i)$ is a \nbhd{} of~$y_i$
for all $i \in I$
and $U_i = X_i$ for $i \in I \SM F$,
the set $g^{- 1} (U) = \prod_{i \in I} g_i^{-1} (U_i)$
is a \nbhd{} of~$y$.
So the set $U = \prod_{i \in I} U_i$ is the required set.
\end{proof}

\begin{lem}\label{L_9X04_Subsp}
In the situation in Definition~\ref{D_9929_StrOpen},
assume that $g$ is strictly open at~$y$.
Let $Y_0 \S Y$ be a subspace such that $y \in Y_0$.
Then $g |_{Y_0}$ is strictly open at~$y$.
\end{lem}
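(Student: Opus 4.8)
The plan is to verify directly, from Definition~\ref{D_9929_StrOpen}, that the restriction $g_0 = g |_{Y_0} \colon Y_0 \to X$ has the two defining features of strict openness at~$y$: that each set $g_0^{-1} (U)$, for $U \S X$ open with $g (y) \in U$, is a \nbhd{} of~$y$ in~$Y_0$, and that these sets form a \nbhd{} base. Everything rests on the elementary identity $g_0^{-1} (U) = g^{-1} (U) \cap Y_0$ together with the standard description of the \nbhd{} filter in a subspace. First I would record that a set $W \S Y_0$ is a \nbhd{} of~$y$ in~$Y_0$ \ifo{} $W = N \cap Y_0$ for some \nbhd{} $N$ of~$y$ in~$Y$; the only point needing a word is the forward direction, where, given a relatively open $O_0$ with $y \in O_0 \S W$, one writes $O_0 = O \cap Y_0$ with $O$ open in~$Y$ and takes $N = W \cup O$.

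Next I would check that each $g_0^{-1} (U)$ is a \nbhd{} of~$y$ in~$Y_0$. Since $g$ is strictly open at~$y$, the sets $g^{-1} (U)$ are members of a \nbhd{} base of~$y$ in~$Y$ and are therefore \nbhd{s} of~$y$ in~$Y$; intersecting with~$Y_0$ yields a \nbhd{} of~$y$ in~$Y_0$, which is exactly $g_0^{-1} (U)$. For the base property, given any \nbhd{} $W$ of~$y$ in~$Y_0$, I would write $W = N \cap Y_0$ as above, invoke strict openness of~$g$ to obtain an open $U \S X$ with $g (y) \in U$ and $g^{-1} (U) \S N$, and conclude $g_0^{-1} (U) = g^{-1} (U) \cap Y_0 \S N \cap Y_0 = W$. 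These two observations together say precisely that $\bigl\{ g_0^{-1} (U) \colon {\mbox{$U \S X$ open and $g (y) \in U$}} \bigr\}$ is a \nbhd{} base for~$y$ in~$Y_0$, i.e.\ that $g_0$ is strictly open at~$y$.

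There is no genuine obstacle: the argument is purely formal. The one subtlety worth flagging is that, because $g$ is not assumed \ct, the sets $g^{-1} (U)$ need not be open, so it is essential to use the paper's unrestricted notion of \nbhd{} in order to recognize the members of the given base as \nbhd{s}; with that convention in force the whole lemma reduces to the set-theoretic identity for $g_0^{-1} (U)$ and the subspace description of \nbhd{s}.
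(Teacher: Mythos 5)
Your argument is correct and is exactly the verification that the paper leaves implicit: its proof consists of the single remark that the result is immediate from the definition of the subspace topology, and your identity $g|_{Y_0}^{-1}(U) = g^{-1}(U) \cap Y_0$ together with the subspace description of neighborhoods is precisely what makes that remark true. You are also right to flag that, since $g$ is not assumed continuous, one must use the paper's unrestricted notion of neighborhood; this matches the paper's own convention stated after Definition~\ref{D_9929_StrOpen}.
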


\begin{proof}
The result is immediate from the definition of the subspace
topology.
\end{proof}

\begin{lem}\label{L_9929_S1toS1}
Let $g \colon S^1 \to S^1$ be \ct{} and surjective.
Suppose that there are disjoint closed arcs
$I_1, I_2, \ldots \S S^1$
such that:
\begin{enumerate}
\item\label{Item_L_9929_S1toS1_Const}
For $n \in \N$, the function $g$ is constant on~$I_n$.
\item\label{Item_L_9929_S1toS1_NI}
$\bigcup_{n = 1}^{\I} I_n$ is dense in~$S^1$.
\item\label{Item_L_9929_S1toS1_Inj}
With $R = S^1 \SM \bigcup_{n = 1}^{\I} I_n$,
the restriction $g |_R$ is injective.
\end{enumerate}
Then $g$ is strictly open at every point of~$R$.
\end{lem}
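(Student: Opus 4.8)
The plan is to apply Lemma~\ref{L_9929_WhenStrOpen}: since $g$ is continuous and surjective and $S^1$ is compact Hausdorff, it suffices to check, for each $y\in R$, the two conditions (i) $g^{-1}(\{g(y)\})=\{y\}$ and (ii) that $g(V)$ is a \nbhd{} of $g(y)$ for every open $V\ni y$. In fact (ii) is a formal consequence of (i): if $g^{-1}(\{g(y)\})=\{y\}\S V$, then $S^1\SM V$ is compact, so $g(S^1\SM V)$ is closed and omits $g(y)$, whence $g(V)\supseteq S^1\SM g(S^1\SM V)$ is a \nbhd{} of $g(y)$. So the whole lemma reduces to proving (i). Writing $c_n$ for the constant value of $g$ on $I_n$, the engine of the argument is the remark that if $w\in S^1$ is \emph{not} one of the values $c_n$, then no point of $\bigcup_n I_n$ maps to $w$, so $g^{-1}(\{w\})\S R$; hence by injectivity of $g|_R$ together with surjectivity of $g$, the fiber $g^{-1}(\{w\})$ is exactly one point, lying in $R$. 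Equivalently, two distinct points carrying the same $g$-value $w\notin\{c_n\}$ can never occur (and $\{c_n\}$ is countable).

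Now (i) is equivalent to the assertion that $g(y)\notin\{c_n\}$ for every $y\in R$: if $g(y)=c_m$ then $I_m$ lies in the fiber of $y$ and (i) fails, while conversely $g(y)\notin\{c_n\}$ gives $g^{-1}(\{g(y)\})=\{y\}$ by the remark. So I must rule out that a point of $R$ and a collapsed arc share a value, and the key structural step is to show that \emph{every fiber $g^{-1}(\{c\})$ is connected} (equivalently, that $g$ is monotone). This is the main obstacle, and I would prove it by contradiction using lifts. Suppose $g^{-1}(\{c\})$ is disconnected, and choose a complementary arc $\gm$ of the closed set $g^{-1}(\{c\})$ whose endpoints $e_1,e_2$ lie in different components, so $g(e_1)=g(e_2)=c$ and $g\ne c$ on $\operatorname{int}\gm$. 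Lifting $g|_{\overline\gm}$ through $\R\to S^1$ forces $\operatorname{int}\gm$ into a single fundamental interval of the lift of $c$; either the endpoints lift to the same level, in which case an interior turning point makes some value $w\notin\{c_n\}$ be attained twice on $\gm$, contradicting the remark, or they lift to adjacent levels, in which case $g$ carries $\gm$ onto all of $S^1$, attaining every $w\ne c$ on $\operatorname{int}\gm$ (a non-monotone lift here again produces a doubly attained generic value). In the onto case, by the remark $g$ can attain no value outside $\{c\}\cup\{c_n\}$ on the other complementary arc $\gm'$; since $g(\overline{\gm'})$ is connected and countable it equals $\{c\}$, so $\gm'\S g^{-1}(\{c\})$ joins the two components, a contradiction. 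Hence all fibers are connected.

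With connectivity in hand I finish using Sierpi\'{n}ski's theorem, that a continuum with more than one point cannot be partitioned into countably many ($\geq 2$) pairwise disjoint nonempty closed sets. A connected fiber $F=g^{-1}(\{c\})$ is a proper closed subset of $S^1$, hence a point or a closed subarc; its intersection with $R$ has at most one point (injectivity of $g|_R$), while $F\SM R$ is a union of whole arcs $I_n$ (any $I_n$ meeting $F$ has $c_n=c$, so $I_n\S F$). If $F$ were a nondegenerate arc containing a point $y\in R$, then $\{y\}$ together with the arcs $I_n\S F$ would be such a forbidden partition of the continuum $F$; likewise a fiber cannot consist of two or more distinct arcs. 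Thus every nondegenerate fiber is a single arc $I_m$, disjoint from $R$. In particular, for $y\in R$ the fiber $g^{-1}(\{g(y)\})$ meets $R$ at $y$ and so is not a single arc; being connected, it is the single point $\{y\}$. This is (i), and Lemma~\ref{L_9929_WhenStrOpen} then shows that $g$ is strictly open at every point of $R$.
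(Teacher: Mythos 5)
Your proof is correct, but it takes a genuinely different route from the paper's. The paper works directly with arcs: it establishes a chain of five claims (disjoint sets have images with disjoint interiors; every open arc whose closure is not contained in a single $I_n$ meets $R$, in fact infinitely often; the image of an arc with suitable endpoints is an open arc; the preimage of an open arc is an open arc) and then manufactures the required open set $U$ as the image of a small arc around the given point of $R$. You instead reduce everything to the single condition $g^{-1} (\{ g (y) \}) = \{ y \}$ --- your observation that, for a continuous surjection of compact Hausdorff spaces, condition~(\ref{Item_D_9929_Open}) of Lemma~\ref{L_9929_WhenStrOpen} is automatic from condition~(\ref{Item_D_9929_Inj}) (take $U = S^1 \setminus g (S^1 \setminus V)$) is correct and is a genuine simplification --- and then prove that $g$ is monotone (all fibers connected) by a lifting and intermediate value argument: a disconnected fiber would force some value outside the countable set $\{ c_n \}$ to be attained at two distinct points, contradicting injectivity of $g |_R$. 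Sierpi\'{n}ski's theorem on partitions of continua into countably many closed sets (the same Theorem~6 of~\cite{Krtw} that the paper invokes for its second claim) then shows a nondegenerate connected fiber must be a single $I_m$, so fibers through points of $R$ are singletons. Both arguments are sound; the paper's yields the extra information that $g$ carries suitable open arcs to open arcs and pulls open arcs back to open arcs, while yours isolates the cleaner structural fact that $g$ is a monotone quotient whose nondegenerate fibers are exactly the collapsed arcs. Two presentational points, neither affecting correctness: ``the other complementary arc $\gamma'$'' should read ``any other complementary arc of the fiber'' (there may be many, but since the fiber is assumed disconnected at least one other exists, and your countability argument forces it into the fiber, which is the desired contradiction); and in the equal-levels case one should note explicitly that a constant lift is ruled out because the interior of $\gamma$ is disjoint from the fiber.
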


\begin{proof}
For distinct $\ld, \zt \in S^1$,
we denote the open, closed, and half open arcs from $\ld$ to~$\zt$
using interval notation:
$(\ld, \zt)$, $[\ld, \zt]$, $[\ld, \zt)$, and $(\ld, \zt]$.
Further write $I_n = [\bt_n, \gm_n]$ with $\bt_n, \gm_n \in S^1$,
and set $B = \{ \bt_n \colon n \in \N \}$
and $C = \{ \gm_n \colon n \in \N \}$.

The set $g (S^1 \SM R)$ is countable
and $g$ is surjective, so $S^1 \SM g (R)$ is countable.

First,
we claim that if $E, F \S S^1$ are disjoint,
then $\sint (g (E)) \cap \sint (g (F)) = \E$.
If the claim is false,
then $g (E) \cap g (F)$ is uncountable.
Now $g (E) \SM g (E \cap R) \S g (S^1 \SM R)$
which is countable, so $g (E) \SM g (E \cap R)$ is countable.
Similarly $g (F) \SM g (F \cap R)$ is countable.
So $g (E \cap R) \cap g (F \cap R)$ is uncountable.
This contradicts $E \cap F = \E$ and injectivity of $g |_R$.

Second, we claim that if $J \S S^1$ is any nonempty open arc such that
there is no $n \in \N$ with ${\overline{J}} \S I_n$,
then $R \cap J \neq \E$.
Suppose the claim is false.
Write $J = (\ld, \zt)$ with $\ld, \zt \in S^1$.
Define
\[
S = \begin{cases}
   \E & \hspace*{1em}        \ld \not\in R
        \\
   \{ \ld \} & \hspace*{1em} \ld \in R
\end{cases}
\andeqn
T = \begin{cases}
   \E & \hspace*{1em}        \zt \not\in R
        \\
   \{ \zt \} & \hspace*{1em} \zt \in R.
\end{cases}
\]
Then ${\overline{J}}$ is the disjoint union of the closed sets
\[
S, \, T, \, {\overline{J}} \cap I_1, \, {\overline{J}} \cap I_2, \ldots.
\]
Suppose there is $n \in \N$ such that ${\overline{J}} \cap I_m = \E$
for all $m \neq n$.
Since ${\overline{J}} \not\subset I_n$,
there is a nonempty open arc $L \S J$
such that $L \cap I_n = \E$,
whence $L \cap I_m = \E$ for all $m \in \N$.
This contradicts~(\ref{Item_L_9929_S1toS1_NI}).
Thus, at least two of
the sets ${\overline{J}} \cap I_n$ are nonempty.
But, according to
Theorem~6 in Part III of Section~47 (in Chapter~5) of~\cite{Krtw},
a connected \chs{} cannot be the disjoint union of closed subsets
$E_1, E_2, \ldots$ with at least two of them nonempty.
This contradiction proves the claim.

Third, we claim that, under the same hypotheses, $R \cap J$
is infinite.
Again write $J = (\ld, \zt)$ with $\ld, \zt \in S^1$.
Apply the previous claim
to choose $\rh_1 \in R \cap J$ and set $J_1 = (\rh_1, \zt)$.
Then $\rh_1 \not\in \bigcup_{n = 1}^{\I} I_n$
so there is no $n \in \N$ with ${\overline{J_1}} \S I_n$.
Apply the previous claim again, choosing $\rh_2 \in R \cap J_1$,
and showing that $J_2 = (\rh_2, \zt)$
satisfies ${\overline{J_1}} \not\S I_n$ for all $n \in \N$.
Proceed inductively.

Fourth, we claim that if $\ld \in R \cup C$ and $\zt \in R \cup B$
are distinct,
then $g \bigl( (\ld, \zt) \bigr)$ is an open arc.
For the proof, set $J = (\ld, \zt)$.
We can assume that $g (J) \neq S^1$.
Since $g (J)$ is connected, there are $\mu, \nu \in S^1$
such that $g (J)$ is one of
$(\mu, \nu)$, $[\mu, \nu]$
(in this case we allow $\mu = \nu$), $[\mu, \nu)$, or $(\mu, \nu]$.
Suppose $\mu \in g (J)$; we will obtain a contradiction.
Choose $\rh \in J$ such that $g (\rh) = \mu$.
There are two cases: $\rh \in R$ and $\rh \in \bigcup_{n = 1}^{\I} I_n$.

In the first case, let $L_0$ and $L_1$ be the nonempty arcs
$L_0 = (\ld, \rh)$ and $L_1 = (\rh, \zt)$.
For $j = 0, 1$,
since $\rh \in {\overline{L_j}}$,
the arc $L_j$ satisfies the hypotheses of the second claim.
So $R \cap L_j$ is infinite by the third claim.
Since $g |_R$ is injective, $g (L_j)$ is infinite.
Clearly $\mu \in {\overline{g (L_j)}} \S [\mu, \nu]$.
Also, $g (L_j)$ is an arc by connectedness.
So there is $\om_j \in (\mu, \nu)$ such that $(\mu, \om_j) \S g (L_j)$.
This implies $\sint ( g (L_0) ) \cap \sint ( g (L_1) ) \neq \E$,
contradicting the first claim.

In the second case,
let $n \in \N$ be the integer such that $\rh \in I_n$.
Both $\ld \in [\bt_n, \gm_n)$ and $\zt \in (\bt_n, \gm_n]$
are impossible, but $[\bt_n, \gm_n] \cap (\ld, \zt) \neq \E$,
so $[\bt_n, \gm_n] \subset (\ld, \zt)$.
Therefore the arcs $L_0 = (\ld, \bt_n)$ and $L_1 = (\gm_n, \zt)$
are nonempty and do not intersect~$I_n$.
If there is $m \in \N$ such that ${\overline{L_0}} \S I_m$,
then clearly $m \neq n$,
but then $\bt_n \in I_m \cap I_n$,
contradicting $I_m \cap I_n = \E$.
So no such $m$ exists,
whence $L_0$ satisfies the hypotheses of the second claim.
Similarly $L_1$ satisfies the hypotheses of the second claim.
Since $g$ is constant on~$I_n$,
we have $g (\bt_n) = g (\gm_n) = g (\rh) = \mu$.
Now we get a contradiction in the same way as in the first case.

So $\mu \not\in g (J)$.
Similarly $\nu \not\in g (J)$.
The claim is proved.

Fifth,
we claim that if $L \S S^1$ is an open arc,
then $g^{-1} (L)$ is an open arc.
To prove the claim,
since $g^{-1} (L)$ is open,
there are an index set $S$
and disjoint nonempty open arcs $J_s$ for $s \in S$
such that $g^{-1} (L) = \bigcup_{s \in S} J_s$,
and there are $\ld_s, \zt_s \in S^1$
such that $J_s = (\ld_s, \zt_s)$.

Let $s \in S$.
Suppose $\ld_s \in I_n$ for some $n \in \N$.
Since $g$ is constant on~$I_n$,
either $I_n \cap g^{-1} (L) = \E$
or $I_n \S g^{-1} (L)$.
In the second case, $I_n \cup (\ld_s, \zt_s)$
is a connected subset of $g^{-1} (L)$.
Since $(\ld_s, \zt_s)$ is a maximal connected subset of $g^{-1} (L)$,
we have $I_n \S (\ld_s, \zt_s)$.
This contradicts $\ld_s \in I_n$.
So the first case must apply, and therefore $\ld_s = \gm_n$.
Combining this with the possibility
$\ld_s \not\in \bigcup_{n = 1}^{\I} I_n$,
we conclude that $\ld_s \in R \cup C$.
Similarly, $\zt_s \in R \cup B$.
By the previous claim,
$g \bigl( (\ld_s, \zt_s) \bigr)$ is open.

The first claim now implies that the sets
$g \bigl( (\ld_s, \zt_s) \bigr)$ are all disjoint.
Since $L$ is connected,
it follows that $\card (S) = 1$,
which implies the claim.

To prove the lemma,
let $\et \in R$ and let $V \S S^1$ be open with $\et \in V$.
We need an open set $U \S S^1$
such that $g (\et) \in U$ and $g^{-1} (U) \S V$.
Choose $\ld_0, \zt_0 \in S^1$
such that $\et \in (\ld_0, \zt_0) \S V$.
Use the second claim to choose $\ld_1 \in (\ld_0, \et) \cap R$
and $\zt_1 \in (\et, \zt_0) \cap R$.
Then $U = g \bigl( (\ld_1, \zt_1) \bigr)$ is open by
the fourth claim,
and by the fifth claim there are $\ld_2, \zt_2 \in S^1$
such that $g^{-1} (U) = (\ld_2, \zt_2)$.
If $\ld_1 \in g^{-1} (U)$,
then $(\ld_2, \ld_1)$ is a nonempty open arc contained in $g^{-1} (U)$
with $\ld_1 \in R$.
The third claim and injectivity of $g |_R$
imply that $g \bigl( (\ld_2, \ld_1) \bigr)$
is not a point.
Since $g \bigl( (\ld_2, \ld_1) \bigr)$ is connected,
it has nonempty interior.
It is contained in $U = g \bigl( (\ld_1, \zt_1) \bigr)$,
contradicting the first claim.
So $\ld_1 \not\in (\ld_2, \zt_2)$.
Similarly $\zt_1 \not\in (\ld_2, \zt_2)$.
Since $\et \in (\ld_2, \zt_2)$,
$\et \in (\ld_1, \zt_1)$, and $(\ld_2, \zt_2)$ is connected,
it now follows that $(\ld_2, \zt_2) \S (\ld_1, \zt_1)$.
(In fact, we have equality, since $g \bigl( (\ld_1, \zt_1) \bigr) = U$.)
Thus
\[
g^{-1} (U)
 = (\ld_2, \zt_2)
 \S (\ld_1, \zt_1)
 \S (\ld_0, \zt_0)
 \S V.
\]
This completes the proof.
\end{proof}

We have not seen a term in the literature for the following
class of \hme{s}.

\begin{dfn}\label{D_9X04_RestrDj}
A {\emph{restricted Denjoy homeomorphism}}
is a homeomorphism of the Cantor set which is conjugate to
the restriction and corestriction of a Denjoy homeomorphism of~$S^1$,
in the sense of Definition~3.3 of~\cite{PtSmdSk},
to the unique minimal set of Proposition~3.4 of~\cite{PtSmdSk}.
The {\emph{rotation number}}
of a restricted Denjoy homeomorphism is the rotation number,
as at the beginning of Section~3 of~\cite{PtSmdSk},
of the Denjoy homeomorphism of~$S^1$
of which it is the restriction;
this number is well defined by Remark~3 at the end of Section~3
of~\cite{PtSmdSk}.
\end{dfn}

\begin{lem}\label{L_9X04_DjSemiCj}
Let $X$ be the Cantor set,
and let $k \colon X \to X$ be a restricted Denjoy homeomorphism
with rotation number~$\te$,
as in Definition~\ref{D_9X04_RestrDj}.
Then there are a \ct{} surjective map $g \colon X \to S^1$
and a dense subset $R \S X$ such that
$g$ is strictly open at~$x$ for every $x \in R$
and, following Notation~\ref{N_9X04_Rotation},
$g \circ h = r_{\te} \circ g$.
\end{lem}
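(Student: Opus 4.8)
The plan is to reduce the statement to the classical Poincaré--Denjoy semiconjugacy and then feed that semiconjugacy into \Lem{L_9929_S1toS1} and \Lem{L_9X04_Subsp}. By \Def{D_9X04_RestrDj} there are a Denjoy homeomorphism $f \colon S^1 \to S^1$ with rotation number~$\te$, its unique minimal Cantor set $K \S S^1$, and a homeomorphism $\Phi \colon X \to K$ such that $\Phi \circ k = (f|_K) \circ \Phi$. It therefore suffices to produce a \ct{} surjection $g_0 \colon K \to S^1$ and a dense set $R_0 \S K$ at whose points $g_0$ is strictly open, with $g_0 \circ (f|_K) = r_{\te} \circ g_0$; then $g = g_0 \circ \Phi$ and $R = \Phi^{-1} (R_0)$ will do, the density and strict openness being transported by the homeomorphism~$\Phi$ (a homeomorphism is strictly open at every point, and strict openness is preserved under precomposition with it), and the intertwining $g \circ k = r_{\te} \circ g$ following from that for $g_0$.

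For $g_0$ I would use the standard monotone semiconjugacy. Since $f$ has irrational rotation number~$\te$, there is a \ct{} degree-one monotone surjection $\pi \colon S^1 \to S^1$ with $\pi \circ f = r_{\te} \circ \pi$, whose plateaus (the maximal arcs on which $\pi$ is constant) are exactly the closures $I_n = [\bt_n, \gm_n]$ of the gaps $(\bt_n, \gm_n)$ of~$K$ (the connected components of $S^1 \SM K$), and which is injective off $\bigcup_n I_n$. These are the facts recorded for this setting in Section~3 of~\cite{PtSmdSk}. I would then set $g_0 = \pi|_K$, which is \ct{} and surjective because $\pi (K) = \pi (S^1) = S^1$ (each gap and its endpoints have the same image under~$\pi$), and which intertwines $f|_K$ with $r_{\te}$ because $f (K) = K$.

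To get strict openness, I would apply \Lem{L_9929_S1toS1} directly to~$\pi$, taking the closed arcs $I_1, I_2, \ldots$ as above: $\pi$ is constant on each $I_n$; the gaps are dense since the Cantor set~$K$ is nowhere dense, so $\bigcup_n I_n$ is dense in~$S^1$; and the restriction of $\pi$ to $R := S^1 \SM \bigcup_n I_n$ is injective. Hence $\pi$ is strictly open at every point of~$R$. Now $R = K \SM \{ \bt_n, \gm_n \colon n \in \N \}$ is $K$ with a countable set removed, so, $K$ being perfect, $R$ is dense in~$K$. Since $R \S K$, \Lem{L_9X04_Subsp} shows that $g_0 = \pi|_K$ is strictly open at every point of~$R$. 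Taking $R_0 = R$ completes the construction on~$K$, and transporting through~$\Phi$ as above finishes the proof.

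The only genuinely nontrivial input is the existence and structure of the semiconjugacy~$\pi$: that its plateaus are precisely the closed gaps of~$K$ and that it is injective elsewhere. This is the Poincaré classification of circle homeomorphisms with irrational rotation number specialized to the Denjoy case, and it is exactly what is packaged in the references cited in \Def{D_9X04_RestrDj}; everything after that is bookkeeping with the two lemmas and the conjugacy~$\Phi$. (I note in passing that the displayed conclusion should read $g \circ k = r_{\te} \circ g$: the symbol $h$ in the statement appears to be a typo for~$k$.)
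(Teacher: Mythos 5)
Your proposal is correct and follows essentially the same route as the paper: both pass to the minimal set of a Denjoy homeomorphism, take the restriction of the Poincar\'e--Denjoy semiconjugacy from~\cite{PtSmdSk}, verify the hypotheses of Lemma~\ref{L_9929_S1toS1} with the closed gaps as the arcs $I_n$, and then restrict via Lemma~\ref{L_9X04_Subsp}. The only (immaterial) difference is how density of $R$ is obtained --- you remove a countable set from the perfect set $K$, while the paper notes $R$ is nonempty and $k$-invariant and invokes minimality --- and you are right that the $h$ in the displayed intertwining relation is a typo for~$k$.
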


\begin{proof}
We may assume that $k_0 \colon S^1 \to S^1$ is a Denjoy homeomorphism
as in Definition~3.3 of~\cite{PtSmdSk},
that $X \S S^1$ is the unique minimal set for~$k_0$,
and that $k$ is the restriction and corestriction of $k_0$ to~$X$.
Corollary~3.2 of~\cite{PtSmdSk}
gives $h \colon S^1 \to S^1$
such that $h \circ k_0 = r_{\te} \circ h$.
Set $g = h |_X$.
Since $X \S S^1$ is compact and invariant and $r_{\te}$ is minimal,
$g$ must be surjective.
The discussion after Proposition~3.4 of~\cite{PtSmdSk}
implies that $h$ satisfies the hypotheses of Lemma~\ref{L_9929_S1toS1},
with $X = S^1 \SM \bigcup_{n = 1}^{\I} \sint (I_n)$.
Moreover, the set $R$ of Lemma~\ref{L_9929_S1toS1}
is nonempty by
Theorem~6 in Part III of Section~47 (in Chapter~5) of~\cite{Krtw},
and $k$-invariant,
hence dense in~$X$.
By Lemma~\ref{L_9929_S1toS1} and Lemma~\ref{L_9X04_Subsp},
$g$ is strictly open at every point of~$R$.
\end{proof}

\begin{prp}\label{P_4X04_ProdRstDj}
Let $I$ be a set,
let $(\te_i)_{i \in I}$ be a family of elements of~$\R$,
and for $i \in I$ let $k_i \colon X_i \to X_i$
be a restricted Denjoy homeomorphism with rotation number~$\te_i$,
as in Definition~\ref{D_9X04_RestrDj}.
Suppose that $1$ and the numbers $\te_i$ for $i \in I$ are
linearly independent over~$\Q$.
Then the product $k \colon \prod_{i \in I} X_i \to \prod_{i \in I} X_i$
of the \hme{s} $k_i$ is minimal.
\end{prp}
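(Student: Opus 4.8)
The plan is to realize the product $k = \prod_{i \in I} k_i$ as a factor of the product rotation on $(S^1)^I$ via a semiconjugacy that is strictly open on a dense set, and then to quote Lemma~\ref{L_9929_ExtMin}. The rational independence of $1$ and the $\te_i$ will enter only through the minimality of the target system, supplied by Proposition~\ref{P_4X04_ProdRotation}. The case $I = \E$ is trivial, so I assume $I$ nonempty.

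First I would apply Lemma~\ref{L_9X04_DjSemiCj} to each restricted Denjoy homeomorphism $k_i$, obtaining a \ct{} surjection $g_i \colon X_i \to S^1$ and a dense subset $R_i \S X_i$ such that $g_i$ is strictly open at every point of~$R_i$ and $g_i \circ k_i = r_{\te_i} \circ g_i$. I would then assemble these coordinatewise into a single map $g \colon \prod_{i \in I} X_i \to (S^1)^I$, $g \bigl( (x_i)_{i \in I} \bigr) = (g_i (x_i))_{i \in I}$. This $g$ is \ct{} (each coordinate of $g$ factors through the $i$-th projection as $g_i \circ \pi_i$) and surjective (choose a preimage in each coordinate), and it intertwines $k$ with the product rotation $r = \prod_{i \in I} r_{\te_i}$, since coordinatewise $g_i \circ k_i = r_{\te_i} \circ g_i$ gives $g \circ k = r \circ g$. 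Because $1$ and the $\te_i$ are linearly independent over~$\Q$, Proposition~\ref{P_4X04_ProdRotation} shows that $r$ is minimal.

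Next I would produce the dense set on which $g$ is strictly open by taking $S = \prod_{i \in I} R_i$. Each $R_i$ is dense in~$X_i$, and a basic open set of the product topology constrains only finitely many coordinates, so $S$ meets every nonempty basic open set and is therefore dense in $\prod_{i \in I} X_i$. Since each $g_i$ is strictly open at every point of~$R_i$, Lemma~\ref{L_9X04_Prod} shows that $g$ is strictly open at every point $(x_i)_{i \in I}$ of~$S$. Both $\prod_{i \in I} X_i$ and $(S^1)^I$ are \chs{} by Tychonoff's theorem. Thus, taking $Y = \prod_{i \in I} X_i$, $X = (S^1)^I$, and $h = r$, all hypotheses of Lemma~\ref{L_9929_ExtMin} are met, and I conclude that $k$ is minimal.

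Essentially all of the real content is delegated to the already-established lemmas, so no step involves substantial new difficulty. The only points that require genuine care are the product-topology facts for a possibly uncountable index set~$I$: continuity and surjectivity of~$g$, density of $S = \prod_{i \in I} R_i$, and the passage from strict openness of each~$g_i$ to strict openness of~$g$. Since Lemma~\ref{L_9929_ExtMin} is stated for merely \chs{} (not necessarily metrizable) spaces and Lemma~\ref{L_9X04_Prod} is stated for an arbitrary index set, these present no real obstacle; the one thing to confirm is that the strict-openness hypothesis of Lemma~\ref{L_9929_ExtMin} is supplied on a genuinely dense set, which is exactly what the product structure of~$S$ guarantees.
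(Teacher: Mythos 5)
Your proposal is correct and follows exactly the same route as the paper's proof: apply Lemma~\ref{L_9X04_DjSemiCj} coordinatewise, form the product semiconjugacy onto $(S^1)^I$, invoke Proposition~\ref{P_4X04_ProdRotation} for minimality of the product rotation, use Lemma~\ref{L_9X04_Prod} for strict openness on the dense set $\prod_{i \in I} R_i$, and conclude with Lemma~\ref{L_9929_ExtMin}. The additional remarks on continuity, surjectivity, and density in the product topology are routine verifications that the paper leaves implicit.
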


\begin{proof}
For $i \in I$,
let $g_i \colon X_i \to S^1$ be as in Lemma~\ref{L_9X04_DjSemiCj}.
By Lemma~\ref{L_9X04_DjSemiCj},
there is a dense subset $R_i \S X_i$
such that $g_i$ is strictly open at every point of~$R_i$.
Let $g \colon \prod_{i \in I} X_i \to (S^1)^{I}$
be the product of the maps~$g_i$.
Lemma~\ref{L_9X04_Prod} implies that
$g$ is strictly open at every point of $\prod_{i \in I} R_i$,
this set is dense,
and the product $r \colon (S^1)^{I} \to (S^1)^{I}$
of the rotations $r_{\te_i}$ is minimal
by Proposition~\ref{P_4X04_ProdRotation}.
So $k$ is minimal
by Lemma~\ref{L_9929_ExtMin}.
\end{proof}

\section{Examples}\label{Sec_InfRed}

\indent
To take full advantage of Proposition~\ref{AyRSHADirLim},
we need information on the structure of simple
direct limits of recursive subhomogeneous algebras
over the algebra $D$ which appears there.
For example,
we need conditions for such direct limits to have stable rank~$1$
(as originally done for $D = \C$ in~\cite{DNNP})
and to have real rank zero
(as originally done for $D = \C$ in~\cite{BDR}).
We intend to study this problem in a future paper.
Even without generalizations of those theorems,
three cases are accessible now.
They are $h \colon X \to X$ wild
(for example, with strictly positive mean dimension,
as in Notation~\ref{N_0108_DimX})
but with $D$ being $\JS$-stable;
$X$ is the Cantor set
and $D$ has stable rank one but is otherwise wild;
and $X = S^1$,
$h$ is an irrational rotation,
and $D$ has stable rank one, real rank zero,
and trivial $K_1$-group,
but is otherwise wild.

We also give examples for Theorem~\ref{T_0101_PICase},
although only for $G = \Z$.

In the first type of example,
if $X$ is \fd{} then the action has a
higher dimensional Rokhlin property,
and results on crossed products by such actions can be applied.
As far as we know, however,
there are no previously known general theorems
which apply when $X$ is in\fd.
In the second type of example,
the action has the Rokhlin property.
Since the algebra is not simple,
results of~\cite{OP1} can't be applied,
and, when $D$ is not $\JS$-stable
and does not have finite nuclear dimension,
Theorems 4.1 and~5.8 of~\cite{HWZ} can't be applied.
We know of no previous general theorems which apply in this case.
Actions in the third type of example
are further away from known results:
the situation is like for the second type of example,
but the Rokhlin property must be weakened to
finite Rokhlin dimension with commuting towers.

Since we will use quasifree automorphisms of reduced C*-algebras
of free groups
and the free shift on $C^*_{\mathrm{r}} (F_{\infty})$
in several examples,
we establish notation for them separately.

\begin{ntn}\label{N_9214_QFreeFn}
For $n \in \N \cup \{ \I \}$,
we let $F_n$ denote the free group on $n$~generators.
We let $u_1, u_2, \ldots, u_n \in C^*_{\mathrm{r}} (F_n)$
(when $n = \I$,
for $u_1, u_2, \ldots \in C^*_{\mathrm{r}} (F_{\infty})$)
be the ``standard'' unitaries in $C^*_{\mathrm{r}} (F_n)$,
obtained as the images of the standard generators of~$F_n$.
For $\zt = (\zt_1, \zt_2, \ldots, \zt_n) \in (S^1)^n$
(when $n = \I$,
for $\zt = (\zt_n)_{n \in \N} \in (S^1)^{\N}$),
we let $\ph_{\zt} \in \Aut ( C^*_{\mathrm{r}} (F_n) )$
be the (quasifree) automorphism
determined by $\ph_{\zt} (u_k) = \zt_k u_k$
for $k = 1, 2, \ldots, n$
(when $n = \I$,
for $k = 1, 2, \ldots)$.
It is well known that $\zt \mapsto \ph_{\zt}$ is \ct.
\end{ntn}

\begin{ntn}\label{N_9214_FreeShift}
Take the standard generators of the free group $F_{\infty}$
to be indexed by~$\Z$,
and for $n \in \Z$ let $u_n \in C^*_{\mathrm{r}} (F_{\infty})$
be the unitary obtained as the image of the corresponding
generator of~$F_{\infty}$.
We denote by $\sm$ the free shift on $C^*_{\mathrm{r}} (F_{\infty})$,
that is, the automorphism
$\sm \in \Aut ( C^*_{\mathrm{r}} (F_{\infty}) )$
determined by $\sm (u_n) = u_{n + 1}$
for $n \in \Z$.
\end{ntn}

\begin{exa}\label{E_9214_ShiftAndUHF}
Set $X_0 = (S^1)^{\Z}$.
Let $h_0 \colon X_0 \to X_0$
be the (forwards) shift,
defined for $\zt = (\zt_n)_{n \in \Z} \in X_0$ by
$h_0 (\zt) = (\zt_{n - 1})_{n \in \Z}$.
Let $D = \bigotimes_{n \in \N} M_2$
be the $2^{\infty}$~UHF algebra.
Choose a bijection $\sm \colon \N \to \Z$,
and for $\zt = (\zt_n)_{n \in \Z} \in X_0$
define
\[
\af_{\zt}^{(0)}
 = \bigotimes_{n \in \N}
  \Ad \left( \left( \begin{matrix}
       1 & 0 \\
       0 & \zt_{\sm (n)}
     \end{matrix} \right) \right)
 \in \Aut (D).
\]
Then $\zt \mapsto \af_{\zt}^{(0)}$ is \ct.

We have $\mdim (h_0) = 1$ (see Notation~\ref{N_0108_DimX})
by Proposition~3.3 of~\cite{LndWs},
and we can use $X_0$, $h_0$, $D$, and $\zt \mapsto \af^{(0)}_{\zt}$
in Lemma~\ref{L_4Y12_GetAuto}.
However, $h_0$ is not minimal.
We therefore proceed as follows.
Identify $[0, 1]$ with a closed arc in~$S^1$,
say via $\ld \mapsto \exp (\pi i \ld)$.
Use this identification to identify
$[0, 1]^{\Z}$ with a closed subset of $X_0 = (S^1)^{\Z}$.
This identification is equivariant when both spaces
are equipped with the shift \hme{s}.
Let $X \S X_0$ and $h = h_0 |_{X} \colon X \to X$
be the minimal subshift
in~\cite{GlKr},
which can be taken to have mean dimension
arbitrarily close to~$1$.
For $\zt \in X$ let $\af_{\zt} = \af_{\zt}^{(0)}$.
Let $\af \colon \Z \to \Aut (C (X, D))$
be the corresponding action as in Lemma~\ref{L_4Y12_GetAuto}.
Then $\af$ lies over the free minimal action of $\Z$ on~$X$
generated by~$h$.

The crossed product
$C^* \bigl( \Z, \, C (X, D), \, \af \bigr)$ is simple
by Proposition~\ref{P_4Y15_CPSimple},
and is nuclear,
so it is $\JS$-stable by Theorem~\ref{ATraciallyZStable}.
\end{exa}

The action in Example~\ref{E_9214_ShiftAndUHF}
can also be described as follows.
Realize $D$ as the algebra of the canonical anticommutation
relations on generators $a_k$ for $k \in \N$,
following Section~5.1 of~\cite{Brtl}.
Then $\af_{\zt}^{(0)}$ is the gauge automorphism
$\af_{\zt}^{(0)} (a_k) = \zt_{\sm (k)} a_k$.
In~\cite{Brtl}, see Section~5.1 and the proof of Lemma~5.2.

The next example is a slightly different version,
with larger mean dimension.

\begin{exa}\label{E_9Z26_ShiftAndUHF_v2}
Let $X_0$, $h_0$, $D$, and $\zt \mapsto \af_{\zt}^{(0)}$
be as in Example~\ref{E_9214_ShiftAndUHF}.
We will, however, use the \hme{} $h_0^2$ of~$X_0$,
which is the shift on $(S^1 \times S^1)^{\Z}$.
Let $(X, h)$ be the minimal subspace of the shift on $([0, 1]^2)^{\Z}$
constructed in Proposition 3.5 of \cite{LndWs},
which satisfies $\mdim (h) > 1$ (Notation~\ref{N_0108_DimX}).
Use an embedding of $[0, 1]^2$ in~$(S^1)^2$
to choose an equivariant \hme{} $g$
from $(X, h)$ to an invariant closed subset of $(X_0, h_0^2)$.
For $x \in X$ let $\af_{x} = \af_{g (x)}^{(0)}$.
Let $\af \colon \Z \to \Aut (C (X, D))$
be the corresponding action as in Lemma~\ref{L_4Y12_GetAuto}.
The crossed product
$C^* \bigl( \Z, \, C (X, D), \, \af \bigr)$ is again
simple by Proposition~\ref{P_4Y15_CPSimple},
so $\JS$-stable by Theorem~\ref{ATraciallyZStable}.
\end{exa}

\begin{exa}\label{E_9214_ShiftAndCStFI_2}
Let $X$, $h$, $D$, and $\zt \mapsto \af_{\zt}$ be as in
Example~\ref{E_9214_ShiftAndUHF}.
Define $E = D \otimes C^*_{\mathrm{r}} (F_{\infty})$,
and, following Notation~\ref{N_9214_QFreeFn},
for $\zt \in X$ define
$\bt_{\zt} = \af_{\zt} \otimes \ph_{\zt}$.
Apply Lemma~\ref{L_4Y12_GetAuto}
to get an action $\bt \colon \Z \to \Aut (C (X, E))$
which lies over the free minimal action of $\Z$ on~$X$
generated by~$h$.

We claim that
$C^* \big( \Z, \, C (X, E), \, \bt \big)$ is tracially $\JS$-stable
and has stable rank one.
Tracial $\JS$-stability follows from Theorem~\ref{ATraciallyZStable}.
For stable rank one,
$E$ is exact,
so has strict comparison of positive elements
by Corollary 4.6 of~\cite{Rdm7}.
Choose any one point subset $Y \S X$.
Then $C^* \big( \Z, \, C (X, E), \, \bt \big)_Y$
(see Definition~\ref{D_4Y12_OrbSubalg})
is $\JS$-stable
by Corollary~\ref{EStableGivesZStable}.
This algebra
is simple by Proposition~\ref{P_4Y15_CPSimple},
so by Theorem 6.7 of~\cite{Rdm7} it has stable rank one.
The algebra $C^* \big( \Z, \, C (X, E), \, \bt \big)_Y$
is centrally large
in $C^* \big( \Z, \, C (X, E), \, \bt \big)$
by
Corollary \ref{AyCentLargeConds}(\ref{9212_AyCentLargeConds_StrCmp}),
so Theorem 6.3 of~\cite{ArPh} implies
that $C^* \big( \Z, \, C (X, E), \, \bt \big)$ has stable rank one.
\end{exa}

In Example~\ref{E_9214_ShiftAndCStFI_2}, we don't know whether
$C^* \big( \Z, \, C (X, E), \, \bt \big)$ is $\JS$-stable.
We also don't know whether tracial $\JS$-stability implies
stable rank one, although this is expected to be true.

There is nothing special about the specific formulas
for $x \mapsto \af_x$
in Example~\ref{E_9214_ShiftAndUHF}
and Example~\ref{E_9Z26_ShiftAndUHF_v2},
and $x \mapsto \bt_x$ in Example~\ref{E_9214_ShiftAndCStFI_2}.
They were chosen merely to show that interesting examples exist.

Example~\ref{E_9214_ShiftAndUHF},
Example~\ref{E_9Z26_ShiftAndUHF_v2},
and Example~\ref{E_9214_ShiftAndCStFI_2}
were constructed so that the \hme~$h$
does not have mean dimension zero.
If $X$ is \fd,
then one can get all we do by using known results
for crossed products by actions with finite Rokhlin dimension
with commuting towers.
With \fd~$X$, one even gets $\JS$-stability in
examples like Example~\ref{E_9214_ShiftAndUHF},
Example~\ref{E_9Z26_ShiftAndUHF_v2},
and Example~\ref{E_9214_ShiftAndCStFI_2},
by Theorem~5.8 of~\cite{HWZ}.
However,
we know of no results which apply to examples of this type
when $X$ is in\fd{} and $h$ has mean dimension zero.

The next most obvious choice for a \mh{} of an in\fd{} space~$X$
seems to be as follows.
Take $X = (S^1)^{\N}$,
fix $\te = (\te_n)_{n \in \N} \in \R^{\N}$,
and define $h \colon X \to X$ by
$h \bigl( (\zt_{n})_{n \in \N} \bigr)
 = \bigl( e^{2 \pi i \te_n} \zt_{n} \bigr)_{n \in \N}$.
If $1, \te_1, \te_2, \ldots$ are linearly independent over~$\Q$,
then $h$ is minimal
by Proposition~\ref{P_4X04_ProdRotation}.
However, by considering the action in just one coordinate,
one sees that, regardless of $\zt \mapsto \af_{\zt}$,
the action has finite Rokhlin dimension with commuting towers.
By Theorem 6.2 of~\cite{HWZ}, irrational rotations
have finite Rokhlin dimension with commuting towers.
Remark 6.3 of~\cite{HWZ}, according to which the result
extends to any homeomorphism which has an irrational
rotation as a factor,
applies equally well to any automorphism of $C (S^1) \otimes B$,
for any unital \ca~$B$,
which lies over an irrational rotation on~$S^1$
in the sense of Definition~\ref{D_4Y12_OverX}.
If we start with $C \bigl( (S^1)^{\N}, \, D \bigr)$,
take $B$ above to be $C \bigl( (S^1)^{\N \SM \{ 1 \}}, \, D \bigr)$.
Therefore Theorem~5.8 of~\cite{HWZ} applies,
and our results give nothing new.

We now give some examples of the second type discussed
in the introduction to this section.
For easy reference,
we recall a result on the stable rank of reduced free products.
Many reduced free products have stable rank~$1$.
The following result is from~\cite{DHR}.
(It is not affected by the correction~\cite{DHRc}.)
Reduced free products of unital \ca{s}
in~\cite{DHR}
are implicitly taken to be amalgamated over~$\C$;
see Section~2.2 of~\cite{DHR}.

\begin{prp}[Corollary~3.9 of~\cite{DHR}]\label{P_5416_TsrFrGp}
Let $G$ and $H$ be discrete groups
with $\card (G) \geq 2$
and $\card (H) \geq 3$.
Then $C^*_{\mathrm{r}} (G \star H)$ has stable rank~$1$.
\end{prp}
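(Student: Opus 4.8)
Since this statement is quoted as Corollary~3.9 of~\cite{DHR}, the proof consists of citing that reference; what follows is a sketch of the argument underlying it. The plan is first to translate the group-theoretic assertion into one about reduced free products of C*-algebras, and then to invoke the structure theory developed in~\cite{DHR}.

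First I would identify the algebra. For discrete groups $G$ and $H$, the canonical tracial state $\tau$ on $C^*_{\mathrm{r}} (G \star H)$ restricts to the canonical traces on the standard unital copies of $C^*_{\mathrm{r}} (G)$ and $C^*_{\mathrm{r}} (H)$; with respect to $\tau$ these two subalgebras are freely independent and together generate the whole algebra. This yields the standard identification
\[
C^*_{\mathrm{r}} (G \star H)
 \cong \bigl( C^*_{\mathrm{r}} (G), \, \tau_G \bigr)
    \star \bigl( C^*_{\mathrm{r}} (H), \, \tau_H \bigr)
\]
as a reduced free product of unital C*-algebras with respect to faithful tracial states. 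The hypotheses are then read off directly: $\card (G) \geq 2$ and $\card (H) \geq 3$ say exactly that $C^*_{\mathrm{r}} (G)$ and $C^*_{\mathrm{r}} (H)$ have dimension at least $2$ and at least $3$, respectively, which is precisely what is needed to set aside the single degenerate case $\C^2 \star \C^2$ with balanced traces, corresponding to $G = H = \Z / 2$.

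The second step is to apply the general stable rank theorem for such reduced free products. Its engine is a structural decomposition: a reduced free product of this kind splits, schematically, as a direct sum of a simple finite C*-algebra carrying a faithful diffuse trace (the ``free group factor like'' summand) together with finitely many summands of homogeneous type, each of the form $M_k (C ([0, 1]))$ or $M_k (\C)$. For the homogeneous and finite-dimensional summands stable rank one is classical, since they have topological dimension at most one and hence, after accounting for the matrix size, stable rank one. Thus the genuine content is concentrated in the diffuse summand.

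The hard part will be establishing stable rank one for the diffuse summand, which is the free-probability core of~\cite{DHR}. Here I would show that every element is a norm limit of invertibles by exploiting the abundance of freely independent Haar unitaries manufactured from the free product structure: given a target element, one uses a free Haar unitary to ``rotate'' its polar data and a free-convolution estimate to show that spectral mass accumulating at $0$ can be pushed away, producing nearby invertibles. Controlling this approximation quantitatively --- in particular verifying that the distribution of the relevant self-adjoint elements has no atom at $0$ once $\card (H) \geq 3$ --- is the main obstacle, and is exactly the point at which the case $\Z / 2 \star \Z / 2$ falls outside the hypotheses.
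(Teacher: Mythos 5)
The paper gives no proof of this proposition: it is quoted verbatim as Corollary~3.9 of~\cite{DHR}, so your citation, together with the identification of $C^*_{\mathrm{r}} (G \star H)$ with the reduced free product $\bigl( C^*_{\mathrm{r}} (G), \tau_G \bigr) \star \bigl( C^*_{\mathrm{r}} (H), \tau_H \bigr)$ with respect to the canonical traces, is exactly the paper's ``proof.'' Be aware, though, that your sketch of the internals of~\cite{DHR} does not reflect how that corollary is actually obtained: under these hypotheses there is no direct sum decomposition with homogeneous summands of the form $M_k (C ([0,1]))$ --- rather, $\card (G) \geq 2$ and $\card (H) \geq 3$ furnish a trace-zero unitary $u \in C^*_{\mathrm{r}} (G)$ and trace-zero unitaries $v, w \in C^*_{\mathrm{r}} (H)$ with $\tau (v^* w) = 0$ (Avitzour's condition), so the reduced free product is already simple with a unique trace, and stable rank one follows from the main theorem of~\cite{DHR}, whose proof establishes density of the invertibles by reducing, via unitaries supplied by the free product structure, to elements of the form $u a$ with $u$ a Haar unitary free from~$a$; the decomposition you describe belongs to the analysis of free products of finite dimensional algebras and to the excluded case $\Z / 2 \star \Z / 2$, not to this corollary.
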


\begin{exa}\label{E_5416_CSrFGp}
Let $X$ be the Cantor set
and let $h$ be an arbitrary minimal
homeomorphism of~$X$.
Let $\sm \in \Aut ( C^*_{\mathrm{r}} (F_{\infty}) )$
be as in Notation~\ref{N_9214_FreeShift}.
Let
$\af \in \Aut \big( C (X) \otimes C^*_{\mathrm{r}} (F_{\infty}) \big)$
be the tensor product of the automorphism
$f \mapsto f \circ h^{-1}$ of $C (X)$ and~$\sm$.
Then
$C^* \big( \Z, \, C (X) \otimes C^*_{\mathrm{r}} (F_{\infty}),
    \, \af \big)$
is simple by Proposition~\ref{P_4Y15_CPSimple}.

We claim that
$C^* \big( \Z, \, C (X) \otimes C^*_{\mathrm{r}} (F_{\infty}),
    \, \af \big)$
has stable rank~$1$.

To prove the claim,
we apply Theorem~\ref{P_9Z26_XisCantor} with
$D = C^*_{\mathrm{r}} (F_{\infty})$ and
$\af$ as given.
Use Theorems 9.2.6 and 9.2.7 of \cite{PhNotes}
to see that $D$ is simple and has a tracial state.
By Proposition~\ref{P_5416_TsrFrGp}, we have $\tsr (D) = 1$.
By Proposition 6.3.2 of \cite{Rbt},
$D$ has strict comparison of positive elements.
By construction, $\af$ lies over~$h$.
Now apply
Theorem~\ref{P_9Z26_XisCantor}(\ref{9217_GetTsr1_StrCmp}).
\end{exa}

Apparently no previously known results give
anything about the crossed product in this example.
In particular, as discussed after Problem~\ref{Pb_9922_RokhlinTsr},
knowing that the action has the Rokhlin property
doesn't seem to help.

There are many other automorphisms
of $C^*_{\mathrm{r}} (F_{\infty})$
which could be used in place of~$\sm$.
For example,
one could take a quasifree automorphism,
as in Notation~\ref{N_9214_QFreeFn}.
Here is a more interesting version.

\begin{exa}\label{E_5416_DenjFst}
Fix any $n \in \set{ 2, 3, \ldots, \infty }$
and take $D = C^*_{\mathrm{r}} (F_{n})$.
Adopt Notation~\ref{N_9214_QFreeFn}.
Let $h \colon X \to X$ be a restricted Denjoy \hme{}
with rotation number $\te \in \R \setminus \Q$,
as in Definition~\ref{D_9X04_RestrDj}.
Let $\zt \colon X \to S^1$ be the \ct{} surjective map
with $\zt (h (x)) = e^{2 \pi i \te} \zt (x)$
of Lemma~\ref{L_9X04_DjSemiCj}
(gotten from Corollary~3.2 and Proposition~3.4 of~\cite{PtSmdSk}).
Following Notation~\ref{N_9214_QFreeFn} for the generators
of~$D$,
for $x \in X$
let $\af_x \in \Aut ( D )$
be determined
by $\af_x (u_k) = \zt (x) u_k$
for $k = 1, 2, \ldots, n$ (or $k \in \N$ if $n = \I$).
Apply Lemma~\ref{L_4Y12_GetAuto}
to get an action
$\af \colon \Z \to \Aut \big( C (X, D ) \big)$,
which we can think of as
a kind of noncommutative Furstenberg transformation.
The algebra $C^* \big( \Z, \, C (X, D ), \, \af \big)$
is simple by Proposition~\ref{P_4Y15_CPSimple}.

We claim that
$C^* \big( \Z, \, C (X, D ), \, \af \big)$
has stable rank one.
To prove the claim,
use Theorems 9.2.6 and 9.2.7 of \cite{PhNotes}
to see that $D$ is simple and has a tracial state.
By Proposition~\ref{P_5416_TsrFrGp}, we have $\tsr (D) = 1$.
Now apply
Theorem~\ref{P_9Z26_XisCantor}(\ref{9217_GetTsr1_CptGen}).
\end{exa}

Again, apparently no previously known results give
anything about the crossed product.
Since $C^*_{\mathrm{r}} (F_{n})$ is not $\JS$-stable,
knowing that the action has the Rokhlin property
doesn't seem to help.

We can generalize Example~\ref{E_5416_DenjFst} as follows.

\begin{exa}\label{E_9217_MultltDenj}
Fix $n \in \{ 2, 3, \ldots, \I \}$.
Fix $\te_1, \te_2, \ldots, \te_n \in \R$
(or $\te_1, \te_2, \ldots \in \R$)
such that $1, \te_1, \te_2, \ldots, \te_n$
(or $1, \te_1, \te_2, \ldots$)
are linearly independent over~$\Q$.
For $k = 1, 2, \ldots, n$,
let $h \colon X_k \to X_k$ be a restricted Denjoy \hme{}
with rotation number $\te_k \in \R \setminus \Q$,
as in Definition~\ref{D_9X04_RestrDj}.
Take $X = \prod_{k = 1}^n X_k$,
and let $h \colon X \to X$
act as $h_k$ on the $k$-th factor.
Then~$h$ is minimal by Proposition~\ref{P_4X04_ProdRstDj}.
Define $\zt_k \colon X_k \to S^1$
analogously to the definition of $\zt$
in Example~\ref{E_5416_DenjFst},
and take $\af_x (u_k) = \zt_k (x_k) u_k$.
Apply Lemma~\ref{L_4Y12_GetAuto}
to get an action
$\af \colon
 \Z \to \Aut \big( C (X, \, C^*_{\mathrm{r}} (F_{n}) ) \big)$.
Then
$C^* \big( \Z, \, C (X, \, C^*_{\mathrm{r}} (F_{n}) ),
    \, \af \big)$
is simple and has stable rank one
for the same reasons as in Example~\ref{E_5416_DenjFst}.
\end{exa}

\begin{exa}\label{R_5416_More}
Let $X$ be the Cantor set
and let $h$ be an arbitrary minimal
homeomorphism of~$X$.
Choose any decomposition $X = X_1 \amalg X_2$
of $X$ as the disjoint union of two nonempty closed subsets.
Set $D = C^*_{\mathrm{r}} (F_{\infty})$,
with the generators of $F_{\I}$ indexed by~$\Z$.
Fix any $\zt = (\zt_n)_{n \in \Z} \in (S^1)^{\N}$.
For $x \in X_1$ take
$\af_{x}$ to be the automorphism
$\ph_{\zt}$ of Notation~\ref{N_9214_QFreeFn},
except using $\Z$ in place of $\N$,
and for $x \in X_2$ take
$\af_{x}$ to be the automorphism
$\sm$ of Notation~\ref{N_9214_FreeShift}.
Apply Lemma~\ref{L_4Y12_GetAuto}
to get an action
$\af \colon \Z \to \Aut ( C (X, D) )$.
The algebra $C^* (\Z, \, C (X) \otimes D, \, \alpha)$
is simple by Proposition~\ref{P_4Y15_CPSimple}.
The algebra $D$ has strict comparison of positive elements by
Proposition 6.3.2 of~\cite{Rbt},
so $C^* (\Z, \, C (X) \otimes D, \, \alpha)$ has
stable rank one
by Theorem~\ref{P_9Z26_XisCantor}(\ref{9217_GetTsr1_StrCmp}).
\end{exa}

Example~\ref{R_5416_More} admits many variations.
Here are several.

\begin{exa}\label{E_9217_AltDenj}
Let $h \colon X \to X$ be a restricted Denjoy \hme,
as in Example~\ref{E_5416_DenjFst},
and let $\zt \colon X \to S^1$ be as there.
Choose any decomposition $X = X_1 \amalg X_2$
of $X$ as the disjoint union of two nonempty closed subsets.
Following Notation~\ref{N_9214_QFreeFn} for the generators
of $C^*_{\mathrm{r}} (F_{2})$,
for $x \in X_1$
let $\af_x \in \Aut ( C^*_{\mathrm{r}} (F_{n}) )$
be determined
by $\af_x (u_1) = \zt (x) u_1$ and $\af_x (u_2) = u_2$,
and for $x \in X_2$
let $\af_x \in \Aut ( C^*_{\mathrm{r}} (F_{n}) )$
be determined
by $\af_x (u_1) = u_1$ and $\af_x (u_2) = \zt (x) u_2$.
The algebra
$C^* \big( \Z, \, C (X)
  \otimes C^*_{\mathrm{r}} (F_{2}),  \, \af \big)$
is simple by Proposition~\ref{P_4Y15_CPSimple}.

We claim that
this algebra
has stable rank one.
To prove the claim,
use Theorems 9.2.6 and 9.2.7 of \cite{PhNotes}
to see that $D$ is simple and has a tracial state.
By Proposition~\ref{P_5416_TsrFrGp}, we have $\tsr (D) = 1$.
Now apply
Theorem~\ref{P_9Z26_XisCantor}(\ref{9217_GetTsr1_CptGen}).
\end{exa}

\begin{exa}\label{E_9217_DenjAndSw}
In Example~\ref{E_9217_AltDenj},
replace the definition of $\af_x$
for $x \in X_2$ with
$\af_x (u_1) = u_2$ and $\af_x (u_2) = u_1$.
Proposition~\ref{P_4Y15_CPSimple}
and Theorem~\ref{P_9Z26_XisCantor}(\ref{9217_GetTsr1_CptGen})
still apply,
so
$C^* \big( \Z, \, C (X)
  \otimes C^*_{\mathrm{r}} (F_{2}),  \, \af \big)$
again is simple and has stable rank one.
\end{exa}

We now give examples in which we also get real rank zero.
The following lemma will be used for them.

\begin{lem}\label{L_0916_MakeSimple}
Let $M$ be factor of type $\mathrm{II}_1$,
let $\af \colon G \to \Aut (M)$
be an action of a countable group~$G$ on~$M$,
and let $P \S M$ be a separable C*-subalgebra.
Then there exists a simple separable unital C*-subalgebra
$D \subset M$ which contains~$P$,
is invariant under ${\overline{\sm}}$,
has a unique tracial state
(the restriction to $D$ of the unique tracial state on $M$),
has real rank zero and stable rank one,
and such that the order on projections over~$D$
is determined by traces.
\end{lem}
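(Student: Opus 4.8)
The plan is to construct $D$ as the closure of an increasing union $D = \overline{\bigcup_{n=1}^{\infty} D_n}$ of separable unital $\af$-invariant C*-subalgebras of~$M$, built by an exhaustion argument in which each successive $D_{n+1}$ is enlarged so as to witness finitely many instances of each desired property for a countable norm-dense subset of~$D_n$. First I would fix the unique tracial state $\tau$ on~$M$ and take $D_0$ to be the separable unital C*-subalgebra generated by $P$ and~$1$. Given $D_n$, I choose a countable dense subset $F_n \subset D_n$, discharge the tasks described below, and then replace the result by the C*-subalgebra generated by $\bigcup_{g \in G} \af_g(\,\cdot\,)$ of what was produced; since $G$ is countable this preserves separability and arranges $\af$-invariance of~$D_{n+1}$.

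The key enlargements, all available because $M$ is a type $\mathrm{II}_1$ factor, are as follows. \emph{Simplicity:} for each $a \in (F_n)_+ \setminus \{0\}$, use that the algebraic ideal generated by~$a$ in the factor~$M$ is all of~$M$ to find $x_1, \ldots, x_k \in M$ with $\sum_j x_j^* a x_j = 1$, and adjoin the~$x_j$. \emph{Unique trace:} for each $a \in F_n$, invoke the Dixmier property of~$M$ to choose unitaries $u_1, \ldots, u_m \in M$ with $\bigl\| \tfrac{1}{m} \sum_i u_i a u_i^* - \tau(a) 1 \bigr\|$ small, and adjoin the~$u_i$. \emph{Real rank zero:} for each self-adjoint $a \in F_n$, approximate $a$ in norm by a self-adjoint element of finite spectrum using spectral projections of~$a$ in~$M$, and adjoin these projections and approximants. \emph{Stable rank one:} for each $a \in F_n$, use that in the finite von Neumann algebra~$M$ the invertibles are norm dense (write $a = v|a|$, extend~$v$ to a unitary since $1 - v^* v$ and $1 - v v^*$ have equal trace, and perturb $|a|$ away from~$0$), and adjoin the resulting invertible together with its inverse. \emph{Comparison:} for each pair of projections $p, q \in D_n$ with $\tau(p) < \tau(q)$, use comparison in the factor to find $v \in M$ and a subprojection $q_0 \le q$ with $v^* v = p$ and $v v^* = q_0$, and adjoin $v$ and~$q_0$.

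In the limit, $D$ is separable, unital, and $\af$-invariant, and the required properties follow by passing approximate finite-stage data to exact limit statements. Simplicity follows from the first item by a routine approximation argument, so that every nonzero positive element of~$D$ generates~$D$ as an ideal. The Dixmier averaging forces every tracial state on~$D$ to agree with $\tau|_D$, giving uniqueness of the trace. Real rank zero and stable rank one pass to the direct limit from the third and fourth items, using that these properties are preserved under inductive limits with the witnessing elements lying in~$D$. For the last conclusion one runs the comparison step simultaneously at every matrix level $M_r(M)$ (again a $\mathrm{II}_1$ factor), so that $\tau(p) < \tau(q)$ implies $p \precsim q$ for projections $p, q$ over~$D$; with the unique trace this is exactly the assertion that the order on projections over~$D$ is determined by traces.

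The main obstacle is organizing all five requirements into a single countable exhaustion while keeping each $D_n$ separable and $\af$-invariant, and in particular handling comparison uniformly across all matrix sizes. I would address this by a diagonalization: enumerate a countable list of tasks drawn from dense subsets of the algebras $M_r(D_n)$ over all $r$ and~$n$, discharge finitely many at each stage, and check that re-applying the $\af$-invariantization does not spoil the relations already arranged, since applying $\af_g$ to an identity such as $\sum_j x_j^* a x_j = 1$ only produces further elements satisfying the analogous identity for $\af_g(a)$. The remaining bookkeeping—verifying that the finitely many adjoined elements preserve separability and that approximate witnesses at finite stages become exact in the norm closure—is of the standard Blackadar type and presents no essential difficulty.
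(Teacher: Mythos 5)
Your proposal is correct and is essentially the paper's argument: the paper proves this by citing the Blackadar-type exhaustion of Proposition~3.1 of the reference for a simple separable C*-algebra not isomorphic to its opposite, interleaving exactly the same enlargement steps (full elements for simplicity, Dixmier averaging, spectral projections, dense invertibles, comparison of projections) with a $G$-invariantization stage that adjoins $\bigcup_{g \in G} \af_g(\,\cdot\,)$ at each level. The only cosmetic difference is in the last property: you arrange Murray--von Neumann subequivalence directly at every matrix level, whereas the paper deduces it from the cited conclusion that $K_0(D) \to K_0(M)$ is an order embedding together with cancellation coming from stable rank one; both routes are fine.
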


\begin{proof}
The proof is the same as that of Proposition~3.1 of~\cite{Ph57},
except for $G$-invariance.
We construct by induction on $n \in \Nz$ separable unital subalgebras
$A_n, B_n, C_n, D_n, E_n, F_n \S M$ with
\[
P \S A_0 \S B_0 \S C_0 \S D_0 \S E_0 \S F_0 \S \cdots
  \S A_n \S B_n \S C_n \S D_n \S E_n \S F_n \S \cdots
\]
such that $A_n$, $B_n$, $C_n$, $D_n$, and $E_n$ have the
properties in the proof of Proposition~3.1 of~\cite{Ph57}
($A_n$ is simple, etc.),
and such that $F_n$ is $G$-invariant.
In the induction step, after constructing $E_n$,
we take $F_n$ to be the C*-subalgebra generated by
$\bigcup_{g \in G} \af_g (E_n)$.
Then ${\overline{\bigcup_{n = 0}^{\I} F_n}}$ is $G$-invariant.
The proof in~\cite{Ph57} now gives the desired conclusion,
except that the conclusion is that $K_0 (D) \to K_0 (M)$
is an order isomorphism onto its range
instead of that the order on projections over~$D$
is determined by traces.
But the conclusion we get implies the conclusion we want
if $A$ has cancellation,
and stable rank one implies cancellation
by Proposition 6.4.1 and Proposition 6.5.1 of~\cite{Blkd3}.
\end{proof}

\begin{exa}\label{E_5416_DenjRR0}
Let $X$ be the Cantor set
and let $h$ be an arbitrary minimal
homeomorphism of~$X$.
Let $\sm \in \Aut ( C^*_{\mathrm{r}} (F_{\infty}) )$
be as in Notation~\ref{N_9214_FreeShift}.

We regard $C^*_{\mathrm{r}} (F_{\infty})$
as a subalgebra of the group
von Neumann algebra $W^* (F_{\infty})$
in the usual way,
and we let ${\overline{\sm}} \in \Aut (W^* (F_{\infty}))$
be the von Neumann algebra automorphism
which shifts the generators of $F_{\infty}$
in the same way that $\sm$ does.
Thus ${\overline{\sm}} |_{C^*_{\mathrm{r}} (F_{\infty})} = \sm$.
Choose a ${\overline{\sm}}$-invariant
subalgebra $D \subset W^* (F_{\infty})$
which contains $C^*_{\mathrm{r}} (F_{\infty})$
as in Lemma~\ref{L_0916_MakeSimple},
with the properties there.
Set $\gm = {\overline{\sm}} |_D \in \Aut (D)$.
Then define $\af \in \Aut ( C (X) \otimes D )$
in the same was as in Example~\ref{E_5416_CSrFGp},
using $\gm$ in place of~$\sm$.
The algebra $C^* \big( \Z, \, C (X) \otimes D, \, \af \big)$
is simple by Proposition~\ref{P_4Y15_CPSimple}.
It has stable rank~$1$ and real rank zero
by Theorem \ref{P_9Z26_XisCantor}(\ref{9217_GetTsr1_SP}).

The algebra $D$ is not nuclear
because the Gelfand-Naimark-Segal representation
from its tracial state gives a nonhyperfinite factor.
\end{exa}

It is perhaps interesting to point out that every quasitrace
on the algebra~$D$ in Example~\ref{E_5416_DenjRR0} is a trace.
This is a consequence of the following lemma.

\begin{lem}\label{L_9929_UniqQTr}
Let $A$ be a unital \ca{} with real rank zero.
Suppose that $\ta$ is a quasitrace on~$A$ and
that whenever $p, q \in M_{\I} (A)$ are \pj{s}
with $\ta (p) < \ta (q)$,
then $p \precsim q$.
Then $\ta$ is the only quasitrace on~$A$.
\end{lem}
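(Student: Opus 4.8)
The plan is to reduce the uniqueness of $\ta$ to an equality on projections and then spread that equality to all of $A$ using real rank zero. So let $\sm$ be any quasitrace on $A$; the goal is $\sm = \ta$. The crucial point is that the comparison hypothesis on $\ta$, together with the monotonicity of the dimension function $d_{\sm}$ under Cuntz subequivalence, pins down $\sm$ on every projection in $M_{\I}(A)$.

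Concretely, I would first observe that for any projections $P, Q \in M_{\I}(A)$ with $\ta(P) < \ta(Q)$ the hypothesis gives $P \precsim Q$, and since $a \precsim b$ implies $d_{\sm}(a) \leq d_{\sm}(b)$ and $d_{\sm}(P) = \sm(P)$ for a projection, this forces $\sm(P) \leq \sm(Q)$. Applying this with $P$ the $n$-fold direct sum $\diag(p, \ldots, p)$ and $Q$ the $m$-fold direct sum $\diag(1_A, \ldots, 1_A)$, and using that $\ta$ and $\sm$ extend to $M_{\I}(A)$ with $\ta(\diag(p,\ldots,p)) = n\ta(p)$ and likewise for $\sm$, I get the implication $\ta(p) < m/n \Rightarrow \sm(p) \leq m/n$ for all positive integers $m, n$. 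Letting $m/n$ decrease to $\ta(p)$ over the rationals yields $\sm(p) \leq \ta(p)$; interchanging the roles of $p$ and $1_A$ gives $\ta(p) \leq \sm(p)$ (with positivity of $\sm$ covering the case $\ta(p) = 0$). Hence $\sm(p) = \ta(p)$ for every projection $p \in M_{\I}(A)$.

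It then remains to pass from projections to general elements, and this is where real rank zero enters. Since a quasitrace restricts to a trace on each abelian C*-subalgebra, $\sm$ and $\ta$ agree on every self-adjoint element with finite spectrum, that is, on every real linear combination of mutually orthogonal projections of $A$. Real rank zero guarantees that such elements are dense in $A_{\sa}$, and quasitraces are norm-continuous, so $\sm = \ta$ on $A_{\sa}$, and therefore on all of $A$ after splitting into real and imaginary parts.

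The argument is almost entirely a comparison-and-scaling computation; the only inputs requiring care are the standard facts that a normalized $2$-quasitrace is norm-continuous and is a trace on abelian subalgebras, and that its associated dimension function is monotone under Cuntz subequivalence. Granting those, the main obstacle is simply to organize the scaling so that the rational approximations squeeze $\sm(p)$ against $\ta(p)$ from both sides, after which real rank zero is used only in the final density step.
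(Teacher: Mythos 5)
Your proof is correct and follows essentially the same route as the paper's: the comparison hypothesis plus monotonicity of quasitraces under subequivalence forces $\sm(p) = \ta(p)$ on projections via the $m$-copies-of-$p$ versus $n$-copies-of-$1_A$ scaling, and real rank zero together with continuity and linearity on commutative subalgebras handles the rest. The only difference is organizational (you argue directly, the paper argues by contradiction, locating first a projection where the two quasitraces would disagree), which changes nothing of substance.
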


\begin{proof}
Suppose the conclusion is false,
and let $\sm$ be some other quasitrace on~$A$.
By definition, for any quasitrace~$\rh$ on~$A$
and any $a, b \in A_{\mathrm{sa}}$,
we have $\rh (a + i b) = \rh (a) + i \rh (b)$.
Therefore there is $a \in A_{\mathrm{sa}}$
such that $\sm (a) \neq \ta (a)$.
Since quasitraces are continuous,
it follows from real rank zero that there is $a \in A_{\mathrm{sa}}$
such that $a$ has finite spectrum and $\sm (a) \neq \ta (a)$.
Since quasitraces are linear on commutative C*-subalgebras,
there is a \pj{} $p \in A$ such that $\sm (p) \neq \ta (p)$.

Suppose $\sm (p) < \ta (p)$.
Choose $m, n \in \Nz$ such that $m \sm (p) < n < m \ta (p)$.
Define \pj{s} $e, f \in M_{\I} (A)$ by $e = 1_{M_m} \otimes p$ and
$f = 1_{M_n} \otimes 1_A$.
Then $\ta (f) < \ta (e)$, so $f \precsim e$.
But $\sm (f) > \sm (e)$, a contradiction.
Similarly, $\sm (p) > \ta (p)$ is also impossible.
This contradiction shows that $\sm$ does not exist.
\end{proof}

\begin{exa}\label{E_9217_BiggerCstFn}
Let $n \in \set{ 2, 3, \ldots }$.
Following Notation~\ref{N_9214_QFreeFn} for the generators
of $C^*_{\mathrm{r}} (F_{n})$,
for $\rh$ in the symmetric group~$S_n$
let $\ps_{\rh} \in \Aut ( C^*_{\mathrm{r}} (F_{n}) )$
be the automorphism determined
by $\ps_{\rh} (u_k) = u_{\rh^{-1} (k)}$ for $k = 1, 2, \ldots, n$,
and let ${\overline{\ps_{\rh} }}$ be the corresponding automorphism
of the group von Neumann algebra $W^* (F_{n})$.

By Theorems 9.2.6 and 9.2.7 of \cite{PhNotes},
the algebra $C^*_{\mathrm{r}} (F_{n})$ is simple
and has a unique tracial state.
Use Lemma~\ref{L_0916_MakeSimple}
to find a simple separable unital \ca{}
$D \subset W^* (F_{n})$ which contains $C^*_{\mathrm{r}} (F_{n})$,
is invariant under all the automorphisms
${\overline{\ps_{\rh} }}$ for $\rh \in S_n$,
and has the other properties given in Lemma~\ref{L_0916_MakeSimple}.

Let $X$ be the Cantor set, and let $h \colon X \to X$ be any \mh.
Choose any decomposition $X = \coprod_{\rh \in S_n} X_{\rh}$
of $X$ as the disjoint union of nonempty closed subsets.
For $x \in X_{\rh}$ let $\af_x = {\overline{\ps_{\rh} }} |_D$.
Let $\af \colon \Z \to \Aut (C (X, D))$
be the corresponding action as in Lemma~\ref{L_4Y12_GetAuto}.

The algebra $C^* \bigl( \Z, \, C (X, D), \, \af \bigr)$
is simple by Proposition~\ref{P_4Y15_CPSimple}.
It has stable rank one and real rank zero by
Theorem~\ref{P_9Z26_XisCantor}(\ref{9217_GetTsr1_CptGen}).
The algebra $D$ is not nuclear
for the same reason as in Example~\ref{E_5416_DenjRR0}.
\end{exa}

The next example is of the third type discussed
in the introduction to this section.

\begin{exa}\label{E_9217_GenPType}
Let $(\pi_1, \pi_2, \ldots)$
be a sequence of unital \fd{} representations
of $C^* (F_2)$
such that, for every $n \in \N$,
the representation
$\bigoplus_{k = n}^{\infty} \pi_k$ is faithful.
For $n \in \Nz$,
let $d (n)$ be the dimension of $\pi_n$,
and define
$l (n) = d (n) + 4$
and $r (n) = \prod_{k = 1}^n l (k)$.
Let $u_1, u_2 \in C^* (F_2)$
be the ``standard'' unitaries in $C^* (F_2)$,
obtained as the images of the standard generators of~$F_2$,
as in Notation~\ref{N_9214_FreeShift}
except that we are now using the full \ca{} instead
of the reduced \ca.
Let $\gm_1, \gm_2, \gm_3 \in \Aut (C^* (F_2))$
be the automorphisms determined by
\[
\gm_1 (u_1) = u_1^{-1}
\andeqn
\gm_1 (u_2) = u_2,
\]
\[
\gm_2 (u_1) = u_1
\andeqn
\gm_2 (u_2) = u_2^{-1},
\]
and
\[
\gm_3 (u_1) = u_1^{-1}
\andeqn
\gm_3 (u_2) = u_2^{-1}.
\]
For $n \in \N$
define
$D_n = M_{r (n)} \otimes C^* (F_2)$,
which we identify as
\[
M_{l (n)} \otimes M_{l (n - 1)} \otimes \cdots
    \otimes M_{l (1)} \otimes C^* (F_2),
\]
and define
$\gm_{n, n - 1} \colon D_{n - 1} \to D_n$
by,
for $a \in D_{n - 1} = M_{r (n - 1)} \otimes C^* (F_2)$,
\begin{align*}
\gm_{n, n - 1} (a)
& = \diag \bigl( a, \, (\id_{M_{r (n - 1)}} \otimes \ph_1 ) (a),
        \, (\id_{M_{r (n - 1)}} \otimes \ph_2 ) (a),
\\
& \hspace*{3em} {\mbox{}}
        \, (\id_{M_{r (n - 1)}} \otimes \ph_3 ) (a),
        \, (\pi_n \otimes \id_{M_{r (n - 1)}} (a) \otimes 1
     \bigr)
\\
& \in M_{l (n)} \otimes D_{n - 1} = D_n.
\end{align*}
Let $D$ be the direct limit of the resulting direct system.
It follows by methods of~\cite{Ddlt}
that $D$ is simple, separable, and has tracial rank zero.
In particular, $D$ has stable rank one
and real rank zero by Theorem 3.4 of~\cite{HLinTrAF},
and the order on projections over $D$ is determined by traces
by Theorem 6.8 of~\cite{HLinTTR}.
Moreover, using the known result for $K_* (C^* (F_n))$
(see~\cite{CuFree}),
one gets $K_1 (D) = 0$.

For $\zt \in S^1$ define inductively
unitaries $u_n (\zt) \in D_n$ as follows.
Set $u_0 (\zt) = 1$,
and, given $u_{n - 1} (\zt) \in D_{n - 1}$,
set
\[
u_n (\zt)
 = \bigl( \diag \bigl( \zt, 1, 1, \ldots, 1 \bigr)
             \otimes 1_{D_{n - 1}}  \bigr)
         \gm_{n, n - 1} ( u_{n - 1} (\zt) )
 \in M_{l (n)} \otimes D_{n - 1} = D_n.
\]
Then the actions
$\zt \mapsto \Ad (u_n (\zt)) \in \Aut (D_n)$
of $S^1$ on~$D_n$
are compatible with the direct system,
and so yield a \ct{} map
(in fact, an action)
$\zt \mapsto \af_{\zt} \colon S^1 \to \Aut (D)$.
Let $h \colon S^1 \to S^1$ be an irrational rotation.
Apply Lemma~\ref{L_4Y12_GetAuto}
to get an action
$\af \colon \Z \to \Aut ( C (S^1, D) )$.

The algebra $C^* \big( \Z, \, C (S^1, D), \, \af \big)$
is simple by Proposition~\ref{P_4Y15_CPSimple}
and has stable rank one
by
Theorem \ref{T_9222_Tsr1AndRR0_CrPrd}(\ref{9927_AyCentLargeConds_SP}).
\end{exa}

Methods of~\cite{NiuWng} will probably show that
the algebra $D$ in Example~\ref{E_9217_GenPType} is not $\JS$-stable
(see Question~\ref{Q_9217_GenPType_IsZStab}),
although it definitely is tracially $\JS$-stable.

Finally, we give purely infinite examples.

\begin{exa}\label{E_9214_ShiftAndOI}
Let $(X, h)$ be the minimal subshift
(of the shift on $(S^1)^{\Z}$) with nonzero mean dimension
used in Example~\ref{E_9214_ShiftAndUHF}.
Let $D = {\mathcal{O}}_{\I}$,
but with standard generators indexed by~$\Z$,
say $s_k$ for $k \in \Z$.
Let $\af_{\zt}$ be the gauge automorphism,
given by $\af_{\zt} (s_k) = \zt_k s_k$.
Let $\af \colon \Z \to \Aut (C (X, D))$
be the corresponding action as in Lemma~\ref{L_4Y12_GetAuto}.
Then $C^* \bigl( \Z, \, C (X, D), \, \af \bigr)$ is
purely infinite and simple by Theorem~\ref{T_0101_PICase}.
Since $C^* \bigl( \Z, \, C (X, D), \, \af \bigr)$
is nuclear,
it is necessarily ${\mathcal{O}}_{\I}$-stable
by Theorem 3.15 of \cite{KiPh}.
\end{exa}

In Example~\ref{E_9214_ShiftAndOI},
Theorems 4.1 and~5.8 of~\cite{HWZ} don't apply,
since there is no reason to think that $\af$
has finite Rokhlin dimension with commuting towers.

\begin{exa}\label{E_9214_OnPINotZStab}
Let $D$ be the reduced free product
$D = (M_2 \otimes M_2) \star_{\mathrm{r}} C ([0, 1])$,
taken with respect to the Lebesgue measure state on $C ([0, 1])$
and the state on $M_2 \otimes M_2$
given by tensor product of the usual tracial state ${\mathrm{tr}}$
with the state
$\rho (x) = {\operatorname{tr}} \big(
 {\mathrm{diag}} \big( \frac{1}{3}, \frac{2}{3} \big) x \big)$
on $M_2$.
It is shown in Example 5.8 of~\cite{AmGlJmPh}
that $D$ is purely infinite and simple
but not ${\mathcal{Z}}$-stable.

Take $X = (S^1)^4$,
and for $\zt = (\zt_1, \zt_2, \zt_3, \zt_4) \in X$
take $\af_{\zt}$
to be the free product automorphism
which is given by
\[
\Ad \left( \left( \begin{matrix}
       \zt_1 & 0 \\
       0     & \zt_2
     \end{matrix} \right) \right)
\otimes \Ad \left( \left( \begin{matrix}
       \zt_3 & 0 \\
       0     & \zt_4
     \end{matrix} \right) \right)
\]
on $M_2 \otimes M_2$
and is trivial on $C ([0, 1])$.
Choose $\te_1, \te_2, \te_3, \te_4 \in \R$
such that $1, \te_1, \te_2, \te_3, \te_4$
are linearly independent over~$\Q$.
Take $h \colon X \to X$
to be
\[
(\zt_1, \zt_2, \zt_3, \zt_4)
  \mapsto \bigl( e^{2 \pi i \te_1} \zt_1, \,
       e^{2 \pi i \te_2} \zt_2, \,
       e^{2 \pi i \te_3} \zt_3, \,
       e^{2 \pi i \te_4} \zt_4  \bigr).
\]
Then $C^* \bigl( \Z, \, C (X, D), \, \af \bigr)$ is
purely infinite and simple by Theorem~\ref{T_0101_PICase}.
\end{exa}

The action in Example~\ref{E_9214_OnPINotZStab}
has finite Rokhlin dimension with commuting towers.
However, we don't know any theorem on pure infiniteness
for crossed products by such actions
when the original algebra is not ${\mathcal{O}}_{\I}$-stable.
We address this in Question~\ref{Pb_0111_FinRDim_PI} below.
Our result also does not imply that the crossed product
is ${\mathcal{O}}_{\I}$-stable, or even ${\mathcal{Z}}$-stable,
although it seems plausible that it might be.

In the next two examples, $D$ is again not $\JS$-stable.
Also, $X$ isn't \fd, and $h$ doesn't even have mean dimension zero.
So a positive answer to Question~\ref{Pb_0111_FinRDim_PI}
presumably would not help.

\begin{exa}\label{E_9214_OnPINotZStab_2}
Let $D$ be as in Example~\ref{E_9214_OnPINotZStab},
and let $(X, h)$ be as in Example~\ref{E_9214_ShiftAndUHF}
(and reused in Example~\ref{E_9214_ShiftAndOI}).
For $\zt = (\zt_n)_{n \in \Z} \in X$
take $\af_{\zt}$ to be the free product automorphism
which is given by
\[
\Ad \left( \left( \begin{matrix}
       \zt_1 & 0 \\
       0     & \zt_2
     \end{matrix} \right) \right)
\otimes \Ad \left( \left( \begin{matrix}
       \zt_3 & 0 \\
       0     & \zt_4
     \end{matrix} \right) \right)
\]
on $M_2 \otimes M_2$
and is trivial on $C ([0, 1])$.
(We are only using the coordinates with indexes $1, 2, 3, 4$.)
Then $C^* \bigl( \Z, \, C (X, D), \, \af \bigr)$ is
purely infinite and simple by Theorem~\ref{T_0101_PICase},
but may well not be ${\mathcal{Z}}$-stable.
\end{exa}

With a small modification,
we can give an example of this type in which the
underlying action of $(S^1)^{\Z}$
is effective.

\begin{exa}\label{E_9Z26_OnPINotZStab_v2}
Let $B$ be the $2^{\infty}$~UHF algebra,
and let $\ta$ be its (unique) \tst.
Let $\tr$ be the \tst{} on~$M_2$,
and let $\rh$ be the state on $M_2$ given by
$\rho (x)
 = \tr \big( \diag \big( \frac{1}{3}, \frac{2}{3} \big) x \big)$.
Let $D$ be
the reduced free product
$D = (B \otimes M_2) \star_{\mathrm{r}} C ([0, 1])$,
taken with respect to the state $\ta \otimes \rh$ on $B \otimes M_2$
and the Lebesgue measure state on $C ([0, 1])$.
We claim that $A$ is purely infinite and simple
but not ${\mathcal{Z}}$-stable.

To prove pure infiniteness,
in Examples 3.9(iii) of~\cite{Dkm2}
take $A_1 = B$
with the state~$\ta$,
take $F = M_2$
with the state~$\rh$,
and take $B = C ([0, 1])$
with the state given by Lebesgue measure.
These choices satisfy the hypotheses there.
So $A$ is is purely infinite and simple.

To prove that $A$ is not ${\mathcal{Z}}$-stable,
let $\pi$ be the Gelfand-Naimark-Segal representation of~$B$
associated with~$\ta$,
set $N = \ta (B)''$,
and also write $\ta$ for the corresponding \tst{} on~$N$.
In Proposition 5.6 of~\cite{AmGlJmPh},
take $P_1 = N \otimes M_2$
with the state $\ta \otimes \rh$,
and take $P_2 = L^{\infty} ([0, 1])$
with the state given by Lebesgue measure.
Define
\[
a = \left[ \left( \begin{matrix} 1 & 0 \\ 0 & -1 \end{matrix} \right)
        \otimes 1 \otimes 1 \otimes \cdots \right]
      \otimes 1
    \in \left( \bigotimes_{n = 1}^{\I} M_2 \right) \otimes M_2
    = B \otimes M_2
    \S P_1.
\]
Take $G_1 = \{ 1,  a\} \S P_1$,
and take $G_2 \S P_2$ to be the set of functions
$\ld \mapsto e^{2 \pi i n \ld}$ for $n \in \Z$.
These choices
satisfy the hypotheses there.
Moreover, $G_1 \S B \otimes M_2$ and $G_2 \S C ([0, 1])$.
Therefore the reduced free product
$A = (B \otimes M_2) \star_{\mathrm{r}} C ([0, 1])$
is a subalgebra of the algebra~$P$
in Proposition 5.6 of~\cite{AmGlJmPh}
which contains $a$, $b$, and~$c$,
so is not ${\mathcal{Z}}$-stable by Proposition 5.6 of~\cite{AmGlJmPh}.
The claim is proved.

Let $(X, h)$ be
as in Example~\ref{E_9214_ShiftAndUHF}
(and reused in Example~\ref{E_9214_ShiftAndOI}).
Let $\sm \colon \N \to \Z$ be a bijection
(as in Example~\ref{E_9214_ShiftAndUHF}).
For $\zt = (\zt_n)_{n \in \Z} \in X$
take $\af_{\zt}$
to be the free product automorphism
which is the identity on $C ([0, 1])$
and which is the infinite tensor product
\[
\left[ \bigotimes_{n \in \N}
  \Ad \left( \left( \begin{matrix}
       1 & 0 \\
       0 & \zt_{\sm (n)}
     \end{matrix} \right) \right) \right]
     \otimes \Ad \left( \left( \begin{matrix}
       \zt_{\sm (1)} & 0 \\
       0             & \zt_{\sm (2)}
     \end{matrix} \right) \right)
  \in \Aut (B \otimes M_2)
\]
on~$B \otimes M_2$.
Then $C^* \bigl( \Z, \, C (X, D), \, \af \bigr)$ is
purely infinite and simple by Theorem~\ref{T_0101_PICase},
but may well not be ${\mathcal{Z}}$-stable.
\end{exa}

\section{Open problems}\label{Sec_Qs}

In this section,
we collect some open questions
suggested by the examples and results in this paper.

\begin{pbm}\label{Pb_9922_RokhlinTsr}
Let $A$ be a unital \ca,
and let $\af$ be an action of $\Z$ on~$A$
which has finite Rokhlin dimension with commuting towers.
Suppose that $A$ has stable rank one and $C^* (\Z, A, \af)$ is simple.
Does it follow that $C^* (\Z, A, \af)$
has stable rank one?
\end{pbm}

This seems to be unknown even if $A$ is simple
(in which case simplicity of $C^* (\Z, A, \af)$ is automatic)
and $\af$ has the Rokhlin property.
Without assuming simplicity of $C^* (\Z, A, \af)$,
the answer is definitely no.
For aperiodic \hme{s} of the Cantor set
whose \tgca{s} don't have stable rank one,
see Theorem~3.1 of~\cite{Pt1}
(it is easy to construct examples there
which have more than one minimal set)
or Example~8.8 of~\cite{Phl10}.
Corollary~2.6 of~\cite{Szb}
implies that the corresponding actions of~$\Z$
have Rokhlin dimension with commuting towers at most~$1$.
In fact, though,
at least in Example~8.8 of~\cite{Phl10} we get the Rokhlin property.

\begin{lem}\label{L_9922_HasRokhlin}
Let $h \colon X \to X$ be the aperiodic \hme{}
of the Cantor set in Example~8.8 of~\cite{Phl10}.
Then the induced automorphism of $C (X)$ has the Rokhlin property.
\end{lem}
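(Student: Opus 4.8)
The plan is to translate the Rokhlin property for the automorphism $\af(f) = f \circ h^{-1}$ of $C(X)$ into a purely dynamical statement about clopen towers, and then to produce the required towers from the explicit construction of $h$ in Example~8.8 of~\cite{Phl10}. First I would record the reduction. Since $X$ is the Cantor set, every projection in $C(X)$ is the characteristic function $\ch_U$ of a clopen set $U \S X$; all such projections are central, so the approximate commutation requirements in the Rokhlin property hold trivially, and two projections in $C(X)$ whose difference has norm less than~$1$ are equal. Hence, taking the tolerance below~$1$, the Rokhlin property for the induced automorphism is equivalent to the following \emph{exact} statement: for every $n \in \N$ there are coprime integers $p, q \geq n$ and clopen sets $B, C \S X$ with $B, h(B), \ldots, h^{p-1}(B)$ pairwise disjoint and $h^{p}(B) = B$, with $C, h(C), \ldots, h^{q-1}(C)$ pairwise disjoint and $h^{q}(C) = C$, and such that
\[
X = \Bigl( \bigsqcup_{i=0}^{p-1} h^{i}(B) \Bigr)
      \sqcup \Bigl( \bigsqcup_{j=0}^{q-1} h^{j}(C) \Bigr).
\]

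In particular the two unions are complementary $h$-invariant clopen sets, each carrying a single cyclic Kakutani--Rokhlin tower of constant return time. I would stress that this is strictly stronger than the bare existence of refining Kakutani--Rokhlin partitions: here each roof must return exactly \emph{onto} its own base, and the columns must be grouped into just two towers of coprime heights that together exhaust~$X$.

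Next I would extract these towers from the construction in Example~8.8 of~\cite{Phl10}. That construction presents $h$ concretely and supplies arbitrarily fine clopen $h$-invariant decompositions of $X$ into columns of constant return time. Fixing $n$, one selects from such a decomposition a clopen base $B$ whose constant return time $p$ satisfies $p \geq n$ and whose cyclic tower $\{h^{i}(B)\}_{i=0}^{p-1}$, with $h^{p}(B) = B$, exhausts one clopen $h$-invariant piece of $X$; aperiodicity of $h$ guarantees that the available return times may be taken arbitrarily large. On a complementary clopen $h$-invariant piece the same mechanism yields a cyclic tower of height $q \geq n$, and one arranges $\gcd(p, q) = 1$ by choosing the two pieces at a level of the presentation where coprime constant return times occur together. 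Because every set involved is clopen and all tower relations hold on the nose, this data verifies the reduced condition with $\ep = 0$, giving the Rokhlin property.

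The main obstacle is precisely the exact, cyclic nature of the towers. General aperiodicity of a Cantor \hme{} only produces Kakutani--Rokhlin partitions whose roofs map \emph{into} the union of the bases, typically with several columns of varying heights; here instead each roof must map exactly onto its own base, the columns must be collected into exactly two cyclic towers, and the two heights must be made coprime while covering all of~$X$. This forces genuine use of the particular structure of Example~8.8 of~\cite{Phl10}, since aperiodicity alone is insufficient: a \hme{} realizing a ``flow'' between two minimal sets with no clopen $h$-invariant separation is aperiodic yet admits no such decomposition. Verifying that the homeomorphism of Example~8.8 does admit arbitrarily fine clopen $h$-invariant decompositions into constant-return-time columns is therefore the substantive point, and it is this that is read off from the explicit construction there.
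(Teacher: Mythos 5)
Your reduction in the first paragraph replaces the Rokhlin property by a strictly stronger condition, and that stronger condition provably fails for the homeomorphism in question, so the argument cannot be repaired along these lines. The Rokhlin property couples the towers only through the single requirement that $\alpha$ send the \emph{sum} of the top projections (approximately, hence here exactly) to the \emph{sum} of the base projections; it does not require $h^{p}(B) = B$ for each tower separately, nor that there be exactly two towers of coprime heights. Your exact cyclicity requirement forces the total space $\bigsqcup_{i=0}^{p-1} h^{i}(B)$ of each nonempty tower to be a clopen $h$-invariant subset of $X$. But in Example~8.8 of~\cite{Phl10} one has $X = X_1 \times X_2$ with $X_1 = \Z \cup \{\pm\infty\}$, $h = h_1 \times h_2$, $h_1 (n) = n + 1$, and $h_2$ minimal. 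Since $\{\pm\infty\} \times X_2$ has empty interior, every nonempty clopen invariant set meets $\Z \times X_2$, hence contains the closure of a full orbit $\{(n + k, \, h_2^k (x))\}_{k \in \Z}$, which by minimality of $h_2$ contains $\{+\infty\} \times X_2$; being open it then contains $[m, +\infty] \times X_2$ for some $m$, and being invariant and closed it equals $X$. So the only clopen $h$-invariant sets are $\varnothing$ and $X$: this is exactly the ``flow between two minimal sets with no clopen invariant separation'' that you yourself flag as the fatal obstruction in your last paragraph. Consequently the decomposition you propose does not exist, and the claim that Example~8.8 ``supplies arbitrarily fine clopen $h$-invariant decompositions into columns of constant return time'' is false.

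The paper's proof instead exploits the product structure: apply the standard first-return-time (Kakutani--Rokhlin) construction to the minimal factor $h_2$ to obtain clopen sets $Z_1, \ldots, Z_n \S X_2$ and return times $r (1), \ldots, r (n) > N$ with $X_2 = \coprod_{k} \coprod_{j = 0}^{r (k) - 1} h_2^{j} (Z_k)$. Here the first-return map sends $\coprod_{k} h_2^{r (k) - 1} (Z_k)$ onto $\coprod_{k} Z_k$ but permutes mass among the bases rather than fixing each one, which is all the Rokhlin property needs. Crossing with $X_1$ (which $h_1$ maps onto itself) gives towers with bases $X_1 \times Z_k$ and heights $r (k) > N$ whose levels partition $X$ and whose roofs map, collectively, onto the union of the bases; since projections in $C (X)$ are central characteristic functions of clopen sets, this exact tower system verifies the Rokhlin property.
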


\begin{proof}
Recall from~\cite{Phl10} that $X_1 = \Z \cup \{\pm \I \}$,
$h_1 \colon X_1 \to X_1$ is $n \mapsto n + 1$,
$X_2$ is the Cantor set,
$h_2 \colon X_2 \to X_2$ is minimal,
$X = X_1 \times X_2$,
and $h = h_1 \times h_2$.

Let $N \in \N$.
The standard first return time construction provides
\[
n, r (1), r (2), \ldots, r (n) \in \N
\]
with $N < r (1) < r (2) < \cdots < r (n)$,
and compact open subsets $Z_1, Z_2, \ldots, Z_n \S X_2$,
such that
\[
X_2 = \coprod_{k = 1}^{n} \coprod_{j = 0}^{r (k) - 1} h_{2}^j (Z_k).
\]
Then the sets
\[
X_1 \times Z_1, \quad X_1 \times Z_2,
 \quad \ldots, \quad X_1 \times Z_n
\]
form a system of Rokhlin towers for~$h$,
with heights $r (1), r (2), \ldots, r (n)$,
all of which exceed~$N$.
\end{proof}

In fact,
it seems to be known that any aperiodic \hme{}
of the Cantor set~$X$
induces an automorphism of $C (X)$ with the Rokhlin property.
We have not found a reference,
and we do not prove this here.

The following question asks for a plausible generalization
of Lemma~\ref{L_4Y11_Comp}.

\begin{qst}\label{Qst1_20190103D}
Let $D$ be a simple \uca.
Let $X$ be a \cms, let $G$ be a discrete group,
and let $(g,x) \mapsto gx$ be a minimal
and essentially free action of $G$ on $X$.
Let $\af \colon G \to \Aut (C (X, D))$ be an action of $G$
which lies over the action of $G$ on $X$
and which is pseudoperiodically generated.
Set $A = C^* \bigl( \Z, \, C (X, D), \, \af \bigr)$.

Does it follow that for every $a \in C (X, D)_{+} \setminus \{ 0 \}$
there exists $f \in C (X)_{+} \setminus \{ 0 \} \subset A$
such that $f \precsim_A a$?
\end{qst}

We expect the techniques in the proof of  Lemma \ref{L_4Y11_Comp}
(taking $Y$ to be the empty set) can be used to answer
this question in the affirmative.

\begin{qst}\label{Q_9212_ExistsNotPseudoper}
Is there a simple unital \ca~$D$
such that $\Aut (D)$
is not pseudoperiodic in the sense of Definition~\ref{D_4Y12_PsPer}?
\end{qst}

Presumably such examples exist,
but we don't know of any.
Indeed,
we don't see any reason why there should not be $\af \in \Aut (D)$
such that $\set{\af^n \colon n \in \Z}$ is not pseudoperiodic.

\begin{qst}\label{Q_9214_IsZStab}
Consider the crossed product
$C^* \big( \Z, \, C (X, E), \, \bt \big)$
in Example~\ref{E_9214_ShiftAndCStFI_2}.
Is this algebra $\JS$-stable?
What about crossed products by similarly constructed actions?
\end{qst}

\begin{qst}\label{Q_9214_IsZStab_2}
Consider the crossed product
$C^* \big( \Z, \, C (X)
  \otimes C^*_{\mathrm{r}} (F_{\infty}),  \, \af \big)$
in Example~\ref{E_5416_CSrFGp}.
Is this algebra $\JS$-stable?
What about crossed products by similarly constructed actions?
What about Example~\ref{E_5416_DenjFst}?
\end{qst}

\begin{qst}\label{Q_9217_GenPType_IsZStab}
Consider the crossed product
$C^* \big( \Z, \, C (S^1, D), \, \af \big)$
in Example~\ref{E_9217_GenPType}.
Is this algebra $\JS$-stable?
\end{qst}

The following question
is motivated by Example~\ref{E_9214_OnPINotZStab}.

\begin{qst}\label{Pb_0111_FinRDim_PI}
Let $A$ be a nonsimple unital \ca{} which is purely infinite
in the sense of Definition~4.1 of~\cite{KiRo2000}.
Let $\af \colon \Z \to \Aut (A)$
be an action with finite Rokhlin dimension with commuting towers.
Does it follow that $C^* (\Z, A, \af)$
is purely infinite?
\end{qst}

If $A$ is ${\mathcal{O}}_{\I}$-stable,
then $C^* (\Z, A, \af)$ is at least $\JS$-stable
by Theorem~5.8 of~\cite{HWZ}.
If $A$ is also exact,
then so is $C^* (\Z, A, \af)$
(by Proposition 7.1(v) of~\cite{Krh2}),
and $C^* (\Z, A, \af)$ is traceless
(as at the beginning of Section~5 of~\cite{Rdm7})
because $A$ is,
so $C^* (\Z, A, \af)$ is purely infinite
by Corollary~5.1 of~\cite{Rdm7}.
(In this case, one obviously wants ${\mathcal{O}}_{\I}$-stability.
See Question~\ref{Pb_0111_PI_ZStab_to_OIStab} below.)
But the question as stated seems to be open,
even if one assumes that $C^* (\Z, A, \af)$ is simple.
If $A$ itself is simple,
then $C^* (\Z, A, \af)$
is purely infinite even just assuming that $\af$ is pointwise outer,
by Corollary~4.4 of~\cite{JngOsk}.
Provided one uses the reduced crossed product,
this remains true if $\Z$ is replaced by any discrete group.

\begin{qst}\label{Pb_0111_OIStabQ}
Consider the crossed product
$C^* \bigl( \Z, \, C (X, D), \, \af \bigr)$
in Example~\ref{E_9214_OnPINotZStab_2}.
Is this algebra $\JS$-stable?
Is it ${\mathcal{O}}_{\I}$-stable?
\end{qst}

We hope that $\JS$-stability should come from the
action in the ``$(S^1)^4$~direction''.
We suppose that if a purely infinite simple \ca{}
is $\JS$-stable,
then it is probably ${\mathcal{O}}_{\I}$-stable,
but this seems to be open in general.

\begin{qst}\label{Pb_0111_PI_ZStab_to_OIStab}
Let $B$ be a $\JS$-stable purely infinite simple \ca.
Does it follow that $B$ is ${\mathcal{O}}_{\I}$-stable?
\end{qst}

If $B$ is separable and nuclear,
the answer is yes, by Theorem~5.2 of~\cite{Rdm7}.
But this result doesn't help with Example~\ref{E_9214_OnPINotZStab_2},
even if we prove $\JS$-stability there.

\end{document}